\newtheorem{thm}{Theorem}[section]
\newtheorem{lm}[thm]{Lemma}
\newtheorem{cor}[thm]{Corollary}
\newtheorem{prop}[thm]{Proposition}
\newtheorem{quest}[thm]{Question}
\theoremstyle{definition}
\newtheorem{df}[thm]{Definition}
\newtheorem*{df*}{Definition}
\theoremstyle{remark}
\newtheorem{rem}[thm]{Remark}
\newtheorem*{rem*}{Remark}
\numberwithin{equation}{section}
\newcommand{\ci}[1]{_{ {}_{\scriptstyle #1}}}
\newcommand{\ti}[1]{_{\scriptstyle \text{\rm #1}}}
\newcommand{\dd}{{\mathrm{d}}}
\newcommand{\mathd}{\mathrm{d}}
\newcommand{\cB}{\mathcal{B}}
\newcommand{\cD}{\mathcal{D}}
\newcommand{\cC}{\mathcal{C}}
\newcommand{\cX}{\mathcal{X}}
\newcommand{\cM}{\mathcal{M}}
\newcommand{\cS}{\mathcal{S}}
\newcommand{\cE}{\mathcal{E}}
\newcommand{\cF}{\mathcal{F}}
\newcommand{\cK}{\mathcal{K}}
\newcommand{\cR}{\mathcal{R}}
\newcommand{\cW}{\mathcal{W}}
\newcommand{\cV}{\mathcal{V}}
\newcommand{\cA}{\mathcal{A}}
\newcommand{\T}{\mathbb{T}}
\newcommand{\f}{\varphi}
\newcommand{\e}{\varepsilon}
\newcommand{\s}{\sigma}
\newcommand{\C}{\mathbb{C}}
\newcommand{\R}{\mathbb{R}}
\newcommand{\Z}{\mathbb{Z}}
\newcommand{\Q}{\mathbb{Q}}
\newcommand{\B}{\mathbb{B}}
\newcommand{\E}{\mathbb{E}}
\newcommand{\F}{\mathbb{F}}
\newcommand{\bP}{\mathbb{P}}
\newcommand{\shi}{\mathbb{S}}
\newcommand{\cU}{\mathcal{U}}
\newcommand{\ch}{\operatorname{ch}}
\newcommand{\1}{\mathbf{1}}
\newcommand{\wt}{\widetilde}
\newcommand{\cz}{Calder\'{o}n--Zygmund\ }
\newcommand{\jr}{Journ\'{e}\ }
\newcommand{\La}{\langle }
\newcommand{\Ra}{\rangle }
\newcommand{\bs}[1]{\boldsymbol{#1}}
\newcommand{\bfD}{\bs{\mathcal{D}}}
\newcommand{\bee}{\boldsymbol{\e}}
\newcommand{\ddoto}{\"{o}}
\newcommand{\ddotu}{\"{u}}
\newcommand{\ddota}{\"{a}}
\newcommand{\fdot}{\,\cdot\,}
\newcommand{\supp}{\operatorname{supp}}
\DeclareMathOperator*{\esssup}{ess\,sup}
\newcommand{\BMOprod}{\ensuremath\text{BMO}\ci{\text{prod}}}
\newcommand{\BMOprodD}{\ensuremath\text{BMO}\ci{\text{prod},\bfD}}
\newcommand{\rp}{\text{Re}}
\newcommand{\ip}{\text{Im}}
\newcommand{\bi}{\mathbf{i}}
\newcommand{\bj}{\mathbf{j}}
\def\cyr{\fontencoding{OT2}\fontfamily{wncyr}\selectfont}
\DeclareTextFontCommand{\textcyr}{\cyr}
\newenvironment{entry}
{\begin{list}{X}%
		{%
			\setlength{\labelwidth}{55pt}%
			\setlength{\leftmargin}{\labelwidth}
			\addtolength{\leftmargin}{\labelsep}%
			\setlength{\itemsep}{.4pc}
	}%
}%
{\end{list}}
\newcounter{vremennyj}
\newcommand{\Addresses}{{
  \bigskip
  \footnotesize

  \noindent Komla~Domelevo\\
  \textsc{Julius-Maximilians-Universität Würzburg,\\
          Campus Hubland Nord,\\
          Emil-Fischer-Stra{\ss}e 40,\\
          97074 W\"{u}rzburg, Germany}\\
  \textit{E-mail address: } \texttt{komla.domelevo@mathematik.uni-wuerzburg.de}

  \bigskip
  
  \noindent Spyridon~Kakaroumpas\\
  \textsc{Julius-Maximilians-Universität Würzburg,\\
          Campus Hubland Nord,\\
          Emil-Fischer-Stra{\ss}e 40,\\
          97074 W\"{u}rzburg, Germany}\\
  \textit{E-mail address: } \texttt{spyridon.kakaroumpas@mathematik.uni-wuerzburg.de}
  
  \bigskip
  
  \noindent Stefanie~Petermichl\\
  \textsc{Julius-Maximilians-Universität Würzburg,\\
          Campus Hubland Nord,\\
          Emil-Fischer-Stra{\ss}e 40,\\
          97074 W\"{u}rzburg, Germany}\\
  \textit{E-mail address: } \texttt{stefanie.petermichl@mathematik.uni-wuerzburg.de}
  
  \bigskip

  \noindent Odí~Soler~i~Gibert\\
  \textsc{Julius-Maximilians-Universität Würzburg,\\
          Campus Hubland Nord,\\
          Emil-Fischer-Stra{\ss}e 40,\\
          97074 W\"{u}rzburg, Germany}\\
  \textit{E-mail address: }\texttt{odi.solerigibert@mathematik.uni-wuerzburg.de}

}}
\begin{document}

\title{Boundedness of Journ\'{e} operators with matrix weights}

\author{K. Domelevo, S. Kakaroumpas, S. Petermichl, O. Soler i Gibert}

\thanks{The second and third authors are partially supported by the Alexander von Humboldt Stiftung.
        The third and fourth authors are partially supported by the ERC project CHRiSHarMa no.~DLV-682402.}

\maketitle

\begin{abstract}
We develop a biparameter theory for matrix weights and provide various biparameter matrix-weighted bounds for \jr operators as well as other central operators under the assumption of the product matrix Muckenhoupt condition.
In particular, we provide a complete theory for biparameter \jr operator bounds on matrix-weighted $L^2$ spaces.
We also achieve bounds in the general case of matrix-weighted $L^p$ spaces, for $1 < p < \infty$ for paraproduct-free \jr operators.
Finally, we expose an open problem involving a matrix-weighted Fefferman--Stein inequality, on which our methods rely in the general setting of matrix-weighted bounds for arbitrary \jr operators and $p \neq 2.$\\
\textsc{MSC 2020.} \textit{Primary:} 42B20. 
                   \textit{Secondary:} 42B25, 
                                       42B15.\\ 
\textsc{Keywords.} \jr operators, matrix weights, biparameter weighted estimates, sparse domination.
\end{abstract}

\setcounter{tocdepth}{1}
\tableofcontents
\setcounter{tocdepth}{2}

\section*{Notation}

\begin{entry}
\item[$\Z_{+}$] the set of non-negative integers;

\item[$\1\ci{E}$] characteristic function of a set $E$;

\item[$\mathd x$] integration with respect to Lebesgue measure; 

\item[$|E|$] $n$-dimensional Lebesgue measure of a measurable set $E\subseteq\R^n$;

\item[$\La f\Ra\ci{E}$] average with respect to Lebesgue measure, $\La f\Ra\ci{E}:=\frac{1}{|E|}\int_{E}f(x)\mathd x$;

\item[$\strokedint_{E} \mathd x$] integral with respect to the normalized Lebesgue measure on a set $E$ of positive finite measure, $\strokedint_{E} f(x)\mathd x := \frac{1}{|E|} \int_{E} f(x)\mathd x = \La f\Ra\ci{E}$;

\item[$\La e,f\Ra$] usual hermitian pairing of vectors $e,f\in\C^d$ that is conjugate-linear in the second variable, $\La e,f\Ra:=\sum_{i=1}^{d}e_i\bar{f}_i$;

\item[$|e|$] usual Euclidean norm of a vector $e\in\C^d$, $|e|:=\La e,e\Ra^{1/2}$;

\item[$M_{d}(\C)$] the set of all $d\times d$-matrices with complex entries;

\item[$|A|$] usual matrix norm (i.e. largest singular value) of a matrix $A\in M_{d}(\C)$;

\item[$I_{d}$] the identity $d\times d$-matrix;

\item[$\Vert f \Vert\ci{\cX}$] norm of the element $f$ in a Banach space $\cX;$

\item[$\cX'$] topological dual of a topological vector space $\cX;$

\item[$L^{\infty}\ti{c}$] the space of compactly supported $L^{\infty}$ functions;

\item[$\cC^{\infty}\ti{c}$] the space of compactly supported smooth functions;

\item[$L^{p}(W)$] matrix-weighted Lebesgue space, $\|f\|\ci{L^p(W)}^p := \int|W(x)^{1/p}f(x)|^p \mathd x$; 

\item[$(f,g)$] usual (unweighted) $L^2$-pairing, $(f,g) := \int \La f(x),g(x)\Ra \mathd x$, where $f,g$ take values in $\C^d$;

\item[$p'$] H\"{o}lder conjugate exponent to $p$, $1/p+1/p'=1$; 

\item[$I_{-},\,I_{+}$] left and right halves, respectively, of an interval $I\subseteq\R$;

\item[$h^{0}\ci{I},~h^{1}\ci{I}$] $L^{2}$-normalized \emph{cancellative} and \emph{non-cancellative}, respectively, Haar functions for an interval $I\subseteq \R$, $h^{0}\ci{I}:=\frac{\1\ci{I_{+}}-\1\ci{I_{-}}}{\sqrt{|I|}}$, $h^{1}\ci{I}:=\frac{\1\ci{I}}{\sqrt{|I|}}$; for simplicity we denote $h\ci{I}:=h^{0}\ci{I}$;

\item[$f\ci{I}$] usual Haar coefficient of a function $f\in L^1\ti{loc}(\R;\C^d)$, $f\ci{I}:=\int_{\R}h\ci{I}(x)f(x)\mathd x$;

\item[$h\ci{R}^{\e_1\e_2}$] any of the four $L^2$-normalized Haar functions for a rectangle $R$ in $\R^2$, $h\ci{R}^{\e_1\e_2} := h\ci{I}^{\e_1}\otimes h\ci{J}^{\e_2}$, where $R = I \times J$ and $\e_1,\e_2 \in \lbrace 0,1 \rbrace$; for simplicity we denote $h\ci{R} := h\ci{R}^{00}$;

\item[$f\ci{R}$] usual (biparameter) Haar coefficient of a function $f\in L^1\ti{loc}(\R^2;\C^d)$, $f\ci{R} :=\int_{\R^2}h\ci{R}(x)f(x)\mathd x$.
\end{entry}

The notation $x\lesssim\ci{a,b,\ldots}  y$ means $x\leq Cy$ with a constant $0<C<\infty$ depending \emph{only} on the quantities $a, b, \ldots$; the notation $x\gtrsim\ci{a,b,\ldots} y$ means $y\lesssim\ci{a,b,\ldots} x.$
We use  $x\sim\ci{a,b,\ldots} y$ if \emph{both} $x\lesssim\ci{a,b,\ldots} y$ and $x\gtrsim\ci{a,b,\ldots} y$ hold.

\section{Introduction}
The theory of weights has drawn much attention in recent years. In the scalar-valued setting, we say that a non-negative locally integrable function $w$ on $\R$ is an $A_p$ weight if $$\sup_I \La w\Ra\ci I \La w^{-p'/p}\Ra\ci I^{p/p'}<\infty ,$$
where the supremum runs over intervals $I\subseteq\R$ and $\La\Ra\ci I$ returns the average of a function over the interval $I$ (here, $1<p<\infty$ is fixed, and $p':=p/(p-1)$). It is classical that this is a necessary and sufficient condition for square functions, Hardy--Littlewood maximal function, Hilbert transform and \cz operators to be bounded in the weighted Lebesgue space $L^p(w)$, $1<p<\infty$. 

Inspired by applications to multivariate stationary stochastic processes, a theory of matrix $A_2$ weights was developed by S.~Treil and A.~Volberg \cite{angle-past-future}, where a necessary and sufficient condition for the boundedness of the Hilbert transform on $L^2$ matrix-weighted Lebesgue spaces was found, the finiteness of the matrix $A_2$ characteristic. This was a difficult task at that time and required a number of new ideas. Under considerable extra effort, the result was extended to the $L^p$ case, $1<p<\infty$ by F.~Nazarov and Treil \cite{nazarov-treil}, with an alternative proof given by Volberg \cite{volberg}.  M.~Christ and M.~Goldberg \cite{Michael Christ and Goldberg}, \cite{goldberg} made highly interesting contributions to the field with their work on matrix-weighted maximal functions. The field spiked recent interest in the light of the fashion of sharp weighted norm estimates. In the scalar case, the classical questions were settled between 2000 and 2012 \cite{hukovic-treil-volberg}, \cite{hytonen}, \cite{petermichl weight} but mostly remain an unsolved mystery in the matrix case, even in the case $p=2$. For example, the dyadic square function was finally shown to have the same bound as in the scalar case \cite{hytonen-petermichl-volberg}, \cite{treil-nonhomogeneous_square_function}, but the bound for the Hilbert transform or even the Haar multiplier is still open, with the current best estimates missing that of the scalar case by a long shot, see \cite{convex body}. However, the approach in \cite{convex body} relies on a rather general convex body sparse domination principle and naturally covers all \cz operators.

In this paper, we initiate the investigation of biparameter matrix-weighted estimates. In other words, we develop a biparameter $A_p$ class and prove various square function estimates, estimates for martingale multipliers and their generalizations, the dyadic shifts, and deduce via a representation theorem estimates for Journ\'e operators. In the scalar weighted case, this difficult line of investigation dates back to Fefferman and Stein \cite{fefferman-stein}, \cite{fefferman/2}. No sharp weighted theory has been established, even in the scalar case, so our focus here is on boundedness. Various different approaches can be made to work in the matrix-weighted setting, but they have a common feature in their use of various matrix-weighted biparameter square function expressions. One important feature of square functions is that of the possibility to use Khintchine's inequality to approach higher parameter cases elegantly. Another one is the ideological place of square functions in the biparameter theory that is quite natural. The concept of stopping time is more delicate in higher parameters and the use of a bottom up approach via the use of square functions is the natural one to get, for example, an atomic decomposition of $H^1$ spaces in several parameters. While square functions also played a role in the one-parameter estimate of \cite{convex body}, they could have been completely avoided via a different approach using maximal functions. Indeed, typically the use of square functions does not result in sharp norm estimates, but in the matrix case, the use of maximal functions does not seem to help either. As explained before, in this two parameter setting here, there are numerous good reasons why square functions should arise.

Let us give a more detailed overview of the central objects in this paper. The unweighted $L^p$ theory of biparameter singular integrals was developed in the 80s, with a beautiful unified definition found by J.-L.~Journ\'e \cite{journe}; we refer to the introduction therein for an intuitive historic perspective. The questions on weighted estimates arose and some turned out to be difficult. The biparameter $A_p$ condition takes a supremum over rectangles of arbitrary eccentricity instead of intervals (or cubes). A version of a weighted Theorem \ref{t: main result} in the scalar setting first appeared in R.~Fefferman and E.~M.~Stein \cite{fefferman-stein}, with restrictive assumptions on the kernel. Subsequently, the kernel assumptions were weakened significantly by R.~Fefferman in \cite{fefferman/2}, at the cost of assuming the weight belongs to the more restrictive class $A_{p/2}$. This was due to the use of his sharp function $f\mapsto M\ci{S}(f^2)^{1/2}$, where $M_S$ is the strong maximal function. Finally, R.~Fefferman improved his own result in \cite{fefferman}, where he showed that the $A_p$ class sufficed and obtained the full statement of the weighted theorem. This was achieved by a difficult bootstrapping argument based on his previous result \cite{fefferman/2}. 
In 2017, alternative arguments were given with a modern perspective, independently \cite{barron-pipher}, \cite{holmes-petermichl-wick}  for the weighted estimates of Journ\'e operators, both benefiting strongly from an extremely useful generalization of the use of Haar shifts \cite{hytonen}, \cite{petermichl} to Journ\'e operators \cite{martikainen}, \cite{ou}. Indeed, H.~ Martikainen's work \cite{martikainen} (and Y.~ Ou's work \cite{ou} in higher parameters) has given us a wonderful new tool to tackle Journ\'e-type singular integrals. Aside from a fused coefficient in the Haar representation, the approximating biparameter Haar Shifts still have a resemblance to tensor products. Both \cite{barron-pipher} and \cite{holmes-petermichl-wick} use this representation formula for their proofs and furthermore, both arguments use ``shifted'' adapted square functions. The argument by \cite{barron-pipher} features a very beautiful sparse domination by means of adapted square functions. Thanks to these fairly recent developments in product theory, we can now tackle a problem that would have seemed completely out of reach 20 years ago. It remains remarkable, that one can obtain a $L^p$ weighted theory in the matrix case {\it and} in product spaces. 

Our main result is the following.

\begin{thm}
\label{t: main result}
Let $1<p<\infty$, and let $W,U$ be $d\times d$-matrix valued biparameter $A_p$ weights on $\R^2$ with $[W,U']\ci{A_p(\R\times\R)}<\infty$. Let $T$ be any Journ\'e operator on $\R^2$.
\begin{entry}
\item[(1)] If $T$ is paraproduct-free, then
\begin{equation*}
\Vert T\Vert\ci{L^{p}(W)\rightarrow L^{p}(W)}\lesssim_{d,p,T}[W]\ci{A_p(\R\times\R)}^{\alpha_1(p)+\alpha_2(p)}
\end{equation*}
and
\begin{equation*}
\Vert T\Vert\ci{L^{p}(U)\rightarrow L^{p}(W)}\lesssim_{d,p,T}[W,U']\ci{A_p(\R\times\R)}^{1/p}[W]\ci{A_p(\R\times\R)}^{\alpha_1(p)}[U]\ci{A_p(\R\times\R)}^{\alpha_2(p)},
\end{equation*}
where $\alpha_1(p), \alpha_2(p)$ are given by \eqref{alpha_1(p)},\eqref{alpha_2(p)} respectively.

\item[(2)] If $p=2$, then
\begin{equation*}
\Vert T\Vert\ci{L^{2}(W)\rightarrow L^{2}(W)}\lesssim_{d,T}[W]\ci{A_2(\R\times\R)}^{5}
\end{equation*}
and
\begin{equation*}
\Vert T\Vert\ci{L^{2}(U)\rightarrow L^{2}(W)}\lesssim_{d,T}[W,U']\ci{A_2(\R\times\R)}^{1/2}[W]\ci{A_2(\R\times\R)}^{5/2}[U]\ci{A_2(\R\times\R)}^{5/2}.
\end{equation*}
\end{entry}
\end{thm}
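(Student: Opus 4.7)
The strategy is to reduce $T$ to dyadic model operators via a biparameter dyadic representation in the spirit of \cite{martikainen} (and \cite{ou} for higher parameters), and then to prove matrix-weighted bounds for each model operator using biparameter matrix-weighted square function estimates. The first step is to expand $T$ as an average, over random dyadic grids $\bfD$ on $\R\times\R$, of biparameter dyadic shifts of bounded complexity together with (partial) biparameter paraproducts. This reduces the problem to obtaining uniform bounds for the model operators with constants depending polynomially on the complexities, so that the averaging survives.

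Next one needs to set up the correct biparameter matrix $A_p$ theory together with quantitative matrix-weighted bounds for a family of shifted, adapted biparameter square functions and their duals, expressed in terms of reducing matrices attached to $W$, $U$ and the joint characteristic $[W,U']\ci{A_p(\R\times\R)}$. The key technical input is to lift the one-parameter matrix-weighted square function bounds of \cite{hytonen-petermichl-volberg}, \cite{treil-nonhomogeneous_square_function} parameter by parameter, using Khintchine's inequality on the randomness inside each variable to convert vector- and matrix-valued expressions back into a single square-function shape. With these tools in hand, biparameter shifts are controlled either by direct square-function duality estimates or by a sparse domination principle in terms of adapted square functions along the lines of \cite{barron-pipher}, adapted to the matrix setting by inserting the appropriate reducing operators; this yields part (1) for paraproduct-free $T$ in the full range $1<p<\infty$, with the exponents $\alpha_1(p)+\alpha_2(p)$ arising by combining the matrix-weighted square function exponents with those of the reducing sequences, and with the extra $[W,U']^{1/p}$ factor in the two-weight bound produced by pairing $f\in L^p(U)$ against $g\in L^{p'}(W^{-p'/p})$ and distributing the joint characteristic across the two reducing families.

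The main obstacle is the treatment of the biparameter paraproducts in the matrix-weighted setting for $p\neq 2$: these operators intrinsically call for a matrix-valued biparameter Fefferman--Stein inequality (a vector-valued bound for the strong maximal function applied to matrix expressions), which is currently out of reach and is flagged in the abstract as an open problem. For this reason part (2) is confined to $p=2$. At this endpoint, matrix $A_2$ self-duality permits us to swap reducing matrices between the test function and the weight, so the full biparameter paraproducts can be handled by combining biparameter dyadic product-$\BMO$ estimates with the matrix-weighted square function estimates above, producing the exponent $5$ in the one-weight bound (and $5/2+5/2$ together with a factor $[W,U']^{1/2}$ in the two-weight bound). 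Assembling the shift estimates of part (1) at $p=2$ together with the paraproduct estimates then gives part (2).
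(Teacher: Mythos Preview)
Your outline for part~(1) is essentially the paper's approach: Martikainen's representation reduces a paraproduct-free Journ\'e operator to cancellative biparameter shifts, and these are bounded either by a Barron--Pipher-style sparse domination in terms of matrix-weighted shifted square functions or by a direct square-function duality argument; the exponents $\alpha_1(p),\alpha_2(p)$ and the factor $[W,U']^{1/p}$ arise exactly as you say.

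For part~(2) there is a genuine gap. The paraproduct terms in the representation involve averages $(f)\ci{R}$, $(f^1\ci{R_1})\ci{R_2}$, $(g^1\ci{Q_1})\ci{R_2}$, etc., and these cannot be controlled by square functions alone, regardless of any $A_2$ self-duality. The paper handles them through a separate maximal-function machinery: a biparameter modified Christ--Goldberg maximal function $\wt M^W\ci{\bfD}$ (Proposition~\ref{p: weighted bound modified strong C-G}) for the full standard paraproducts, and, crucially, a family of \emph{mixed} operators $[S\wt M]\ci{\bfD,W}$, $[S^{i,0}M]\ci{\bfD,W}$, $[S^{j,0}\wt S]\ci{\bfD,W}$ that act as a square function in one coordinate and a maximal function (or square function) in the other, for the full mixed and partial paraproducts (Section~\ref{s: general Journe}). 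Bounding these mixed operators is what forces the iteration through the ``sliced'' reducing operators $\cW\ci{x_2,I}$ and the weights $W\ci{I}$, together with the one-parameter maximal and square function bounds applied fibrewise. Your sketch does not mention any maximal-function ingredient at $p=2$; without it the partial paraproducts $\shi^{i,j}\ci{\bfD}$ cannot be closed.

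Relatedly, your description of the $p\neq 2$ obstruction is slightly off. The paper \emph{does} obtain the biparameter strong Christ--Goldberg bound for all $1<p<\infty$ (Proposition~\ref{p: weighted bound modified strong C-G}); what is missing is a \emph{one-parameter vector-valued} estimate for the Christ--Goldberg maximal function (Questions~\ref{quest: vector valued maximal function} and~\ref{quest: vector valued maximal function-not modified}), needed to bound the mixed operators $[S^{j,0}M]\ci{\bfD,W'}$ for $p\neq 2$. So the open problem is a Fefferman--Stein inequality for the one-parameter matrix-weighted maximal function, not a biparameter one.
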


See Subsection \ref{s: definition Journe} for a brief overview of Journ\'e operators and the explanation of the terminology ``paraproduct-free'', as well as Section \ref{s: biparameter Ap} for the definition and properties of matrix-valued biparameter $A_p$ weights. Let us note that in the special case that $T$ is paraproduct-free, we obtain slightly better estimates in part (2) of Theorem \ref{t: main result} (see Corollary \ref{c: paraproduct free Journe} and Remark \ref{r: paraproduct free Journe dual} below). It should be noted that we do not expect any of the exponents of $[W]\ci{A_p(\R\times\R)},[U]\ci{A_p(\R\times\R)}$ appearing in Theorem \ref{t: main result} to be optimal (even for $p=2$).

We also obtain $L^{p}$ matrix-weighted bounds for biparameter Haar multipliers (see Lemma \ref{l: bounds Haar}), the biparameter analog of the modified dyadic Christ--Goldberg maximal function (see Proposition \ref{p: weighted bound modified strong C-G}), as well as a class of biparameter paraproducts (see \ref{s: pure biparameter paraproduct}). Moreover, we obtain $L^{p}$ upper and lower matrix-weighted bounds for biparameter square functions (see Section \ref{s: square functions}). Finally, we make substantial progress towards extending part (2) of Theorem \ref{t: main result} to any $1<p<\infty$, see Section \ref{s: non-cancellative}.

Although for simplicity we mostly restrict our attention to the product space $\R\times\R$, our arguments work in any biparameter product space $\R^n\times\R^m$, with the usual modifications.

\subsection{Difficulties of proofs}

We give here a brief discussion of the main difficulties of our proofs.

First of all, we need to develop a theory of matrix-valued biparameter $A_p$ weights almost from scratch. The powerful machinery of reducing operators introduced and developed by Goldberg in \cite{goldberg} works in great generality and allows one to define biparameter $A_p$ characteristics for matrix weights in a workable way. However, since in many estimates for biparameter operators we rely on iterating known estimates for one parameter counterparts, we also have to study the one parameter characteristics that one gets when one fixes one variable of a matrix biparameter $A_p$ weight, or when one ``averages'' over one variable. While in the scalar setting this can be readily done by just using the Lebesgue differentiation theorem (see \cite{holmes-petermichl-wick}), in the matrix setting several subtleties arise, partly due to lack of commutativity, and partly due to several measurability difficulties, absent in the case $p=2$ but crucial for $p\neq2$ (see \ref{s: iterate reducing operators} for a more detailed discussion).

Next, one of the foundational features of the theory of matrix-weighted estimates in the one-parameter case is that weights are directly incorporated in the definitions of square functions and maximal functions. Unsurprisingly, we follow this approach in our investigation of biparameter matrix-weighted estimates as well. Moreover, as in the one-parameter case, we have to distinguish between several different variants of biparameter matrix-weighted maximal and square functions, namely ones that incorporate the weight in a pointwise fashion, and ones that incorporate it in terms of reducing operators. The natural strategy for bounding these biparameter operators would be some kind of iteration, relying on the known one-parameter results. However, estimates involving crude pointwise composition of one-parameter operators acting in each direction (such as those yielding a direct proof of the weighted boundedness of the strong maximal function in the scalar setting) are not in general available; any iteration scheme must involve the weight in some way. Operators where the weight is incorporated in terms of reducing operators seem not to be amenable to iteration, while operators where the weight is incorporated in a pointwise manner seem to be only sometimes amenable to iteration. As a result, we generally try to prove bounds for the second kind of operators by using some type of properly formulated iteration, and then based on that deduce bounds for the first kind of operators using some duality technique. This strategy, while to a large extent successful for square functions (and their shifted variants), seems to fail completely for the maximal function. Instead, we give a direct argument for the biparameter analog of the modified Christ--Goldberg maximal function that cannot handle the biparameter analog of the Christ--Goldberg maximal function itself. Furthermore, to handle general Journ\'e operators, which include paraproducts in their dyadic expansion, one also needs to consider ``mixed" operators that, roughly speaking, are given as iterations in different directions of an one-parameter square function, shifted square function, or maximal function with a one-parameter operator of another of these three types. While in the scalar setting it is fairly straightforward to define and estimate such operators (see \cite{holmes-petermichl-wick}), in the present matrix-weighted setting even defining them in a reasonable way is challenging. While one can consider several different variants of them, it seems that only particular formulations of them are appropriate for estimating Journ\'e operators. See Section \ref{s: non-cancellative} for more details.

Moreover, at the time of preparation of this article there were no extrapolation results
for matrix-weighted estimates, even in the one-parameter setting.
That meant that proving $L^2$ estimates did not immediately imply $L^{p}$ estimates for $p$ different from 2.
Thus, significant additional effort and new ideas were sought to tackle the case $p\neq 2$.
Although we made considerable progress towards extending part (2) of Theorem \ref{t: main result} to any $1<p<\infty$,
there was a missing piece related to vector-valued estimates for the dyadic (one-parameter)
Christ--Goldberg maximal function that prevented us from obtaining the full result
using our methods (see Remark \ref{r: incomplete general Journe} below).
However, two more recent works on extrapolation in the setting of matrix weighted estimates,
one due to Bownik and Cruz-Uribe \cite{bownik-cruzuribe} and the second due to Vuorinen \cite{vuorinen},
were pointed out by the referees.
Two comments are required on the possible applications of these articles to the present work.
First, it is unclear that the results in \cite{bownik-cruzuribe}, and hence those in \cite{vuorinen},
are applicable to matrix weighted sublinear operators.
Second, even though the extrapolation in \cite{vuorinen} would imply $L^p$ bounds for $p \neq 2,$
that article considers real valued matrix weights, while we consider complex valued ones.
In practice, this amounts to the following change.
While \cite{bownik-cruzuribe} and \cite{vuorinen} use centrally symmetric convex bodies,
that is sets $K$ such that $rK \subseteq K$ for every $-1 \leq r \leq 1,$
one would need to check that their results still hold if one uses
centrally homothetic convex bodies,
that is sets $K$ such that $zK \subseteq K$ for every complex $|z| \leq 1.$
Regarding the vector-valued estimates mentioned before,
two questions concerning Fefferman--Stein type inequalities involving a matrix-weight remain open for some $p$ and are of independent interest.
These are mentioned in Question \ref{quest: vector valued maximal function} and Question \ref{quest: vector valued maximal function-not modified} below
and they will be addressed in subsequent works.

Finally, our methods distinguish very strongly between one-weight and two-weight situations. This is a direct consequence of the fact that the matrix weights have to be incorporated in the operators themselves, and not just appear in the weighted norms.

\subsection*{Acknowledgements}
The authors would like to thank the referees of the paper for valuable comments and suggestions that improved the exposition and its clarity.

\section{Background}
Here we recall the definition and some basic facts about dyadic cubes and rectangles and the Haar system.
For definiteness, in what follows intervals in $\R$ will always be assumed to be left-closed, right-open and bounded. A cube in $\R^n$ will be a set of the form $Q=I_1\times\ldots\times I_n$, where $I_k,~k=1,\ldots,n$ are intervals in $\R$ of the same length. A rectangle in $\R^n\times\R^m$ (with sides parallel to the coordinate axes) will be a set of the form $R=R_1\times R_2$, where $R_1$ is a cube in $\R^n$ and $R_2$ is a cube in $\R^m$.

\subsection{Dyadic grids}
A collection $\cD$ of intervals in $\R$ will be said to be a \emph{dyadic grid} if one can write $\cD=\bigcup_{k\in\Z}\cD_{k}$, such that the following hold:
\begin{enumerate}[(i)]
\item
for all $k\in\Z$, $\cD_k$ forms a partition of $\R$, and all intervals in $\cD_k$ have length $2^{-k},$
\item
for all $k\in\Z$, every $J\in\cD_{k}$ is the union of exactly $2$ intervals in $\cD_{k+1}$.
\end{enumerate}
We say that $\cD$ is the \emph{standard dyadic grid} in $\R$ if
\begin{equation*}
\cD:=\lbrace [m 2^{k},(m+1) 2^{k}):~k,m\in\Z\rbrace.
\end{equation*}

A collection $\cD$ of cubes in $\R^n$ will be said to be a \emph{dyadic grid} if for some dyadic grids $\cD^1,\ldots,\cD^n$ in $\R$ one can write $\cD=\bigcup_{k\in\Z}\cD_{k}$, where
\begin{equation*}
\cD_{k}=\lbrace I_1\times\ldots\times I_{n}:~ I_{i}\in\cD^i,~|I_{i}|=2^{-k},~i=1,\ldots,n\rbrace,\qquad\forall k\in\Z.
\end{equation*}
We say that $\cD$ is the \emph{standard dyadic grid} in $\R^n$ if
\begin{equation*}
\cD:=\lbrace [m_1 2^{k},(m_1+1) 2^{k}) \times \dots \times [m_n 2^{k},(m_n+1) 2^{k}):~k,m_1,\ldots,m_n\in\Z\rbrace.
\end{equation*}
If $\cD$ is a dyadic grid in $\R^n$, then we denote
\begin{equation*}
\ch_{i}(Q):=\lbrace K\in\cD:~K\subseteq Q,~|K|=2^{-in}|Q|\rbrace,\qquad Q\in\cD,~i=0,1,2,\ldots.
\end{equation*}
We emphasize that in this paper we follow the so called ``probabilistic indexing" of dyadic cubes, so that cubes of the $k$-th generation $\cD_k$ have sidelength $2^{-k}$ (and not $2^k$). 

A collection $\bfD$ of rectangles in $\R^n\times\R^m$ is said to be a \emph{product dyadic grid} if for a dyadic grid $\cD^1$ in $\R^n$ and a dyadic grid $\cD^2$ in $\R^m$ we have
\begin{equation*}
\bfD:=\lbrace R_1\times R_2:~R_i\in\cD^i,~i=1,2\rbrace,
\end{equation*}
and in this case we write (slightly abusing the notation) $\bfD=\cD^1\times\cD^2$.

If $\bfD$ is a product dyadic grid in $\R^n\times\R^m$, then for all $R\in\bfD$ and for all pairs of nonnegative integers $\bi=(i_1,i_2)$ we denote
\begin{equation*}
\ch_{\bi}(R):=\lbrace Q_1\times Q_2:~Q_1\in\ch_{i_{1}}(R_{1}),~Q_2\in\ch_{i_{2}}(R_{2})\rbrace,
\end{equation*}
and we say that $R$ is the $\bi$-th ancestor of a rectangle $P\in\bfD$ if $P\in\ch_{\bi}(R)$.

\subsection{Sparse families}\label{s: sparse}
Consider a dyadic grid $\cD$ in $\R^n.$
Following the terminology of \cite[Subsection~2]{convex body}, we say that a family $\cS$ of dyadic cubes in $\R^n$ is dyadically $\delta$-sparse (where $0<\delta<1$) if
\begin{equation*}
\sum_{K\in\ch\ci{\cS}(Q)}|K|\leq(1-\delta)|Q|,\qquad\forall Q\in\cS,
\end{equation*}
where $\ch\ci{\cS}(Q)$ is the family of all maximal cubes in $\cS$ that are contained in $Q$.

Consider now a product dyadic grid $\bfD$ in $\R^n \times \R^m$ and a collection $\cS$ of rectangles in $\bfD.$ Let also $0 < \delta < 1.$
We say that $\cS$ is a \emph{dyadic Carleson family} with constant $\delta$ if
\begin{equation*}
\sum_{\substack{R\in\cS\\ R\subseteq\Omega}} |R| \leq \frac{1}{\delta} |\Omega|
\end{equation*}
for all open sets $\Omega \subseteq \R^n \times \R^m.$
On the other hand, we say that $\cS$ is a \emph{weakly $\delta$-sparse} family with constant $\delta$ if for each $R \in \cS$ there is a measurable set $E_R \subseteq R$ with
\begin{equation*}
|E_R| \geq \delta |R|
\end{equation*}
and such that the collection $\lbrace E_R \rbrace\ci{R\in\cS}$ is pairwise disjoint.

It had been proved by A.~K.~Lerner and F.~Nazarov \cite{lerner-nazarov} that a collection of dyadic \emph{cubes} $\cS$ is a dyadic Carleson family if and only if it is a weakly sparse family of dyadic cubes.
Later on, T.~S.~H\"anninen \cite{hanninen} proved that a collection of dyadic rectangles $\cS$ is a dyadic Carleson family with constant $\delta$ if and only if it is a weakly $\delta$-sparse family.
In fact, he shows his result for collections of general Borel sets.
Observe as well that, if one defines one-parameter analogues of dyadic Carleson families and weakly sparse families of dyadic cubes, they are not equivalent to dyadically sparse families.
In fact, a dyadically sparse family is also weakly sparse, but the converse only holds if the weakly sparse constant satisfies $\delta > 1/2$ (see \cite[Subsection~2]{convex body}).

The concept of dyadic Carleson families appears naturally when one studies dyadic product BMO spaces, although in light of this equivalence one can just use weakly sparse families.
In this work, we mainly consider weakly sparse families of dyadic rectangles.

\subsection{Haar systems}

\subsubsection{Haar system on \texorpdfstring{$\R$}{R} and \texorpdfstring{$\R^n$}{Rn}}

Consider first a dyadic grid $\cD$ in $\R$. For any interval $I \in \cD$, $h^{0}\ci{I}, h^{1}\ci{I}$ will denote, respectively, the $L^2$-normalized \emph{cancellative} and \emph{noncancellative} Haar functions over the interval $I\in\cD$, that is
\begin{equation*}
h^{0}\ci{I}:=\frac{\1\ci{I_{+}}-\1\ci{I_{-}}}{\sqrt{|I|}},\qquad h^{1}\ci{I}:=\frac{\1\ci{I}}{\sqrt{|I|}}
\end{equation*}
(so $h^{0}\ci{I}$ has mean 0).
For simplicity we denote $h\ci{I}:=h\ci{I}^{0}$.
For any function $f \in L^1\ti{loc}(\R)$, we denote $f\ci{I}:=(f,h\ci{I})$, $I\in\cD$.
We will also denote by $Q\ci{I}$ the projection on the one-dimensional subspace spanned by $h\ci{I}$, so that
\begin{equation*}
Q\ci{I}f := f\ci{I}h\ci{I}, \qquad f \in L^1\ti{loc}(\R).
\end{equation*}
It is well-known that one has the expansion
\begin{equation*}
f = \sum_{I\in\cD} f\ci{I}h\ci{I}, \qquad \forall f\in L^2(\R)
\end{equation*}
in the $L^2(\R)$-sense, and that the system $\lbrace h\ci{I}\rbrace\ci{I\in\cD}$ forms an orthonormal basis for $L^2(\R)$.
Of course, all these notations and facts extend to $\C^d$-valued functions in the obvious coordinate-wise way.

Now, consider instead that $\cD$ is dyadic grid in $\R^n$.
Denote $\cE := \lbrace 0, 1\rbrace^n \setminus \lbrace (1,\ldots,1) \rbrace,$ which we call the set of one-parameter signatures,
and consider a cube $I = I_1 \times \dots \times I_n \in \cD.$
For $\e = (\e_1,\ldots,\e_n) \in \cE,$ we denote by $h^{\e}\ci{I}$ the $L^2$-normalized cancellative Haar function over the cube $I$ defined by
\begin{equation*}
  h^{\e}\ci{I}(x) := h^{\e_1}\ci{I_1}(x_1) \dots h^{\e_n}\ci{I_n}(x_n), \qquad x = (x_1,\ldots,x_n) \in \R^n.
\end{equation*}
In some particular cases, we will need to consider the set $\overline{\cE} = \cE \cup \lbrace (1,\ldots,1) \rbrace$ of \emph{extended} one-parameter signatures.
If this is the case and $\e = (1,\ldots,1),$ we say that $h^{\e}\ci{I} = h^{1}\ci{I_1}(x_1) \dots h^{1}\ci{I_n}(x_n)$ is the $L^2$-normalized noncancellative Haar function over the cube $I.$
For any function $f \in L^1\ti{loc}(\R^n)$ and $\e \in \overline{\cE}$, we denote $f^{\e}\ci{I} := (f,h^{\e}\ci{I})$, $I\in\cD$.
Analogously to what we did before, we will also denote by $Q^{\e}\ci{I}$ the projection on the one-dimensional subspace spanned by $h^{\e}\ci{I}$, that is
\begin{equation*}
Q^{\e}\ci{I}f := f^{\e}\ci{I}h^{\e}\ci{I}, \qquad f \in L^1\ti{loc}(\R^n).
\end{equation*}
It is well-known that one has the expansion
\begin{equation*}
f = \sum_{I\in\cD} \sum_{\e\in\cE} f^{\e}\ci{I}h^{\e}\ci{I}, \qquad \forall f\in L^2(\R^n)
\end{equation*}
in the $L^2(\R^n)$-sense, and that the system $\lbrace h^{\e}\ci{I}\colon ~I \in \cD,~\e \in \cE\rbrace$ forms an orthonormal basis for $L^2(\R^n)$.
All these notations and facts extend to $\C^d$-valued functions in the obvious coordinate-wise way.

\subsubsection{Haar system on the product space \texorpdfstring{$\R^n\times\R^m$}{RxR}}

Let $\bfD = \cD^1 \times \cD^2$ be any product grid in $\R^n \times \R^m.$
Denote $\cE := \cE^{1}\times\cE^2$, where
\begin{equation*}
\cE^1:=\lbrace 0,1 \rbrace^n \setminus \lbrace (1,\ldots,1) \rbrace,~\cE^2:=\lbrace 0,1 \rbrace^m \setminus \lbrace (1,\ldots,1) \rbrace.
\end{equation*}
We shall call $\cE$ the set of biparameter signatures.
It can also be the case that we need to consider the set $\overline{\cE} = \overline{\cE^1} \times \overline{\cE^2}$ of extended biparameter signatures, where $\overline{\cE^1} := \lbrace 0,1 \rbrace^n$ and $\overline{\cE^2} := \lbrace 0,1 \rbrace^m$.
Let us remark that the dimensions of the rectangles in the product grid $\bfD$ will be clear from the context, as well as whether $\cE$ and $\overline{\cE}$ refer to one-parameter or biparameter sets of signatures and their dimensions.

If $R = I \times J \in \bfD,$ we denote by $h\ci{R}^{\boldsymbol{\e}},$ with $\boldsymbol{\e} = (\e_1,\e_2) \in \overline{\cE},$ the $L^2$-normalized Haar function over $R$ defined by
\begin{equation*}
  h\ci{R}^{\boldsymbol{\e}} = h\ci{I}^{\e_1} \otimes h\ci{J}^{\e_2},
\end{equation*}
that is
\begin{equation*}
  h\ci{R}^{\boldsymbol{\e}}(t_1,t_2) = h\ci{I}^{\e_1}(t_1) h\ci{J}^{\e_2}(t_2),
  \qquad (t_1,t_2) \in \R^n \times \R^m.
\end{equation*}
When $\boldsymbol{\e} \in \cE,$ we say that the Haar function $h\ci{R}^{\boldsymbol{\e}},$ with $R \in \bfD,$ is a cancellative Haar function; otherwise, we say that the Haar function is non-cancellative.
For any function $f \in L\ti{loc}^1(\R^{n+m}),$ we denote $f\ci{R}^{\boldsymbol{\e}} := (f,h\ci{R}^{\boldsymbol{\e}}),$ for $R \in \bfD$ and $\boldsymbol{\e} \in    \overline{\cE}.$
As before, we will also denote by $Q\ci{R}^{\boldsymbol{\e}}$ the projection on the one-dimensional subspace spanned by $h\ci{R}^{\boldsymbol{\e}},$ that is
\begin{equation*}
  Q\ci{R}^{\boldsymbol{\e}}f := f\ci{R}^{\boldsymbol{\e}} h\ci{R}^{\boldsymbol{\e}},
  \qquad f \in L\ti{loc}^1(\R^{n+m}).
\end{equation*}
Again from the corresponding one-parameter facts, we immediately deduce the expansion
\begin{equation*}
  f = \sum_{R\in\bfD} \sum_{\boldsymbol{\e}\in\cE} f\ci{R}^{\boldsymbol{\e}} h\ci{R}^{\boldsymbol{\e}},
  \qquad \forall f \in L^2(\R^{n+m})
\end{equation*}
in the $L^2$-sense, and that the system $\lbrace h\ci{R}^{\boldsymbol{\e}}\colon R \in \bfD, \boldsymbol{\e} \in \cE \rbrace$ forms an orthonormal basis for $L^2(\R^{n+m}).$

For $I \in \cD^1,$ $J \in \cD^2$, $\e_1\in\cE^1$ and $\e_2\in\cE^2$, we denote
\begin{equation*}
  f^{\e_1,1}\ci{I}(t_2) := (f(\fdot,t_2),h\ci{I}^{\e_1}),\qquad 
  f^{\e_2,2}\ci{J}(t_1):=(f(t_1,\fdot),h\ci{J}^{\e_2}).
\end{equation*}
Moreover, we denote by $Q\ci{I}^{\e_1,1},$ $Q\ci{J}^{\e_2,2}$ the operators acting on functions $f \in L\ti{loc}^1 (\R^{n+m})$ by
\begin{equation*}
  Q\ci{I}^{\e_1,1}f(t_1,t_2) = f^{\e_1,1}\ci{I}(t_2) h^{\e_1}\ci{I}(t_1),\qquad
  Q\ci{J}^{\e_2,2}f(t_1,t_2) = f^{\e_2,2}\ci{J}(t_1)h^{\e_2}\ci{J}(t_2).
\end{equation*}
Thus, if $R = I \times J$ and $\boldsymbol{\e} = (\e_1,\e_2) \in \cE,$ then $Q\ci{R}^{\boldsymbol{\e}} = Q\ci{I}^{\e_1,1}Q\ci{J}^{\e_2,2}=Q\ci{J}^{\e_2,2}Q\ci{I}^{\e_1,1}$.
All these notations and facts extend to $\C^d$-valued functions in the obvious coordinate-wise way.

\subsection{Product BMO and product \texorpdfstring{$H^1$}{H1}}
\label{s:ProductSpaces}

Let $\bfD$ be a product grid in $\R^n \times \R^m.$
Consider a wavelet system $\{\varphi_R^{\bee}\}\ci{R\in\bfD,\bee\in\cE}$ of enough regularity adapted to the dyadic rectangles in $\bfD$ (for our purposes, we can assume that these wavelets are smooth).
The product BMO space on the product space $\R^n \times \R^m,$ denoted by $\BMOprod(\R^n\times\R^m),$ is the space of functions $f \in L^1\ti{loc}(\R^{n+m})$ for which
\begin{equation}
\label{eq:BMOProd}
    \Vert f \Vert\ci{\BMOprod(\R^n\times\R^m)}
    \coloneq \sup_{\Omega} \bigg(\frac{1}{|\Omega|} \sum_{\substack{R\in\bfD\\R\subseteq\Omega}}|( f,\varphi^{\bee}\ci{R})|^2 \bigg)^{1/2}
    < \infty,
\end{equation}
where summation over all signatures $\bee\in\cE$ is implicit, and the supremum ranges over all non-empty open sets $\Omega\subseteq\R^n\times\R^m$ of finite measure.
It can be seen that this definition does not depend on the grid $\bfD$ or the wavelet system, as long as the latter has enough regularity (see \cite{chang-fefferman}).
Observe that one can easily extend this definition to functions defined in any general product space $\R^{n_1} \times \dots \times \R^{n_t}.$
Moreover, one can restrict the open sets $\Omega$ to finite unions of dyadic rectangles by standard limiting arguments.
Furthermore, we also define the dyadic product BMO space on $\R^n \times \R^m,$ denoted by $\BMOprodD(\R^n\times\R^m),$ as the space of functions $f \in L^1\ti{loc}(\R^{n+m})$ for which
\begin{equation}
\label{eq:BMOProdD}
    \Vert f \Vert\ci{\BMOprodD(\R^n\times\R^m)}
    \coloneq \sup_{\Omega} \bigg(\frac{1}{|\Omega|} \sum_{\substack{R\in\bfD\\R\subseteq\Omega}}|f\ci{R}^{\bee}|^2 \bigg)^{1/2}
    < \infty,
\end{equation}
where summation over all signatures $\bee\in\cE$ is implicit, the supremum ranges over all non-empty open sets $\Omega\subseteq\R^n\times\R^m$ of finite measure as before and the same observations apply.
It must be noted that in \eqref{eq:BMOProd} and \eqref{eq:BMOProdD} one cannot restrict the supremum to rectangles.
This follows from a counterexample due to L.~Carleson \cite{carleson} (see also \cite{blasco-pott-cc}, \cite{fefferman-bmo} or \cite{tao}).

These spaces generalize the classical John--Nirenberg BMO and dyadic BMO spaces to the multiparameter setting.
In particular, product BMO is the dual to product $H^1$ (see \cite{chang} and \cite{fefferman-bmo}) and dyadic product BMO is the dual to dyadic product $H^1_{\bfD}$ (see \cite{bernard}).
We only state the duality between the dyadic spaces as this will be enough for our purposes.
Recall that for a function $f \in L^1\ti{loc}(\R^{n+m})$ we define its dyadic square function $S_{\bfD}f$ by
\begin{equation*}
    S_{\bfD}f \coloneq \bigg(\sum_{R\in\bfD} |f^{\bee}\ci{R}|^2 \frac{\1\ci{R}}{|R|}\bigg)^{1/2},
\end{equation*}
where summation over all signatures $\bee\in\cE$ is implicit. We say that a function $f \in L^1\ti{loc}(\R^{n+m})$ is in the dyadic space $H^1_{\bfD}(\R^n\times\R^m)$ if $S_{\bfD}f \in L^1(\R^{n+m}).$
Furthermore, if $f \in H^1_{\bfD},$ we take $\Vert f \Vert\ci{H^1_{\bfD}} \coloneq \Vert S_{\bfD}f \Vert\ci{L^1}.$
Then, the space $\BMOprodD$ is the dual of $H^1_{\bfD},$ so that in particular, if $f \in \BMOprodD,$ we have that
\begin{equation*}
    |(f,g)| \lesssim_{n,m} \Vert f \Vert\ci{\BMOprodD} \Vert g \Vert\ci{H^1_{\bfD}},
    \qquad \forall g \in H^1_{\bfD}.
\end{equation*}

\subsection{\jr operators}
\label{s: definition Journe}
\jr operators are the generalization of tensor products of \cz operators,
in the sense that they include the latter class of singular integral operators and they admit similar kernel representations.
In order to keep this exposition as self-contained as possible, we recall the definitions of both.
Let $\Delta_n = (\R^n \times \R^n) \setminus \{(x,y)\colon x=y\}.$

\begin{df}
Consider a continuous function $K\colon \Delta_n \rightarrow B,$ where $B$ is some Banach space with norm $\Vert \cdot \Vert\ci{B},$ and let $\delta \in (0,1).$
We say that $K$ is a \emph{$B$-valued $\delta$-standard kernel} on $\R^n$ if there exists a constant $C > 0$ satisfying the following properties.
\begin{enumerate}
    \item
    Size condition: if $(x,y) \in \Delta_n,$ then
    \begin{equation*}
        \Vert K(x,y) \Vert\ci{B} \leq \frac{C}{|x-y|^n}.
    \end{equation*}
    \item
    H\"{o}lder condition: if $(x,y) \in \Delta_n$ and $x' \in \R^n$ is such that $|x-x'| < |x-y|/2,$ then
    \begin{equation*}
        \begin{split}
            \Vert K(x,y)-K(x',y) \Vert\ci{B} &\leq C \frac{|x-x'|^\delta}{|x-y|^{n+\delta}},\\
            \Vert K(y,x)-K(y,x') \Vert\ci{B} &\leq C \frac{|x-x'|^\delta}{|x-y|^{n+\delta}}.
        \end{split}
    \end{equation*}
\end{enumerate}
Moreover, we denote by $C(\delta,K)$ the minimum constant satisfying these properties.
\end{df}

Here we denote the space of all smooth compactly supported functions on $\R^n$ by $\cC\ti{c}^{\infty}(\R^n),$
or simply $\cC\ti{c}^\infty$ whenever the domain is clear.
For a topological vector space $X,$ we denote its topological dual by $X'.$
Given $g \in X$ and $f \in X',$ we denote the duality pairing by $(f,g)$ in analogy to the usual unweighted $L^2$ pairing.

\begin{df}
A continuous linear mapping $T\colon \cC\ti{c}^{\infty}(\R^n) \rightarrow [\cC\ti{c}^{\infty}(\R^n)]'$ is a \emph{$\delta$-singular integral operator} if there exists a $\C$-valued $\delta$-standard kernel $K$ on $\R^n$ such that the representation
\begin{equation*}
    ( Tf,g ) = \int f(y) K(x,y) g(x) \mathd x \mathd y
\end{equation*}
holds whenever $f,g \in \cC\ti{c}^\infty$ and $\supp(f) \cap \supp(g) = \emptyset.$
\end{df}

\begin{df}
A $\delta$-singular integral operator $T$ is a \emph{$\delta$-\cz operator} on $\R^n$ if it extends boundedly from $L^2$ to itself.
If $T$ is such an operator with kernel $K,$ we define its norm by
\begin{equation*}
    \Vert T \Vert\ci{\delta CZ} = \Vert T \Vert\ci{L^2 \rightarrow L^2} + C(\delta,K).
\end{equation*}
One can see that the set of $\delta$-\cz operators on $\R^n$ equipped with this norm is a Banach space.
\end{df}

The original definition of biparameter operators due to \jr \cite{journe} generalizes the previous by looking at kernel representations of tensor products of \cz operators.

\begin{df}
A continuous linear mapping $T\colon \cC\ti{c}^{\infty}(\R^n) \otimes \cC\ti{c}^\infty(\R^m) \rightarrow [\cC\ti{c}^{\infty}(\R^n) \otimes \cC\ti{c}^\infty(\R^m)]'$ is a \emph{biparameter $\delta$-singular integral operator} on $\R^n \times \R^m$ if there exists a pair $(K_1,K_2)$ of $\delta$-\cz operator-valued $\delta$-standard kernels (on $\R^n$ and on $\R^m$ respectively) such that, for $f_1,g_1  \in \cC\ti{c}^\infty(\R^n)$ and $f_2,g_2 \in \cC\ti{c}^\infty(\R ^m),$ the representation
\begin{equation*}
    ( T(f_1 \otimes f_2),g_1 \otimes g_2 ) = \int_{\Delta_n} f_1(y) (K_1(x,y)f_2,g_2) g_1(x) \mathd x \mathd y
\end{equation*}
holds whenever $\supp(f_1) \cap \supp(g_1) = \emptyset$ and the representation
\begin{equation*}
    ( T(f_1 \otimes f_2),g_1 \otimes g_2 ) = \int_{\Delta_m} f_2(y) (K_2(x,y)f_1,g_1) g_2(x) \mathd x \mathd y
\end{equation*}
holds whenever $\supp(f_2) \cap \supp(g_2) = \emptyset.$
\end{df}

Let us define for a continuous linear mapping $T\colon \cC\ti{c}^{\infty}(\R^n) \otimes \cC\ti{c}^\infty(\R^m) \rightarrow [\cC\ti{c}^{\infty}(\R^n) \otimes \cC\ti{c}^\infty(\R^m)]'$ the partial adjoint $T_1$ by
\begin{equation}
\label{eq:PartialAdjoint}
   ( T_1(f_1 \otimes f_2), g_1 \otimes g_2 ) = ( T(g_1 \otimes f_2), f_1 \otimes g_2 ).
\end{equation}
Observe that if $T$ is a biparameter $\delta$-singular integral operator, so is $T_1.$

\begin{df}
A biparameter $\delta$-singular integral operator $T$ is a \emph{biparameter $\delta$-\jr operator} if both $T$ and $T_1$ extend boundedly from $L^2$ to itself.
\end{df}

In our applications it will be especially relevant to distinguish the case of \emph{paraproduct-free \jr operators}.
We recall their definition for the reader's convenience.

\begin{df}
We say that a \jr operator $T$ is a \emph{paraproduct-free \jr operator} if
\begin{equation*}
T(\cdot \otimes 1) = T(1 \otimes \cdot) = T^\ast(\cdot \otimes 1) = T^\ast(1 \otimes \cdot) = 0.    
\end{equation*}
\end{df}

\begin{rem}
Note that the previous definitions can be easily extended to the general multiparameter setting.
\end{rem}

There is a more recent definition of $\delta$-\jr operators introduced by S.~Pott and P.~Villarroya \cite{pott-villarroya}.
This one assumes a purely biparameter kernel representation for all suitable pairs of functions.
Moreover, for each pair of (suitable) functions, it assumes the existence of a pair of one-parameter $\C$-valued kernels yielding partial representations on each variable, with some control in their size and regularity.
It also replaces the boundedness assumption by some cancellation and weak boundedness properties.
We refer to \cite{pott-villarroya} for the details.
Note that it was shown by A.~Grau~de~la~Herr\'{a}n \cite{GrauDeLaHerran} that both definitions are equivalent.

\subsubsection{Random dyadic lattices and Martikainen's representation theorem}

One of the most fundamental tools used in the present paper is H.~Martikainen's \cite{martikainen} representation theorem for Journ\'e operators in terms of fast decaying averages of Haar shifts. These averages are taken over ``random dyadic grids". The powerful technique of averaging over random dyadic grids was introduced by Nazarov--Treil--Volberg in \cite{tb-nazarov-treil-volberg} for the purpose of proving $Tb$ theorems in nonhomogeneous spaces. This technique was subsequently used by T.~Hyt\ddoto nen \cite{hytonen} for the representation theorem for \cz operators, leading to a proof of the $A_2$ conjecture (see also \cite{hytonen-perez-treil-volberg} and \cite{volberg1}), as well as by Martikainen \cite{martikainen} for the aforementioned representation theorem for Journ\'e operators.

Let $\cD_{n}^{0}$ be the standard dyadic grid in $\R^n.$
Following the probabilistic indexing of cubes we denote by $\cD_{n,k}^{0}$ the set of all cubes in $\cD_{n}^{0}$ of sidelength $2^{-k}$, for all $k\in\Z$. Let $\Omega_{n}$ be the set of all maps $\omega:\Z\rightarrow\lbrace0,1\rbrace^{n}$. We equip $\Omega_n$ with the natural product probability measure $\bP_{n}$ that to each coordinate assigns each of the values 0,1 with probability equal to $1/2$, and independently of the other coordinates. For each $k\in\Z$, for each $P\in\cD_{n,k}^{0}$ and for each $\omega\in\Omega_n$ we denote
\begin{equation*}
\omega\dotplus P:=\omega^{k}\plus P:=\lbrace x+\omega^{k}:~x\in P\rbrace,
\end{equation*}
where
\begin{equation*}
\omega^{k}:=\sum_{j=k+1}^{\infty}\omega(j)2^{-j}.
\end{equation*}
Then, for each $\omega\in\Omega$, we set
\begin{equation*}
\cD_{n,k}(\omega):=\lbrace \omega\dotplus P:~P\in\cD_{n,k}^{0}\rbrace
\end{equation*}
and
\begin{equation*}
\cD_{n}(\omega):=\bigcup_{k\in\Z}\cD_{n,k}(\omega).
\end{equation*}
It is easy to see that $\cD_{n}(\omega)$ is a dyadic grid on $\R^n$, for all $\omega\in\Omega_n$. Note that $\cD_{n}^{0}=\cD_{n}(0)$. In what follows, we will say that $\cD_n(\omega)$ is a system of \emph{random dyadic grids} on $\R^n$ parametrized by $\omega$.

To state Martikainen's representation theorem from \cite{martikainen} precisely, we need one more definition. Let $\bi=(i_1,i_2),\bj=(j_1,j_2)$ be pairs of nonnegative integers, and let $\bfD$ be any product dyadic grid in $\R^n\times\R^m$. A \emph{biparameter Haar shift} $T^{\bi,\bj}$ of complexity $(\bi,\bj)$ with respect to the product dyadic grid $\bfD$ is an operator of the form
\begin{equation*}
T^{\bi,\bj}f = \sum_{R\in\bfD} \sum_{P\in\ch_{\bi}(R)} \sum_{Q\in\ch_{\bj}(R)} a\ci{RPQ}^{\boldsymbol{\e}} f\ci{P}^{\boldsymbol{\e}} h\ci{Q}^{\boldsymbol{\e}}, \qquad f \in L^2(\R^{n+m}),
\end{equation*}
where summation over \emph{all} possible signatures $\boldsymbol{\e} \in \overline{\cE}$ is implicit and
\begin{equation*}
|a_{RPQ}^{\boldsymbol{\e}}| \leq \frac{\sqrt{|P| |Q|}}{|R|} = 2^{-\frac{1}{2}(i_1+i_2+j_1+j_2)}.
\end{equation*}
Observe that it can either happen that all non-zero coefficients $a_{RPQ}^{\boldsymbol{\e}}$ correspond to cancellative Haar functions, in which case the shift is called \emph{cancellative}, or that there appear non-cancellative terms in the sum, in which case it is called a \emph{non-cancellative} shift.

We can now state Martikainen's representation theorem from \cite{martikainen} precisely.

\begin{thm}[Martikainen \cite{martikainen}]
\label{t: martikainen representation}
Let $T$ be a $\delta$-Journ\'e operator on $\R^n\times\R^m$ (as defined above). Let $\cD_{n}(\omega_n)$, $\cD_{m}(\omega_m)$ be systems of random dyadic grids on $\R^n,\R^m$ respectively parametrized by $\omega_n,\omega_m$ respectively. Then, there exist a constant $C$ (depending only on $T,n,m$) as well as a family of biparameter Haar shifts
\begin{equation*}
\lbrace T^{\bi,\bj}\ci{\omega_n,\omega_m}:~\bi,\bj\in\Z^{2}_{+},~\omega_n\in\Omega_n,~\omega_m\in\Omega_m\rbrace
\end{equation*}
(depending only on $T,n,m$), where for all $\bi,\bj\in\Z^{2}_{+}$ and for all $\omega_n\in\Omega_n,\omega_m\in\Omega_m$, $T^{\bi,\bj}\ci{\omega_n,\omega_m}$ is a biparameter Haar shift of complexity $(\bi,\bj)$ with respect to the product dyadic grid $\cD_{n}(\omega_n)\times\cD_{m}(\omega_m)$ in $\R^n\times\R^m$, such that
\begin{equation}
\label{martikainen representation}
(Tf,g)= C\E_{\omega_n} \E_{\omega_m} \sum_{\substack{\bi=(i_1,i_2)\in\Z^2_{+}\\\bj=(j_1,j_2)\in\Z^2_{+}}} 2^{-\max(i_1,i_2)\delta/2} 2^{-\max(j_1,j_2)\delta/2} (T^{\bi,\bj}_{\omega_n,\omega_m}f,g),
\end{equation}
where $\E_{\omega_n},\E_{\omega_m}$ denote expectation with respect to $\mathd\bP_{n}(\omega_n),\mathd\bP_{m}(\omega_m)$ respectively. Moreover, non-cancellative Haar shifts in \eqref{martikainen representation} can only appear for $\bi=(0,0)$ or $\bj=(0,0)$.
\end{thm}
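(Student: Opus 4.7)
The plan is to follow H.~Martikainen's original strategy from \cite{martikainen}, which is the biparameter extension of T.~Hyt\ddoto nen's representation \cite{hytonen} for one-parameter \cz operators, built on the random-grid good/bad machinery of Nazarov--Treil--Volberg \cite{tb-nazarov-treil-volberg}. For fixed $\omega_n\in\Omega_n$, $\omega_m\in\Omega_m$ set $\bfD(\omega):=\cD_n(\omega_n)\times\cD_m(\omega_m)$ and expand $f,g\in L^2(\R^{n+m})$ in the cancellative biparameter Haar basis of $\bfD(\omega)$; this unfolds the bilinear form as a sum over quadruples of cubes of matrix elements
\begin{equation*}
a^{\bee,\bee'}\ci{P,Q}:=\langle T(h^{\e_1}\ci{P_1}\otimes h^{\e_2}\ci{P_2}),\,h^{\e_1'}\ci{Q_1}\otimes h^{\e_2'}\ci{Q_2}\rangle.
\end{equation*}

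Taking expectation over $\omega_n,\omega_m$ and applying the independent good/bad splitting in each parameter (one uses that for any fixed cube, the probability that it is bad with respect to the random grid decays exponentially in the size gap), one reduces the sum, up to a multiplicative constant absorbed into $C$, to configurations in which, in each parameter $k\in\{1,2\}$, the smaller of $P_k,Q_k$ is good with respect to the larger. In such a good configuration the two cubes have a minimal common dyadic ancestor $R_k$, and after relabeling $P_k\in\ch_{i_k}(R_k)$, $Q_k\in\ch_{j_k}(R_k)$ with either $j_k=0$ (so $Q_k=R_k$) or $i_k=0$ (so $P_k=R_k$). The resulting sum is then grouped by complexity pairs $\bi=(i_1,i_2)$, $\bj=(j_1,j_2)$.

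For each fixed $(\bi,\bj)$, one estimates $a^{\bee,\bee'}\ci{P,Q}$ as follows. If all signatures are cancellative (i.e.\ lie in $\cE$), the mean-zero property of the Haar functions together with the biparameter H\"{o}lder and mixed size-H\"{o}lder conditions on $K$ yield a decay
\begin{equation*}
|a^{\bee,\bee'}\ci{P,Q}|\lesssim 2^{-\max(i_1,i_2)\delta/2}2^{-\max(j_1,j_2)\delta/2}\,\frac{\sqrt{|P|\,|Q|}}{|R|},
\end{equation*}
so letting $a^{\bee}_{RPQ}$ be $a^{\bee,\bee'}\ci{P,Q}$ rescaled by the extracted decay factor produces exactly a complexity-$(\bi,\bj)$ \emph{cancellative} biparameter Haar shift. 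When the smaller Haar function is forced to be non-cancellative on one parameter, which by goodness can only occur if $\bi=(0,0)$ or $\bj=(0,0)$, one instead invokes the separate one-parameter \cz structure, the weak boundedness hypotheses, and the product BMO control on $T1,T^{*}1,T_11,T_1^{*}1$ to re-express the contribution as a \emph{non-cancellative} biparameter shift of the stated complexity, with coefficients controlled by the respective $\BMOprod$ norms. Summing the geometric series in $\bi,\bj$ against the decay factor then yields \eqref{martikainen representation}.

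The most delicate point is the bookkeeping of the paraproduct-type terms: the contributions of $T1,T^{*}1,T_11$ and $T_1^{*}1$ must be extracted simultaneously from four different flat configurations, their $\BMOprod$-estimates translated into the shift coefficient bound $|a^{\bee}_{RPQ}|\leq\sqrt{|P|\,|Q|}/|R|$ via the $\BMOprod$--$H^1\ti{prod}$ duality, and finally absorbed into non-cancellative shifts attached to the boundary indices $\bi=(0,0)$ or $\bj=(0,0)$. A second subtlety is that the good/bad decomposition must be performed independently in each parameter, and the bad parts in each direction must be reabsorbed into the left-hand side using the already-assumed $L^2$-boundedness of both $T$ and $T_1$ (which is precisely why both are required in the definition of a \jr operator).
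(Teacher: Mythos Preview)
The paper does not give its own proof of this theorem: it is quoted as a result of Martikainen \cite{martikainen} and used as a black box, with only the additional remark that in the paraproduct-free case all shifts in \eqref{martikainen representation} are cancellative. Your proposal is a reasonable high-level outline of Martikainen's original argument (random grids, independent good/bad splitting in each factor, grouping by the common ancestor and complexity, decay from the biparameter kernel estimates, and paraproduct extraction via the $\BMOprod$ hypotheses on $T1,T^{*}1,T_11,T_1^{*}1$), so in that sense it matches the source the paper cites rather than anything the paper itself does.

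Two small points of precision in your sketch. First, the statement that non-cancellative Haar functions ``by goodness can only occur if $\bi=(0,0)$ or $\bj=(0,0)$'' is not quite the mechanism: the non-cancellative terms arise from the nested/diagonal part of the decomposition after telescoping and extracting the paraproducts, and it is the structure of that extraction (not goodness per se) that forces the complexity restriction. Second, the handling of bad cubes in the Hyt\"onen/Martikainen framework is not by reabsorption via $L^2$-boundedness of $T$ and $T_1$; rather one uses that the event of being bad is independent of the cube's position and has probability bounded away from $1$, so that after averaging one can restrict to good configurations at the cost of a fixed constant. The $L^2$-boundedness of $T$ and $T_1$ is used elsewhere (e.g.\ in making sense of $T1$-type quantities and in the equivalence of the two definitions of \jr operators), not for eliminating bad cubes.
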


In the particular case of paraproduct-free Journ\'e operators, it follows from Martikainen's proof \cite{martikainen} that all Haar shifts in \eqref{martikainen representation} will be of cancellative type.

\section{One-parameter and biparameter matrix \texorpdfstring{$A_p$}{Ap} weights}
\label{s: biparameter Ap}

In what follows, we denote by $\lbrace e_1,\ldots,e_d\rbrace$ the standard basis of $\C^d$. We will be often using the fact that
\begin{equation}
\label{equivalence_matrix_norm_columns}
|A|\sim_{d}\sum_{k=1}^{d}|Ae_k|,\qquad\forall A\in M_{d}(\C).
\end{equation}
In particular, if $A,B,C,D\in M_{d}(\C)$ are self-adjoint matrices such that $|Ae|\lesssim_{d}|Ce|$ and $|Be|\lesssim_{d}|De|$, for all $e\in\C^d$, then we may apply~\eqref{equivalence_matrix_norm_columns} twice to get
\begin{equation}
\label{comparability_from_vector_to_matrix}
|AB| \lesssim_{d} |CB| = |(CB)^{\ast}| = |BC| \lesssim_{d} |DC| = |CD|.
\end{equation}
Moreover, we will be often using the elementary fact that for any $s\in(0,\infty)$, for any positive integer $k$ and for any $x_1,\ldots,x_k\in[0,\infty)$ there holds
\begin{equation}
\label{break power of sum}
\min(1,k^{s-1})(x_1^{s}+\ldots+x_k^{s})\leq(x_1+\ldots+x_k)^{s}\leq\max(1,k^{s-1})(x_1^{s}+\ldots+x_k^{s}).
\end{equation}
This is just a consequence of convexity or concavity (depending on $s$) and Jensen's inequality (or H\ddoto lder's inequality). 

\subsection{Matrix-weighted Lebesgue spaces} A function $W$ on $\R^n$ is said to be a \emph{$d\times d$-matrix valued weight} if it is a locally integrable $M_{d}(\C)$-valued function such that $W(x)$ is a positive-definite matrix for a.e.~$x\in\R^n$.
Given a $d\times d$-matrix valued weight $W$ on $\R^n$ and $1<p<\infty$, we define the weighted space $L^{p}(W)$ norm as
\begin{equation*}
\Vert f\Vert\ci{L^{p}(W)}:=\left(\int_{\R^n}|W(x)^{1/p}f(x)|^{p}\mathd x\right)^{1/p},
\end{equation*}
for all measurable $\C^d$-valued functions $f$ on $\R^n$ for which this quantity is finite. Assume that $W':=W^{-p'/p}=W^{-1/(1-p)}$, where $p':=p/(p-1)$, is also a $d\times d$-matrix valued weight on $\R^n$. Then, for all (suitable) $\C^d$-valued functions $f$ on $\R^n$, we have
\begin{align*}
\Vert f\Vert\ci{L^{p}(W)}&=\sup\left\lbrace\left|\int_{\R^n}\La W(x)^{1/p}f(x),g(x)\Ra \mathd x\right|:~\Vert g\Vert\ci{L^{p'}}=1\right\rbrace\\
&=\sup\left\lbrace\left|\int_{\R^n}\La f(x),h(x)\Ra \mathd x\right|:~\Vert W^{-1/p}h\Vert\ci{L^{p'}}=1\right\rbrace\\
&=\sup\left\lbrace\left|\int_{\R^n}\La f(x),h(x)\Ra \mathd x\right|:~\Vert h\Vert\ci{L^{p'}(W')}=1\right\rbrace.
\end{align*}
Therefore, under the standard (unweighted) $L^2$-pairing, we have $(L^{p}(W))^{\ast}=L^{p'}(W')$.

\subsection{Reducing matrices}

In this subsection we recall a few facts about reducing matrices that are important for our purposes.
Reducing matrices, were first been introduced by Nazarov--Treil \cite{nazarov-treil}
and by Volberg \cite{volberg} and have also been used by M.~Goldberg \cite{goldberg}
to study $L^{p}$ bounds for matrix-weighted maximal functions for $p\neq2$.
These can be understood as substitutes of averages of matrix-valued weights whenever one is interested in the case $p\neq 2$.

%

Let $E$ be a bounded measurable subset of $\R^n$ of non-zero measure. In what follows, $E$ will always be considered to be equipped with normalized Lebesgue measure; in particular, we follow this convention in the notation $L^{p}(E;C^{d})$, $1<p<\infty$, and we emphasize it when integrating by using the notation $\strokedint_E \mathd x.$ Let $W$ be a $M_{d}(\C)$-valued function on $E$ that is integrable (meaning that $\strokedint_{E}|W(x)|\mathd x<\infty$) and that takes a.e.~values in the set of positive-definite $d\times d$-matrices. In the rest of this subsection, we fix $1<p<\infty$. We denote
\begin{equation*}
\Vert f\Vert\ci{L^{p}(E,W)}:=\left(\strokedint_{E}|W(x)^{1/p}f(x)|^{p}\mathd x\right)^{1/p},
\end{equation*}
for all $\C^d$-valued measurable functions $f$ on $E$. Notice that by the spectral theorem we have $|W(x)^{1/p}|^{p}=|W(x)|$ for a.e. $x\in E$, therefore $W^{1/p}$ is $p$-integrable over $E$.
Using standard properties of John ellipsoids, Goldberg \cite[Proposition 1.2]{goldberg} proved that there exists a (not necessarily unique) positive-definite matrix $\cW\ci{E}\in M_{d}(\C)$, such that
\begin{equation}
\label{reducing_matrices_definition}
\left(\strokedint_{E}|W(x)^{1/p}e|^{p}\mathd x\right)^{1/p}\leq |\cW\ci{E} e|\leq\sqrt{d}\left(\strokedint_{E}|W(x)^{1/p}e|^{p}\mathd x\right)^{1/p},\qquad\forall e\in\C^d.
\end{equation}
We emphasize that the constants in the inequalities in \eqref{reducing_matrices_definition} do not depend on $W$, the set $E$ or $p$: all this information has been absorbed in $\cW\ci{E}$. The matrix $\cW\ci{E}$ is called \emph{reducing matrix} (or \emph{reducing operator}) of $W$ over $E$ corresponding to the exponent $p$. We emphasize that in this paper, reducing matrices are always chosen to be self-adjoint. If $d=1$, i.e. $W$ is scalar-valued, then we simply take $\cW\ci{E}:=\La W\Ra\ci{E}^{1/p}$. On the other hand, if $p=2$ and for any $d,$ we have
\begin{equation*}
\strokedint_{E}|W(x)^{1/2}e|^{2}\mathd x=\strokedint_{E}\La W(x)e,e\Ra \mathd x=\La \La W\Ra\ci{E}e,e\Ra=|\La W\Ra\ci{E}^{1/2}e|^2,\qquad\forall e\in\C^d,
\end{equation*}
and thus in this special case we will take $\cW\ci{E}:=\La W\Ra\ci{E}^{1/2}$. We note that for \emph{any} $A\in M_{d}(\C)$, using \eqref{equivalence_matrix_norm_columns} twice we get
\begin{equation}
\label{reducing_matrix_action_on_matrix}
\left(\strokedint_{E}|W(x)^{1/p}A|^{p}\mathd x\right)^{1/p} \sim_{d,p} |\cW\ci{E}A|.
\end{equation}
If the function
\begin{equation}
\label{conjugate weight}
W':=W^{-1/(p-1)}
\end{equation}
(sometimes called \emph{Muckenhoupt conjugate weight} to $W$) also happens to be integrable over $E$, then we let $\cW'\ci{E}$ be the reducing matrix of $W'$ over $E$ corresponding to the exponent $p':=p/(p-1)$, so that
\begin{equation*}
|\cW'\ci{E} e|\sim_{d}\left(\strokedint_{E}|W'(x)^{1/p'}e|^{p'}\mathd x\right)^{1/p'}=\left(\strokedint_{E}|W(x)^{-1/p}e|^{p'}\mathd x\right)^{1/p'},\qquad\forall e\in\C^d.
\end{equation*}
Note that it is clear by the definitions that in this case one can take $\cW\ci{E}''=\cW\ci{E}$. Notice also that for all measurable $\C^d$-valued functions $f$ on $E$ we have by H\ddoto lder's inequality that
\begin{align*}
\strokedint_{E}|f(x)|\mathd x &\leq \left(\strokedint_{E}|W(x)^{-1/p}|^{p'}\mathd x\right)^{1/p'} \left(\strokedint_{E}|W(x)^{1/p}f(x)|^{p}\mathd x\right)^{1/p}\\
&=\left(\strokedint_{E}|W'(x)|\mathd x\right)^{1/p'}\left(\strokedint_{E}|W(x)^{1/p}f(x)|^{p}\mathd x\right)^{1/p},
\end{align*}
therefore if $W'\in L^1(E;M_{d}(\C))$, then $L^{p}(E,W)\subseteq L^1(E;\C^d)$.

The following lemma is stated and proved for $p=2$ in \cite[Lemma 2.1]{angle-past-future}, and in a slightly different form for any $1<p<\infty$ in \cite[Proposition 2.1]{goldberg} (adapting the ideas from \cite[Lemma 2.1]{angle-past-future}). For the sake of clarity we give the full statement here, which was explicitly proved in \cite{isralowitz-pott-treil}. Let $A\ci{E}$ be the averaging operator over $E$, i.e. $A\ci{E}$ acts on functions $f\in L^1(E;\C^d)$ by
\begin{equation*}
A\ci{E}f:=\La f\Ra\ci{E}\1\ci{E}.
\end{equation*}

\begin{lm}
\label{l: averaging operators}
Set
\begin{equation*}
C\ci{E}:=\strokedint_{E}\left(\strokedint_{E}|W(x)^{1/p}W'(y)^{1/p'}|^{p'}\mathd y \right)^{p/p'}\mathd x.
\end{equation*}
Then, $A\ci{E}$ is bounded as an operator acting from $L^p(E,W)$ into $L^p(E,W)$ if any only if $C\ci{E}<\infty$, and in this case $W'$ is integrable over $E$ and
\begin{equation*}
\Vert A\ci{E}\Vert\ci{L^{p}(E,W)\rightarrow L^{p}(E,W)}\sim_{p,d}|\cW'\ci{E}\cW\ci{E}|\sim_{p,d}C\ci{E}^{1/p}.
\end{equation*}
\end{lm}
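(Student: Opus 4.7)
My plan is to establish both equivalences $\Vert A\ci{E}\Vert\ci{L^p(E,W)\rightarrow L^p(E,W)} \sim |\cW'\ci{E}\cW\ci{E}|$ and $|\cW'\ci{E}\cW\ci{E}| \sim C\ci{E}^{1/p}$ in a single sweep, taking care along the way to verify that finiteness of any one of the three quantities forces $W'$ to be integrable over $E$ (so that $\cW'\ci{E}$ is well-defined) and the other two to be finite.

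For the first equivalence, I would first note that since $A\ci{E}f = (f)\ci{E}\1\ci{E}$, we have $\Vert A\ci{E}f\Vert\ci{L^p(E,W)}^p = \strokedint_{E} |W(x)^{1/p}(f)\ci{E}|^p\mathd x$, which by \eqref{reducing_matrices_definition} is $\sim_{d} |\cW\ci{E}(f)\ci{E}|^p$; hence $\Vert A\ci{E}\Vert$ is comparable to $\sup_{\Vert f\Vert\ci{L^p(E,W)}=1}|\cW\ci{E}(f)\ci{E}|$. The substitution $g:=W^{1/p}f$, which gives $(\strokedint_{E} |g|^p\mathd x)^{1/p}=1$ and $(f)\ci{E}=\strokedint_{E} W(x)^{-1/p}g(x)\mathd x$, combined with pairing $\cW\ci{E}(f)\ci{E}$ against a unit vector $v\in\C^d$ and scalar $L^p$--$L^{p'}$ duality on $E$ (equipped with normalized measure), identifies this supremum with
\begin{equation*}
\sup_{|v|=1}\left(\strokedint_{E} |W(x)^{-1/p}\cW\ci{E}v|^{p'}\mathd x\right)^{1/p'}.
\end{equation*}
If $\Vert A\ci{E}\Vert<\infty$, this inner integral is finite for every $v$, so running $v$ through the standard basis and invoking \eqref{equivalence_matrix_norm_columns} yields $|W^{-1/p}|\in L^{p'}(E)$, i.e.\ $W'=W^{-p'/p}$ is integrable and $\cW'\ci{E}$ is defined. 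Applying \eqref{reducing_matrices_definition} to $W'$ and $p'$ then gives $(\strokedint_{E}|W(x)^{-1/p}\cW\ci{E}v|^{p'}\mathd x)^{1/p'}\sim_d |\cW'\ci{E}\cW\ci{E}v|$, and taking sup over $|v|=1$ yields $\Vert A\ci{E}\Vert \sim |\cW'\ci{E}\cW\ci{E}|$. Reading the same chain in reverse handles the converse implication.

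For the second equivalence, the plan is to apply \eqref{reducing_matrix_action_on_matrix} twice. First, applied to the weight $W'$ and exponent $p'$ with $W(x)^{1/p}$ plugged into the slot of $A$, and using that the operator norm is adjoint-invariant (so $|W'(y)^{1/p'}W(x)^{1/p}| = |W(x)^{1/p}W'(y)^{1/p'}|$ since $W(x)^{1/p}$ and $W'(y)^{1/p'}$ are self-adjoint), I obtain
\begin{equation*}
\left(\strokedint_{E} |W(x)^{1/p}W'(y)^{1/p'}|^{p'}\mathd y\right)^{1/p'} \sim_d |\cW'\ci{E}W(x)^{1/p}| = |W(x)^{1/p}\cW'\ci{E}|.
\end{equation*}
Raising to the $p$-th power, integrating in $x$, and invoking \eqref{reducing_matrix_action_on_matrix} a second time (now with $W$, $p$ and $A=\cW'\ci{E}$) gives $C\ci{E} \sim_{d,p} |\cW\ci{E}\cW'\ci{E}|^p = |\cW'\ci{E}\cW\ci{E}|^p$. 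The only real bookkeeping obstacle is verifying that $C\ci{E}<\infty$ by itself forces $W'\in L^1(E)$, which I would handle by Fubini: for a.e.\ $x\in E$ the inner integral is finite, and for any such $x$ the pointwise bound $|W(x)^{1/p}W'(y)^{1/p'}|\gtrsim_{x} |W'(y)^{1/p'}|$ (with constant controlled by the smallest singular value of $W(x)^{1/p}$) together with $|W'(y)^{1/p'}|^{p'}=|W'(y)|$ yields integrability of $W'$. No deeper analytic difficulty arises beyond careful handling of these definedness issues and the adjoint manipulations.
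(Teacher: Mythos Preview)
Your treatment of $C\ci{E}^{1/p}\sim_{p,d}|\cW'\ci{E}\cW\ci{E}|$ via two applications of \eqref{reducing_matrix_action_on_matrix}, and of $C\ci{E}<\infty\Rightarrow W'\in L^1(E)$, match the paper. So does the computation $\Vert A\ci{E}\Vert\sim_{p,d}|\cW'\ci{E}\cW\ci{E}|$ \emph{once} $W'$ is known to be integrable; the paper runs essentially the same duality chain you sketch.

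The divergence, and a genuine gap in your write-up, is in the implication $\Vert A\ci{E}\Vert<\infty\Rightarrow W'\in L^1(E)$. After the substitution $g=W^{1/p}f$, the variable $g$ does \emph{not} range over the whole unit sphere of $L^p(E;\C^d)$: it ranges only over those $g$ with $W^{-1/p}g\in L^1(E)$, since otherwise $(f)\ci{E}$ is undefined and $f$ does not contribute to $\Vert A\ci{E}\Vert$. Your appeal to $L^p$--$L^{p'}$ duality to identify the supremum with $\bigl(\strokedint_{E}|W^{-1/p}\cW\ci{E}v|^{p'}\bigr)^{1/p'}$ therefore requires showing that this restricted class is norming for $L^{p'}$. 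This is fixable (bounded $g$ supported on $E_N:=\{|W^{-1}|\le N\}$ lie in the class, and truncation plus monotone convergence does the rest), but it is precisely the subtlety the paper handles by a different device: it introduces the regularization $W_{\e}:=(W^{1/p}+\e I_d)^p$, for which $W_\e^{-1/p}$ is bounded so the duality is unrestricted, transfers the boundedness of $A\ci{E}$ from $L^p(E,W)$ to $L^p(E,W_\e)$ with controlled norm, extracts a uniform-in-$\e$ bound on $\strokedint_{E}|W_\e^{-1/p}e|^{p'}\mathd x$, and concludes via Fatou as $\e\to0$. Your direct route is shorter once the norming step is supplied; the paper's regularization is more self-contained but longer.
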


Let us observe that more generally, if $V$ is another integrable $M_{d}(\C)$-valued function on $E$ that takes a.e.~values in the set of positive definite $d\times d$-matrices, then denoting by $\cV\ci{E}$ the reducing matrix of $V$ over $E$ corresponding to the exponent $p'$ and using \eqref{reducing_matrix_action_on_matrix} again and self-adjointness, we obtain
\begin{align*}
&\left(\strokedint_{E}\left(\strokedint_{E}|W(x)^{1/p}V(y)^{1/p'}|^{p'}\mathd y \right)^{p/p'}\mathd x\right)^{1/p}\\
&\sim_{p,d}\left(\strokedint_{E}|\cV\ci{E} W(x)^{1/p}|^{p}\mathd x\right)^{1/p} \sim |\cW\ci{E}\cV\ci{E}|\\
&=|\cV\ci{E}\cW\ci{E}| \sim_{d,p} \left(\strokedint_{E}\left(\strokedint_{E}|V(x)^{1/p'}W(y)^{1/p}|^{p}\mathd y \right)^{p'/p}\mathd x\right)^{1/p'},
\end{align*}

In the rest of this subsection we will denote
\begin{equation*}
C\ci{E,W,p} \coloneq \strokedint_{E}\left(\strokedint_{E}|W(x)^{1/p}W'(y)^{1/p'}|^{p'}\mathd y \right)^{p/p'}\mathd x
\end{equation*}
and we will assume that $C\ci{E,W,p} < \infty,$ so that $W'\in L^1(E;M_{d}(\C))$ and
\begin{equation*}
    \Vert A\ci{E}\Vert\ci{L^{p}(E,W)\rightarrow L^{p}(E,W)} \sim_{p,d} C\ci{E,W,p}^{1/p}
\end{equation*}
by Lemma \ref{l: averaging operators} with this notation.
We remark that it follows from the proof of Lemma \ref{l: averaging operators} that in the special case $d=1$ we just have
\begin{equation}
    \label{eq:ScalarWeightAveragingOp}
    \Vert A\ci{E}\Vert^p\ci{L^{p}(E,W)\rightarrow L^{p}(E,W)} = C\ci{E,W,p} = \La W\Ra\ci{E}\La W^{-1/(p-1)}\Ra\ci{E}^{p-1}.
\end{equation}

The following lemma is stated in a slightly different form in \cite[Lemma~2.2]{isralowitz-kwon-pott}.
In this statement we also include two explicit observations that follow from elementary arguments for scalar weights (see also \cite{goldberg}).

\begin{lm}
\label{l: second scalar}
(1) Assume that $d=1$. Denote $w:=W$. Then
\begin{equation*}
\La w\Ra\ci{E}^{1/p}\leq C\ci{E,w,p}^{1/p}\La w^{1/p}\Ra\ci{E}.
\end{equation*}

(2) For any $d,$ let $e\in\C^d\setminus\lbrace0\rbrace$ be arbitrary. Set $w_{e}:=|W^{1/p}e|^{p}$. Then
\begin{equation*}
\left(\strokedint_{E}w_{e}(x)\mathd x\right)\left(\strokedint_{E}w_{e}^{-1/(p-1)}(x)\mathd x\right)^{p-1}\lesssim_{p,d} C\ci{E,W,p}.
\end{equation*}

(3) For any $d$ and for all $e\in\C^d$, there holds
\begin{equation*}
|\cW\ci{E} e|\lesssim_{p,d}C\ci{E,W,p}^{1/p}\La |W^{1/p}e|\Ra\ci{E}.
\end{equation*}
\end{lm}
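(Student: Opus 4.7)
\medskip\noindent\textbf{Proof plan.}
For part (1), my plan is to reduce the inequality to a single application of Jensen's inequality. Using the scalar identity $C\ci{E,w,p}=(w)\ci{E}(w^{-1/(p-1)})\ci{E}^{p-1}$ from \eqref{eq:ScalarWeightAveragingOp} and dividing both sides of the claim by $(w)\ci{E}^{1/p}$, the statement becomes $1\leq(w^{1/p})\ci{E}\cdot(w^{-1/(p-1)})\ci{E}^{1/p'}$. Writing $f:=w^{1/p}$, so that $w^{-1/(p-1)}=f^{-p'}$, this rewrites as $(f)\ci{E}\geq(f^{-p'})\ci{E}^{-1/p'}$. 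Since $x\mapsto x^{-p'}$ is convex on $(0,\infty)$, Jensen's inequality applied to the normalized Lebesgue measure on $E$ gives $(f^{-p'})\ci{E}\geq(f)\ci{E}^{-p'}$, and raising both sides to the negative power $-1/p'$ produces the desired bound.

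For part (2), my plan is to scalarize along the fixed direction $e$. Observe first that the left-hand side of the claim is precisely $C\ci{E,w_e,p}$ for the scalar weight $w_e$, which by the scalar case of Lemma \ref{l: averaging operators} (equivalently \eqref{eq:ScalarWeightAveragingOp}) equals $\Vert A\ci{E}\Vert\ci{L^p(E,w_e)\rightarrow L^p(E,w_e)}^p$. Next, for any scalar measurable function $f$ on $E$, the vector-valued function $g:=f\cdot e$ satisfies $|W(x)^{1/p}g(x)|=|f(x)|\,w_e(x)^{1/p}$ pointwise on $E$, so that
\[
\Vert g\Vert\ci{L^p(E,W)}=\Vert f\Vert\ci{L^p(E,w_e)} \quad\text{and}\quad \Vert A\ci{E}g\Vert\ci{L^p(E,W)}=\Vert A\ci{E}f\Vert\ci{L^p(E,w_e)},
\]
since $A\ci{E}g=(f)\ci{E}\,e\,\1\ci{E}$. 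Taking the supremum over $f\neq 0$ yields $\Vert A\ci{E}\Vert\ci{L^p(E,w_e)\rightarrow L^p(E,w_e)}\leq\Vert A\ci{E}\Vert\ci{L^p(E,W)\rightarrow L^p(E,W)}$, and Lemma \ref{l: averaging operators} converts the matrix-weighted operator norm into a constant (depending only on $p,d$) times $C\ci{E,W,p}^{1/p}$.

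For part (3), my plan is simply to concatenate parts (1) and (2) via the scalarization $w_e:=|W^{1/p}e|^p$. By the defining inequalities \eqref{reducing_matrices_definition} of the reducing matrix one has $|\cW\ci{E}e|\sim_d (w_e)\ci{E}^{1/p}$, and one trivially has $(w_e^{1/p})\ci{E}=(|W^{1/p}e|)\ci{E}$. Applying part (1) to the scalar weight $w_e$ gives $(w_e)\ci{E}^{1/p}\leq C\ci{E,w_e,p}^{1/p}(w_e^{1/p})\ci{E}$, and part (2) furnishes $C\ci{E,w_e,p}\lesssim_{p,d} C\ci{E,W,p}$; chaining these two bounds delivers the statement of part (3).

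Conceptually, no step here is delicate; the only nontrivial observation is the one in part (2), namely that the matrix-weighted operator norm of $A\ci{E}$ controls its scalar-weighted analogue along any fixed direction $e\in\C^d$, which follows from the isometric embedding $f\mapsto f\,e$ of scalar functions into $\C^d$-valued ones. This identification is the main idea driving the whole lemma and turns parts (1)--(3) into an essentially one-dimensional calculation.
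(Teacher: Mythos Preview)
Your proposal is correct and follows essentially the same approach as the paper: part (1) via Jensen's inequality after invoking the scalar identity \eqref{eq:ScalarWeightAveragingOp}, part (2) via the isometric embedding $f\mapsto fe$ of $L^p(E,w_e)$ into $L^p(E,W)$ together with Lemma \ref{l: averaging operators}, and part (3) by chaining (1) and (2) through the defining property \eqref{reducing_matrices_definition} of the reducing matrix. The only cosmetic difference is that the paper additionally records the elementary pointwise bound $w_e^{-1/(p-1)}\leq |W'|\,|e|^{-p'}$ to note that $w_e'\in L^1(E)$ before invoking the scalar case of Lemma \ref{l: averaging operators}, whereas you extract this integrability implicitly from the boundedness of $A\ci{E}$ via the equivalence in that lemma.
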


Note that by~\eqref{reducing_matrices_definition} and Jensen's inequality, for all $e\in\C^d$ we have that 
\begin{equation}
\label{trivial bound}
|\cW\ci{E}e|\geq\La|W^{1/p}e|^{p}\Ra\ci{E}^{1/p}\geq \La|W^{1/p}e|\Ra\ci{E}\geq |\La W^{1/p}\Ra\ci{E}e|.
\end{equation}
We also note that a stronger form of part (3) of Lemma \ref{l: second scalar} is proved in \cite[Lemma 2.2]{isralowitz-kwon-pott}, in fact implicitly under the weaker assumption
\begin{equation*}
\exp\left(\strokedint_{E}\log|W(x)^{1/p}e|\mathd x\right)\leq C|(\cW\ci{E}')^{-1}e|,\quad\forall e\in\C^d
\end{equation*}
for some $0<C<\infty$ introduced in \cite{volberg} as the defining property of the so-called $A_{p,\infty}$ class of matrix weights defined there (see also \cite{cruz-uribe-isralowitz-moen} and \cite{convex body} for further variants of this class, as well as \cite{duoandikoetxea--martin-reyes--ombrosi} for a detailed discussion of the situation in the scalar case that motivates the various definitions in the matrix case).

The next two results are useful facts that will be necessary later on.
We include the details for the reader' convenience, although they are just minor modifications of known results.
In particular, Lemma~\ref{l: first scalar} is an adaptation of \cite[Corollary~2.3]{goldberg}.

\begin{lm}
\label{l: sum of scalar A_p weights}
Let $k$ be any positive integer. Let $w_1,\ldots,w_k$ be a.e.~positive integrable functions on $E$ such that $w_{i}^{-1/(p-1)}\in L^1(E)$, for all $i=1,\ldots,k$. Set
\begin{equation*}
c:=\max\lbrace \La w_i\Ra\ci{E}\La w_i^{-1/(p-1)}\Ra^{p-1}\ci{E}:~i=1,\ldots,k\rbrace    
\end{equation*}
and
\begin{equation*}
w:=\sum_{i=1}^{k}w_i.
\end{equation*}
Then $\La w\Ra\ci{E}\La w^{-1/(p-1)}\Ra\ci{E}^{p-1}\leq c$.
\end{lm}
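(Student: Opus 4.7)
The natural strategy is to invoke the identity \eqref{eq:ScalarWeightAveragingOp}, which in the scalar case reformulates the quantity $(v)\ci{E}(v^{-1/(p-1)})\ci{E}^{p-1}$ as $\|A\ci{E}\|\ci{L^{p}(E,v)\to L^{p}(E,v)}^{p}$. This reformulation turns the lemma into a statement that is linear in the weight on both sides, and hence amenable to a direct additive argument.

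\textbf{Step 1: Integrability check.} First I would verify that $w^{-1/(p-1)}\in L^{1}(E)$, so that the characteristic $(w)\ci{E}(w^{-1/(p-1)})\ci{E}^{p-1}$ is meaningful. Since $w=\sum_{i=1}^{k}w_{i}\geq w_{i}$ pointwise and $-1/(p-1)<0$, one has $w^{-1/(p-1)}\leq w_{i}^{-1/(p-1)}$ a.e., and the right hand side is integrable by assumption.

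\textbf{Step 2: Reduction to an operator-norm inequality.} By \eqref{eq:ScalarWeightAveragingOp} applied to the scalar weight $w$, the desired conclusion is equivalent to
\begin{equation*}
\int\ci{E} w(x)\,|(f)\ci{E}|^{p}\mathd x \;\leq\; c\int\ci{E} w(x)\,|f(x)|^{p}\mathd x \qquad \text{for all } f\in L^{p}(E,w).
\end{equation*}
Note that any $f\in L^{p}(E,w)$ automatically belongs to $L^{p}(E,w_{i})$ for each $i$, since $w_{i}\leq w$ pointwise.

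\textbf{Step 3: Additivity over the $w_{i}$.} Applying \eqref{eq:ScalarWeightAveragingOp} in the reverse direction to each $w_{i}$ and using the definition of $c$, one has $\|A\ci{E}\|\ci{L^{p}(E,w_{i})\to L^{p}(E,w_{i})}^{p}\leq c$, i.e.
\begin{equation*}
\int\ci{E} w_{i}(x)\,|(f)\ci{E}|^{p}\mathd x \;\leq\; c\int\ci{E} w_{i}(x)\,|f(x)|^{p}\mathd x, \qquad i=1,\ldots,k.
\end{equation*}
Summing over $i=1,\ldots,k$ and using $w=\sum_{i}w_{i}$ on both sides yields the inequality from Step~2, which by \eqref{eq:ScalarWeightAveragingOp} gives exactly $(w)\ci{E}(w^{-1/(p-1)})\ci{E}^{p-1}\leq c$.

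\textbf{Main obstacle.} There is essentially none, provided one is willing to quote \eqref{eq:ScalarWeightAveragingOp}; the lemma reduces to the observation that the bound $\|A\ci{E}\|\ci{L^{p}(v)}^{p}\leq c$ is preserved under the (non-linear in $v^{-1/(p-1)}$, but linear in $v$) operation of summing scalar weights, simply because both sides of the averaging bound are linear in the weight. If one did not wish to invoke the operator-norm reformulation, the alternative would be to give a direct proof using Jensen or reverse H\"{o}lder bounds on $(\sum w_{i})^{-1/(p-1)}$, which is strictly messier.
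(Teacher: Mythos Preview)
Your proof is correct and follows essentially the same approach as the paper: both arguments use the scalar identity \eqref{eq:ScalarWeightAveragingOp} to rewrite the $A_p$ quantity as $\|A\ci{E}\|\ci{L^p(E,v)\to L^p(E,v)}^p$, then exploit the additivity $\|A\ci{E}f\|\ci{L^p(E,w)}^p=\sum_i\|A\ci{E}f\|\ci{L^p(E,w_i)}^p$ (and similarly for $\|f\|$) to sum the individual bounds.
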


\begin{proof}
Note that $w':=w^{-1/(p-1)}\in L^1(E)$ since, for example, $w'\leq w_1':=w_1^{-1/(p-1)}\in L^1(E).$ Observe now that for all $f\in L^1(E)$, by Lemma \ref{l: averaging operators} we have
\begin{align*}
\Vert A\ci{E}f\Vert\ci{L^{p}(E,w)}^{p}=\sum_{i=1}^{k}\Vert A\ci{E}f\Vert\ci{L^{p}(E,w_i)}^{p}\leq c\sum_{i=1}^{k}\Vert f\Vert\ci{L^{p}(E,w_i)}^{p}=c\Vert f\Vert\ci{L^{p}(E,w)}^{p}.
\end{align*}
Appealing again to Lemma \ref{l: averaging operators} we deduce the required result.
\end{proof}

\begin{lm}
\label{l: first scalar}
Let $A$ be any invertible matrix in $M_{d}(\C)$. Consider the scalar-valued function
\begin{equation*}
u:=|W^{1/p}A|^{p}.
\end{equation*}
Then
\begin{equation*}
\left(\strokedint_{E}u(x)\mathd x\right)\left(\strokedint_{E}u(x)^{-1/(p-1)}\mathd x\right)^{p-1}\lesssim_{p,d} C\ci{E,W,p}.
\end{equation*}
\end{lm}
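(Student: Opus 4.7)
The plan is to reduce this matrix-weighted $A_p$-type estimate for $u = |W^{1/p}A|^{p}$ to the scalar estimate already proved in Lemma \ref{l: second scalar}(2), by unpacking the matrix norm $|W^{1/p}A|$ column by column and then patching the resulting scalar pieces together via Lemma \ref{l: sum of scalar A_p weights}.

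First I would observe that we may assume $C\ci{E,W,p} < \infty$, for otherwise the conclusion is trivial; this assumption also guarantees via Lemma \ref{l: averaging operators} that $W'\in L^{1}(E;M_{d}(\C))$. Next, applying \eqref{equivalence_matrix_norm_columns} to the matrix $W(x)^{1/p}A$ and then raising to the $p$-th power via \eqref{break power of sum}, one obtains for a.e.\ $x\in E$
\begin{equation*}
u(x) = |W(x)^{1/p}A|^{p} \sim_{d,p} \sum_{k=1}^{d}|W(x)^{1/p}Ae_{k}|^{p} = \sum_{k=1}^{d} w_{Ae_{k}}(x),
\end{equation*}
where $w_{e}:=|W^{1/p}e|^{p}$ is the scalar weight of Lemma \ref{l: second scalar}(2). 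Since $A$ is invertible, each $Ae_{k}$ is nonzero, so Lemma \ref{l: second scalar}(2) applies and yields
\begin{equation*}
(w_{Ae_{k}})\ci{E} \cdot (w_{Ae_{k}}^{-1/(p-1)})\ci{E}^{p-1} \lesssim_{p,d} C\ci{E,W,p}, \qquad k=1,\ldots,d,
\end{equation*}
and in particular both $w_{Ae_{k}}$ and $w_{Ae_{k}}^{-1/(p-1)}$ are integrable over $E$.

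Finally, setting $v:=\sum_{k=1}^{d}w_{Ae_{k}}$, Lemma \ref{l: sum of scalar A_p weights} immediately produces
\begin{equation*}
(v)\ci{E}\cdot(v^{-1/(p-1)})\ci{E}^{p-1} \lesssim_{p,d} C\ci{E,W,p}.
\end{equation*}
Since $u\sim_{d,p}v$ pointwise a.e.\ on $E$, both averages $(u)\ci{E}$ and $(u^{-1/(p-1)})\ci{E}$ are comparable (with constants depending only on $p,d$) to $(v)\ci{E}$ and $(v^{-1/(p-1)})\ci{E}$ respectively, and the desired bound for $u$ follows. There is no substantial obstacle here: all the ingredients are already in place, and the only minor subtlety is keeping track of the $d,p$-dependent constants introduced when raising a sum of $d$ nonnegative terms to the $p$-th power, which is precisely what \eqref{break power of sum} is designed to handle.
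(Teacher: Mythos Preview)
Your proposal is correct and follows essentially the same route as the paper's proof: decompose $u$ via \eqref{equivalence_matrix_norm_columns} and \eqref{break power of sum} into $\sum_k |W^{1/p}Ae_k|^{p}$, apply Lemma \ref{l: second scalar}(2) to each summand (using invertibility of $A$ to ensure $Ae_k\neq 0$), and then invoke Lemma \ref{l: sum of scalar A_p weights}. The paper does exactly this, noting it is a trivial adaptation of \cite[Corollary 2.3]{goldberg}.
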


\begin{proof}
Denote by $\lbrace e_1,\ldots,e_d\rbrace$ the standard basis in $\C^d.$
By~\eqref{equivalence_matrix_norm_columns} and~\eqref{break power of sum} we have
\begin{equation*}
u\sim_{p,d}\sum_{k=1}^{d}|W^{1/p}v_k|^{p},
\end{equation*}
where $v_k:=Ae_k$, $k=1,\ldots,d$. Set $w_{k}:=|W^{1/p}v_k|^{p}$, $k=1,\ldots,d$. By part (2) of Lemma \ref{l: second scalar} we have
\begin{equation*}
\La w_k\Ra\ci{E}\La w_k^{-1/(p-1)}\Ra\ci{E}^{p-1}\lesssim_{p,d}C\ci{E,W,p},\qquad\forall k=1,\ldots,d.
\end{equation*}
Therefore, by Lemma \ref{l: sum of scalar A_p weights} we obtain $\La u\Ra\ci{E}\La u^{-1/(p-1)}\Ra\ci{E}^{p-1}\lesssim_{p,d} C\ci{E,W,p}$, concluding the proof.
\end{proof}

Next lemma states a well known property of reducing operators
(see for instance \cite[Proposition~1.1]{goldberg} and the remark after Proposition~1.2 therein).

\begin{lm}
\label{l: replace inverse by prime}
There holds
\begin{equation*}
|\cW\ci{E}^{-1} e|\leq |\cW\ci{E}'e|\lesssim_{p,d}C\ci{E,W,p}^{1/p}|\cW\ci{E}^{-1}e|,\qquad\forall e\in\C^d.
\end{equation*}
Moreover
\begin{equation*}
|(\cW\ci{E}')^{-1} e|\leq |\cW\ci{E}e|,\qquad\forall e\in\C^d.
\end{equation*}
\end{lm}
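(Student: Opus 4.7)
The key observation is a duality-type inequality relating the reducing operators $\cW\ci{E}$ and $\cW\ci{E}'$: for all $e, f \in \C^d$,
\begin{equation*}
    |\La e, f\Ra| \leq |\cW\ci{E}' e| \cdot |\cW\ci{E} f|.
\end{equation*}
This follows by inserting $W(x)^{1/p}W(x)^{-1/p} = I_d$ inside the unweighted pairing, averaging over $E$ (which changes nothing since the integrand is constant in $x$), applying Cauchy--Schwarz pointwise and H\"{o}lder's inequality with exponents $p',p$ to get
\begin{equation*}
    |\La e,f\Ra| \leq \left(\strokedint_{E}|W(x)^{-1/p}e|^{p'}\mathd x\right)^{1/p'}\left(\strokedint_{E}|W(x)^{1/p}f|^{p}\mathd x\right)^{1/p},
\end{equation*}
and then using the upper bounds in \eqref{reducing_matrices_definition} for both $\cW\ci{E}$ (applied to $f$) and $\cW\ci{E}'$ (applied to $e$).

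With this in hand, the first and third inequalities follow by taking a dual. Since $\cW\ci{E}$ is self-adjoint and invertible, substituting $f := \cW\ci{E}^{-1}g$ above gives $|\La\cW\ci{E}^{-1}e,g\Ra| \leq |\cW\ci{E}'e|\cdot|g|$ for all $g\in\C^d$, and taking $\sup_{|g|=1}$ yields $|\cW\ci{E}^{-1}e| \leq |\cW\ci{E}'e|$. The same argument with the roles of $\cW\ci{E}$ and $\cW\ci{E}'$ swapped (using that $\cW\ci{E}'$ is also self-adjoint and invertible) gives the second claim $|(\cW\ci{E}')^{-1}e| \leq |\cW\ci{E}e|$.

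For the middle inequality, the statement must read $C\ci{E,W,p}^{1/p}$ (the factor $(W)^{1/p}$ in the displayed bound appears to be a typo), since by Lemma \ref{l: averaging operators} we have $C\ci{E,W,p}^{1/p} \sim_{p,d} |\cW\ci{E}'\cW\ci{E}|$. The bound is then completely routine: writing $e = \cW\ci{E}\cW\ci{E}^{-1}e$ and applying submultiplicativity of the matrix norm,
\begin{equation*}
    |\cW\ci{E}'e| = |\cW\ci{E}'\cW\ci{E}\cW\ci{E}^{-1}e| \leq |\cW\ci{E}'\cW\ci{E}|\cdot|\cW\ci{E}^{-1}e| \lesssim_{p,d} C\ci{E,W,p}^{1/p}|\cW\ci{E}^{-1}e|.
\end{equation*}
There is no real obstacle here; the whole argument is essentially an exercise in duality built on top of the previously established equivalence in Lemma \ref{l: averaging operators}. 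The only conceptual point worth isolating is the duality inequality above, which plays the role of a ``H\"{o}lder inequality for reducing matrices'' and will likely reappear throughout the paper.
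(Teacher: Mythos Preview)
Your proof is correct and essentially the same as the paper's. The paper proves $|\cW\ci{E}'\cW\ci{E} e|\geq |e|$ directly by choosing the specific test function $f=W^{1/p}\cW\ci{E}^{-1}e/|e|$ and applying H\"older's inequality, which is exactly your duality inequality $|\La e,f\Ra|\leq|\cW\ci{E}'e|\cdot|\cW\ci{E}f|$ specialized to the relevant choice; the middle inequality is handled identically via $|\cW\ci{E}'\cW\ci{E}|\sim_{p,d}C\ci{E,W,p}^{1/p}$ (and yes, the $(W)^{1/p}$ is a typo for the exponent). The only cosmetic difference is that the paper derives $|(\cW\ci{E}')^{-1}e|\leq|\cW\ci{E}e|$ by invoking Lemma~\ref{l: inverse comparability} on the first inequality, whereas you repeat the duality argument with the roles swapped; both are equally short.
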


%

Notice that Lemma \ref{l: replace inverse by prime} implies in particular that
\begin{equation}
\label{lower bound matrix A_p characteristic}
|\cW'\ci{E}\cW\ci{E}|\geq 1.
\end{equation}

We refer to \cite{goldberg}, \cite{isralowitz-commutators}, \cite{isralowitz-kwon-pott}, \cite{isralowitz-pott-treil} and the references therein for additional properties of reducing matrices.

\subsubsection{``Iterating'' reducing operators}
\label{s: iterate reducing operators}

Let $E,F$ be measurable subsets of $\R^n,\R^m$ respectively with $0<|E|,|F|<\infty$. Let $1<p<\infty$. Let $W$ be a $M_{d}(\C)$-valued integrable function on $E\times F$ taking a.e.~values in the set of positive-definite $d\times d$-matrices. For all $x_1\in E$, set $W_{x_1}(x_2):=W(x_1,x_2)$, $x_2\in F$. For a.e.~$x_1\in E$, denote by $\cW\ci{x_1,F}$ the reducing operator of $W_{x_1}$ over $F$ with respect to the exponent $p$. If $p=2$, then
\begin{equation*}
\cW\ci{x_1,F}:=\La W(x_1,\fdot)\Ra\ci{F}^{1/2}.
\end{equation*}
and therefore the function $\cW\ci{x_1,F}$ is clearly measurable in $x_1$. In fact, it is not hard to see that for any $1<p<\infty$, one can choose the reducing operator $\cW\ci{x_1,F}$ in a way that is measurable in $x_1$. We supply the details in Section \ref{s:MeasurableReducingOp} in the appendix.

Set $W\ci{F}(x_1):=\cW\ci{x_1,F}^{p}$, for a.e.~$x_1\in E$. Then $W\ci{F}\in L^1(E;M_{d}(\C))$, since
\begin{equation*}
\strokedint_{E}|W\ci{F}(x_1)^{1/p}e|^{p}\mathd x_1 = \strokedint_{E}|\cW\ci{x_1,F}e|^{p}\mathd x_1\sim_{p,d}
\strokedint_{E\times F}|W(x_1,x_2)^{1/p}e|^{p}\mathd x<\infty,
\end{equation*}
for all $e\in\C^d$. In particular, this computation shows that
\begin{equation}
\label{iterate reducing operators}
|\cW\ci{F,E}e|\sim_{p,d}|\cW\ci{E\times F}e|,\qquad\forall e\in\C^d,
\end{equation}
where $\cW\ci{F,E}$ is the reducing operator of $W\ci{F}$ over $E$ with respect to the exponent $p$, and $\cW\ci{E\times F}$ is the reducing operator of $W$ over $E\times F$ with respect to the exponent $p$.

Notice now that $W'\ci{F}(x_1):=W\ci{F}(x_1)^{-1/(p-1)}=\cW\ci{x_1,F}^{-p'}$, for a.e.~$x_1\in E$. Thus, for all $e\in\C^d$, applying Lemma \ref{l: replace inverse by prime} in the second step below we have
\begin{align*}
\strokedint_{E}|W'\ci{F}(x_1)^{1/p'}e|^{p'}\mathd x_1
&=\strokedint_{E}|\cW\ci{x_1,F}^{-1}e|^{p'}\mathd x_1
\leq \strokedint_{E}|\cW\ci{x_1,F}'e|^{p'}\mathd x_1\\
&\sim_{p,d} \strokedint_{E\times F}|W'(x_1,x_2)^{1/p'}e|^{p'}\mathd x<\infty,
\end{align*}
where $W_{x_1}':=W_{x_1}^{-1/(p-1)}$, and $\cW\ci{x_1,F}'$ is the reducing operator of $W_{x_1}'$ over $F$ with respect to the exponent $p'$, for a.e.~$x_1\in E$. Thus, $W'\ci{F}\in L^1(E;M_{d}(\C))$, and in particular the last computation shows that
\begin{equation}
\label{iterate reducing operators dual}
|\cW'\ci{F,E}e|\lesssim_{p,d}|\cW'\ci{E\times F}e|,\qquad\forall e\in\C^d,
\end{equation}
where $\cW\ci{F,E}'$ is the reducing operator of $W\ci{F}'$ over $E$ with respect to the exponent $p'$, and $\cW\ci{E\times F}'$ is the reducing operator of $W':=W^{-1/(p-1)}$ over $E\times F$ with respect to the exponent $p'$.

Combining \eqref{iterate reducing operators} and \eqref{iterate reducing operators dual} with \eqref{comparability_from_vector_to_matrix}, we deduce
\begin{equation}
\label{uniform domination of characteristics of averages-local}
|\cW'\ci{F,E}\cW\ci{F,E}|\lesssim_{p,d}|\cW'\ci{E\times F}\cW\ci{E\times F}|.
\end{equation}

Of course, analogous facts hold if one ``iterates" the operation of taking reducing operators in the reverse order.
These observations will be crucial for handling the ``mixed" operators of Section \ref{s: non-cancellative}.

\subsection{Matrix \texorpdfstring{$A_p$}{Ap} weights}

Let $W$ be a $d\times d$-matrix valued weight on $\R^n$. We say that $W$ is a (one-parameter) \emph{$d\times d$-matrix valued $A_p$ weight} if
\begin{equation*}
[W]\ci{A_p(\R^n)}:=\sup_{Q}\strokedint_{Q}\left(\strokedint_{Q}|W(x)^{1/p}W(y)^{-1/p}|^{p'}\mathd y \right)^{p/p'}\mathd x<\infty,
\end{equation*}
where the supremum is taken over all cubes $Q$ in $\R^{n}$, and we call $[W]\ci{A_p(\R^n)}$ the $A_p$ characteristic of $W$ (we omit the dependence on $n$ as it will be clear from the context).
Lemma \ref{l: averaging operators} yields that if $[W]\ci{A_p(\R^n)}$ is finite, then $W':=W^{-1/(p-1)}$ is a $d\times d$-matrix valued $A_{p'}$ weight on $\R^n$ with $[W']\ci{A_{p'}(\R^n)}^{1/p'}\sim_{p,d}[W]\ci{A_p(\R^n)}^{1/p}$, and
\begin{equation*}
[W]\ci{A_p(\R^n)}\sim_{p,d}\sup_{Q}|\cW'\ci{Q}\cW\ci{Q}|^{p},
\end{equation*}
where the reducing matrices for $W$ correspond to exponent $p$, and those for $W'$ correspond to exponent $p'$.

Analogously, consider a $d\times d$-matrix valued weight $W$ on $\R^n\times\R^m$.
We say that $W$ is a \emph{biparameter $d\times d$-matrix valued $A_p$ weight} if
\begin{equation*}
[W]\ci{A_p(\R^n\times\R^m)}:=\sup_{R}\strokedint_{R}\left(\strokedint_{R}|W(x)^{1/p}W(y)^{-1/p}|^{p'}\mathd y \right)^{p/p'}\mathd x<\infty,
\end{equation*}
where the supremum is taken over all rectangles $R$ in $\R^{n}\times\R^{m}$ (with sides parallel to the coordinate axes).
We call the quantity $[W]\ci{A_p(\R^n\times\R^m)}$ the biparameter $A_p$ characteristic of $W$ and, if it is finite, then $W':=W^{-1/(p-1)}$ is a $d\times d$-matrix valued biparameter $A_{p'}$ weight on $\R^n\times\R^m$ with $[W']\ci{A_{p'}(\R^n\times\R^m)}^{1/p'}\sim_{p,d}[W]\ci{A_p(\R^n\times\R^m)}^{1/p}$, and
\begin{equation*}
[W]\ci{A_p(\R^n\times\R^m)}\sim_{p,d}\sup_{R}|\cW'\ci{R}\cW\ci{R}|^{p},
\end{equation*}
where the reducing matrices for $W$ correspond to exponent $p$, and those for $W'$ correspond to exponent $p'$.

More generally, we define two-weight characteristics as follows.
If $V,W$ are $d\times d$-matrix valued weights on $\R^n,$ we define
\begin{equation*}
[W,V]\ci{A_p(\R^n)}:=\sup_{Q}\strokedint_{Q}\left(\strokedint_{Q}|W(x)^{1/p}V(y)^{1/p'}|^{p'}\mathd y \right)^{p/p'}\mathd x,
\end{equation*}
and if they are $d\times d$-matrix valued weights on $\R^n \times \R^m,$ then we define
\begin{equation}
\label{definition two weight A_p}
[W,V]\ci{A_p(\R^n\times\R^m)}:=\sup_{R}\strokedint_{R}\left(\strokedint_{R}|W(x)^{1/p}V(y)^{1/p'}|^{p'}\mathd y \right)^{p/p'}\mathd x.
\end{equation}
Observe that by the observation immediately after Lemma~\ref{l: averaging operators} we have
\begin{equation}
\label{one-parameter two-weight A_p reducing}
[W,V]\ci{A_p(\R^n)}\sim_{p,d}\sup_{Q}|\cV\ci{Q}\cW\ci{Q}|^{p}
\end{equation}
for one-parameter $d \times d$-matrix valued weights
where the reducing matrices for $W$ correspond to exponent $p$ and those for $V$ correspond to exponent $p'$,
and similarly
\begin{equation}
\label{biparameter two-weight A_p reducing}
[W,V]\ci{A_p(\R^n\times\R^m)}\sim_{p,d}\sup_{R}|\cV\ci{R}\cW\ci{R}|^{p}
\end{equation}
for biparameter $d \times d$-matrix valued weights using the same convention for reducing operators.
Notice that \eqref{one-parameter two-weight A_p reducing} implies that
\begin{equation*}
[W,V]\ci{A_p(\R^n)}^{1/p}\sim_{p,d}[V,W]\ci{A_{p'}(\R^n)}^{1/p'}
\end{equation*}
in the one-parameter case and
\begin{equation*}
[W,V]\ci{A_p(\R^n\times\R^m)}^{1/p}\sim_{p,d}[V,W]\ci{A_{p'}(\R^n\times\R^m)}^{1/p'}
\end{equation*}
for biparameter weights.

If $\cD$ is any dyadic grid in $\R^n$, we define the dyadic one-parameter $A_p$ characteristics
\begin{equation*}
[W]\ci{A_p,\cD}:=\sup_{Q\in\cD}\strokedint_{Q}\left(\strokedint_{Q}|W(x)^{1/p}W(y)^{-1/p}|^{p'}\mathd y \right)^{p/p'}\mathd x
\end{equation*}
and
\begin{equation*}
[W,V]\ci{A_p,\cD}:=\sup_{Q\in\cD}\strokedint_{Q}\left(\strokedint_{Q}|W(x)^{1/p}V(y)^{1/p'}|^{p'}\mathd y \right)^{p/p'}\mathd x.
\end{equation*}
We say that $W$ is a one-parameter \emph{$d\times d$-matrix valued $\cD$-dyadic $A_p$ weight} if $[W]\ci{A_p,\cD}<\infty$.
For a product dyadic grid in $\R^n \times \R^m,$ we define the dyadic biparameter $A_p$ characteristics
\begin{equation*}
[W]\ci{A_p,\bfD}:=\sup_{R\in\bfD}\strokedint_{R}\left(\strokedint_{R}|W(x)^{1/p}W(y)^{-1/p}|^{p'}\mathd y \right)^{p/p'}\mathd x
\end{equation*}
and
\begin{equation}
\label{definition two weight dyadic A_p}
[W,V]\ci{A_p,\bfD}:=\sup_{R\in\bfD}\strokedint_{R}\left(\strokedint_{R}|W(x)^{1/p}V(y)^{1/p'}|^{p'}\mathd y \right)^{p/p'}\mathd x.
\end{equation}
In this case, we say that $W$ is a biparameter \emph{$d\times d$-matrix valued $\bfD$-dyadic biparameter $A_p$ weight} if $[W]\ci{A_p,\bfD}<\infty$.

\subsubsection{Matrix biparameter \texorpdfstring{$A_p$}{Ap} implies uniform matrix \texorpdfstring{$A_p$}{Ap} in each coordinate}

\begin{lm}
\label{l: two-weight-biparameter A_p implies uniform A_p in each coordinate}
Let $W,V$ be $d\times d$-matrix valued weights on $\R^{n+m}$.
\begin{entry}

\item[(1)] Let $\bfD=\cD^1\times\cD^2$ be any product dyadic grid in $\R^n\times\R^m$. Then, there holds
\begin{equation*}
[W(\fdot,x_2),V(\fdot,x_2)]\ci{A_p,\cD^1}\lesssim_{d,p}[W,V]\ci{A_p,\bfD},\qquad\text{for a.e.~} x_2\in \R^m,
\end{equation*}
\begin{equation*}
[W(x_1,\fdot),V(x_1,\fdot)]\ci{A_p,\cD^2}\lesssim_{d,p}[W,V]\ci{A_p,\bfD},\qquad\text{for a.e.~} x_1\in \R^n.
\end{equation*}

\item[(2)] There holds
\begin{equation*}
[W(\fdot,x_2),V(\fdot,x_2)]\ci{A_p(\R^n)}\lesssim_{d,p,n}[W,V]\ci{A_p(\R^n\times\R^m)},\qquad\text{for a.e.~} x_2\in \R^m,
\end{equation*}
\begin{equation*}
[W(x_1,\fdot),V(x_1,\fdot)]\ci{A_p(\R^m)}\lesssim_{d,p,m}[W,V]\ci{A_p(\R^n\times\R^m)},\qquad\text{for a.e.~} x_1\in \R^n.
\end{equation*}

\end{entry}
\end{lm}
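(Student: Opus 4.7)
By the symmetry between the two variables, it suffices to prove the first estimate in each of Part (1) and Part (2). The plan is to pass from the biparameter $A_p$ bound on rectangles $Q \times J \in \bfD$ (with $Q \in \cD^1$, $J \in \cD^2$) to a one-parameter $A_p$ bound at a.e. $x_2 \in \R^m$ via iterated Lebesgue differentiation.

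\textbf{Part (1).} Fix $Q \in \cD^1$. For every $J \in \cD^2$, the hypothesis applied to the rectangle $Q \times J$ yields, after Fubini,
\begin{equation*}
\text{LHS}(J) := \strokedint_Q \strokedint_J \left(\strokedint_Q \strokedint_J |W(x_1,x_2')^{1/p} V(y_1,y_2)^{1/p'}|^{p'}\,\mathd y_2\,\mathd y_1\right)^{p/p'} \mathd x_2'\,\mathd x_1 \leq [W,V]\ci{A_p,\bfD}.
\end{equation*}
Set
\begin{equation*}
\Theta_Q(x_2) := \strokedint_Q \left(\strokedint_Q |W(x_1,x_2)^{1/p} V(y_1,x_2)^{1/p'}|^{p'}\,\mathd y_1\right)^{p/p'} \mathd x_1,
\end{equation*}
so that by \eqref{definition two weight dyadic A_p} the desired estimate reduces to showing $\Theta_Q(x_2) \lesssim_{d,p} [W,V]\ci{A_p,\bfD}$ for a.e. $x_2 \in \R^m$. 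Fixing $(x_1, y_1) \in Q \times Q$, Lebesgue's differentiation theorem applied to the locally integrable scalar function $y_2 \mapsto |W(x_1,x_2')^{1/p} V(y_1,y_2)^{1/p'}|^{p'}$ gives that the inner $y_2$-average converges to $|W(x_1,x_2')^{1/p} V(y_1,x_2)^{1/p'}|^{p'}$ as $J \downarrow x_2$ for a.e. $x_2$; a second Lebesgue differentiation in the outer variable $x_2'$ then collapses $x_2'$ to $x_2$. The uniform pointwise domination by the Hardy--Littlewood maximal function in the corresponding direction, which is finite a.e. after integration over $x_1,y_1 \in Q$ by Fubini, permits the interchange of these limits with the $x_1,y_1$-averages via dominated convergence. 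Combining these steps yields $\text{LHS}(J) \to \Theta_Q(x_2)$ as $J \downarrow x_2$ for a.e. $x_2$, and therefore $\Theta_Q(x_2) \leq \sup_J \text{LHS}(J) \leq [W,V]\ci{A_p,\bfD}$. Since $\cD^1$ is countable, the union over $Q \in \cD^1$ of the exceptional null sets remains null, so the estimate holds for every $Q \in \cD^1$ outside a single null set $N \subseteq \R^m$.

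\textbf{Part (2).} The continuous biparameter $A_p$ characteristic on $\R^n \times \R^m$ is comparable, with constants depending only on $n,m$, to the supremum over a finite family $\cL$ of shifted product dyadic grids, obtained from the standard shifted-dyadic trick applied independently in each factor $\R^n$ and $\R^m$. The same comparison holds for the one-parameter $A_p$ characteristic in the first coordinate. Applying Part (1) to each $\bfD' \in \cL$ separately and taking the union of the finitely many exceptional null sets gives Part (2).

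\textbf{Main obstacle.} The key technical difficulty lies in the joint passage to the limit $J \downarrow x_2$ in both the inner $y_2$-average and the outer $x_2'$-average of the biparameter integrand, whose natural target is the pointwise value of a certain function along the diagonal $\{(x_2,x_2): x_2 \in \R^m\} \subseteq \R^m \times \R^m$, a null set on which two-dimensional Lebesgue differentiation cannot be invoked directly. One must instead exploit the partial product structure of the integrand in $x_2'$ and $y_2$, splitting the limit into two iterated Lebesgue-differentiation steps and controlling the error by appropriate maximal-function majorants; the interchange of limits and integrals over $Q$ in $x_1, y_1$ requires uniform (in $J$) dominating bounds that are integrable for a.e. $x_2$.
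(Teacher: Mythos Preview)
Your Part~(2) via finitely many shifted dyadic grids matches the paper and is fine. The gap is in Part~(1). You propose to collapse both the inner $y_2$-average and the outer $x_2'$-average to the point $x_2$ by ``two iterated Lebesgue-differentiation steps,'' but both averages are over the \emph{same} cube $J$ shrinking simultaneously; you cannot freeze $x_2'$ while sending $J\downarrow x_2$ in $y_2$ and then afterwards send $J\downarrow x_2$ in $x_2'$. In the scalar case the integrand factors as $W(x_1,x_2')^{p'/p}\,V(y_1,y_2)$, so the two limits genuinely decouple and your argument goes through (this is the route in \cite{holmes-petermichl-wick}). For $d>1$ the matrix product $|W(x_1,x_2')^{1/p}V(y_1,y_2)^{1/p'}|^{p'}$ does not split, so the set of Lebesgue points for the inner limit depends on $x_2'$, and your maximal-function majorant---which after the crude bound $|AB|\le|A||B|$ amounts to $M\ci{\cD^2}(|W(x_1,\fdot)|)(x_2)$ times a factor independent of $x_1$---need not be integrable over $x_1\in Q$, since $|W(x_1,\fdot)|\in L^1\ti{loc}$ only guarantees $M\ci{\cD^2}(|W(x_1,\fdot)|)\in L^{1,\infty}\ti{loc}$. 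The dominated-convergence step is therefore unjustified, and the ``Main obstacle'' you correctly identify is not actually overcome.

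The paper avoids the diagonal problem by passing to the reducing-matrix characterization $[W,V]\ci{A_p,\cD}\sim_{p,d}\sup_Q|\cV\ci{Q}\cW\ci{Q}|^p$, which \emph{decouples} $W$ from $V$: one only needs one-variable Lebesgue differentiation of the scalar functions $F\ci{Q,e}(x_2)=\int_Q|W(x_1,x_2)^{1/p}e|^p\,\mathd x_1$ and $G\ci{Q,e}(x_2)=\int_Q|V(x_1,x_2)^{1/p'}e|^{p'}\,\mathd x_1$ over the countable family $Q\in\cD^1$, $e\in\Q^{2d}$. At any common Lebesgue point $x_2$ one gets $|\cW\ci{Q\times J_\ell}e|\to|\cW\ci{x_2,Q}e|$ and $|\cV\ci{Q\times J_\ell}e|\to|\cV\ci{x_2,Q}e|$ along a shrinking sequence $J_\ell$; a compactness argument then extracts matrix-norm limits $\cW\ci{Q,2},\cV\ci{Q,2}$ and yields $|\cV\ci{x_2,Q}\cW\ci{x_2,Q}|\sim_{p,d}\lim_k|\cV\ci{Q\times J_{\ell_k}}\cW\ci{Q\times J_{\ell_k}}|\lesssim_{p,d}[W,V]\ci{A_p,\bfD}^{1/p}$. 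This separation of $W$ and $V$ before taking the limit is exactly the missing idea.
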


\begin{proof}
(1) We identify $\Q^{2d}$ with the subset $\Q^{d}+i\Q^{d}$ of $\C^{d}$ throughout the proof. Denote by $\lbrace e_1,\ldots,e_d\rbrace$ the standard basis for $\C^d$.

Assume that $[W,V]\ci{A_p,\bfD}<\infty$. We only show that
\begin{equation*}
[W(\fdot,x_2),V(\fdot,x_2)]\ci{A_p,\cD^1}\lesssim_{p,d}[W,V]\ci{A_p,\bfD},\qquad\text{for a.e.~} x_2\in \R^m,
\end{equation*}
the proof of the other estimate being symmetric.

Since $W,V$ are locally integrable over $\R^{n}\times\R^m$ and $\R^m,\R^n$ can be covered by countably many balls, we deduce that there exists a Lebesgue measurable subset $A$ of $\R^m$ with $|\R^m\setminus A|=0$ such that
\begin{equation*}
W_{x_2}:=W(\fdot, x_2),~V_{x_2}:=V(\fdot,x_2)
\end{equation*}
are locally integrable over $\R^n$, for all $x_2\in A$.

For each $Q\in\cD^1$ and for each $e\in\Q^{2d}$, we have that the function
\begin{equation*}
F\ci{Q,e}(x_2):=\int_{Q}|W(x_1,x_2)^{1/p}e|^{p}\mathd x_1,\qquad x_2\in A
\end{equation*}
is locally integrable over $\R^m$; let $A\ci{Q,e} \subseteq \R^m$ be the set of all its Lebesgue points. Moreover, for each $Q\in\cD^1$ and for each $e\in\Q^{2d}$, we have that the function
\begin{equation*}
G\ci{Q,e}(x_2):=\int_{Q}|V(x_1,x_2)^{1/p'}e|^{p'}\mathd x_1,\qquad x_2\in A
\end{equation*}
is locally integrable over $\R^m$; let $B\ci{Q,e} \subseteq \R^m$ be the set of all its Lebesgue points. Set
\begin{equation*}
C:=A\cap\bigg(\bigcap_{\substack{Q\in\cD^1\\e\in\Q^{2d}}}A\ci{Q,k}\bigg)\cap\bigg(\bigcap_{\substack{Q\in\cD^1\\e\in\Q^{2d}}}B\ci{Q,k}\bigg).
\end{equation*}
Then $C$ is a Lebesgue measurable subset of $\R^m$ with $|\R^m\setminus C|=0$. We will now show that
\begin{equation*}
[W(\fdot,x_2),V(\fdot,x_2)]\ci{A_p,\cD^1}\lesssim_{p,d}[W,V]\ci{A_p,\bfD},\qquad\forall x_2\in C.
\end{equation*}
Let $I\in\cD^1$ and $x_2\in C$ be arbitrary. Pick a sequence $(J_\ell)^{\infty}_{\ell=1}$ of cubes in $\cD^2$ shrinking to $x_2$. Then, for all $e\in\Q^{2d}$ we have
\begin{equation*}
\lim_{\ell\rightarrow\infty}\strokedint_{J_\ell}\left(\strokedint_{I}|W(x_1,y_2)^{1/p}e|^{p}\mathd x_1\right)\mathd y_2=\strokedint_{I}|W(x_1,x_2)^{1/p}e|^{p}\mathd x_1\sim_{p,d}|\cW\ci{x_2,I}e|^{p},
\end{equation*}
where $\cW\ci{x_2,I}$ is the reducing matrix over $I$ of $W_{x_2}$ with respect to exponent $p$, and
\begin{equation*}
\strokedint_{J_\ell}\left(\strokedint_{I}|W(x_1,y_2)^{1/p}e|^{p}\mathd x_1\right)\mathd y_2\sim_{p,d}|\cW\ci{I\times J_\ell}e|^{p},\qquad\forall \ell=1,2,\ldots,
\end{equation*}
where $\cW\ci{I\times J_{\ell}}$ is the reducing matrix of $W$ over $I\times J_{\ell}$ with respect to the exponent $p$, for all $\ell=1,2,\ldots$. In particular, for each $k=1,\ldots,d$ there exists $0<c_k<\infty$ such that
\begin{equation*}
|\cW\ci{I\times J_{\ell}}e_k|\leq c_k,\qquad\forall \ell=1,2,\ldots.
\end{equation*}
Then, by \eqref{equivalence_matrix_norm_columns} we have
\begin{align*}
|\cW\ci{I\times J_{\ell}}|\sim_{d}\sum_{k=1}^{d}|\cW\ci{I\times J_{\ell}}e_k|\leq\sum_{k=1}^{d}c_k<\infty,\qquad\forall \ell=1,2,\ldots.
\end{align*}
Thus, the sequence of matrices $(\cW\ci{I\times J_\ell})_{\ell=1}^{\infty}$ is bounded in matrix-norm, therefore by compactness we can extract a subsequence $(\cW\ci{I\times J_{\ell_k}})^{\infty}_{k=1}$ converging in matrix-norm to some self-adjoint matrix $\cW\ci{I,2}\in M_{d}(\C)$. Then, by density of $\Q^{2d}$ in $\C^{d}$ we deduce
\begin{equation*}
|\cW\ci{I,2}e|\sim_{p,d}|\cW\ci{x_2,I}e|,\qquad\forall e\in\C^d.
\end{equation*}
Working similarly we can extract a further subsequence $(\cV\ci{I\times J_{\ell_{t_{k}}}})^{\infty}_{k=1}$ converging in matrix-norm to some self-adjoint matrix $\cV\ci{I,2}\in M_{d}(\C)$ for which
\begin{equation*}
|\cV\ci{I,2}e|\sim_{p,d}|\cV\ci{x_2,I}e|,\qquad\forall e\in\C^d,
\end{equation*}
where $\cV\ci{I\times J_{\ell}}$ is the reducing matrix of $V$ over $I\times J_{\ell}$ with respect to the exponent $p'$, for all $\ell=1,2,\ldots$, and $\cV\ci{x_2,I}$ is the reducing matrix over $I$ of the weight $V_{x_2}$ with respect to exponent $p'$.

It follows that
\begin{align*}
|\cV\ci{x_2,I}\cW\ci{x_2,I}|\sim_{p,d}|\cV\ci{I,2}\cW\ci{I,2}|=\lim_{k\rightarrow\infty}|\cV\ci{I\times J_{\ell_{t_{k}}}}\cW\ci{I\times J_{\ell_{t_{k}}}}|\lesssim_{p,d}[W,V]\ci{A_p,\bfD}^{1/p}, 
\end{align*}
where in the first step we used \eqref{comparability_from_vector_to_matrix}, yielding the desired result.

(2) This follows by using the well-known $1/3$-trick (see for instance \cite{lerner-nazarov}) coupled with part (1) and the definitions \eqref{definition two weight A_p}, \eqref{definition two weight dyadic A_p} of two-matrix weight $A_p$ characteristics. 
\end{proof}

In the special case $V:=W'$, a converse to Lemma \ref{l: two-weight-biparameter A_p implies uniform A_p in each coordinate} is true as well.

\begin{lm}
\label{l: uniform A_p in each coordinate implies biparameter A_p}
Let $W$ be any $d\times d$-matrix valued weight on $\R^{n+m}$. Then, there holds
\begin{equation*}
[W]\ci{A_p(\R^n\times\R^m)}\lesssim_{d,p}(\esssup_{x_2\in\R^m}[W(\fdot,x_2)]\ci{A_p(\R^n)})\cdot(\esssup_{x_1\in\R^n}[W(x_1,\fdot)]\ci{A_p(\R^m)}).
\end{equation*}
The dyadic version of this is true as well.
\end{lm}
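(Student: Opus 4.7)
The plan is to use Lemma~\ref{l: averaging operators}, which identifies the biparameter $A_p$ characteristic over a rectangle $R$ with the $p$-th power of the norm of the averaging operator $A_R$ on $L^p(R,W)$ (up to constants depending only on $d,p$). Since $A_R$ factors as an iteration of one-parameter averaging operators acting in each direction separately, I will run the one-parameter version of Lemma~\ref{l: averaging operators} twice, in each direction, picking up a factor of the (essential supremum of the) one-parameter $A_p$ characteristic each time.

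Set $K_1:=\esssup_{x_2\in\R^m}[W(\fdot,x_2)]\ci{A_p(\R^n)}$ and $K_2:=\esssup_{x_1\in\R^n}[W(x_1,\fdot)]\ci{A_p(\R^m)}$; we may assume both are finite. Fix a rectangle $R=I\times J\subseteq\R^n\times\R^m$. For $f\in L^p(R,W)$ write $A_Rf=A^1_I(A^2_Jf)$, where $A^1_I$ denotes averaging over $I$ in the $x_1$ variable only and $A^2_J$ denotes averaging over $J$ in the $x_2$ variable only; these are just the usual one-parameter averaging operators applied to sections of $f$. The key computation is, by Fubini and the one-parameter Lemma~\ref{l: averaging operators} applied to the weight $W(\fdot,x_2)$ on the cube $I$ (whose one-parameter $A_p$ characteristic is bounded by $K_1$ for a.e.~$x_2$):
\begin{align*}
\|A_Rf\|^p\ci{L^p(R,W)}
&=\int_J\int_I|W(x_1,x_2)^{1/p}A^1_I(A^2_Jf)(x_1,x_2)|^p\mathd x_1\mathd x_2\\
&\lesssim_{p,d}K_1\int_J\int_I|W(x_1,x_2)^{1/p}(A^2_Jf)(x_1,x_2)|^p\mathd x_1\mathd x_2.
\end{align*}
Swapping the order of integration and applying Lemma~\ref{l: averaging operators} again to the weight $W(x_1,\fdot)$ on the cube $J$ (whose one-parameter characteristic is bounded by $K_2$ for a.e.~$x_1$) yields
\begin{equation*}
\int_I\int_J|W(x_1,x_2)^{1/p}(A^2_Jf)(x_1,x_2)|^p\mathd x_2\mathd x_1
\lesssim_{p,d}K_2\,\|f\|^p\ci{L^p(R,W)}.
\end{equation*}
Combining the two estimates gives $\|A_R\|^p\ci{L^p(R,W)\rightarrow L^p(R,W)}\lesssim_{p,d}K_1K_2$, and hence by Lemma~\ref{l: averaging operators} again, $C\ci{R,W,p}\lesssim_{p,d}K_1K_2$. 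Taking the supremum over all rectangles $R$ yields the result via the characterization of $[W]\ci{A_p(\R^n\times\R^m)}$ in terms of $\sup_R C\ci{R,W,p}$ (up to constants depending only on $p,d$).

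The dyadic version is proved by exactly the same argument, restricting $I\in\cD^1$ and $J\in\cD^2$ throughout, and using Lemma~\ref{l: two-weight-biparameter A_p implies uniform A_p in each coordinate}(1) (or just the analogous one-parameter dyadic formulation of Lemma~\ref{l: averaging operators}) for the sections. I do not expect any serious obstacle: the only mild subtlety is the need for measurable selection of one-parameter reducing operators (addressed in Subsection~\ref{s: iterate reducing operators} and the appendix), but here we only use a.e.~pointwise bounds on $[W(\fdot,x_2)]\ci{A_p(\R^n)}$ and $[W(x_1,\fdot)]\ci{A_p(\R^m)}$ by the essential suprema $K_1, K_2$, which is available regardless of any measurability choice. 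The bulk of the argument is just the Fubini-type iteration of the already-established one-parameter Lemma~\ref{l: averaging operators}.
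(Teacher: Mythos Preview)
Your proof is correct and follows essentially the same approach as the paper: factor the biparameter averaging operator $A_R$ as an iteration of the one-parameter averagings $A^1_I$ and $A^2_J$, then apply Lemma~\ref{l: averaging operators} in each direction via Fubini to pick up the factors $K_1$ and $K_2$. The paper's own proof is in fact just a one-sentence sketch of exactly this argument, so you have written out in full what the authors left implicit.
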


\begin{proof}
It follows immediately by iterating Lemma \ref{l: averaging operators}, after noticing that if $Q=Q_1\times Q_2$ is any rectangle in $\R^n\times\R^m$, then
\begin{equation*}
A\ci{Q}f(x_1,x_2)=A\ci{Q_1}(E\ci{Q_2}(\fdot,x_2))(x_1),\qquad E\ci{Q_2}(y_1,x_2):=A\ci{Q_2}(f(y_1,\fdot))(x_2),
\end{equation*}
where $A\ci{Q},A\ci{Q_1},A\ci{Q_2}$ denote the averaging operators over $Q,Q_1,Q_2$ respectively.
\end{proof}

\subsubsection{\texorpdfstring{$A_p$}{Ap} characteristics of ``sliced" reducing operators}

Let $1<p<\infty$. Let $W$ be a biparameter matrix $A_p$ weight on $\R^n\times\R^m$. For a.e. $x_1\in\R^n$, set $W\ci{x_1}(x_2):=W(x_1,x_2)$, $x_2\in\R^m$. Fix any cube $Q$ in $\R^m$. For a.e.~$x_1\in\R^n$, let $\cW\ci{x_1,Q}$ be the reducing operator of $W\ci{x_1}$ over $Q$ with respect to the exponent $p$. Set $W\ci{Q}(x_1):=\cW\ci{x_1,Q}^{p}$, for a.e.~$x_1\in\R^n$. Then, by \eqref{uniform domination of characteristics of averages-local} we deduce
\begin{equation}
\label{uniform domination of characteristics of averages}
[W\ci{Q}]\ci{A_p(\R^n)}\lesssim_{p,d}[W]\ci{A_p(\R^n\times\R^m)}.
\end{equation}
Of course, the dyadic version of this is also true. Moreover, both versions remain true if one ``slices" with respect to the first coordinate instead of the second one.

\section{Dyadic strong Christ--Goldberg maximal function}
\label{s: maximal function}

In this section we briefly study matrix-weighted bounds for biparameter maximal functions.
These and their vector-valued counterparts (see Subsection \ref{s: vector-valued maximal function} below) are crucial for estimating the non-cancellative terms in Martikainen's representation \eqref{martikainen representation} (see Section \ref{s: non-cancellative}).
What follows is based on previous works \cite{Michael Christ and Goldberg} and \cite{goldberg}.

\subsection{Dyadic Christ--Goldberg maximal function}

Let $\cD$ be any dyadic grid in $\R^n$. Let $1<p<\infty$, and let $W$ be a $d\times d$-matrix valued weight on $\R^n$. Consider the \emph{(dyadic) Christ--Goldberg maximal function} corresponding to the weight $W$ (and the exponent $p$)
\begin{equation*}
M\ci{\cD,W}f(x):=\sup_{I\in\cD}\La|W(x)^{1/p}f|\Ra\ci{I}\1\ci{I}(x),\qquad x\in\R^n,~f\in L^1\ti{loc}(\R^n;\C^d).
\end{equation*}
Consider also the \emph{(dyadic) modified Christ--Goldberg maximal function}
\begin{equation*}
\wt{M}\ci{\cD,W}f(x):=\sup_{I\in\cD}\La|\cW\ci{I}f|\Ra\ci{I}\1\ci{I}(x),\qquad x\in\R^n,~f\in L^1\ti{loc}(\R^n;\C^d),
\end{equation*}
where the reducing operators for $W$ correspond to the exponent $p$. Both of these operators were defined by Christ and Goldberg for $p=2$ in \cite{Michael Christ and Goldberg}, and by Goldberg for general $1<p<\infty$ in \cite{goldberg}.
In these two works it is proved that if $[W]\ci{A_p,\cD}<\infty$, then
\begin{equation*}
\Vert M\ci{\cD,W}\Vert\ci{L^{p}(W)\rightarrow L^{p}},\Vert \wt{M}\ci{\cD,W}\Vert\ci{L^{p}(W)\rightarrow L^{p}}<\infty.
\end{equation*}
In fact, J.~Isralowitz and K.~Moen \cite{isralowitz-moen} proved the bound
\begin{equation}
\label{weighted bound C-G}
\Vert M\ci{\cD,W}\Vert\ci{L^{p}(W)\rightarrow L^{p}}\lesssim_{p,d,n}[W]\ci{A_p,\cD}^{1/(p-1)},
\end{equation}
and Isralowitz, H.~K.~Kwon and Pott \cite{isralowitz-kwon-pott} showed the bound
\begin{equation}
\label{weighted bound modified C-G}
\Vert\wt{M}\ci{\cD,W}f\Vert\ci{L^{p}(W)\rightarrow L^{p}}\lesssim_{n,p,d}[W]\ci{A_p,\cD}^{1/(p-1)}.
\end{equation}

\subsection{Modified dyadic strong Christ--Goldberg maximal function}

Let $\bfD$ be any product dyadic grid in $\R^n\times\R^m$. Let $1<p<\infty$, and let $W$ be a $d\times d$-matrix valued weight on $\R^n\times\R^m$. Consider the \emph{modified dyadic strong Christ--Goldberg maximal function}
\begin{equation*}
\wt{M}\ci{\bfD,W}f(x):=\sup_{R\in\bfD}\La|\cW\ci{R}f|\Ra\ci{R}\1\ci{R}(x),\qquad x\in\R^{n+m},~f\in L^1\ti{loc}(\R^{n+m};\C^d),
\end{equation*}
where reducing operators for $W$ correspond to the exponent $p$.

\begin{prop}
\label{p: weighted bound modified strong C-G}
There holds
\begin{equation*}
\Vert\wt{M}\ci{\bfD,W}f\Vert\ci{L^{p}(W)\rightarrow L^{p}}\lesssim_{p,d,n,m}[W]\ci{A_p,\bfD}^{\frac{p+1}{p(p-1)}}.
\end{equation*}
\end{prop}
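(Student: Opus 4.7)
The plan is to iterate the one-parameter modified Christ--Goldberg bound \eqref{weighted bound modified C-G} of Isralowitz--Kwon--Pott, using the sliced reducing operator machinery of Subsection \ref{s: iterate reducing operators} to reduce the biparameter problem to two coupled one-parameter problems. The key observation is that in the ``outer'' direction we do not need the full matrix-weighted maximal function; a scalar Hardy--Littlewood maximal function suffices, and this is precisely what buys us the improvement over the naive bound.

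\textit{Step 1 (direction-2 iteration).} For each $x_1$, put $V_{x_1}(x_2):=W(x_1,x_2)$ and define
\[
g(x_1,x_2) := \wt{M}^{V_{x_1}}_{\cD^2}[f(x_1,\cdot)](x_2).
\]
By Lemma \ref{l: two-weight-biparameter A_p implies uniform A_p in each coordinate}(1), $[V_{x_1}]_{A_p,\cD^2}\lesssim_{p,d}[W]_{A_p,\bfD}$ for a.e.~$x_1$. Applying \eqref{weighted bound modified C-G} to each slice and integrating in $x_1$ by Fubini yields
\[
\|g\|_{L^p(\R^{n+m})}^{p} \lesssim_{p,d,m} [W]_{A_p,\bfD}^{p/(p-1)} \,\|f\|_{L^p(W)}^{p}.
\]

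\textit{Step 2 (pointwise reduction in direction 1).} Fix a point $(x_1,x_2)$ and a rectangle $R=I\times J\in\bfD$ containing it. Using \eqref{iterate reducing operators} to replace $\cW_R$ by the sliced reducing operator $\cW_{J,I}$, and then factoring $\cW_{J,I}=(\cW_{J,I}\cW_{y_1,J}^{-1})\,\cW_{y_1,J}$ for each $y_1\in I$, we obtain
\[
(|\cW_R f|)_R \;\sim_{p,d}\; \strokedint_R |\cW_{J,I}f(y_1,y_2)|\,dy \;\leq\; \strokedint_I |\cW_{J,I}\cW_{y_1,J}^{-1}|\cdot\Big(\strokedint_J |\cW_{y_1,J}f(y_1,y_2)|\,dy_2\Big)\,dy_1,
\]
and the inner average is bounded by $g(y_1,x_2)$ because $x_2\in J$ and $\cW_{y_1,J}$ is the reducing operator used in the definition of $g$.

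\textit{Step 3 (scalar maximal function and weight factor).} Apply Hölder in $y_1\in I$ with an exponent pair $(\gamma,\gamma')$ to be chosen, yielding
\[
(|\cW_R f|)_R \;\lesssim\; \Phi_{\gamma}(R)\cdot\Big(\strokedint_I g(y_1,x_2)^{\gamma'}\,dy_1\Big)^{1/\gamma'},\qquad \Phi_{\gamma}(R):=\Big(\strokedint_I |\cW_{J,I}\cW_{y_1,J}^{-1}|^{\gamma}\,dy_1\Big)^{1/\gamma}.
\]
Taking the supremum over $R\ni(x_1,x_2)$, the $g$-factor is controlled by $\big(M_{\cD^1}(g(\cdot,x_2)^{\gamma'})(x_1)\big)^{1/\gamma'}$, where $M_{\cD^1}$ is the scalar dyadic maximal function in direction 1. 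Choosing $\gamma'<p$, the operator $M_{\cD^1}$ is unconditionally $L^{p/\gamma'}$-bounded, so $\big\|(M_{\cD^1}g^{\gamma'})^{1/\gamma'}\big\|_{L^p}\lesssim \|g\|_{L^p}$, contributing no $[W]_{A_p,\bfD}$ factor at all. The weight factor $\Phi_\gamma(R)$ is bounded by a power of $[W]_{A_p,\bfD}$ by: first using Lemma \ref{l: replace inverse by prime} to replace $\cW_{y_1,J}^{-1}$ by $\cW'_{y_1,J}$ up to an $A_p$ factor; then invoking the dyadic version of \eqref{uniform domination of characteristics of averages} to propagate $[W_J]_{A_p,\cD^1}\lesssim[W]_{A_p,\bfD}$; and finally using \eqref{biparameter two-weight A_p reducing} to recognize the resulting integral as a biparameter $A_p$ average.

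\textit{Step 4 (combine).} Plugging the pointwise bound from Step 3 into the $L^p$ norm and using Step 1 for $g$ gives the announced bound, after the exponent bookkeeping. The main obstacle lies in Step 3: one must fine-tune the Hölder exponent $\gamma'$ (and hence $\gamma$) so that (i) $\gamma'<p$, enabling the free scalar $L^{p/\gamma'}$-bound, and (ii) the resulting exponent of $[W]_{A_p,\bfD}$ in $\Phi_{\gamma}(R)$ is exactly $1/(p(p-1))$, so that in combination with the factor $[W]_{A_p,\bfD}^{1/(p-1)}$ from Step 1 (as a factor, not $p$-th power) one obtains the target $(p+1)/(p(p-1))$. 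Naive iteration of \eqref{weighted bound modified C-G} in both directions would produce the weaker bound $[W]_{A_p,\bfD}^{2/(p-1)}$, so the savings of $[W]_{A_p,\bfD}^{1}$ (in the $p$-th power) come precisely from replacing the matrix-weighted MF in direction 1 by the scalar MF, at the price of the Hölder weight factor.
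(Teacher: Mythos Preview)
Your iteration approach is genuinely different from the paper's proof, which does \emph{not} iterate the one-parameter Isralowitz--Kwon--Pott bound. Instead, the paper transplants their reverse H\"older trick directly to the biparameter setting: by Lemma~\ref{l: first scalar} the scalar function $|W^{-1/p}\cW\ci{R}|^{p'}$ is a biparameter $A_{p'}$ weight with characteristic $\lesssim[W]\ci{A_p,\bfD}^{p'/p}$; the biparameter reverse H\"older inequality (proved by iterating one-parameter reverse H\"older, see the Appendix) then gives, for $\e\sim[W]\ci{A_p,\bfD}^{-p'/p}$, the pointwise bound $\wt{M}^{W}\ci{\bfD}f\lesssim[W]\ci{A_p,\bfD}^{1/p}M\ci{a,\bfD}(|W^{1/p}f|)$ with $a=\tfrac{p(1+\e)}{1+p\e}$. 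The unweighted strong maximal function $M\ci{a,\bfD}$ costs $((p/a)')^{2/p}\sim\e^{-2/p}\sim[W]\ci{A_p,\bfD}^{2/(p(p-1))}$, and $1/p+2/(p(p-1))=(p+1)/(p(p-1))$.

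Your proposal, by contrast, has a genuine gap in the exponent accounting of Step~3. You cannot choose $\gamma'<p$ \emph{independently of $W$} while keeping $\Phi_{\gamma}(R)$ uniformly bounded: for $\gamma>p'$ the quantity $\Phi_{\gamma}(R)=\big(\strokedint_{I}|W\ci{J}(y_1)^{-1/p}\cW\ci{J,I}|^{\gamma}\,\mathd y_1\big)^{1/\gamma}$ is an $L^{\gamma}$ norm with $\gamma>p'$ of a function whose $L^{p'}$ norm is $\sim|\cW'\ci{J,I}\cW\ci{J,I}|\lesssim[W]\ci{A_p,\bfD}^{1/p}$, and controlling the higher norm forces $\gamma=p'(1+\e)$ with the reverse H\"older $\e\sim[W]\ci{A_p,\bfD}^{-p'/p}$. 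Then $p/\gamma'-1=(p-1)\e/(1+\e)$, so the scalar $L^{p/\gamma'}$ bound for $M\ci{\cD^1}$ contributes $((p/\gamma')')^{1/\gamma'}\sim\e^{-1/p}\sim[W]\ci{A_p,\bfD}^{1/(p(p-1))}$, \emph{not} a constant. Moreover $\Phi_{\gamma}(R)\lesssim[W]\ci{A_p,\bfD}^{1/p}$, not $[W]\ci{A_p,\bfD}^{1/(p(p-1))}$ as you claim. Adding up: Step~1 gives $[W]^{1/(p-1)}$, $\Phi_{\gamma}$ gives $[W]^{1/p}$, the scalar maximal function gives $[W]^{1/(p(p-1))}$, and the total exponent is $\tfrac{p}{p(p-1)}+\tfrac{p-1}{p(p-1)}+\tfrac{1}{p(p-1)}=\tfrac{2p}{p(p-1)}=\tfrac{2}{p-1}$---precisely the naive iteration bound you set out to beat. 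The advertised savings are illusory: replacing the direction-1 matrix maximal function by a scalar one saves $[W]^{1/p}$, but the H\"older weight factor $\Phi_{\gamma}$ costs exactly $[W]^{1/p}$ back. The paper avoids this double payment by extracting the single weight factor $[W]^{1/p}$ at the biparameter level and reducing to an \emph{unweighted} strong maximal function in both directions simultaneously.
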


\begin{proof}
The proof is an adaptation of the one-parameter case due to Isralowitz--Kwon--Pott \cite{isralowitz-kwon-pott}.
We only outline the argument.

Lemma \ref{l: first scalar} applied to $W'$ with exponent $p'$ and matrix $A:=\cW\ci{R}$ gives
\begin{equation*}
[|W^{-1/p}\cW\ci{R}|^{p'}]\ci{A_{p'},\bfD}\leq C(p,d) [W]\ci{A_p,\bfD}^{p'/p},\qquad\forall R\in\bfD.
\end{equation*}
Hence, the function $|W^{-1/p}\cW\ci{R}|^{p'}$ is a scalar $\bfD$-dyadic biparameter $A_{p'}$ weight and it satisfies the reverse H\ddoto lder inequality \cite[Proposition 2.2]{holmes-petermichl-wick} (for the reader's convenience a full proof is included in the appendix), for all $R\in\bfD$.
Taking $\e \lesssim_{n,m,p,d} [W]\ci{A_p,\bfD}^{-p'/p}$
we have
\begin{equation*}
\left(\strokedint_{R}|\cW\ci{R}W^{-1/p}(x)|^{p'(1+\e)}\mathd x\right)^{1/(p'(1+\e))}
\lesssim_{p,n,m}\left(\strokedint_{R}|W^{-1/p}(x)\cW\ci{R}|^{p'}\mathd x\right)^{1/p'}
\lesssim_{p,d}[W]\ci{A_p,\bfD}^{1/p},
\end{equation*}
for all $R\in\bfD$.
It follows by H\ddoto lder's inequality that
\begin{align*}
&\La|\cW\ci{R}f|\Ra\ci{R}\\
&\leq\left(\strokedint_{R}|\cW\ci{R}W^{-1/p}(x)|^{p'(1+\e)}\mathd x\right)^{1/(p'(1+\e))}\left(\strokedint_{R}|W^{1/p}(x)f(x)|^{p(1+\e)/(1+p\e)}\mathd x\right)^{(1+p\e)/(p(1+\e))}\\
&\lesssim_{p,d,n,m}[W]\ci{A_p,\bfD}^{1/p}M\ci{a,\bfD}(|W^{1/p}f|)(x),
\end{align*}
for all $x\in R$ and for all $R\in\bfD$, where $a:=\frac{p(1+\e)}{1+p\e}\in(1,p)$ and
\begin{equation*}
M\ci{a,\bfD}g:=\sup_{R\in\bfD}\La|g|^{a}\Ra\ci{R}^{1/a}\1\ci{R}.
\end{equation*}
Thus
\begin{equation}
\label{dominate weighted maximal function by unweighted}
\wt{M}\ci{\bfD,W}f\lesssim_{p,d,n,m}[W]^{1/p}\ci{A_p,\bfD}M\ci{a,\bfD}(|W^{1/p}f|).
\end{equation}
Notice that~\eqref{lower bound matrix A_p characteristic} implies $\e\leq c(m,n,p,d)$ for some finite positive constant $c(m,n,p,d)$ depending only on $n,m,p,d$, but independent of the weight and its $A_p$ characteristic. By iterating the classical bounds for the dyadic Hardy--Littlewood maximal function, we get that
\begin{equation*}
\Vert M\ci{\bfD}\Vert\ci{L^{q}\rightarrow L^{q}}\leq 4(q')^{2/q},\qquad\forall 1<q<\infty,
\end{equation*}
where
\begin{equation*}
M\ci{\bfD}g:=\sup_{R\in\bfD}\La|g|\Ra\ci{R}\1\ci{R}.
\end{equation*}
Therefore, we have
\begin{align*}
\Vert M\ci{a,\bfD}g\Vert\ci{L^{p}}^{p}
&=\Vert M\ci{\bfD}(|g|^{a})\Vert\ci{L^{p/a}}^{p/a}\leq 4^{p/a} ((p/a)')^{2}\Vert |g|^{a}\Vert\ci{L^{p/a}}^{p/a}\\
&\lesssim_{p,d} \e^{-2}\Vert |g|^{a}\Vert\ci{L^{p/a}}^{p/a}\sim_{p,d,n,m}[W]\ci{A_p,\bfD}^{2/(p-1)}\Vert g\Vert\ci{L^{p}}^{p},
\end{align*}
where we used that $\frac{p}{a}<p$ and that
\begin{equation*}
(p/a)'\leq\frac{1+p\cdot c(m,n,p,d)}{p-1}\e^{-1}.
\end{equation*}
Finally, since $\Vert |W^{1/p}f|\Vert\ci{L^{p}}=\Vert f\Vert\ci{L^{p}(W)}$, the conclusion follows.
\end{proof}

\section{Biparameter matrix-weighted square functions}
\label{s: square functions}

In this section we obtain matrix-weighted bounds for square functions. These and their counterparts for shifted square functions (see Subsection \ref{s: shifted square functions}) are crucial for estimating the cancellative terms in Martikainen's representation \eqref{martikainen representation} (see Section \ref{s: cancellative}).

\subsection{Matrix-weighted vector-valued extensions for linear operators}

\begin{lm}
\label{l: matrix weight vector valued extensions}
Let $W$ be a $d\times d$-matrix weight on $\R^n$. Let $T:L^{p}(W)\rightarrow L^{p}(\R^n;H)$ be a bounded linear operator, where $H$ is any Hilbert space. Then
\begin{equation*}
\bigg\Vert\bigg(\sum_{k=1}^{\infty}\Vert T(f_k)\Vert^2\bigg)^{1/2}\bigg\Vert\ci{L^{p}}\lesssim_{p}\Vert T\Vert\cdot\bigg\Vert \bigg(\sum_{k=1}^{\infty}|W^{1/p}f_k|^2\bigg)^{1/2}\bigg\Vert\ci{L^{p}},
\end{equation*}
for any sequence $(f_k)^{\infty}_{k=1}$ in $L^{p}(W)$, where the implied constant does not depend on the Hilbert space $H$ or $d$.
\end{lm}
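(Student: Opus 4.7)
The natural approach is the standard Khintchine/Kahane randomization trick, but applied simultaneously on both sides of the operator $T$. The payoff of this method is precisely that the constants it produces depend only on $p$, because Kahane's inequality in a Hilbert space has constants independent of the dimension of the Hilbert space (and the usual $\C^d$ with Euclidean norm is itself a Hilbert space). Thus the bound will be dimension-free in both $d$ and $H$, as required.

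Concretely, let $\{r_k\}_{k=1}^{\infty}$ be the Rademacher functions on $[0,1]$ and fix a sequence $(f_k)\in L^{p}(W)$. The first step is the Kahane--Khintchine inequality applied pointwise in $x\in\R^n$ inside the Hilbert space $H$: for any finitely supported sequence $(v_k)\subset H$,
\begin{equation*}
\Bigl(\sum_{k}\Vert v_k\Vert_{H}^{2}\Bigr)^{p/2}\sim_{p}\int_{0}^{1}\Bigl\Vert\sum_{k}r_k(t)v_k\Bigr\Vert_{H}^{p}\mathd t,
\end{equation*}
with constants independent of $H$. Applying this with $v_k = T(f_k)(x)$ and integrating in $x$, Fubini gives
\begin{equation*}
\int_{\R^n}\Bigl(\sum_{k}\Vert T(f_k)(x)\Vert_{H}^{2}\Bigr)^{p/2}\mathd x
\sim_{p}\int_{0}^{1}\int_{\R^n}\Bigl\Vert\sum_{k}r_k(t)T(f_k)(x)\Bigr\Vert_{H}^{p}\mathd x\,\mathd t.
\end{equation*}

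The second step uses linearity of $T$: for each $t\in[0,1]$, $\sum_{k}r_k(t)T(f_k)=T\bigl(\sum_{k}r_k(t)f_k\bigr)$, so the inner integral is exactly $\Vert T(g_t)\Vert_{L^{p}(\R^n;H)}^{p}$ with $g_t:=\sum_{k}r_k(t)f_k\in L^{p}(W)$. By hypothesis this is bounded by $\Vert T\Vert^{p}\Vert g_t\Vert_{L^{p}(W)}^{p}$, that is
\begin{equation*}
\int_{\R^n}\Bigl\Vert\sum_{k}r_k(t)T(f_k)(x)\Bigr\Vert_{H}^{p}\mathd x
\leq \Vert T\Vert^{p}\int_{\R^n}\Bigl|W(x)^{1/p}\sum_{k}r_k(t)f_k(x)\Bigr|^{p}\mathd x.
\end{equation*}

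The third step applies Kahane--Khintchine in the reverse direction, this time in the Hilbert space $\C^d$, pointwise in $x$: the sequence $v_k:=W(x)^{1/p}f_k(x)\in\C^d$ satisfies
\begin{equation*}
\int_{0}^{1}\Bigl|\sum_{k}r_k(t)W(x)^{1/p}f_k(x)\Bigr|^{p}\mathd t
\sim_{p}\Bigl(\sum_{k}|W(x)^{1/p}f_k(x)|^{2}\Bigr)^{p/2},
\end{equation*}
with a constant independent of $d$ (viewing $\C^d$ as a Hilbert space). Combining the three steps via Fubini yields the claim. One passes from finite sums to countable ones by monotone convergence in the standard way. There is no real obstacle; the only subtle point worth flagging is the need to use the Hilbert-space valued Kahane inequality twice rather than the scalar Khintchine inequality, precisely so that the constants do not see $H$ or $d$.
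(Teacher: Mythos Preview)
Your proof is correct and follows essentially the same approach as the paper: reduce to finite sums, apply Khintchine's inequality for Hilbert spaces to the $H$-valued side, use linearity and boundedness of $T$, and then apply Khintchine again in $\C^d$ to rewrite the right-hand side. The paper's argument is structured identically, including the observation that the Hilbert-space Khintchine constants are independent of $H$ and $d$.
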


\begin{proof}
First of all, by the Monotone Convergence Theorem it suffices to prove only that
\begin{equation*}
\bigg\Vert\bigg(\sum_{k=1}^{N}\Vert T(f_k)\Vert^2\bigg)^{1/2}\bigg\Vert\ci{L^{p}}\lesssim_{p}\Vert T\Vert\cdot\bigg\Vert \bigg(\sum_{k=1}^{N}|W^{1/p}f_k|^2\bigg)^{1/2}\bigg\Vert\ci{L^{p}},\qquad\forall N=1,2,\ldots,
\end{equation*}
provided the implied constant does not depend on $N$.

We fix now a positive integer $N$. Then, using Khintchine's inequalities for Hilbert spaces (see e.g. \cite{hytonen-analysis-banach-spaces}) we have
\begin{align*}
&\bigg\Vert\bigg(\sum_{k=1}^{N}\Vert T(f_k)\Vert^2\bigg)^{1/2}\bigg\Vert\ci{L^{p}}^{p}
\sim_{p}\bigg\Vert\bigg(\int_{\Omega}\bigg\Vert\sum_{k=1}^{N}\e_k(\omega)T(f_k)\bigg\Vert^{p}\mathd\bP(\omega)\bigg)^{1/p}\bigg\Vert\ci{L^{p}}^{p}\\
&=\int_{\Omega}\bigg\Vert T\bigg(\sum_{k=1}^{N}\e_k(\omega)f_k\bigg)\bigg\Vert\ci{L^{p}(\R^n;H)}^{p}\mathd\bP(\omega)\leq\Vert T\Vert^p\cdot\int_{\Omega}\bigg\Vert \sum_{k=1}^{N}\e_k(\omega)f_k\bigg\Vert\ci{L^{p}(W)}^{p}\mathd\bP(\omega)\\
&=\Vert T\Vert^{p}\cdot\int_{\R^n}\int_{\Omega}\bigg|\sum_{k=1}^{N}\e_k(\omega)W(x)^{1/p}f_k(x)\bigg|^{p}\mathd\bP(\omega)\mathd x\\
&\sim_{p}\Vert T\Vert^{p}\cdot\int_{\R^n}\bigg(\sum_{k=1}^{N}|W(x)^{1/p}f_k(x)|^{2}\bigg)^{p/2}\mathd x,
\end{align*}
concluding the proof.
\end{proof}

\subsection{One-parameter matrix-weighted square functions}

Fix any dyadic grid $\cD$ in $\R^n$. We define the standard (unweighted) square function
\begin{equation*}
S\ci{\cD}f(x):=\bigg(\sum_{\substack{Q\in\cD\\\e\in\cE}}|f\ci{Q}^{\e}|^2\frac{\1\ci{Q}(x)}{|Q|}\bigg)^{1/2},\qquad f\in L^1\ti{loc}(\R^{n};\C^d),
\end{equation*}
and we recall that standard (unweighted) dyadic Littlewood--Paley theory gives
\begin{equation*}
\Vert S\ci{\cD}f\Vert\ci{L^{p}}\sim_{n,p,d}\Vert f\Vert\ci{L^{p}}.
\end{equation*}

Let $1<p<\infty$, and let $W$ be a $d\times d$-matrix valued weight on $\R^n$. The direct generalization of the square function from the scalar context is
\begin{equation*}
S\ci{\cD,W}f(x):=\bigg(\sum_{\substack{Q\in\cD\\\e\in\cE}}|W(x)^{1/p}f\ci{Q}^{\e}|^2\frac{\1\ci{Q}(x)}{|Q|}\bigg)^{1/2},\qquad x\in\R^n,~f\in L^1\ti{loc}(\R^n;\C^d).
\end{equation*}
However, in the theory of matrix-valued weights it is sometimes more convenient to use the following Triebel--Lizorkin type matrix-weighted square function corresponding to the weight $W$ (and the exponent $p$):
\begin{equation*}
\wt{S}\ci{\cD,W}f(x):=\bigg(\sum_{\substack{Q\in\cD\\\e\in\cE}}|\cW\ci{Q}f\ci{Q}^{\e}|^2\frac{\1\ci{Q}(x)}{|Q|}\bigg)^{1/2},\qquad x\in\R^n,~f\in L^1\ti{loc}(\R^n;\C^d),
\end{equation*}
where $\cW\ci{Q}$ is the reducing operator over $Q$ of the weight $W$ corresponding to the exponent $p$ (so if $p=2$, then just $\cW\ci{Q}=(W)\ci{Q}^{1/2}$). Notice that both definitions take into account the exponent $p$. 

Notice that in the special case $p=2$ we simply have
\begin{equation*}
\Vert\wt{S}\ci{\cD,W}f\Vert\ci{L^2}^2 = \sum_{\substack{Q\in\cD\\\e\in\cE}}|\La W\Ra\ci{Q}^{1/2}f\ci{Q}^{\e}|^2
= \sum_{\substack{Q\in\cD\\\e\in\cE}}\La|W^{1/2}f\ci{Q}^{\e}|^2\Ra\ci{Q}
= \Vert S\ci{\cD,W}f\Vert\ci{L^2}^2.
\end{equation*}

It will be useful in the following to view $S\ci{\cD,W}$ as the vector-valued linear operator $\vec{S}\ci{\cD,W}$ given by
\begin{equation*}
\vec{S}\ci{\cD,W}(f)(x):=\lbrace W^{1/p}(x)f^{\e}\ci{Q}h^{\e}\ci{Q}(x)\rbrace\ci{(Q,\e)\in\cD\times\cE},\qquad x\in\R^n,
\end{equation*}
in the sense that
\begin{equation*}
S\ci{\cD,W}(f)(x)=\Vert \vec{S}\ci{\cD,W}(f)(x)\Vert\ci{\ell^2(\cD\times\cE;\C^d)}.
\end{equation*}
In particular
\begin{equation*}
\Vert S\ci{\cD,W}(f)\Vert\ci{L^p}=\Vert \vec{S}\ci{\cD,W}(f)\Vert\ci{L^{p}(\R^n;\ell^2(\cD\times\cE;\C^d))}.
\end{equation*}

It has been proved in \cite{hytonen-petermichl-volberg}, \cite{treil-nonhomogeneous_square_function} that if $p=2$, then
\begin{equation}
\label{upper bound square functions p=2}
\Vert S\ci{\cD,W}f\Vert\ci{L^2}\lesssim_{n,d}[W]\ci{A_2,\cD}\Vert f\Vert\ci{L^2(W)},\qquad\forall f\in L^1\ti{loc}(\R^n;\C^d),
\end{equation}
and this estimate is sharp (even in the scalar case). For general $1<p<\infty$, it has been proved in \cite{isralowitz-square function} that
\begin{equation}
\label{isralowitz one parameter square function}
\Vert S\ci{\cD,W}f\Vert\ci{L^{p}}\lesssim_{p,d,n}[W]\ci{A_p,\cD}^{\gamma(p)}\Vert f\Vert\ci{L^{p}(W)},\qquad\forall f\in L^1\ti{loc}(\R^n;\C^d),
\end{equation}
where
\begin{equation}
\label{gamma(p)}
\gamma(p):=
\begin{cases}
\frac{1}{p-1},\text{ if }1<p\leq 2\\\\
\frac{1}{2}+\frac{1}{p(p-1)},\text{ if }p>2
\end{cases}
.
\end{equation}
It is noted in \cite{isralowitz-square function} that while this estimate is sharp in the regime $p\leq 2$ even in the scalar case, it is probably not optimal in the regime $p>2$.

\subsection{Biparameter matrix-weighted square functions}

Let $\bfD=\cD^1\times\cD^2$ be any dyadic product grid of rectangles in $\R^n\times\R^m$. We define the standard (unweighted) biparameter square function
\begin{equation*}
S\ci{\bfD}f(x):=\bigg(\sum_{\substack{R\in\bfD\\\bee\in\cE}}|f\ci{R}^{\bee}|^2\frac{\1\ci{R}(x)}{|R|}\bigg)^{1/2},\qquad f\in L^1\ti{loc}(\R^{n+m};\C^d),
\end{equation*}
and we recall that standard (unweighted) dyadic Littlewood--Paley theory gives
\begin{equation*}
\Vert S\ci{\bfD}f\Vert\ci{L^{p}}\sim_{p,d,n,m}\Vert f\Vert\ci{L^{p}},
\end{equation*}
for all (suitable) functions $f,$ which follows by iterating the classical one parameter result
(also check \cite{chang-fefferman} and \cite{tao}).

Let $1<p<\infty$, and let $W$ be a $d\times d$-matrix valued weight on $\R^n\times\R^m$. The direct generalization of the biparameter square function from the scalar context is
\begin{equation*}
S\ci{\bfD,W}f(x):=\bigg(\sum_{\substack{R\in\bfD\\\bee\in\cE}}|W(x)^{1/p}f\ci{R}^{\bee}|^2\frac{\1\ci{R}(x)}{|R|}\bigg)^{1/2},\qquad x\in\R^{n+m},~f\in L^1\ti{loc}(\R^{n+m};\C^d).
\end{equation*}
We also consider the following Triebel--Lizorkin type biparameter matrix-weighted square function corresponding to the weight $W$ (and the exponent $p$):
\begin{equation*}
\wt{S}\ci{\bfD,W}f(x):=\bigg(\sum_{\substack{R\in\bfD\\\bee\in\cE}}|\cW\ci{R}f\ci{R}^{\bee}|^2\frac{\1\ci{R}(x)}{|R|}\bigg)^{1/2},\qquad f\in L^1\ti{loc}(\R^{n+m};\C^d),
\end{equation*}
where reducing operators for $W$ are taken with respect to the exponent $p$. As before, in the special case $p=2$ we simply have $\Vert\wt{S}\ci{\bfD,W}f\Vert\ci{L^2}=\Vert S\ci{\bfD,W}f\Vert\ci{L^2}$.

\begin{lm}
\label{l: upper bound square function}
Let $1<p<\infty$, and let $W$ be a $d\times d$-matrix valued $\bfD$-dyadic biparameter $A_p$ weight on $\R^n\times\R^m$. Then, there holds
\begin{equation*}
\Vert S\ci{\bfD,W}f\Vert\ci{L^p}\lesssim_{d,p,n,m}[W]\ci{A_p,\bfD}^{2\gamma(p)}\Vert f\Vert\ci{L^p(W)},\qquad\forall f\in L^{\infty}\ti{c}(\R^{n+m};\C^d),
\end{equation*}
where $L^{\infty}\ti{c}$ indicates the space of compactly supported $L^{\infty}$ functions.
\end{lm}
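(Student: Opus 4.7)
The strategy is iteration: I would reduce the biparameter bound to two applications of the one-parameter bound \eqref{isralowitz one parameter square function}, one in each coordinate direction, using the vector-valued extension principle of Lemma \ref{l: matrix weight vector valued extensions} to pass matrix-weighted estimates through the $\ell^2$ structure. Write $\bfD = \cD^1 \times \cD^2$ and, for $J \in \cD^2$ and $\e_2 \in \cE^2$, set $F^{\e_2}_J(x_1) := f^{\e_2,2}_J(x_1) = (f(x_1,\fdot),h^{\e_2}_J)$, so that $f^{\bee}_{I\times J} = (F^{\e_2}_J)^{\e_1}_I$. The key algebraic identity is
\begin{equation*}
\wt{S}\ci{\bfD,W}f(x_1,x_2)^2 = \sum_{J\in\cD^2,\e_2\in\cE^2} \frac{\1\ci{J}(x_2)}{|J|}\, \bigl[\wt{S}\ci{\cD^1, W(\fdot,x_2)}(F^{\e_2}_J)(x_1)\bigr]^2,
\end{equation*}
which displays the biparameter square function as a vector-valued one-parameter square function in $x_1$ applied to the sequence of ``partial Haar coefficients'' in $x_2$.

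The first main step is to fix $x_2$ and view $T := \vec{\wt{S}}\ci{\cD^1, W(\fdot,x_2)}$ as a bounded linear operator from $L^p(W(\fdot,x_2))$ into $L^p(\R^n;\ell^2(\cD^1\times\cE^1;\C^d))$. By \eqref{isralowitz one parameter square function} applied to the one-parameter weight $W(\fdot,x_2)$, and by Lemma \ref{l: two-weight-biparameter A_p implies uniform A_p in each coordinate} (with $V = W'$), this operator has norm $\lesssim_{p,d,n} [W(\fdot,x_2)]\ci{A_p,\cD^1}^{\gamma(p)} \lesssim [W]\ci{A_p,\bfD}^{\gamma(p)}$ for a.e.\ $x_2$. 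I then feed the sequence $\{F^{\e_2}_J \cdot \1\ci{J}(x_2)/\sqrt{|J|}\}\ci{J,\e_2}$ into Lemma \ref{l: matrix weight vector valued extensions} (treating the multiplicative constant $\1\ci{J}(x_2)/\sqrt{|J|}$ as a scalar in $x_1$). This yields, for a.e.\ fixed $x_2$,
\begin{equation*}
\int_{\R^n} \wt{S}\ci{\bfD,W}f(x_1,x_2)^p\,\dd x_1 \lesssim_{p,d,n} [W]\ci{A_p,\bfD}^{p\gamma(p)} \int_{\R^n} \Bigl(\sum_{J,\e_2} \tfrac{\1\ci{J}(x_2)}{|J|}\, |W(x_1,x_2)^{1/p}F^{\e_2}_J(x_1)|^2 \Bigr)^{p/2}\dd x_1.
\end{equation*}

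The second step is to integrate in $x_2$ and recognize the inner expression as $[\wt{S}\ci{\cD^2, W(x_1,\fdot)}(f(x_1,\fdot))(x_2)]^2$. Applying Fubini and then, for a.e.\ fixed $x_1$, the one-parameter bound \eqref{isralowitz one parameter square function} to the weight $W(x_1,\fdot)$ combined again with Lemma \ref{l: two-weight-biparameter A_p implies uniform A_p in each coordinate}, gives
\begin{equation*}
\int_{\R^m} [\wt{S}\ci{\cD^2, W(x_1,\fdot)}(f(x_1,\fdot))(x_2)]^p\,\dd x_2 \lesssim_{p,d,m} [W]\ci{A_p,\bfD}^{p\gamma(p)} \int_{\R^m} |W(x_1,x_2)^{1/p}f(x_1,x_2)|^p\,\dd x_2.
\end{equation*}
Integrating this in $x_1$ multiplies the two factors of $[W]\ci{A_p,\bfD}^{p\gamma(p)}$, yielding the desired exponent $2\gamma(p)$.

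The main obstacle is the first step: unlike in the scalar case, one cannot simply invoke a scalar-valued Fefferman--Stein-type vector extension because the matrix weight sits inside each scalar term. The role of Lemma \ref{l: matrix weight vector valued extensions} (via Khintchine's inequality in a Hilbert space) is precisely to promote an arbitrary bounded matrix-weighted linear operator to its vector-valued extension with the same norm, which is what makes the iteration go through uniformly. Also, one should verify that for $f \in L^{\infty}\ti{c}$ all the partial Haar coefficients $F^{\e_2}_J$ lie in $L^p(W(\fdot,x_2))$ and that the sums appearing can be truncated to finite ones with a monotone convergence argument at the end; this is straightforward since $f$ is compactly supported and bounded and only routine finiteness checks are needed.
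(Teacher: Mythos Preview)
Your proposal is correct and follows essentially the same approach as the paper's proof: both iterate the one-parameter bound \eqref{isralowitz one parameter square function} twice, using Lemma \ref{l: matrix weight vector valued extensions} to push the first application through the $\ell^2$ structure, and Lemma \ref{l: two-weight-biparameter A_p implies uniform A_p in each coordinate} to control the slice characteristics uniformly. The only cosmetic difference is the order of the coordinates---the paper first applies the vector-valued extension to $\vec{\wt{S}}\ci{\cD^2,W(x_1,\fdot)}$ (fixing $x_1$) over the sequence $\{(Q^{\e_1,1}_{R_1}f)(x_1,\fdot)\}$ and then the scalar one-parameter bound in the first variable, whereas you swap the roles of $x_1$ and $x_2$; by symmetry this is immaterial.
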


\begin{proof}
The proof will rely on Lemma \ref{l: matrix weight vector valued extensions}. We have
\begin{align*}
&\Vert S\ci{\bfD,W}f\Vert\ci{L^p}^{p}=\int_{\R^n}\bigg(\int_{\R^m}\bigg(\sum_{\substack{R_1\in\cD^1\\\e_1\in\cE^1}}\sum_{\substack{R_2\in\cD^2\\\e_2\in\cE^2}}|W(x_1,x_2)^{1/p}f\ci{R_1\times R_2}^{\e_1\e_2}|^2\frac{\1\ci{R_1}(x_1)\1\ci{R_2}(x_2)}{|R_1|\cdot|R_2|}\bigg)^{p/2}\mathd x_2\bigg)\mathd x_1\\
&=\int_{\R^n}\bigg(\int_{\R^m}\bigg(\sum_{\substack{R_1\in\cD^1\\\e_1\in\cE^1}}(S\ci{\cD^2,W(x_1,\fdot)}((Q^{\e_1,1}
\ci{R_1}f)(x_1,\fdot))(x_2))^2\bigg)^{p/2}\mathd x_2\bigg)\mathd x_1\\
&=\int_{\R^n}\bigg(\int_{\R^m}\bigg(\sum_{\substack{R_1\in\cD^1\\\e_1\in\cE^1}}\Vert\vec{S}\ci{\cD^2,W(x_1,\fdot)}((Q^{\e_1,1}
\ci{R_1}f)(x_1,\fdot))(x_2)\Vert\ci{l^2(\cD^2\times\cE^2)}^2\bigg)^{p/2}\mathd x_2\bigg)\mathd x_1,
\end{align*}
where we recall that
\begin{equation*}
Q^{\e_1,1}\ci{R_1}f(y_1,y_2):=Q\ci{R_1}^{\e_1}(f(\fdot,y_2))(y_1)=(f(\fdot,y_2),h^{\e_1}\ci{R_1})h^{\e_1}\ci{R_1}(y_1).
\end{equation*}
We have that $[W(x_1,\fdot)]\ci{A_p,\cD^2}\lesssim_{p,d}[W]\ci{A_p,\bfD}$, for a.e.~$x_1\in\R^n$ (see Lemma \ref{l: uniform A_p in each coordinate implies biparameter A_p}), as well as \eqref{isralowitz one parameter square function}, so in view of Lemma \ref{l: matrix weight vector valued extensions} we obtain that $\Vert S\ci{\bfD,W}f\Vert\ci{L^p}^{p}$ is bounded by
\begin{align*}
&[W]\ci{A_p,\bfD}^{p\gamma(p)}\int_{\R^n}\bigg(\int_{\R^m}\bigg(\sum_{\substack{R_1\in\cD^1\\\e_1\in\cE^1}}|W(x_1,x_2)^{1/p}(Q^{\e_1,1}
\ci{R_1}f)(x_1,x_2)|^2\bigg)^{p/2}\mathd x_2\bigg)\mathd x_1\\
&=[W]\ci{A_p,\bfD}^{p\gamma(p)}\int_{\R^m}\Vert S\ci{\cD^1,W(\fdot,x_2)}(f(\fdot,x_2))\Vert\ci{L^{p}(\R^n)}^p\mathd x_2\\
&\lesssim_{m,d,p}[W]\ci{A_p,\bfD}^{2p\gamma(p)}\int_{\R^m}\Vert f(\fdot,x_2)\Vert\ci{L^{p}(W(\fdot,x_2))}^p\mathd x_2=[W]\ci{A_p,\bfD}^{2p\gamma(p)}\Vert f\Vert\ci{L^{p}(W)}^p,
\end{align*}
where in the last $\lesssim_{p,m,d}$ we used again the one-parameter result from \cite{isralowitz-square function}, concluding the proof.
\end{proof}

\begin{lm}
\label{l: dominate TL square function by square function}
Let $1<p<\infty$, and let $W$ be a $d\times d$-matrix valued $\bfD$-dyadic biparameter $A_p$ weight on $\R^n\times\R^m$. Then, there holds
\begin{equation*}
\Vert\wt{S}\ci{\bfD,W}f\Vert\ci{L^p}\lesssim_{d,p,n,m}[W]\ci{A_p,\bfD}^{1/p} \Vert S\ci{\bfD,W}f\Vert\ci{L^{p}},\qquad\forall f\in L^1\ti{loc}(\R^{n+m};\C^d).
\end{equation*}
\end{lm}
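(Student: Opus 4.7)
The plan is to reduce the statement to an application of the classical scalar vector-valued Fefferman--Stein maximal inequality for the dyadic strong maximal function. The starting point is Lemma \ref{l: second scalar}(3), which for each $R\in\bfD$ and each $\bee\in\cE$ (applied with $E=R$ and $e=f\ci{R}^{\bee}$) provides the linearised control
\begin{equation*}
|\cW\ci{R} f\ci{R}^{\bee}| \lesssim\ci{p,d} C\ci{R,W,p}^{1/p}\strokedint_R |W(y)^{1/p} f\ci{R}^{\bee}|\,\mathd y \leq [W]\ci{A_p,\bfD}^{1/p}\strokedint_R |W(y)^{1/p} f\ci{R}^{\bee}|\,\mathd y.
\end{equation*}
Squaring and inserting this into the definition of $S\ci{\bfD,W}f(x)^2$ produces the pointwise bound
\begin{equation*}
S\ci{\bfD,W}f(x)^2 \lesssim\ci{p,d}[W]\ci{A_p,\bfD}^{2/p}\sum_{R\in\bfD,\bee\in\cE}\left(\strokedint_R |W(y)^{1/p} f\ci{R}^{\bee}|\,\mathd y\right)^2 \frac{\1\ci{R}(x)}{|R|}.
\end{equation*}

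The right-hand side admits a clean reformulation in terms of the scalar nonnegative functions
\begin{equation*}
b\ci{R}^{\bee}(y) := |W(y)^{1/p} f\ci{R}^{\bee}|\frac{\1\ci{R}(y)}{\sqrt{|R|}},
\end{equation*}
for which a direct calculation yields $(b\ci{R}^{\bee})\ci{R}^2 = (\strokedint_R |W^{1/p} f\ci{R}^{\bee}|\,\mathd y)^2/|R|$ and $\sum_{R,\bee}b\ci{R}^{\bee}(y)^2 = \wt{S}\ci{\bfD,W}f(y)^2$. The previous pointwise bound thus reads
\begin{equation*}
S\ci{\bfD,W}f(x)^2 \lesssim\ci{p,d}[W]\ci{A_p,\bfD}^{2/p}\sum_{R,\bee} (b\ci{R}^{\bee})\ci{R}^2\, \1\ci{R}(x).
\end{equation*}

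Because $(b\ci{R}^{\bee})\ci{R}\,\1\ci{R}(x)$ vanishes for $x\notin R$ and is bounded by $M\ci{\bfD}(b\ci{R}^{\bee})(x)$ for $x\in R$ (where $M\ci{\bfD}$ is the dyadic strong maximal function associated to $\bfD$), we obtain $(b\ci{R}^{\bee})\ci{R}^2\,\1\ci{R}(x) \leq (M\ci{\bfD}b\ci{R}^{\bee}(x))^2$ for every $x$. Substituting and taking the $L^p$-norm of the square roots reduces the matter to the scalar vector-valued Fefferman--Stein inequality
\begin{equation*}
\bigg\Vert\bigg(\sum_{R,\bee}(M\ci{\bfD}b\ci{R}^{\bee})^2\bigg)^{1/2}\bigg\Vert\ci{L^p} \lesssim\ci{p,n,m}\bigg\Vert\bigg(\sum_{R,\bee}(b\ci{R}^{\bee})^2\bigg)^{1/2}\bigg\Vert\ci{L^p},
\end{equation*}
valid for $1<p<\infty$ and for the biparameter dyadic strong maximal function by iterating the standard one-parameter Fefferman--Stein inequality in each variable. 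Recognising the right-hand side as $\Vert\wt{S}\ci{\bfD,W}f\Vert\ci{L^p}$ closes the chain of estimates.

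There is no serious obstacle here: all the required pieces are already in place. The single point that deserves attention is the use of the $L^1$-average control of $\cW\ci{R}$ provided by Lemma \ref{l: second scalar}(3)—which is precisely what makes the auxiliary scalar functions $b\ci{R}^{\bee}$ nonnegative and of a form amenable to the \emph{scalar} Fefferman--Stein inequality—rather than the weaker $L^p$-average bound $|\cW\ci{R}e|\lesssim(|W^{1/p}e|^p)\ci{R}^{1/p}$ coming directly from the definition of reducing matrices, which would not plug into a scalar vector-valued maximal inequality and which does not produce the correct $[W]\ci{A_p,\bfD}^{1/p}$ dependence.
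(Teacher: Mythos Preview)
Your proof is correct, and the reduction to scalar Fefferman--Stein via the functions $b\ci{R}^{\bee}$ is clean. After the common first step (the application of Lemma~\ref{l: second scalar}(3), which both proofs share), the paper proceeds differently: it rewrites the resulting averaged square function as $\Vert F\Vert\ci{L^p}$ for the function $F=\sum_{R,\bee}(|W^{1/p}f\ci{R}^{\bee}|)\ci{R}h\ci{R}^{\bee}$ via unweighted Littlewood--Paley, and then estimates $\Vert F\Vert\ci{L^p}$ by duality against $g\in L^{p'}$, using the pointwise bound $\sum_{R,\bee}|W(x)^{1/p}f\ci{R}^{\bee}|\,|g\ci{R}^{\bee}|\,|R|^{-1}\1\ci{R}(x)\leq \wt{S}\ci{\bfD,W}f(x)\,S\ci{\bfD}g(x)$ together with H\"older and the unweighted square function equivalence for $g$. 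Your route replaces the duality/Littlewood--Paley step by the observation $(b\ci{R}^{\bee})\ci{R}\1\ci{R}\leq M\ci{\bfD}b\ci{R}^{\bee}$ and an appeal to the biparameter Fefferman--Stein inequality (iterated from the one-parameter case). Both arguments are standard and yield the same $[W]\ci{A_p,\bfD}^{1/p}$ dependence; the paper's version stays within the square-function framework already set up, while yours trades that for a single black-box vector-valued maximal inequality and avoids duality altogether.
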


\begin{proof}
By the Monotone Convergence Theorem we can assume that $f$ has only finitely many nonzero Haar coefficients. Now, using Lemma \ref{l: second scalar} (3) we get
\begin{align*}
\Vert\wt{S}\ci{\bfD,W}f\Vert\ci{L^p}^{p}&=\int_{\R^n\times\R^m}\bigg(\sum_{\substack{R\in\bfD\\\bee\in\cE}}|\cW\ci{R}f\ci{R}^{\bee}|^2\frac{\1\ci{R}(x)}{|R|}\bigg)^{p/2}\mathd x\\
&\lesssim_{p,d}[W]\ci{A_p,\bfD}\int_{\R^n\times\R^m}\bigg(\sum_{\substack{R\in\bfD\\\bee\in\cE}}\La|W^{1/p} f\ci{R}^{\bee}|\Ra\ci{R}^2\frac{\1\ci{R}(x)}{|R|}\bigg)^{p/2}\mathd x.
\end{align*}
Thus, by dyadic Littlewood--Paley theory we only need to see that
\begin{align*}
\Vert F\Vert\ci{L^{p}}:=\bigg\Vert \sum_{\substack{R\in\bfD\\\bee\in\cE}}\La|W^{1/p}f\ci{R}^{\bee}|\Ra\ci{R}h\ci{R}^{\bee}\bigg\Vert\ci{L^p}\lesssim_{d,p,n,m} \Vert S\ci{\bfD,W}f\Vert\ci{L^{p}}.
\end{align*}

We use duality for that. Let $g\in L^{p'}(\R^{n+m})$. Then, we have
\begin{align*}
|( F,g)|&\leq \int_{\R^n\times\R^m}\sum_{\substack{R\in\bfD\\\bee\in\cE}}|W^{1/p}(x)f^{\bee}\ci{R}|\cdot h\ci{R}^{\bee}(x)\cdot|g^{\bee}\ci{R}|\cdot h\ci{R}^{\bee}(x)\mathd x\\
&\leq\int_{\R^n\times\R^m}(S\ci{\bfD,W}f)(x)(S\ci{\bfD}g)(x)\mathd x\\
&\leq \Vert S\ci{\bfD,W}f\Vert\ci{L^{p}}\Vert S\ci{\bfD}g\Vert\ci{L^{p'}}
\sim_{p,d,n,m}\Vert S\ci{\bfD,W}f\Vert\ci{L^{p}}\Vert g\Vert\ci{L^{p'}},
\end{align*}
concluding the proof.
\end{proof}

\begin{cor}
\label{c: upper bound TL square function}
Let $1<p<\infty$, and let $W$ be a $d\times d$-matrix valued $\bfD$-dyadic biparameter $A_p$ weight on $\R^n\times\R^m$. Then, there holds
\begin{equation*}
\Vert\wt{S}\ci{\bfD,W}f\Vert\ci{L^p}\lesssim_{d,p,n,m}[W]\ci{A_p,\bfD}^{\frac{1}{p}+2\gamma(p)}\Vert f\Vert\ci{L^p(W)},\qquad\forall f\in L^{\infty}\ti{c}(\R^{n+m};\C^d).
\end{equation*}
\end{cor}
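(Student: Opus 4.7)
The plan is to simply chain the two preceding lemmas. For $f \in L^{\infty}\ti{c}(\R^{n+m};\C^d)$, Lemma \ref{l: dominate TL square function by square function} provides the pointwise-to-operator comparison
\[
\Vert S\ci{\bfD,W}f\Vert\ci{L^p}\lesssim_{d,p,n,m}[W]\ci{A_p,\bfD}^{1/p}\Vert\wt S\ci{\bfD,W}f\Vert\ci{L^p},
\]
which converts the reducing-operator square function $S\ci{\bfD,W}$ (where $\cW\ci R$ acts on each Haar coefficient) into the pointwise-weighted square function $\wt S\ci{\bfD,W}$ (where $W(x)^{1/p}$ acts). This step costs a factor of $[W]\ci{A_p,\bfD}^{1/p}$, which comes from the bound $|\cW\ci R e|\lesssim C\ci{R,W,p}^{1/p}(|W^{1/p}e|)\ci R$ of Lemma \ref{l: second scalar}(3) applied Haar-coefficient-by-Haar-coefficient, followed by unweighted Littlewood--Paley duality against $S\ci{\bfD}g$ for $g\in L^{p'}$.

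Then Lemma \ref{l: upper bound square function} supplies the bound
\[
\Vert\wt S\ci{\bfD,W}f\Vert\ci{L^p}\lesssim_{d,p,n,m}[W]\ci{A_p,\bfD}^{2\gamma(p)}\Vert f\Vert\ci{L^p(W)},
\]
with $\gamma(p)$ as in \eqref{gamma(p)}. Concatenating the two estimates gives
\[
\Vert S\ci{\bfD,W}f\Vert\ci{L^p}\lesssim_{d,p,n,m}[W]\ci{A_p,\bfD}^{\frac{1}{p}+2\gamma(p)}\Vert f\Vert\ci{L^p(W)},
\]
which is precisely the claim.

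There is essentially no obstacle here: the work has been done in the two lemmas, and the corollary is a one-line composition. The only thing to double-check is that the hypothesis $f\in L^{\infty}\ti{c}$ required by Lemma \ref{l: upper bound square function} matches the hypothesis of the corollary, and that Lemma \ref{l: dominate TL square function by square function} (which only asks $f\in L^1\ti{loc}$) applies \emph{a fortiori}. Both hold trivially, so the exponents simply add as $\tfrac{1}{p}+2\gamma(p)$.
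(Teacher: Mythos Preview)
Your proof is correct and matches the paper's approach exactly: the paper simply states that the corollary follows by combining the two preceding lemmas, and your chaining of Lemma~\ref{l: dominate TL square function by square function} followed by Lemma~\ref{l: upper bound square function} is precisely that combination.
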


Using the above upper bounds and duality, we can deduce lower bounds.

\begin{cor}
\label{c: lower bounds square functions}
Let $1<p<\infty$, and let $W$ be a $d\times d$-matrix valued $\bfD$-dyadic biparameter $A_p$ weight on $\R^n\times\R^m$. Then, there holds
\begin{equation*}
\Vert f\Vert\ci{L^p(W)}\lesssim_{d,p,n,m}[W]\ci{A_{p},\bfD}^{\frac{2\gamma(p')}{p-1}}\Vert S\ci{\bfD,W}f\Vert\ci{L^{p}},\qquad\forall f\in L^{\infty}\ti{c}(\R^{n+m};\C^d),
\end{equation*}
\begin{equation*}
\Vert f\Vert\ci{L^p(W)}\lesssim_{d,p,n,m}[W]\ci{A_p,\bfD}^{\frac{1}{p}+\frac{2\gamma(p')}{p-1}}\Vert \wt{S}\ci{\bfD,W}f\Vert\ci{L^p},\qquad\forall f\in L^{\infty}\ti{c}(\R^{n+m};\C^d).
\end{equation*}
\end{cor}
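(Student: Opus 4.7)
The plan is to deduce both lower bounds from the upper bounds established in Lemma \ref{l: upper bound square function} and Corollary \ref{c: upper bound TL square function}, applied to the Muckenhoupt conjugate weight $W':=W^{-1/(p-1)}$ with respect to the exponent $p'$, via duality. Throughout I will use that under the unweighted $L^2$-pairing,
\begin{equation*}
\Vert f\Vert\ci{L^{p}(W)}=\sup\{|(f,h)|:~\Vert h\Vert\ci{L^{p'}(W')}\leq 1\},
\end{equation*}
that $W'$ is a biparameter $A_{p'}$ weight with $[W']\ci{A_{p'},\bfD}\sim\ci{p,d}[W]\ci{A_p,\bfD}^{1/(p-1)}$, and the key algebraic identity $1/p'+2\gamma(p')=(1/p')(p-1)+2\gamma(p')$, yielding $[W']\ci{A_{p'},\bfD}^{1/p'+2\gamma(p')}\sim\ci{p,d}[W]\ci{A_p,\bfD}^{1/p+2\gamma(p')/(p-1)}$ and $[W']\ci{A_{p'},\bfD}^{2\gamma(p')}\sim\ci{p,d}[W]\ci{A_p,\bfD}^{2\gamma(p')/(p-1)}$. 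Note also that $\wt S\ci{\bfD,W'}$ naturally involves $W'^{1/p'}=W^{-1/p}$ and $S\ci{\bfD,W'}$ uses the reducing operators $\cW'\ci{R}$.

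For the first inequality, I expand the pairing using the Haar basis and insert $W(x)^{1/p}W(x)^{-1/p}=I_d$ inside each term:
\begin{equation*}
\langle f\ci{R}^{\bee},h\ci{R}^{\bee}\rangle=\strokedint_{R}\langle W(x)^{1/p}f\ci{R}^{\bee},W(x)^{-1/p}h\ci{R}^{\bee}\rangle\,\dd x=\int_{R}\frac{\1\ci{R}(x)}{|R|}\langle W(x)^{1/p}f\ci{R}^{\bee},W(x)^{-1/p}h\ci{R}^{\bee}\rangle\,\dd x.
\end{equation*}
Summing over $R\in\bfD$ and $\bee\in\cE$, applying Cauchy--Schwarz pointwise in $x$ over the index set $\bfD\times\cE$, and then H\ddoto lder's inequality in $L^p\times L^{p'}$, I obtain
\begin{equation*}
|(f,h)|\leq\int\wt S\ci{\bfD,W}f(x)\cdot\wt S\ci{\bfD,W'}h(x)\,\dd x\leq\Vert\wt S\ci{\bfD,W}f\Vert\ci{L^p}\Vert\wt S\ci{\bfD,W'}h\Vert\ci{L^{p'}}.
\end{equation*}
Applying Lemma \ref{l: upper bound square function} to $W'$ and the exponent $p'$ bounds $\Vert\wt S\ci{\bfD,W'}h\Vert\ci{L^{p'}}$ by $[W']\ci{A_{p'},\bfD}^{2\gamma(p')}\Vert h\Vert\ci{L^{p'}(W')}$, and then taking the supremum over admissible $h$ and converting the characteristic gives the first estimate.

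For the second inequality, I instead insert $\cW\ci{R}\cW\ci{R}^{-1}=I_d$, so that
\begin{equation*}
|\langle f\ci{R}^{\bee},h\ci{R}^{\bee}\rangle|=|\langle\cW\ci{R}f\ci{R}^{\bee},\cW\ci{R}^{-1}h\ci{R}^{\bee}\rangle|\leq|\cW\ci{R}f\ci{R}^{\bee}|\cdot|\cW\ci{R}^{-1}h\ci{R}^{\bee}|\leq|\cW\ci{R}f\ci{R}^{\bee}|\cdot|\cW'\ci{R}h\ci{R}^{\bee}|,
\end{equation*}
where the last step uses Lemma \ref{l: replace inverse by prime}. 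Rewriting each factor via $|h\ci{R}^{\bee}|^2=\1\ci{R}/|R|$ and arguing as before (pointwise Cauchy--Schwarz over the index set $\bfD\times\cE$ followed by H\ddoto lder), I obtain
\begin{equation*}
|(f,h)|\leq\Vert S\ci{\bfD,W}f\Vert\ci{L^p}\Vert S\ci{\bfD,W'}h\Vert\ci{L^{p'}},
\end{equation*}
and Corollary \ref{c: upper bound TL square function} applied to $W'$ with exponent $p'$ yields $\Vert S\ci{\bfD,W'}h\Vert\ci{L^{p'}}\lesssim\ci{d,p,n,m}[W']\ci{A_{p'},\bfD}^{1/p'+2\gamma(p')}\Vert h\Vert\ci{L^{p'}(W')}$; the supremum over $h$ and the conversion of characteristics finishes the proof.

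There is no real obstacle here: all the hard estimates are already in place, and the argument is essentially the standard matrix-weighted duality sketch. The only mildly delicate point is the use of Lemma \ref{l: replace inverse by prime} to pass from $\cW\ci{R}^{-1}$ to $\cW'\ci{R}$ without paying any extra power of the characteristic (this is precisely why we get exponent $2\gamma(p')/(p-1)$ rather than something worse in the first estimate, and $1/p+2\gamma(p')/(p-1)$ in the second).
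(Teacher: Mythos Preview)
Your proof is correct and follows essentially the same approach as the paper: duality via the unweighted $L^2$-pairing, Haar expansion, inserting either $W(x)^{1/p}W(x)^{-1/p}$ or $\cW\ci{R}\cW\ci{R}^{-1}$ (with Lemma \ref{l: replace inverse by prime} in the latter case), pointwise Cauchy--Schwarz followed by H\"older, and then applying the upper bounds for $\wt S\ci{\bfD,W'}$ and $S\ci{\bfD,W'}$ with exponent $p'$. The only blemish is the stated ``algebraic identity'' $1/p'+2\gamma(p')=(1/p')(p-1)+2\gamma(p')$, which is false as written, but your conversions of the characteristics are nevertheless correct since $[W']\ci{A_{p'},\bfD}^{\alpha}\sim\ci{p,d}[W]\ci{A_p,\bfD}^{\alpha/(p-1)}$ and $(1/p')/(p-1)=1/p$.
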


\begin{proof}
We use duality. Let $f,g\in L^{\infty}\ti{c}(\R^{n+m};\C^d)$. Then, we have
\begin{align*}
|(f,g)|&=\bigg|\sum_{\substack{R\in\bfD\\\bee\in\cE}}\La f\ci{R}^{\bee},g\ci{R}^{\bee}\Ra\bigg|\leq
\sum_{\substack{R\in\bfD\\\bee\in\cE}}\strokedint_{R}|W(x)^{1/p}f\ci{R}^{\bee}|\cdot|W(x)^{-1/p}g\ci{R}^{\bee}|\mathd x\\
&\leq\int_{\R^{n+m}}(S\ci{\bfD,W}f(x))(S\ci{\bfD,W'}g(x))\mathd x\leq\Vert S\ci{\bfD,W}f\Vert\ci{L^{p}}\Vert S\ci{\bfD,W'}g\Vert\ci{L^{p'}}\\
&\lesssim_{p,d,n,m}\Vert S\ci{\bfD,W}f\Vert\ci{L^{p}}[W]\ci{A_{p'},\bfD}^{2\gamma(p')}\Vert g\Vert\ci{L^{p'}(W')}
\sim_{p,d}\Vert S\ci{\bfD,W}f\Vert\ci{L^{p}}[W]\ci{A_{p},\bfD}^{\frac{2\gamma(p')}{p-1}}\Vert g\Vert\ci{L^{p'}(W')}.
\end{align*}

The proof for $\wt{S}\ci{\bfD,W}$ is similar. It suffices to use that Lemma~\ref{l: replace inverse by prime} implies
\begin{align*}
|\La f\ci{R}^{\bee},g\ci{R}^{\bee}\Ra|\leq|\cW\ci{R}f\ci{R}^{\bee}|\cdot|(\cW\ci{R})^{-1}g\ci{R}^{\bee}|\leq|\cW\ci{R}f\ci{R}^{\bee}|\cdot|\cW'\ci{R}g\ci{R}^{\bee}|
\end{align*}
for all $R\in\bfD$ and for all $\e\in\cE.$
\end{proof}

In Section \ref{s: non-cancellative} we will need the following one-parameter counterpart of Corollary \ref{c: lower bounds square functions} (the proof is identical, using \eqref{isralowitz one parameter square function} instead of Lemma \ref{l: upper bound square function}).

\begin{cor}
\label{c: lower bounds square functions one parameter}
Let $1<p<\infty$. Let $\cD$ be a dyadic grid in $\R^n$, and let $W$ be a $d\times d$-matrix valued $\cD$-dyadic $A_p$ weight on $\R^n$. Then, there holds
\begin{equation*}
\Vert f\Vert\ci{L^p(W)}\lesssim_{d,p,n}[W]\ci{A_{p},\cD}^{\frac{\gamma(p')}{p-1}}\Vert S\ci{\cD,W}f\Vert\ci{L^{p}},\qquad\forall f\in L^{\infty}\ti{c}(\R^{n};\C^d).
\end{equation*}
\end{cor}

\begin{lm}
\label{l: replace weight tl square function}
Let $W,U$ be $d\times d$-matrix valued weights on $\R^{n+m}$. Then, there holds
\begin{equation*}
\wt{S}\ci{\bfD,W}f\lesssim_{d,p}[W,U']\ci{A_p,\bfD}^{1/p}\wt{S}\ci{\bfD,U}f.
\end{equation*}
\end{lm}

\begin{proof}
We have
\begin{align*}
(\wt{S}\ci{\bfD,W}f)^2&=\sum_{R\in\bfD}|\cW\ci{R}f\ci{R}|^2\frac{\1\ci{R}}{|R|}\leq
\sum_{R\in\bfD}|\cW\ci{R}\cU^{-1}\ci{R}|\cdot|\cU\ci{R}f\ci{R}|^{2}\frac{\1\ci{R}}{|R|}\\
&\lesssim_{p,d}\sum_{R\in\bfD}|\cW\ci{R}\cU'\ci{R}|\cdot|\cU\ci{R}f\ci{R}|^{2}\frac{\1\ci{R}}{|R|}
\lesssim_{p,d}[W,U']\ci{A_p,\bfD}^{2/p}(\wt{S}\ci{\bfD,U}f)^2,
\end{align*}
where in the first $\lesssim_{p,d}$ we used Lemma \ref{l: replace inverse by prime} coupled with \eqref{comparability_from_vector_to_matrix}, and in the second $\lesssim_{p,d}$ we used \eqref{biparameter two-weight A_p reducing}. 
\end{proof}

\subsection{\texorpdfstring{$L^{p}$}{Lp} matrix-weighted bounds for biparameter Haar multipliers}
\label{s: matrix-weight Haar}
Let $\bfD$ be any product dyadic grid in $\R\times\R$. Let $\s=(\s\ci{R})\ci{R\in\bfD}$ be a sequence of complex numbers such that $C:=\sup_{R\in\bfD}|\s\ci{R}|<\infty$. Set
\begin{equation*}
T_{\s}f:=\sum_{R\in\bfD}\s\ci{R}f\ci{R}h\ci{R},\qquad f\in L^2(\R^{2};\C^d).
\end{equation*}

\begin{lm}
\label{l: bounds Haar}
Let $1<p<\infty$. Let $W,U$ be $d\times d$-matrix valued $\bfD$-dyadic biparameter $A_p$ weights on $\R\times\R$ with $[W,U']\ci{A_p,\bfD}<\infty$. Then, we have
\begin{equation*}
\Vert T_{\sigma}\Vert\ci{L^{p}(W)\rightarrow L^{p}(W)}\lesssim_{d,p}C[W]\ci{A_p,\bfD}^{\alpha(p)},
\end{equation*}
where
\begin{equation}
\label{alpha(p)}
\alpha(p):=2\gamma(p)+\frac{2\gamma(p')}{p-1},
\end{equation}
and
\begin{equation*}
\Vert T_{\sigma}\Vert\ci{L^{p}(U)\rightarrow L^{p}(W)}\lesssim_{d,p}C[W,U']\ci{A_p,\bfD}^{1/p}[U]\ci{A_p,\bfD}^{\frac{1}{p}+2\gamma(p)}[W]\ci{A_p,\bfD}^{\frac{1}{p}+\frac{2\gamma(p')}{p-1}}.
\end{equation*}
If $p=2$, then
\begin{equation*}
\Vert T_{\sigma}\Vert\ci{L^{2}(U)\rightarrow L^{2}(W)}\lesssim_{d}C[W,U']\ci{A_2,\bfD}^{1/2}[U]\ci{A_2,\bfD}^{2}[W]\ci{A_2,\bfD}^{2}.
\end{equation*}
\end{lm}

\begin{proof}
Set $g:=T_{\s}f$. Using Corollaries~\ref{c: lower bounds square functions} and~\ref{c: upper bound TL square function} we obtain
\begin{equation*}
\Vert g\Vert\ci{L^{p}(W)} \lesssim_{p,d} [W]\ci{A_p,\bfD}^{\frac{2\gamma(p')}{p-1}}\Vert S\ci{\bfD,W}g\Vert\ci{L^{p}} \leq
C[W]\ci{A_p,\bfD}^{\frac{2\gamma(p')}{p-1}}\Vert S\ci{\bfD,W}f\Vert\ci{L^{p}}
\lesssim_{p,d}C[W]\ci{A_p,\bfD}^{\alpha(p)}\Vert f\Vert\ci{L^{p}(W)}.
\end{equation*}
Moreover, by part (2) of Corollary~\ref{c: lower bounds square functions}, Lemma~\ref{l: replace weight tl square function} and Corollary~\ref{c: upper bound TL square function}, we obtain
\begin{align*}
\Vert g\Vert\ci{L^{p}(W)}&\lesssim_{p,d}[W]\ci{A_p,\bfD}^{\frac{1}{p}+\frac{2\gamma(p')}{p-1}}\Vert \wt{S}\ci{\bfD,W}g\Vert\ci{L^{p}}\leq
C[W]\ci{A_p,\bfD}^{\frac{1}{p}+\frac{2\gamma(p')}{p-1}}\Vert \wt{S}\ci{\bfD,W}f\Vert\ci{L^{p}}\\
&\lesssim_{p,d}C[W]\ci{A_p,\bfD}^{\frac{1}{p}+\frac{2\gamma(p')}{p-1}}[W,U']\ci{A_p,\bfD}^{1/p}\Vert\wt{S}\ci{\bfD,U}f\Vert\ci{L^{p}}\\
&\lesssim_{p,d}C[W]\ci{A_p,\bfD}^{\frac{1}{p}+\frac{2\gamma(p')}{p-1}}[W,U']\ci{A_p,\bfD}^{1/p}[U]\ci{A_p,\bfD}^{\frac{1}{p}+2\gamma(p)}\Vert f\Vert\ci{L^{p}(U)}.
\end{align*}
The estimate for $p=2$ is proved similarly.
One has to use part (1) of Corollary~\ref{c: lower bounds square functions}, Lemma~\ref{l: upper bound square function} and the fact that if $p=2,$ then $\Vert\wt{S}\ci{\bfD,W}f\Vert\ci{L^2}=\Vert S\ci{\bfD,W}f\Vert\ci{L^2}$ and $\Vert\wt{S}\ci{\bfD,U}f\Vert\ci{L^2}=\Vert S\ci{\bfD,U}f\Vert\ci{L^2}$.
\end{proof}

\section{Handling the cancellative terms}
\label{s: cancellative}

In the sequel we restrict ourselves to the product space $\R\times\R$; this is done for the sake of simplicity and to ease the notation. However, all results and proofs extend readily to any product space $\R^n\times\R^m$.

\subsection{One-parameter matrix-weighted shifted square functions}

Let $\cD$ be any dyadic grid in $\R$. Let $i$ be a non-negative integer. The (unweighted) shifted square function of complexity $i$ is defined by
\begin{equation*}
S^{i}\ci{\cD}f:=\bigg(\sum_{I\in\cD}\bigg(\sum_{K\in\text{ch}_{i}(I)}|f\ci{K}|\bigg)^2\frac{\1\ci{I}}{|I|}\bigg)^{1/2},\qquad f\in L^1\ti{loc}(\R;\C^d).
\end{equation*}
Such shifted square functions arise naturally as (essentially) the square functions of (one-parameter) Haar shifts in the context of the representation theorem for Calder\'on--Zygmund operators due to Hyt\ddoto nen \cite{hytonen}, and also play from the same point of view a very important role in \cite{holmes-lacey-wick}.

Let $1<p<\infty$, and let $W$ by a $d\times d$-matrix valued weight on $\R$.  Define the matrix-weighted shifted square function corresponding to the weight $W$ (and the exponent $p$) of complexity $i$ by
\begin{equation*}
S^{i}\ci{\cD,W}f(x):=\bigg(\sum_{I\in\cD}\bigg(\sum_{K\in\text{ch}_{i}(I)}|W(x)^{1/p}f\ci{K}|\bigg)^2\frac{\1\ci{I}(x)}{|I|}\bigg)^{1/2},\qquad x\in\R, f\in L^1\ti{loc}(\R;\C^d),
\end{equation*}
as well as the analougous operator where the weight is introduced in terms of its reducing operators (corresponding to exponent p):
\begin{equation*}
\wt{S}^{i}\ci{\cD,W}f:=\bigg(\sum_{I\in\cD}\bigg(\sum_{K\in\text{ch}_{i}(I)}|\cW\ci{I}f\ci{K}|\bigg)^2\frac{\1\ci{I}}{|I|}\bigg)^{1/2},\qquad f\in L^1\ti{loc}(\R;\C^d).
\end{equation*}
Both of these definitions follow \cite{holmes-lacey-wick}, up to incorporating the weight in the operator. Consider also the following variant of $S^{i}\ci{\cD,W}$:
\begin{equation*}
S^{i}\ci{\cD,W,\ast}f(x):=\bigg(\sum_{I\in\cD}\bigg|W(x)^{1/p}\sum_{K\in\text{ch}_{i}(I)}f\ci{K}\bigg|^2\frac{\1\ci{I}(x)}{|I|}\bigg)^{1/2},\qquad x\in\R, f\in L^1\ti{loc}(\R;\C^d).
\end{equation*}
Obviously $S^i\ci{\cD,W,\ast}f\leq S^i\ci{\cD,W}f$.

\subsubsection{Sparse domination and \texorpdfstring{$L^p$}{Lp} matrix-weighted bounds}

Let $1<p<\infty$. Let $\cD$ be any dyadic grid in $\R$. Let $W$ be a $d\times d$-matrix valued weight on $\R$. Let $i$ be a non-negative integer. For all $J\in\cD$, define the localized operator
\begin{align*}
S^{i}\ci{\cD,W,J}f(x)&:=
\bigg(\sum_{R\in\cD(J)}\bigg(\sum_{P\in\text{ch}_{i}(R)}|W(x)^{1/p}f\ci{P}|\bigg)^2\frac{\1\ci{R}(x)}{|R|}\bigg)^{1/2},\qquad x\in\R,f\in L^1\ti{loc}(\R;\C^d).
\end{align*}
Given any dyadically sparse collection $\cS$ of dyadic intervals in $\R$, following \cite{isralowitz-square function} we define the sparse positive operator
\begin{equation*}
\cA\ci{\cS,W}f(x):=\bigg(\sum_{L\in\cS}\La|\cW\ci{L}f|\Ra\ci{L}^2|W(x)^{1/p}\cW\ci{L}^{-1}|^2\1\ci{L}(x)\bigg)^{1/2},\qquad x\in\R,f\in L^1\ti{loc}(\R;\C^d).
\end{equation*}
See Subsection \ref{s: sparse} for a brief overview of the concept of sparse families.

\begin{prop}
\label{p: sparse_one_parameter_shifted_square_function}
Let $J\in\cD$. Then, for all $\e\in(0,1)$ and for all $f\in L^1\ti{loc}(\R;\C^d)$, there exists a dyadically $\e$-sparse collection $\cS$ of dyadic subintervals of $J$ (depending on $\e,J,p,W,f$ and $i$), such that
\begin{equation*}
S^{i}\ci{\cD,W,J}f\lesssim_{p,d,\e} i2^{i+/2}\cA\ci{\cS,W}f.
\end{equation*}
\end{prop}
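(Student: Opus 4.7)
The plan is to adapt Isralowitz's stopping-time sparse-domination argument from \cite{isralowitz-square function} for unshifted matrix-weighted square functions, with additional bookkeeping for the shift parameters $i,j$. First, build $\cS$ recursively, starting with $J \in \cS$. For each $L \in \cS$, declare the stopping children $\ch\ci{\cS}(L)$ to be the maximal dyadic $L' \subsetneq L$ satisfying at least one of
\begin{equation*}
(|\cW\ci{L}f|)\ci{L'} > K(|\cW\ci{L}f|)\ci{L}, \qquad |\cW\ci{L'}\cW\ci{L}^{-1}| > K',
\end{equation*}
for sufficiently large constants $K,K' = K,K'(\e,p,d)$. The first is a Calder\'{o}n--Zygmund stopping for the scalar function $|\cW\ci{L}f|$; the second is a reducing-operator stopping in the spirit of Goldberg \cite{goldberg}. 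A weak-type $(1,1)$ argument for the dyadic maximal function, combined with a John-ellipsoid Carleson estimate, yields $\sum_{L'\in\ch\ci{\cS}(L)}|L'|\leq(1-\e)|L|$, so $\cS$ is dyadically $\e$-sparse.

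Next, I would establish the core pointwise estimate inside a stopping region. For $R \in \cD(J)$ with $L := \pi\ci{\cS}(R)$ (the smallest sparse interval containing $R$), the stopping bounds imply $(|\cW\ci{L}f|)\ci{R}\leq K(|\cW\ci{L}f|)\ci{L}$ and $|\cW\ci{R}\cW\ci{L}^{-1}|\leq K'$. Writing
\begin{equation*}
|W(x)^{1/p}f\ci{P}|\leq|W(x)^{1/p}\cW\ci{L}^{-1}|\cdot|\cW\ci{L}f\ci{P}|,\qquad P\in\ch_i(R),
\end{equation*}
and combining $|\cW\ci{L}f\ci{P}|\leq|P|^{1/2}(|\cW\ci{L}f|)\ci{P}$ with the identity $\sum_{P\in\ch_i(R)}(|\cW\ci{L}f|)\ci{P}=2^i(|\cW\ci{L}f|)\ci{R}$ gives, for each such $R$,
\begin{equation*}
\sum_{P\in\ch_i(R)}|W(x)^{1/p}f\ci{P}|\lesssim 2^{i/2}|R|^{1/2}|W(x)^{1/p}\cW\ci{L}^{-1}|(|\cW\ci{L}f|)\ci{L}.
\end{equation*}

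The remaining step is to square, divide by $|R|$, and sum over $R\ni x$; the subtlety here is the main obstacle. Naively, for a fixed $x\in L$, the chain of $R\ni x$ with $\pi\ci{\cS}(R)=L$ can have unbounded length, so term-by-term summation diverges. The resolution is to re-attribute each Haar-index $P\in\ch_i(R)$ to $\pi\ci{\cS}(P)$ (which may strictly refine $\pi\ci{\cS}(R)$): for each fixed $P$, only the $R$ whose scale lies within the $i$ dyadic generations above $\pi\ci{\cS}(P)$ contribute nontrivially with the $L=\pi\ci{\cS}(R)$-bound above, and the other contributions are instead controlled by the sparse interval of $P$. After this re-attribution and Cauchy--Schwarz over the (at most $i$) relevant scales, each $L\in\cS$ accumulates a contribution of order $i^{2}\cdot 2^i\,(|\cW\ci{L}f|)\ci{L}^2|W(x)^{1/p}\cW\ci{L}^{-1}|^2\1\ci{L}(x)$. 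Together with the prefactor $2^{j/2}$ from the definition of $\wt{S}^{i,j}\ci{\cD,W,J}$, this yields the claimed pointwise inequality. This cross-scale re-attribution is the delicate technical step; it is the matrix-weighted analog of the log-loss appearing in Hyt\ddoto nen's sparse domination for Haar shifts and is precisely the reason for the explicit factor $i$ in the statement.
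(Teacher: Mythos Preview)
Your proposal has a genuine gap at exactly the point you flag as delicate. The paper's stopping condition is \emph{not} a Calder\'on--Zygmund stopping on averages of $|\cW\ci{L}f|$ together with a reducing-operator stopping; it is a single stopping on the partial sums of the shifted square function itself with the reducing matrix frozen at the top interval. Concretely, one selects the maximal $L\subseteq J$ for which
\[
\sum_{\substack{I\in\cD(J)\\L\subseteq I}}\bigg(\sum_{P\in\ch_{i}(I)}|\cW\ci{J}f\ci{P}|\bigg)^2\frac{1}{|I|}>c^2\,i^2 2^{i}\,(|\cW\ci{J}f|)\ci{J}^2,
\]
and sparseness follows from the weak-$(1,1)$ bound $\Vert S^{i,j}\ci{\cD}\Vert\ci{L^1\to L^{1,\infty}}\lesssim i\,2^{(i+j)/2}$ applied to the $\C^d$-valued function $\1\ci{J}\cW\ci{J}f$. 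This is precisely Isralowitz's scheme in \cite{isralowitz-square function}; his stopping is on square-function level sets, not on averages plus a second reducing-operator condition. The factor $i$ thus enters through the weak-type constant, not through any scale-counting.

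The reason this matters is that stopping on the square function makes the infinite-chain problem disappear: for $x$ in the good part of $J$, the truncated sum $\sum_{I:\,L_k\subseteq I}\big(\sum_{P}|\cW\ci{J}f\ci{P}|\big)^2/|I|$ is \emph{by definition} at most the stopping threshold for every $L_k\ni x$, so one can pass to the limit directly. Your CZ-stopping only controls $(|\cW\ci{L}f|)\ci{R}$, which bounds each summand but not the sum. Your proposed re-attribution does not repair this: once you assign $P$ to $M=\pi\ci{\cS}(P)\subsetneq L$, the point $x\in R$ need not lie in $M$, so the corresponding sparse term $(|\cW\ci{M}f|)\ci{M}|W(x)^{1/p}\cW\ci{M}^{-1}|\1\ci{M}(x)$ vanishes and cannot absorb the contribution. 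Moreover, your second stopping condition $|\cW\ci{L'}\cW\ci{L}^{-1}|>K'$ is never invoked in your core estimate, and you give no argument for why it yields sparseness for a general locally integrable matrix weight (the Proposition carries no $A_p$ hypothesis). Replace both stopping rules by the single square-function stopping above and the proof goes through with no re-attribution needed.
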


\begin{proof}
The proof will be an adaptation of the proof of pointwise sparse domination of the usual matrix-weighted square function in \cite[Section 2]{isralowitz-square function}, in the same way that the proof in \cite[Proposition 5.3]{barron-pipher} is an adaptation of the proof in \cite{hytonen-petermichl-volberg}.

Set $\cS_{0}:=\lbrace J\rbrace$. Let $\cS_1$ be the family of all maximal dyadic subintervals $L$ of $J$ such that
\begin{equation*}
\sum_{\substack{I\in\cD(J)\\L\subseteq I}}\bigg(\sum_{P\in\text{ch}_{i}(I)}|\cW\ci{J}f\ci{P}|\bigg)^2\frac{1}{|I|}>c^2i^22^{i}\La|\cW\ci{J}f|\Ra\ci{J}^2,
\end{equation*}
for some large enough $c>0$ to be chosen in a moment (depending only on $\e$ and $d$). It is clear that
\begin{equation*}
L\subseteq\lbrace S^{i}\ci{\cD}(\1\ci{J}\cW\ci{J}f)>c\cdot i2^{i/2}\La|\cW\ci{J}f|\Ra\ci{J}\rbrace,\qquad\forall L\in\cS_1
\end{equation*}
(we emphasize that $\cW\ci{J}$ is a constant matrix). It was proved in \cite[Proposition 3.6]{barron_phd} that
\begin{equation*}
\Vert S^{i}\ci{\cD}\Vert\ci{L^1\rightarrow L^{1,\infty}}\lesssim i2^{i/2},
\end{equation*}
which trivially implies
\begin{equation*}
\Vert S^{i}\ci{\cD}\Vert\ci{L^1(\R;\C^d)\rightarrow L^{1,\infty}}\lesssim_{d} i2^{i/2}.
\end{equation*}
It follows that
\begin{equation*}
\sum_{L\in\cS_1}|L|\lesssim_{d}\frac{|J|}{c},
\end{equation*}
therefore
\begin{equation*}
\sum_{L\in\cS_1}|L|\leq(1-\e)|J|
\end{equation*}
provided $c$ is chosen to be large enough (depending only on $d$ and $\e$).

Set
\begin{equation*}
\cF:=\cD(J)\setminus\bigg(\bigcup_{L\in\cS_1}\cD(L)\bigg).
\end{equation*}
Set also $E:=\bigcup_{L\in\cS_1}L$. Then, we can write
\begin{align}
\label{decomposition}
S^{i}\ci{\cD,W,J}f(x)^{2}&=
\sum_{L\in\cF}\bigg(\sum_{I\in\text{ch}_{i}(L)}|W(x)^{1/p}f\ci{I}|\bigg)^2\frac{\1\ci{L\cap E}(x)}{|L|}\\
\nonumber&+\sum_{L\in\cF}\bigg(\sum_{I\in\text{ch}_{i}(L)}|W(x)^{1/p}f\ci{I}|\bigg)^2\frac{\1\ci{L\setminus E}(x)}{|L|}\\
\nonumber&+\sum_{L\in\cS_1}\sum_{I\in\cD(L)}\bigg(\sum_{K\in\text{ch}_{i}(I)}|W(x)^{1/p}f\ci{K}|\bigg)^2\frac{\1\ci{I}(x)}{|I|}.
\end{align}
In each of the terms
\begin{equation*}
\sum_{I\in\cD(L)}\bigg(\sum_{K\in\text{ch}_{i}(I)}|W(x)^{1/p}f\ci{K}|\bigg)^2\frac{\1\ci{I}(x)}{|I|},\qquad L\in\cS_1
\end{equation*}
we just iterate the construction. So it remains to estimate the first two terms in the right-hand side of \eqref{decomposition}. Call them $A(x)$ and $B(x)$ respectively.

To estimate term $A(x)$, we notice that $A(x)$ vanishes outside $E$, and that for all $I\in\cS_1$ and for all $x\in I$, if $L\in\cF$ is such that $x\in L$, then $L\cap I\neq\emptyset$, therefore necessarily $I\subsetneq L$, therefore
\begin{align*}
A(x)&=
\sum_{\substack{L\in\cF\\ I\subsetneq L}}\bigg(\sum_{K\in\text{ch}_{i}(L)}|W(x)^{1/p}f\ci{K}|\bigg)^2\frac{1}{|L|}\\
&\leq\sum_{\substack{L\in\cF\\ I\subsetneq L}}\bigg(\sum_{K\in\text{ch}_{i}(L)}|\cW\ci{J}f\ci{K}|\bigg)^2|W(x)^{1/p}\cW\ci{J}^{-1}|^2\frac{1}{|L|}\\
&=\sum_{\substack{L\in\cF\\ \hat{I}\subseteq L}}\bigg(\sum_{K\in\text{ch}_{i}(L)}|\cW\ci{J}f\ci{K}|\bigg)^2|W(x)^{1/p}\cW\ci{J}^{-1}|^2\frac{1}{|L|}\\
&\leq c\cdot i^22^{i}|W(x)^{1/p}\cW\ci{J}^{-1}|^2\La|\cW\ci{J}f|\Ra\ci{J}^21\ci{J}(x),
\end{align*}
by the maximality of $I$.

To estimate $B(x)$, we notice that for all $x\in J\setminus E$, we have
\begin{equation*}
\sum_{\substack{I\in\cD(J)\\L\subseteq I}}\bigg(\sum_{P\in\text{ch}_{i}(R)}|\cW\ci{J}f\ci{P}|\bigg)^2\frac{1}{|I|}\leq c\cdot i^22^{i}\La|\cW\ci{J}f|\Ra\ci{J}^2,
\end{equation*}
for all $L\in\cD(J)$ with $x\in L$, therefore if $(L_n)^{\infty}_{n=1}$ is the strictly decreasing (maybe finite) sequence of all intervals in $\cF$ containing $x$, then
\begin{equation*}
\sum_{\substack{I\in\cD(J)\\L_k\subseteq I}}\bigg(\sum_{P\in\text{ch}_{i}(I)}|\cW\ci{J}f\ci{P}|\bigg)^2\frac{1}{|I|}\leq c\cdot i^22^{i}\La|\cW\ci{J}f|\Ra\ci{J}^2,\qquad\forall k=1,2,\ldots,
\end{equation*}
therefore
\begin{align*}
B(x)&=\sum_{k=1}^{\infty}\bigg(\sum_{I\in\text{ch}_{i}(L_k)}|W(x)^{1/p}f\ci{I}|\bigg)^2\frac{1}{|L_k|}\\
&\leq|W(x)^{1/p}\cW\ci{J}^{-1}|^2\lim_{k\rightarrow\infty}\sum_{l=1}^{k}\bigg(\sum_{I\in\text{ch}_{i}(L_l)}|\cW\ci{J}f\ci{I}|\bigg)^2\frac{1}{|L_l|}\\
&\leq|W(x)^{1/p}\cW\ci{J}^{-1}|^2\lim_{k\rightarrow\infty}\sum_{\substack{I\in\cD(J)\\L_{k}\subseteq I}}\bigg(\sum_{K\in\text{ch}_{i}(I)}|\cW\ci{J}f\ci{K}|\bigg)^2\frac{1}{|I|}\\
&\leq ci^22^{i}|W(x)^{1/p}\cW\ci{J}^{-1}|^2\La |\cW\ci{J}f|\Ra\ci{J}^21\ci{J}(x),
\end{align*}
concluding the proof.
\end{proof}

\begin{cor}
\label{c: bound one parameter shifted square function}
Let $i,p,\cD,W$ be as before. Then, there holds
\begin{align*}
\Vert S^{i}\ci{\cD,W,\ast}\Vert\ci{L^{p}(W)\rightarrow L^{p}}\leq\Vert S^{i}\ci{\cD,W}\Vert\ci{L^{p}(W)\rightarrow L^{p}}\lesssim_{p,d}i2^{i/2}[W]\ci{A_p,\cD}^{\gamma(p)}.
\end{align*}
\end{cor}

\begin{proof}
It was proved in \cite[Section 3]{isralowitz-square function} that
\begin{align*}
\Vert\cA\ci{W,\cS}\Vert\ci{L^{p}(W)\rightarrow L^{p}}\lesssim_{p,d,\e}[W]\ci{A_p,\cD}^{\gamma(p)},
\end{align*}
for any dyadically $\e$-sparse collection $\cS$ of dyadic subintervals of $J$, for any $J\in\cD$. It follows by Proposition \ref{p: sparse_one_parameter_shifted_square_function} that
\begin{align*}
\Vert S^{i}\ci{\cD,W,J}f\Vert\ci{L^{p}}\lesssim_{p,d}i2^{i/2}[W]\ci{A_p,\cD}^{\gamma(p)}\Vert f\1\ci{J}\Vert\ci{L^{p}(W)},\qquad\forall f\in L^1\ti{loc}(\R;\C^d),
\end{align*}
for all $J\in\cD$. A simple application of the Monotone Convergences Theorem yields then the desired result.
\end{proof}

\subsection{Biparameter matrix-weighted shifted square functions}
\label{s: shifted square functions}

Let $\bfD=\cD^1\times\cD^2$ be any dyadic product grid in $\R\times\R$. Let $\bi=(i_1,i_2)$ be a pair of non-negative integers. The (unweighted) biparameter shifted square function of complexity $\bi$ is defined by
\begin{equation*}
S^{\bi}\ci{\bfD}f:=\bigg(\sum_{R\in\bfD}\bigg(\sum_{P\in\ch_{\bi}(R)}|f\ci{P}|\bigg)^2\frac{\1\ci{R}}{|R|}\bigg)^{1/2},\qquad f\in L^1\ti{loc}(\R^2;\C^d).
\end{equation*}
Such shifted square functions arise naturally as (essentially) the square functions of biparameter Haar shifts in the context of the representation theorem for biparameter Journ\'e operators due to Martikainen \cite{martikainen}, and also play from the same point of view a very important role in \cite{holmes-petermichl-wick}.

Let $1<p<\infty$, and let $W$ be a $d\times d$-matrix valued weight on $\R^2$. Define the biparameter matrix-weighted shifted square function corresponding to the weight $W$ (and the exponent $p$) of complexity $\bi$ by
\begin{equation*}
S^{\bi}\ci{\bfD,W}f(x):=\bigg(\sum_{R\in\bfD}\bigg(\sum_{P\in\ch_{\bi}(R)}|W(x)^{1/p}f\ci{P}|\bigg)^2\frac{\1\ci{R}(x)}{|R|}\bigg)^{1/2},
\end{equation*}
for $x\in\R^2$ and $f\in L^1\ti{loc}(\R^2;\C^d)$.
We have of course also the alternative definition
\begin{equation*}
\wt{S}^{\bi}\ci{\bfD,W}f:=\bigg(\sum_{R\in\bfD}\bigg(\sum_{P\in\ch_{\bi}(R)}|\cW\ci{R}f\ci{P}|\bigg)^2\frac{\1\ci{R}}{|R|}\bigg)^{1/2},\qquad f\in L^1\ti{loc}(\R^2;\C^d),
\end{equation*}
where reducing operators for $W$ are taken with respect to the exponent $p$. Consider also the variant of $S^{\bi}\ci{\bfD,W}$
\begin{equation*}
S^{\bi}\ci{\bfD,W,\ast}f(x):=\bigg(\sum_{R\in\bfD}\bigg|W(x)^{1/p}\sum_{P\in\ch_{\bi}(R)}f\ci{P}\bigg|^2\frac{\1\ci{R}(x)}{|R|}\bigg)^{1/2}
\end{equation*}
for $x\in\R^2$ and $f\in L^1\ti{loc}(\R^2;\C^d)$. We will now investigate $L^{p}$ matrix-weighted bounds for these operators.

Lemma \ref{l: compare2} below is proved by iterating Corollary \ref{c: bound one parameter shifted square function} with the help of Lemma \ref{l: matrix weight vector valued extensions}, in the exact same way that an iteration of estimate \eqref{isralowitz one parameter square function} yields Lemma \ref{l: upper bound square function}, so we omit its proof.

\begin{lm}
\label{l: compare2}
Let $1<p<\infty$. Let $W$ be a $d\times d$-matrix valued $\bfD$-dyadic biparameter $A_p$ weight on $\R\times\R$. Then, there holds
\begin{equation*}
\Vert S^{\bi}\ci{\bfD,W,\ast}\Vert\ci{L^{p}(W)\rightarrow L^{p}(\R^2)}\lesssim_{p,d}i_1i_22^{(i_1+i_2)/2}[W]\ci{A_p,\bfD}^{2\gamma(p)}.
\end{equation*}
\end{lm}

\begin{cor}
\label{c: upper bound TL shifted square function}
Let $1<p<\infty$, and let $W$ be a $d\times d$-matrix valued $\bfD$-dyadic biparameter $A_p$ weight on $\R\times\R$. Then, there holds
\begin{equation*}
\Vert\wt{S}^{\bi}\ci{\bfD,W}\Vert\ci{L^{p}(W)\rightarrow L^{p}}\lesssim_{p,d}i_1i_22^{(i_1+i_2)/2}[W]\ci{A_p,\bfD}^{\frac{1}{p}+2\gamma(p)+\alpha(p)}.
\end{equation*}
\end{cor}

\begin{proof}
We first show that there exists a sequence $\s=(\s\ci{R})\ci{R\in\bfD}$ in $\lbrace-1,1\rbrace$ depending on $d,i,W$ and $f$, such that
\begin{equation}
\label{compare1}
\Vert\wt{S}^{\bi}\ci{\bfD,W}f\Vert\ci{L^{p}}\lesssim_{d}[W]\ci{A_p,\bfD}^{1/p}\Vert S^{\bi}\ci{\bfD,W,\ast}(T_{\s}f)\Vert\ci{L^{p}},\qquad\forall f\in L^{2}(\R^{2};\C^d).
\end{equation}
By Lemma \ref{l: vectors} below, we have that for all $R\in\bfD$, there exists a sequence $(\s\ci{P}^{R})\ci{P\in\ch_{\bi}(R)}$ in $\lbrace-1,1\rbrace$, depending on $d,\bi,W,f$ and $R$, such that
\begin{equation*}
\sum_{P\in\ch_{\bi}(R)}|\cW\ci{R}f\ci{P}|\sim_{d}\bigg|\sum_{P\in\ch_{\bi}(R)}\s\ci{P}^{R}\cW\ci{R}f\ci{P}\bigg|.
\end{equation*}
Clearly, every rectangle $P\in\bfD$ has a unique $\bi$-th ancestor $P^{(\bi)}$ in $\bfD$. Therefore, we can consider the sequence $\s=(\s\ci{P})\ci{P\in\bfD}$ in $\lbrace-1,1\rbrace$ given by
\begin{equation*}
\s\ci{P}:=\s\ci{P}^{P^{(\bi)}},\qquad\forall P\in\bfD.
\end{equation*}
Note that the sequence $\sigma$ depends only on $d,i,W$ and $f$. Consider the biparameter martingale transform $\wt{f}:=T_{\s}f$ of $f$. Then clearly
\begin{equation*}
\wt{S}^{\bi}\ci{\bfD,W}f\sim_{d}\bigg(\sum_{R\in\bfD}\bigg|\cW\ci{R}\sum_{P\in\ch_{\bi}(R)}\wt{f}\ci{P}\bigg|^2\frac{\1\ci{R}}{|R|}\bigg)^{1/2},
\end{equation*}
where $\wt{f}\ci{P}:=\s\ci{P}f\ci{P},~P\in\bfD,$ are the Haar coefficients of $\wt{f}$. Then, we have
\begin{align*}
\Vert\wt{S}^{\bi}\ci{\bfD,W}f\Vert\ci{L^p}^{p}&\sim_{d}\int_{\R^2}\bigg(\sum_{R\in\bfD}\bigg|\cW\ci{R}\sum_{P\in\ch_{\bi}(R)}\wt{f}\ci{P}\bigg|^2\frac{\1\ci{R}(x)}{|R|}\bigg)^{p/2}\mathd x\\
&\lesssim_{p,d}[W]\ci{A_p,\bfD}\int_{\R^2}\bigg(\sum_{R\in\bfD}\bigg(\bigg|W^{1/p}\sum_{P\in\ch_{\bi}(R)}\wt{f}\ci{P}\bigg|\bigg)\ci{R}^2\frac{\1\ci{R}(x)}{|R|}\bigg)^{p/2}\mathd x.
\end{align*}
where we applied Lemma \ref{l: second scalar} to get from the first to the second line. In the exact same way as in the proof of Lemma \ref{l: dominate TL square function by square function}, we obtain
\begin{align*}
\int_{\R^2}\bigg(\sum_{R\in\bfD}\bigg(\bigg|W^{1/p}\sum_{P\in\ch_{\bi}(R)}\wt{f}\ci{P}\bigg|\bigg)\ci{R}^2\frac{\1\ci{R}(x)}{|R|}\bigg)^{p/2}\mathd x\lesssim_{p,d}\Vert S^{\bi}\ci{\bfD,W,\ast}\wt{f}\Vert\ci{L^{p}}^{p},
\end{align*}
proving \eqref{compare1}.

Combining \eqref{compare1} with Lemmas \ref{l: compare2} and \ref{l: bounds Haar} we deduce the desired result.
\end{proof}

\begin{lm}
\label{l: vectors}
Let $v_1,\ldots,v_k$ be vectors in $\C^d$. Then, there exist $\s_1,\ldots,\s_k\in\lbrace-1,1\rbrace$, such that
\begin{equation*}
\sum_{i=1}^{k}|v_i|\sim_{d}\bigg|\sum_{i=1}^{k}\s_iv_i\bigg|.
\end{equation*}
\end{lm}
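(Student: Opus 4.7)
The upper bound $|\sum_i \sigma_i v_i| \leq \sum_i |v_i|$ is automatic from the triangle inequality for any choice of signs, so the only content of the lemma is the lower bound: one must exhibit signs for which $|\sum_i \sigma_i v_i| \gtrsim_d \sum_i |v_i|$. My plan is to reduce to a scalar pigeonhole over a finite family of $\R$-linear functionals on $\C^d$.

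Recall the already-used equivalence $|v|\sim_d \sum_{k=1}^d |\La v,e_k\Ra|$ (see \eqref{equivalence_matrix_norm_columns}, applied to $v$ viewed as a single-column matrix), together with the elementary fact $|z|\sim |\rp z|+|\ip z|$ for $z\in\C$. Combining these gives
\begin{equation*}
  |v| \sim_d \sum_{k=1}^{d}\bigl(|\rp\La v,e_k\Ra|+|\ip\La v,e_k\Ra|\bigr),\qquad\forall v\in\C^d,
\end{equation*}
i.e.\ $|v|$ is controlled, up to a constant depending only on $d$, by the sum of $2d$ absolute values of fixed $\R$-linear functionals $L_1,\ldots,L_{2d}\colon\C^d\to\R$ (each of operator norm at most $1$).

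Summing this over $i$ and applying the pigeonhole principle, there exists an index $j\in\{1,\ldots,2d\}$ such that
\begin{equation*}
  \sum_{i=1}^{k}|L_j(v_i)|\gtrsim_d \sum_{i=1}^{k}|v_i|.
\end{equation*}
Now set $\s_i:=\sign L_j(v_i)\in\{-1,1\}$ (with the convention $\sign 0:=1$). Then, using $\R$-linearity of $L_j$ and $\Vert L_j\Vert\leq1$,
\begin{equation*}
  \Bigl|\sum_{i=1}^{k}\s_i v_i\Bigr|\geq\Bigl|L_j\Bigl(\sum_{i=1}^{k}\s_i v_i\Bigr)\Bigr|=\Bigl|\sum_{i=1}^{k}\s_i L_j(v_i)\Bigr|=\sum_{i=1}^{k}|L_j(v_i)|\gtrsim_d\sum_{i=1}^{k}|v_i|,
\end{equation*}
which is the desired lower bound. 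There is no real obstacle here: the statement is essentially a quantitative form of the equivalence between the $\ell^1$-type norm $\sum_i|v_i|$ and the $\ell^\infty$-type functional $\max_{\s\in\{\pm1\}^k}|\sum_i\s_i v_i|$ on finite sequences in $\C^d$, made explicit via the finite family of projection functionals $\{\rp\La\fdot,e_k\Ra,\ip\La\fdot,e_k\Ra\}_{k=1}^{d}$.
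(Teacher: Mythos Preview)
Your proof is correct and is essentially identical to the paper's: both split $|v|$ into the $2d$ real coordinate functionals $\rp\La v,e_\ell\Ra$, $\ip\La v,e_\ell\Ra$, pigeonhole to pick one functional $L_j$ that carries a $1/(2d)$-fraction of $\sum_i|v_i|$, and set $\s_i=\sign L_j(v_i)$. The only quibble is that your citation of \eqref{equivalence_matrix_norm_columns} for the norm equivalence on vectors is a bit off (that display is about matrix columns), but the fact you actually use---equivalence of the Euclidean and $\ell^1$ norms on $\C^d$---is elementary and needs no reference.
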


\begin{proof}
For each $i=1,\ldots,k$, write $w_i^1:=\text{Re}(v_i)$ and $w_i^2:=\text{Im}(v_i)$.
Also, for each $i=1,\ldots,k$ and $j=1,2,$ express the vector $w^{j}_{i}$ in $\R^d$ in components as $w^{j}_{i}=(w^{j}_{i,1},\ldots,w^{j}_{i,d})$. Then, it is clear that
\begin{equation*}
\sum_{i=1}^{k}|v_i|\leq C(d)\sum_{i=1}^{k}\sum_{\ell=1}^{d}\sum_{j=1}^{2}|w_{i,\ell}^{j}|.
\end{equation*}
Therefore, there exist $\ell\in\lbrace1,\ldots,d\rbrace$ and $j\in\lbrace1,2\rbrace$, such that
\begin{equation*}
\sum_{i=1}^{k}|w_{i,\ell}^{j}|\geq\frac{1}{2dC(d)}\sum_{i=1}^{k}|v_i|.
\end{equation*}
Taking $\s_i:=\text{sgn}(w_{i,\ell}^{j})$, for all $i=1,\ldots,k$, where $\text{sgn}(t):=t/|t|$ for $t\in\R\setminus\lbrace0\rbrace$, and $\text{sgn}(0):=1$, we obtain the required result.
\end{proof}

For $p=2$, the estimate of Corollary \ref{c: upper bound TL shifted square function} can be considerably improved, by using a different, direct method, adapting the prooof of \cite[Lemma 2.2]{holmes-lacey-wick}.

\begin{lm}
\label{l: tl shifted-square-function-p=2}
Assume $p=2$. Let $W$ be a $d\times d$-matrix valued weight on $\R^2$. Then, there holds
\begin{equation*}
\Vert \wt{S}\ci{\bfD,W}^{\bi}f\Vert\ci{L^{2}}\lesssim_{d}2^{(i_1+i_2)/2}[W]\ci{A_2,\bfD}^{5/2}\Vert f\Vert\ci{L^{2}(W)},
\end{equation*}
for all $f\in L^{\infty}\ti{c}(\R^{2};\C^d)$.
\end{lm}

\begin{proof}
We have
\begin{align*}
\Vert\wt{S}\ci{\bfD,W}^{\bi}f\Vert\ci{L^{2}}^2&=\sum_{R\in\bfD}\bigg(\sum_{P\in\ch_{\bi}(R)}|\cW\ci{R}f\ci{P}|\bigg)^2\leq\sum_{R\in\bfD}\bigg(\sum_{P\in\ch_{\bi}(R)}|\cW\ci{R}\cW\ci{P}^{-1}|\cdot|\cW\ci{P}f\ci{P}|\bigg)^2\\
&\leq\sum_{R\in\bfD}\bigg(\sum_{P\in\ch_{\bi}(R)}|\cW\ci{P}f\ci{P}|^2\bigg)\bigg(\sum_{P\in\ch_{\bi}(R)}|\cW\ci{R}\cW\ci{P}^{-1}|^2\bigg).
\end{align*}
We notice that for all $R\in\bfD$, we have
\begin{align*}
\sum_{P\in\ch_{\bi}(R)}|\cW\ci{R}\cW\ci{P}^{-1}|^2&=\sum_{P\in\ch_{\bi}(R)}|\cW\ci{P}^{-1}\cW\ci{R}|^2\lesssim_{d}\sum_{P\in\ch_{\bi}(R)}|\cW\ci{P}'\cW\ci{R}|^2\\
&\sim_{d}\sum_{P\in\ch_{\bi}(R)}\strokedint_{P}|W(x)^{-1/2}\cW\ci{R}|^2\mathd x=
2^{i_1+i_2}\strokedint_{R}|W(x)^{-1/2}\cW\ci{R}|^2\mathd x\\
&\lesssim_{d}2^{i_1+i_2}|\cW\ci{R}'\cW\ci{R}|^2\lesssim_{d}2^{i_1+i_2}[W]\ci{A_2,\bfD},
\end{align*}
where in the first $\lesssim_{d}$ we used Lemma \ref{l: replace inverse by prime} coupled with \eqref{equivalence_matrix_norm_columns}. Thus
\begin{align*}
\Vert\wt{S}\ci{\bfD,W}^{\bi}f\Vert\ci{L^{2}}^2&\lesssim_{d}2^{i_1+i_2}[W]\ci{A_2,\bfD}\sum_{R\in\bfD}\sum_{P\in\ch_{\bi}(R)}|\cW\ci{P}f\ci{P}|^2=2^{i_1+i_2}[W]\ci{A_2,\bfD}\Vert S\ci{\bfD,W}f\Vert\ci{L^2}^2\\
&=2^{i_1+i_2}[W]\ci{A_2,\bfD}\Vert \wt{S}\ci{\bfD,W}f\Vert\ci{L^2}^2\lesssim_{d}2^{i_1+i_2}[W]\ci{A_2,\bfD}^{5}\Vert f\Vert\ci{L^2(W)}^2,
\end{align*}
where in the last $\lesssim_{d}$ we used Lemma \ref{l: upper bound square function}.
\end{proof}

In Section \ref{s: non-cancellative} (and essentially only for $p\neq2$) we will need to use the following (slightly stronger) one-parameter counterpart of Corollary \ref{c: upper bound TL shifted square function}.

\begin{lm}
\label{l: upper bound TL shifted square function one parameter}
Let $1<p<\infty$. Let $\cD$ be a dyadic grid in $\R$. Let $W$ be a $d\times d$-matrix valued $\cD$-dyadic $A_p$ weight on $\R$. Let $i$ be a nonnnegative integer. Then, there holds
\begin{equation*}
\Vert\wt{S}^{i}\ci{\cD,W}\Vert\ci{L^{p}(W)\rightarrow L^{p}}\lesssim_{p,d}i2^{i/2}[W]\ci{A_p,\cD}^{\frac{1}{p}+\gamma(p)}.
\end{equation*}
\end{lm}

\begin{proof}
For any bounded sequence $\s=(\s\ci{I})\ci{I\in\cD}$ of complex numbers, we denote
\begin{equation*}
T_{\s}f:=\sum_{I\in\cD}\s\ci{I}f\ci{I}h\ci{I}.
\end{equation*}
Then, similarly to the proof of Corollary \ref{c: upper bound TL shifted square function} we have that there exist a sequence $\sigma=(\sigma\ci{I})\ci{I\in\cD}$ in $\lbrace-1,1\rbrace$ (depending only on $d,i,W$ and $f$) such that
\begin{align*}
\Vert \wt{S}^{i}\ci{\cD,W}f\Vert\ci{L^{p}}&\lesssim_{p,d}[W]^{1/p}\ci{A_p,\cD}\Vert S^{i}\ci{\cD,W,\ast}(T_{\sigma}f)\Vert\ci{L^{p}}\leq[W]^{1/p}\ci{A_p,\cD}\Vert S^{i}\ci{\cD,W}(T_{\sigma}f)\Vert\ci{L^{p}}\\
&\leq
[W]^{1/p}\ci{A_p,\cD}\Vert S^{i}\ci{\cD,W}f\Vert\ci{L^{p}}.
\end{align*}
The desired conclusion follows then from Corollary \ref{c: bound one parameter shifted square function}.
\end{proof}

\begin{lm}
\label{l: replace shifted square function with tl shifted square function}
Let $W,U$ be $d\times d$-matrix valued weights on $\R\times\R$. Then, there holds
\begin{equation*}
\Vert S\ci{\bfD,W}^{\bi}f\Vert\ci{L^{p}}\lesssim_{p,d}[W]\ci{A_p,\bfD}^{\frac{1}{p}+2\gamma(p)+\frac{2\gamma(p')}{p-1}}\Vert \wt{S}\ci{\bfD,W}^{\bi}f\Vert\ci{L^{p}}.
\end{equation*}
If $p=2$, then
\begin{equation*}
\Vert S\ci{\bfD,W}^{\bi}f\Vert\ci{L^{2}}\lesssim_{d}\Vert \wt{S}\ci{\bfD,W}^{\bi}f\Vert\ci{L^{2}}.
\end{equation*}
\end{lm}

\begin{proof}
We have
\begin{align*}
\Vert S\ci{\bfD,W}^{\bi}f\Vert\ci{L^{p}}^p&=\int_{\R^2}\left(\sum_{R\in\bfD}\left(\sum_{P\in\ch_{\bi}(R)}|W(x)^{1/p}f\ci{P}|\right)^2\frac{\1\ci{R}(x)}{|R|}\right)^{p/2}\dd x\\
&\leq\int_{\R^2}\left(\sum_{R\in\bfD}\left(\sum_{P\in\ch_{\bi}(R)}|\cW\ci{R}f\ci{P}|\right)^2|W(x)^{1/p}\cW\ci{R}^{-1}|^2\frac{\1\ci{R}(x)}{|R|}\right)^{p/2}\dd x\\
&=\int_{\R^2}\left(\sum_{R\in\bfD}|W(x)^{1/p}a\ci{R}\cW\ci{R}^{-1}|^2\frac{\1\ci{R}(x)}{|R|}\right)^{p/2}\dd x,
\end{align*}
where
\begin{equation*}
a\ci{R}:=\sum_{P\in\ch_{\bi}(R)}|\cW\ci{R}f\ci{P}|,\qquad R\in\bfD.
\end{equation*}
Using first Lemma \ref{l: upper bound square function} and then part (2) of Corollary \ref{c: lower bounds square functions}, both coupled with \eqref{equivalence_matrix_norm_columns} and \eqref{break power of sum}, we get
\begin{align*}
&\int_{\R^2}\left(\sum_{R\in\bfD}|W(x)^{1/p}a\ci{R}\cW\ci{R}^{-1}|^2\frac{\1\ci{R}(x)}{|R|}\right)^{p/2}\dd x\\
&\lesssim_{p,d}[W]\ci{A_p,\bfD}^{2\gamma(p)+\frac{1}{p}+\frac{2\gamma(p')}{p-1}}
\int_{\R^2}\left(\sum_{R\in\bfD}|\cW\ci{R}a\ci{R}\cW\ci{R}^{-1}|^2\frac{\1\ci{R}(x)}{|R|}\right)^{p/2}\dd x\\
&=[W]\ci{A_p,\bfD}^{2\gamma(p)+\frac{1}{p}+\frac{2\gamma(p')}{p-1}}\Vert\wt{S}^{\bi}\ci{\bfD,W}f\Vert\ci{L^{p}}.
\end{align*}
If $p=2$, then
\begin{align*}
\Vert S\ci{\bfD,W}^{\bi}f\Vert\ci{L^{2}}^2&\leq\int_{\R^2}\left(\sum_{R\in\bfD}|W(x)^{1/2}a\ci{R}\cW\ci{R}^{-1}|^2\frac{\1\ci{R}(x)}{|R|}\right)\dd x\\
&=\sum_{R\in\bfD}|a\ci{R}|^2\strokedint_{R}|W(x)^{1/2}\cW\ci{R}^{-1}|^2\dd x\sim_{d}\sum_{R\in\bfD}|a\ci{R}|^2=\Vert\wt{S}^{\bi}\ci{\bfD,W}f\Vert\ci{L^2}^2.
\end{align*}
\end{proof}

The following lemma is proved in the exact same way as Lemma \ref{l: replace weight tl square function}.

\begin{lm}
\label{l: replace weight tl shifted square function}
Let $W,U$ be $d\times d$-matrix valued weights on $\R\times\R$. Then, there holds
\begin{equation*}
\wt{S}\ci{\bfD,W}^{\bi}f\lesssim_{p,d}[W,U']\ci{A_p,\bfD}^{1/p}\wt{S}\ci{\bfD,U}^{\bi}f,
\end{equation*}
for all $f\in L^1\ti{loc}(\R^2;\C^d)$.
\end{lm}

\subsection{\texorpdfstring{$L^{p}$}{Lp} matrix-weighted bounds for cancellative Haar shifts}

Here we show that the approach of \cite[Section 7]{holmes-petermichl-wick} for deducing weighted bounds for biparameter cancellative shifts in the scalar-weighted setting can be adapted to the matrix-weighted setting.

Let $\bi=(i_1,i_2),\bj=(j_1,j_2)$ be pairs of non-negative integers. Let
\begin{equation*}
T^{\bi,\bj}f:=\sum_{R\in\bfD}\sum_{P\in\text{ch}_{\bi}(R)}\sum_{Q\in\text{ch}_{\bj}(R)}a\ci{PQR}f\ci{P}h\ci{Q},\qquad f\in L^2(\R^{2};\C^d),
\end{equation*}
where $a\ci{PQR}$ are complex numbers satisfying the bound
\begin{equation*}
|a\ci{PQR}|\leq\frac{\sqrt{|P|\cdot|Q|}}{\sqrt{|R|}}=2^{-\frac{1}{2}(i_1+i_2+j_1+j_2)}.
\end{equation*}

\begin{lm}
\label{l: bound Haar shifts}
Let $1<p<\infty$. Let $W,U$ be $\bfD$-dyadic biparameter $d\times d$-matrix valued $A_p$-weights on $\R\times\R$ such that $[W,U']\ci{A_p,\bfD}<\infty$. Then, we have the bounds
\begin{equation*}
\Vert T^{\bi,\bj}\Vert\ci{L^p(W)\rightarrow L^p(W)}\lesssim_{p,d}i_1j_1i_2j_2[W]\ci{A_p,\bfD}^{\alpha_1(p)+\alpha_2(p)},
\end{equation*}
and
\begin{equation*}
\Vert T^{\bi,\bj}\Vert\ci{L^p(U)\rightarrow L^p(W)}\lesssim_{p,d}i_1j_1i_2j_2 [W,U']\ci{A_p,\bfD}^{1/p}[W]\ci{A_p,\bfD}^{\alpha_1(p)}[U]\ci{A_p,\bfD}^{\alpha_2(p)},
\end{equation*}
where
\begin{equation}
\label{alpha_1(p)}
\alpha_1(p):=\frac{2\gamma(p')}{p-1},
\end{equation}
and
\begin{equation}
\label{alpha_2(p)}
\alpha_2(p):=
\begin{cases}
\frac{5}{2},\text{ if }p=2\\\\
\frac{2}{p}+6\gamma(p)+\frac{4\gamma(p')}{p-1}
,\text{ otherwise}
\end{cases}
.
\end{equation}
\end{lm}

\begin{proof}
By Corollary \ref{c: lower bounds square functions} we have
\begin{align*}
\Vert T^{\bi,\bj}f\Vert\ci{L^p(W)}\lesssim_{p,d}[W]\ci{A_p,\bfD}^{\frac{2\gamma(p')}{p-1}}\Vert S\ci{\bfD,W}(T^{\bi,\bj}f)\Vert\ci{L^{p}}.
\end{align*}
Recall that every $Q\in\bfD$ has a unique $\bj$-th ancestor in $\bfD$. Therefore, we have
\begin{align*}
(S\ci{\bfD,W}(T^{\bi,\bj}f))^2&=\sum_{R\in\bfD}\sum_{Q\in\text{ch}_{\bj}(R)}\bigg|W(x)^{1/p}\sum_{P\in\ch_{\bi}(R)}a\ci{PQR}f\ci{P}\bigg|^2\frac{\1\ci{Q}}{|Q|}\\
&\leq 2^{-(i_1+j_1+i_2+j_2)}\sum_{R\in\bfD}\sum_{Q\in\text{ch}_{\bj}(R)}\bigg(\sum_{P\in\ch_i(R)}|W(x)^{1/p}f\ci{P}|\bigg)^2\frac{\1\ci{Q}}{|Q|}\\
&=2^{-(i_1+i_2)}(S^{\bi}\ci{\bfD,W}(f))^2,
\end{align*}
where we used the fact that
\begin{equation*}
\sum_{Q\in\ch_{\bj}(R)}\frac{\1\ci{Q}}{|Q|}=2^{j_1+j_2}\frac{\1\ci{R}}{|R|},\qquad\forall R\in\bfD.
\end{equation*}
Combining Corollary \ref{c: upper bound TL shifted square function} and Lemma \ref{l: replace shifted square function with tl shifted square function} we obtain
\begin{equation*}
\Vert S^{\bi}\ci{\bfD,W}f\Vert\ci{L^p}\lesssim_{p,d}i_1i_22^{(i_1+i_2)/2}[W]\ci{A_p,\bfD}^{\frac{2}{p}+6\gamma(p)+\frac{4\gamma(p')}{p-1}}\Vert f\Vert\ci{L^p(W)},
\end{equation*}
and combining Corollary \ref{c: upper bound TL shifted square function} with Lemmas \ref{l: replace shifted square function with tl shifted square function} and \ref{l: replace weight tl shifted square function} we obtain
\begin{equation*}
\Vert S^{\bi}\ci{\bfD,W}f\Vert\ci{L^p}\lesssim_{p,d}i_1i_22^{(i_1+i_2)/2}[W,U']\ci{A_p,\bfD}^{1/p}[U]\ci{A_p,\bfD}^{\frac{2}{p}+6\gamma(p)+\frac{4\gamma(p')}{p-1}}\Vert f\Vert\ci{L^p(U)}.
\end{equation*}
If $p=2$, then using Lemma \ref{l: tl shifted-square-function-p=2} instead of Corollary \ref{c: upper bound TL shifted square function} we get the better bounds
\begin{equation*}
\Vert S^{\bi}\ci{\bfD,W}f\Vert\ci{L^2}\lesssim_{d}2^{(i_1+i_2)/2}[W]\ci{A_2,\bfD}^{9/2}\Vert f\Vert\ci{L^2(W)},
\end{equation*}
and
\begin{equation*}
\Vert S^{\bi}\ci{\bfD,W}f\Vert\ci{L^2}\lesssim_{d}2^{(i_1+i_2)/2}[W,U']\ci{A_2,\bfD}^{1/2}[U]\ci{A_2,\bfD}^{5/2}\Vert f\Vert\ci{L^2(U)}.
\end{equation*}
The desired conclusion follows.
\end{proof}

Combining Lemma \ref{l: bound Haar shifts} with Martikainen's representation theorem (Theorem \ref{t: martikainen representation}), we obtain the following.

\begin{cor}
\label{c: paraproduct free Journe}
Let $T$ be any paraproduct-free \jr operator on $\R\times\R$. Let $1<p<\infty$. Let $W,U$ be biparameter $d\times d$-matrix valued $A_p$-weights on $\R\times\R$ such that $[W,U']\ci{A_p(\R\times\R)}<\infty$. Then, we have the bounds
\begin{equation*}
\Vert T\Vert\ci{L^p(W)\rightarrow L^p(W)}\lesssim_{p,d,T}[W]\ci{A_p(\R\times\R)}^{\alpha_1(p)+\alpha_2(p)},
\end{equation*}
and
\begin{equation*}
\Vert T\Vert\ci{L^p(U)\rightarrow L^p(W)}\lesssim_{p,d,T} [W,U']\ci{A_p(\R\times\R)}^{1/p}[W]\ci{A_p(\R\times\R)}^{\alpha_1(p)}[U]\ci{A_p(\R\times\R)}^{\alpha_2(p)}.
\end{equation*}
\end{cor}

\begin{rem}
\label{r: paraproduct free Journe dual}
Let $T,p,U,W$ be as in Corollary \ref{c: paraproduct free Journe}. Then, $T^{\ast}$ is also a paraproduct-free \jr operator. Thus, we can apply Corollary \ref{c: paraproduct free Journe} for $T^\ast$ in the place of $T$, the exponent $p'$ in the place of $p$, $U'$ in the place of $W$ and $W'$ in the place of $U$. Using then the facts that
\begin{equation*}
\Vert T^{\ast}\Vert\ci{L^{p'}(W')\rightarrow L^{p'}(U')}=\Vert T\Vert\ci{L^{p}(U)\rightarrow L^{p}(W)},
\end{equation*}
\begin{equation*}
[U',(W')']\ci{A_{p'}(\R\times\R)}=[U',W]\ci{A_{p'}(\R\times\R)}\sim_{p,d}[W,U']\ci{A_{p}(\R\times\R)}^{1/(p-1)},
\end{equation*}
\begin{equation*}
[U']\ci{A_{p'}(\R\times\R)}\sim_{p,d}[U]\ci{A_{p}(\R\times\R)}^{1/(p-1)},\qquad [W']\ci{A_{p'}(\R\times\R)}\sim_{p,d}[W]\ci{A_{p}(\R\times\R)}^{1/(p-1)},
\end{equation*}
we obtain additional weighted estimates for $T$ itself.
\end{rem}

\section{Matrix-weighted Fefferman--Stein inequalities}
\label{s: vector-valued maximal function}

For the purpose of obtaining $L^{p}$ matrix-weighted bounds for mixed operators, we need to study vector-valued extensions of the (one-parameter) Christ--Goldberg  maximal function.

Let $\cD$ be any dyadic grid in $\R^n$. The (dyadic version of the) classical Fefferman--Stein vector-valued maximal inequality says
\begin{equation}
\label{Fefferman-Stein}
\bigg\Vert\bigg(\sum_{k=1}^{\infty}(M\ci{\cD}f_k)^{q}\bigg)^{1/q}\bigg\Vert\ci{L^{r}}\leq C(q,r,n)\bigg\Vert\bigg(\sum_{k=1}^{\infty}|f_k|^{q}\bigg)^{1/q}\bigg\Vert\ci{L^{r}},\qquad\forall 1<q,r<\infty,
\end{equation}
where $M\ci{\cD}$ is the usual (unweighted) dyadic Hardy--Littlewood maximal function on $\R^n$,
\begin{equation*}
M\ci{\cD}f:=\sup_{I\in\cD}\La|f|\Ra\ci{I}\1\ci{I},\qquad f\in L^1\ti{loc}(\R^n).
\end{equation*}
We are interested in tracking more precisely the constant $C(q,r,n)$ in this inequality. Examining the proof of \eqref{Fefferman-Stein} in the case $1<r<q$ given e.g. in \cite[Theorem 1.6]{tao-notes5}, we see that if $1<r<q$, then one can take
\begin{equation}
\label{Fefferman-Stein-constant-r<q}
C(q,r,n)=2\bigg(r'+\frac{r}{q-r}\bigg)^{1/r}C_1(q,n)^{\theta}C_{2}(q)^{1-\theta},
\end{equation}
where (for fixed $n$) $C_1(q,n),C_2(q)$ are continuous functions of $q\in(1,\infty)$, and $\theta\in(0,1)$ is given by
\begin{equation*}
\frac{1}{r}=\frac{\theta}{1}+\frac{1-\theta}{q}.
\end{equation*}
We remark that this expression arises through Marcinkiewitz interpolation for sublinear vector-valued operators.

Let now $1<p<2$, $0<c(p,d)<\infty$ and $0<\e<c(p,d)$. Set
\begin{equation*}
a=\frac{p(1+\e)}{1+p\e},\qquad q:=\frac{2}{a},\qquad r:=\frac{p}{a}.
\end{equation*}
Note that $1<r<q$ and $a>1$. We claim that
\begin{equation*}
C(q,r,n)\leq \e^{-1/r}c_1(p,d,n).
\end{equation*}
Indeed, we notice that
\begin{equation*}
1<\frac{2}{p}<q<2(1+p\cdot c(p,d))
\end{equation*}
Since the functions $C_1(\fdot,n),C_2$ are continuous and $0<\theta<1$, we deduce that
\begin{equation*}
C_1(q,n)^{\theta}C_2(q)^{1-\theta}\leq c_0(p,d,n).
\end{equation*}
Moreover, we have $b(p):=\frac{r}{q-r}=\frac{p}{2-p}>0$ and $r,r'>1$, so
\begin{align*}
\bigg(r'+\frac{r}{q-r}\bigg)^{1/r}&=\bigg(r'+b(p)\bigg)^{1/r}<(r'((b(p))^2+3b(p)+2))^{1/r}\\
&\leq(r')^{1/r}[(b(p))^2+3b(p)+2]\lesssim_{p}(r')^{1/r}.
\end{align*}
Finally, we have
\begin{equation*}
r'=\frac{1+p\e}{(p-1)\e}\leq\frac{1+p\cdot c(p,d)}{p-1}\cdot\e^{-1},
\end{equation*}
and also $1<r<p$, so
\begin{equation*}
\bigg(\frac{1+p\cdot c(p,d)}{p-1}\bigg)^{1/r}\leq\max\bigg(\frac{1+p\cdot c(p,d)}{p-1},\bigg(\frac{1+p\cdot c(p,d)}{p-1}\bigg)^{1/p}\bigg),
\end{equation*}
yielding the claim.

\begin{prop}
\label{p: vector valued maximal function}
Let $1<p\leq 2$, and let $W$ be a $d\times d$-matrix valued $\cD$-dyadic $A_p$ weight on $\R^n$. Then, there holds
\begin{equation*}
\bigg\Vert\bigg(\sum_{k=1}^{\infty}(\wt{M}\ci{\cD,W}f_k)^{2}\bigg)^{1/2}\bigg\Vert\ci{L^{p}}\lesssim_{p,n,d}[W]\ci{A_p,\cD}^{1/(p-1)}\bigg\Vert\bigg(\sum_{k=1}^{\infty}|W^{1/p}f_k|^{2}\bigg)^{1/2}\bigg\Vert\ci{L^{p}}.
\end{equation*}
\end{prop}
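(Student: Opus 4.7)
The plan is to adapt the proof of Proposition \ref{p: weighted bound modified strong C-G} to the vector-valued setting by extracting a pointwise bound and then applying Fefferman--Stein with careful tracking of constants. The proof of Proposition \ref{p: weighted bound modified strong C-G} (more precisely, the arithmetic before its conclusion) already establishes the following pointwise bound for any $d\times d$-matrix valued $\cD$-dyadic $A_p$ weight $W$: with $\e:=(2^{n+5}C(p,d)[W]\ci{A_p,\cD}^{p'/p})^{-1}$ and $a:=p(1+\e)/(1+p\e)\in(1,p)$, one has
\begin{equation*}
\wt{M}^{W}\ci{\cD}f_k(x) \lesssim_{p,d,n} [W]\ci{A_p,\cD}^{1/p}\,M\ci{a,\cD}(|W^{1/p}f_k|)(x),\qquad\forall k\in\N,~x\in\R^n,
\end{equation*}
via Lemma \ref{l: first scalar} applied to the weight $W'$ (with $A:=\cW\ci{I}$), the one-parameter reverse H\"{o}lder inequality for scalar $A_{p'}$ weights, and H\"{o}lder's inequality.

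Taking $\ell^2$-norms over $k$ and then $L^p$-norms in $x$, and raising to the $p$-th power, the problem reduces to proving the vector-valued bound
\begin{equation*}
\bigg\Vert\bigg(\sum_{k=1}^{\infty}M\ci{a,\cD}(g_k)^{2}\bigg)^{1/2}\bigg\Vert\ci{L^{p}}
\lesssim_{p,n,d}\e^{-1/p}\bigg\Vert\bigg(\sum_{k=1}^{\infty}|g_k|^{2}\bigg)^{1/2}\bigg\Vert\ci{L^{p}}
\end{equation*}
applied to $g_k:=|W^{1/p}f_k|$. Writing $h_k:=|g_k|^a$, $Q:=2/a$ and $R:=p/a$, so that $R/Q=p/2$ and $aQ=2$, this bound is equivalent to the $L^R(\ell^Q)$ Fefferman--Stein inequality
\begin{equation*}
\bigg\Vert\bigg(\sum_{k}M\ci{\cD}(h_k)^{Q}\bigg)^{1/Q}\bigg\Vert\ci{L^R}\leq C(Q,R,n)\,\bigg\Vert\bigg(\sum_{k}|h_k|^{Q}\bigg)^{1/Q}\bigg\Vert\ci{L^R},
\end{equation*}
raised to the $R$-th power; indeed, we just need $C(Q,R,n)^{1/a}\lesssim \e^{-1/p}$, equivalently $C(Q,R,n)\lesssim \e^{-1/R}$.

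For $1<p<2$ one has $1<R<Q$, and the discussion in \eqref{Fefferman-Stein-constant-r<q} (together with the subsequent continuity and bookkeeping argument, which shows $C(Q,R,n)\lesssim_{p,d,n}\e^{-1/R}$) gives precisely the required bound. Combining this with $\e^{-1}\sim_{p,d}[W]\ci{A_p,\cD}^{p'/p}$ yields the total exponent $\tfrac{1}{p}+\tfrac{1}{p(p-1)}=\tfrac{1}{p-1}$, matching the claim. The endpoint $p=2$ must be treated separately, since there $R=Q$ and the Marcinkiewicz interpolation argument of \eqref{Fefferman-Stein-constant-r<q} degenerates; however, when $R=Q$ the vector-valued inequality reduces to the scalar $L^{Q}$-bound for $M\ci{\cD}$ via Fubini, and the classical scalar estimate $\|M\ci{\cD}\|\ci{L^Q\to L^Q}\lesssim (Q')^{1/Q}\sim \e^{-1/2}$ again produces a constant of the right order. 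The main obstacle is precisely the tracking of the Fefferman--Stein constant $C(Q,R,n)$ as $R\to 1^{+}$: for $p>2$ the analogous factor is a large power of $(R-Q)^{-1}$ that cannot be absorbed by $[W]\ci{A_p,\cD}$, which is exactly what prevents our method from extending beyond $p\leq 2$ and motivates Question \ref{quest: vector valued maximal function} below.
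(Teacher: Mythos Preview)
Your proof is correct and follows essentially the same approach as the paper: the pointwise reduction $\wt{M}^{W}\ci{\cD}f_k\lesssim[W]\ci{A_p,\cD}^{1/p}M\ci{a,\cD}(|W^{1/p}f_k|)$ via reverse H\"older, followed by the Fefferman--Stein inequality with the tracked constant $C(Q,R,n)\lesssim\e^{-1/R}$ from the discussion preceding the proposition. The only cosmetic difference is at $p=2$: the paper dispatches this case in one line by Fubini and the known bound \eqref{weighted bound modified C-G} for $\wt{M}^{W}\ci{\cD}$ itself, whereas you continue with the $\e$-trick and invoke the scalar $L^{Q}$-bound for $M\ci{\cD}$; both arguments are straightforward and yield the same exponent.
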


\begin{proof}
If $p=2$, then this follows immediately from the known weighted bounds for $\wt{M}\ci{\cD,W}$, so we assume $p<2$. We will employ the very useful trick from \cite{isralowitz-kwon-pott} that was also adapted for the proof of Proposition \ref{p: weighted bound modified strong C-G}. Namely, set
\begin{equation*}
\e:=\frac{1}{2^{n+5}C(p,d)[W]\ci{A_p,\cD}^{p'/p}},
\end{equation*}
where $0<C(p,d)<\infty$ satisfies
\begin{equation*}
[|W^{-1/p}\cW\ci{Q}|^{p'}]\ci{A_{p'},\cD}\leq C(p,d)[W]\ci{A_p,\cD}^{p'/p},\qquad\forall Q\in\cD.
\end{equation*}
Observe that this results from applying Lemma~\ref{l: first scalar} with $W'$ in the place of $W$, the exponent $p'$ in the place of the exponent $p$ and the invertible matrix $A:=\cW\ci{R}$.
Isralowitz--Kwon--Pott \cite{isralowitz-kwon-pott} show that
\begin{equation}
\label{dominate weighted maximal function by unweighted, one parameter}
\wt{M}\ci{\cD,W}f\lesssim_{p,d,n}[W]\ci{A_p,\cD}^{1/p}M\ci{a,\cD}(|W^{1/p}f|),
\end{equation}
where $a:=\frac{p(1+\e)}{1+p\e}$ and
\begin{equation*}
M\ci{a,\cD}g:=\sup_{Q\in\cD}\La|g|^{a}\Ra\ci{Q}^{1/a}\1\ci{Q}.
\end{equation*}
We note that \eqref{dominate weighted maximal function by unweighted, one parameter} is just the one-parameter analog of \eqref{dominate weighted maximal function by unweighted}. Thus
\begin{equation*}
\bigg\Vert\bigg(\sum_{k=1}^{\infty}(\wt{M}\ci{\cD,W}f_k)^{2}\bigg)^{1/2}\bigg\Vert\ci{L^{p}}\lesssim_{p,d,n}[W]\ci{A_p,\cD}^{1/p}\bigg\Vert\bigg(\sum_{k=1}^{\infty}(M\ci{a,\cD}(|W^{1/p}f_k|)^{2}\bigg)^{1/2}\bigg\Vert\ci{L^{p}}.
\end{equation*}
Set $q:=2/a$, $r:=p/a$. Notice that $0<\e<c(p,d)$ for some $0<c(p,d)<\infty$. Thus, we have
\begin{align*}
&\bigg\Vert\bigg(\sum_{k=1}^{\infty}(M\ci{a,\cD}(|W^{1/p}f_k|)^{2}\bigg)^{1/2}\bigg\Vert\ci{L^{p}}^{p}=\bigg\Vert\bigg(\sum_{k=1}^{\infty}(M\ci{\cD}(|W^{1/p}f_k|^{a}))^{2/a}\bigg)^{p/2}\bigg\Vert\ci{L^{1}}\\
&=\bigg\Vert\bigg(\sum_{k=1}^{\infty}(M\ci{\cD}(|W^{1/p}f_k|^{a}))^{q}\bigg)^{r/q}\bigg\Vert\ci{L^{1}}=\bigg\Vert\bigg(\sum_{k=1}^{\infty}(M\ci{\cD}(|W^{1/p}f_k|^{a}))^{q}\bigg)^{1/q}\bigg\Vert\ci{L^{r}}^{r}\\
&\leq\e^{-1}c_1(p,d,n)^{r}\bigg\Vert\bigg(\sum_{k=1}^{\infty}(|W^{1/p}f_k|^{a})^{q}\bigg)^{1/q}\bigg\Vert\ci{L^{r}}^{r}\sim_{p,n,d}[W]\ci{A_p,\cD}^{1/(p-1)}\bigg\Vert\bigg(\sum_{k=1}^{\infty}|W^{1/p}f_k|^{2}\bigg)^{1/2}\bigg\Vert\ci{L^{p}}^{p},
\end{align*}
where in the last $\sim_{p,d,n}$ we used the definition of $\e$ and the fact that $1<r<p$.
\end{proof}

Unfortunately, we have not been able to extend the bound of Proposition \ref{p: vector valued maximal function} to the regime $p>2$, because in this case $a:=\frac{p(1+\e)}{1+p\e}>2$ (for small enough $\e$, i.e. large enough $[W]\ci{A_p,\cD}$). Moreover, the argument used in the proof of Proposition \ref{p: vector valued maximal function} seems not to be able to yield vector-valued estimates for the Christ--Goldberg maximal function $M^{W}\ci{\cD}$ itself instead of its modified version $\wt{M}^{W}\ci{\cD}$.

\begin{quest}
\label{quest: vector valued maximal function}
Let $p>2$, and let $W$ be a $d\times d$-matrix valued $\cD$-dyadic $A_p$ weight on $\R^n$. Is it true that there exists a finite constant $C>0$, depending only on depends only on $p,d,n$ and $W$, such that
\begin{equation*}
\bigg\Vert\bigg(\sum_{k=1}^{\infty}(\wt{M}\ci{\cD,W}f_k)^{2}\bigg)^{1/2}\bigg\Vert\ci{L^{p}}\leq C\bigg\Vert\bigg(\sum_{k=1}^{\infty}|W^{1/p}f_k|^{2}\bigg)^{1/2}\bigg\Vert\ci{L^{p}}?
\end{equation*}
\end{quest}

\begin{quest}
\label{quest: vector valued maximal function-not modified}
Let $1<p<\infty$, and let $W$ be a $d\times d$-matrix valued $\cD$-dyadic $A_p$ weight on $\R^n$. Is it true that there exists a finite constant $C>0$, depending only on depends only on $p,d,n$ and $W$, such that
\begin{equation*}
\bigg\Vert\bigg(\sum_{k=1}^{\infty}(M\ci{\cD,W}f_k)^{2}\bigg)^{1/2}\bigg\Vert\ci{L^{p}}\leq C\bigg\Vert\bigg(\sum_{k=1}^{\infty}|W^{1/p}f_k|^{2}\bigg)^{1/2}\bigg\Vert\ci{L^{p}}?
\end{equation*}
\end{quest}

\section{Handling the non-cancellative terms}
\label{s: non-cancellative}

In this section we investigate $L^p$ matrix-weighted bounds for general (not necessarily paraproduct-free) Journ\'e operators. In view of Section \ref{s: cancellative}, it suffices only to estimate the non-cancellative terms in Martikainen's representation \eqref{martikainen representation}, which are all of paraproduct form. To that goal, we will first need to estimate several biparameter operators of mixed type.

\subsection{Auxiliary mixed operators}

Let $\bfD=\cD^1\times\cD^2$ be any product dyadic grid in $\R^2$. Let $1<p<\infty$, and let $W$ be any $\bfD$-dyadic biparameter $A_p$ weight on $\R\times\R$. For a.e.~$x_2\in\R$, we denote by $\cW\ci{x_2,I}$ the reducing operator of the weight $W_{x_2}(x_1):=W(x_1,x_2)$, $x_1\in\R$ over the interval $I\in\cD$ with respect to the exponent $p$, and we also define
\begin{equation*}
W\ci{I}(x_2):=\cW\ci{x_2,I}^{p}.
\end{equation*}
Moreover, for a.e.~$x_1\in\R$, we denote by $\cW\ci{x_1,J}$ the reducing operator of the weight $W_{x_1}(x_2):=W(x_1,x_2)$, $x_2\in\R$ over the interval $J\in\cD$ with respect to the exponent $p$, and we also define
\begin{equation*}
W\ci{J}(x_1):=\cW\ci{x_1,J}^{p}.
\end{equation*}

\subsubsection{Mixed square function-maximal function operators}

For a.e.~$x=(x_1,x_2)\in\R\times\R$, define
\begin{equation*}
[S\wt{M}]\ci{\bfD,W}f(x):=\bigg(\sum_{I\in\cD^1}(\sup_{J\in\cD^2}|\cW\ci{x_1,J}\La f\ci{I}^1{\Ra}\ci{J}|\1\ci{J}(x_2))^2\frac{\1\ci{I}(x_1)}{|I|}\bigg)^{1/2}
\end{equation*}
\begin{equation*}
[\wt{M}S]\ci{\bfD,W}f(x):=\bigg(\sum_{J\in\cD^2}(\sup_{I\in\cD^1}|\cW\ci{x_2,I}\La f\ci{J}^2{\Ra}\ci{I}|\1\ci{I}(x_1))^2\frac{\1\ci{J}(x_2)}{|J|}\bigg)^{1/2},
\end{equation*}
\begin{equation*}
[\wt{S}M]\ci{\bfD,W}f(x)=\bigg(\sum_{I\in\cD^1}(\sup_{J\in\cD^2}|\cW\ci{x_2,I}\La f\ci{I}^1\Ra\ci{J}|\1\ci{J}(x_2))^2\frac{\1\ci{I}(x_1)}{|I|}\bigg)^{1/2},
\end{equation*}
\begin{equation*}
[M\wt{S}]\ci{\bfD,W}f(x):=\bigg(\sum_{J\in\cD^2}(\sup_{I\in\cD^1}|\cW\ci{x_1,J}\La f\ci{J}^2\Ra\ci{I}|\1\ci{I}(x_1))^2\frac{\1\ci{J}(x_2)}{|J|}\bigg)^{1/2}.
\end{equation*}
Intuitively, $[S\wt{M}]\ci{\bfD,W}$ acts like a one-parameter matrix-weighted square function (with the weight introduced in a pointwise fashion) on the first variable, and as a one-parameter matrix-weighted maximal function (with the weight introduced in terms of its reducing operators) on the second variable. A similar symmetric interpretation holds for $[\wt{M}S]\ci{\bfD,W}$. Moreover, similar interpretations hold for $[\wt{S}M]\ci{\bfD,W}$ and its symmetric counterpart $[M\wt{S}]\ci{\bfD,W}$.

Estimating these operators for $p=2$ is very easy.

\begin{lm}
\label{l: SM bound p = 2}
Assume $p=2$. Let $T$ be any of the above four operators. Then, there holds
\begin{equation*}
\Vert T\Vert\ci{L^2(W)\rightarrow L^2}\lesssim_{d}[W]\ci{A_2,\bfD}^2.
\end{equation*}
\end{lm}

\begin{proof}
Let us prove this, for example, for $[\wt{S}M]\ci{\bfD,W}$, the proof for the other three operators being similar. We have
\begin{align*}
\Vert [\wt{S}M]\ci{\bfD,W}f\Vert\ci{L^{2}}^{2}&=\int_{\R^2}\sum_{I\in\cD^1}(\sup_{J\in\cD^2}|\cW\ci{x_2,I}\La f\ci{I}^1{\Ra}\ci{J}|\1\ci{J}(x_2))^2\frac{\1\ci{I}(x_1)}{|I|}\mathd x_1\mathd x_2\\
&=\sum_{I\in\cD^1}\Vert\sup_{J\in\cD^2}|\cW\ci{x_2,I}\La f\ci{I}^1{\Ra}\ci{J}|\1\ci{J}\Vert\ci{L^2(\R)}^2=
\sum_{I\in\cD^1}\Vert \wt{M}\ci{\cD^2,W\ci{I}}(f^{1}\ci{I})\Vert\ci{L^2(\R)}^2\\
&\lesssim_{d}\sum_{I\in\cD^1}[W\ci{I}]\ci{A_2,\cD^2}^{2}\Vert f^{1}\ci{I}\Vert\ci{L^2(W\ci{I})}^2\lesssim_{d}[W]\ci{A_2,\cD}^{2}\sum_{I\in\cD^1}\Vert f^{1}\ci{I}\Vert\ci{L^2(W\ci{I})}^2\\
&=[W]\ci{A_2,\cD}^{2}\sum_{I\in\cD^1}\int_{\R}|\cW\ci{x_2,I}f\ci{I}^{1}(x_2)|^{2}\mathd x_2\\
&=[W]\ci{A_2,\cD}^{2}\int_{\R}\bigg(\int_{\R}\sum_{I\in\cD^1}|\cW\ci{x_2,I}f\ci{I}^{1}(x_2)|^{2}\frac{\1\ci{I}(x_1)}{|I|}\mathd x_1\bigg)\mathd x_2\\
&=[W]\ci{A_2,\cD}^{2}\int_{\R}\Vert \wt{S}\ci{\cD^1,W(\fdot,x_2)}(f(\fdot,x_2))\Vert\ci{L^2(\R)}^2\mathd x_2\\
&\lesssim_{d}[W]\ci{A_2,\bfD}^{4}\int_{\R}\Vert f(\fdot,x_2)\Vert\ci{L^2(W(\fdot,x_2))}^2=[W]\ci{A_2,\bfD}^{4}\Vert f\Vert\ci{L^2(W)}^2,
\end{align*}
where in the second $\lesssim_{d}$ we applied \eqref{weighted bound modified C-G}, in the third $\lesssim_{d}$ we applied \eqref{uniform domination of characteristics of averages}, and in the last $\lesssim_{d}$ we applied \eqref{upper bound square functions p=2} coupled with Lemma \ref{l: two-weight-biparameter A_p implies uniform A_p in each coordinate}, concluding the proof.
\end{proof}

Obtaining bounds for general $1<p<\infty$ is much harder.

\begin{lm}
\label{l: SM bound}
\item[(1)] Assume $1<p\leq 2$. Then, there holds
\begin{equation*}
\Vert [S\wt{M}]\ci{\bfD,W}\Vert\ci{L^{p}(W)\rightarrow L^{p}}\lesssim_{p,d}[W]\ci{A_p,\bfD}^{\gamma(p)+\frac{1}{p-1}}.
\end{equation*}
The same estimate holds for the symmetric counterpart $[\wt{M}S]\ci{\bfD,W}$.

\item[(2)] There holds
\begin{align*}
\Vert[\wt{S}M]\ci{\bfD,W}f\Vert\ci{L^{p}}^{p}\lesssim_{p,d}[W]\ci{A_p,\bfD}\int_{\R}\bigg(\int_{\R}\bigg(\sum_{I\in\cD^1}(M\ci{\cD^2,W(x_1,\fdot)}(Q^{1}\ci{I}f)(x_2))^2\bigg)^{p/2}\mathd x_2\bigg)\mathd x_1.
\end{align*}
A symmetric estimate holds for the symmetric counterpart $[M\wt{S}]\ci{\bfD,W}$.
\end{lm}

\begin{proof}
\item[(1)] For a.e.~$x_1\in\R$, applying first \eqref{trivial bound} and then Proposition \ref{p: vector valued maximal function}, we obtain
\begin{align*}
&\int_{\R}([S\wt{M}]\ci{\bfD,W}f(x_1,x_2))^p\dd x_2=\int_{\R}\sum_{I\in\cD^1}\bigg((\sup_{J\in\cD^2}|\cW\ci{x_1,J}\La f\ci{I}^{1}\Ra\ci{J}|\1\ci{J}(x_2))^2\frac{\1\ci{I}(x_1)}{|I|}\bigg)^{p/2}\dd x_2\\
&=\int_{\R}\bigg(\sum_{I\in\cD^1}(\sup_{J\in\cD^2}|W\ci{J}(x_1)^{1/p}\La f\ci{I}^1\Ra\ci{J}|\1\ci{J}(x_2))^2\frac{\1\ci{I}(x_1)}{|I|}\bigg)^{p/2}\mathd x_2\\
&=\int_{\R}\bigg(\sum_{I\in\cD^1}(\sup_{J\in\cD^2}|\cW\ci{x_1,J}\La f\ci{I}^1\Ra\ci{J}|\1\ci{J}(x_2))^2\frac{\1\ci{I}(x_1)}{|I|}\bigg)^{p/2}\mathd x_2\\
&=\int_{\R}\bigg(\sum_{I\in\cD^1}(\wt{M}\ci{\cD^2,W(x_1,\fdot)}(Q^1\ci{I}f)(x_2))^2\bigg)^{p/2}\mathd x_2\\
&\lesssim_{p,d}[W_{x_1}]\ci{A_p,\cD^2}^{p/(p-1)}\int_{\R}\bigg(\sum_{I\in\cD^1}|W(x_1,x_2)^{1/p}Q^1\ci{I}f(x_2)|^2\bigg)^{p/2}\mathd x_2\\
&\lesssim_{p,d}[W]\ci{A_p,\bfD}^{p/(p-1)}\int_{\R}\bigg(\sum_{I\in\cD^1}|W(x_1,x_2)^{1/p}Q^1\ci{I}f(x_2)|^2\bigg)^{p/2}\mathd x_2.
\end{align*}
Therefore, we have
\begin{align*}
\Vert [S\wt{M}]\ci{\bfD,W}f\Vert\ci{L^{p}}^{p}&\lesssim_{p,d}[W]\ci{A_p,\bfD}^{\frac{p}{p-1}}\int_{\R}\bigg(\int_{\R}\bigg(\sum_{I\in\cD^1}|W(x_1,x_2)^{1/p}Q^1\ci{I}f(x_2)|^2\bigg)^{p/2}\mathd x_2\bigg)\mathd x_1\\
&=[W]\ci{A_p,\bfD}^{\frac{p}{p-1}}\\
&\int_{\R}\bigg(\int_{\R}\bigg(\sum_{I\in\cD^1}(W(x_1,x_2)^{1/p}Q\ci{I}(f(\fdot,x_2))(x_1))^2\bigg)^{p/2}\mathd x_1\bigg)\mathd x_2\\
&=[W]\ci{A_p,\bfD}^{\frac{p}{p-1}}\int_{\R}\Vert S\ci{\cD^1,W_{x_2}}(f(\fdot,x_2))\Vert\ci{L^{p}(\R)}^{p}\mathd x_2\\
&\lesssim_{p,d}[W]\ci{A_p,\bfD}^{\frac{p}{p-1}+p\gamma(p)}\int_{\R}\bigg(\int_{\R}|W(x_1,x_2)|^{1/p}f(x_1,x_2)|^{p}\mathd x_1\bigg)\mathd x_2\\
&=[W]\ci{A_p,\bfD}^{\frac{p}{p-1}+p\gamma(p)}\Vert f\Vert\ci{L^{p}(W)}^{p},
\end{align*}
concluding the proof.

\item[(2)] We have
\begin{equation*}
\Vert [\wt{S}M]\ci{\bfD,W}f\Vert\ci{L^{p}}^{p}=\int_{\R}\bigg(\int_{\R}\bigg(\sum_{I\in\cD^1}(\sup_{J\in\cD^2}|\cW\ci{x_2,I}\La f\ci{I}^1\Ra\ci{J}|\1\ci{J}(x_2))^2\frac{\1\ci{I}(x_1)}{|I|}\bigg)^{p/2}\mathd x_1\bigg)\mathd x_2.
\end{equation*}
For a.e. $x_2\in\R$, using first Lemma \ref{l: second scalar} and then Lemma \ref{l: uniform A_p in each coordinate implies biparameter A_p} we have
\begin{align*}
\sup_{J\in\cD^2}|\cW\ci{x_2,I}\La f\ci{I}^1\Ra\ci{J}|\1\ci{J}(x_2)&\lesssim_{p,d}\sup_{J\in\cD^2}[W_{x_2}]\ci{A_p,\cD^1}^{1/p}(|W_{x_2}^{1/p}\La f\ci{I}^1\Ra\ci{J}|)\ci{I}\1\ci{J}(x_2)\\
&\lesssim_{p,d}[W]\ci{A_p,\bfD}^{1/p}(\sup_{J\in\cD^2}|W_{x_2}^{1/p}\La f\ci{I}^1\Ra\ci{J}|\1\ci{J}(x_2))\ci{I}.
\end{align*}
In the last $\lesssim_{p,d}$, we used \eqref{uniform domination of characteristics of averages}, as well as the fact that the supremum of the integrals is dominated by the integral of the supremum. So, for a.e. $x_2\in\R$ we have
\begin{align*}
&\int_{\R}\bigg(\sum_{I\in\cD^1}(\sup_{J\in\cD^2}|\cW\ci{x_2,I}\La f\ci{I}^1\Ra\ci{J}|\1\ci{J}(x_2))^2\frac{\1\ci{I}(x_1)}{|I|}\bigg)^{p/2}\mathd x_1\\
&\lesssim_{p,d}[W]\ci{A_p,\bfD}\int_{\R}\bigg(\sum_{I\in\cD^1}(\sup_{J\in\cD^2}|W_{x_2}^{1/p}\La f\ci{I}^1\Ra\ci{J}|\1\ci{J}(x_2))\ci{I}^2\frac{\1\ci{I}(x_1)}{|I|}\bigg)^{p/2}\mathd x_1.
\end{align*}
For a.e. $x_2\in\R$, applying the same duality trick as in the proof of Lemma \ref{l: dominate TL square function by square function} we obtain
\begin{align*}
&\int_{\R}\bigg(\sum_{I\in\cD^1}\La\sup_{J\in\cD^2}|W_{x_2}^{1/p}\La f\ci{I}^1\Ra\ci{J}|\1\ci{J}(x_2)\Ra\ci{I}^2\frac{\1\ci{I}(x_1)}{|I|}\bigg)^{p/2}\mathd x_1\\
&\lesssim_{p,d}\int_{\R}\bigg(\sum_{I\in\cD^1}(\sup_{J\in\cD^2}|W_{x_2}(x_1)^{1/p}\La f\ci{I}^1\Ra\ci{J}|\1\ci{J}(x_2))^2\frac{\1\ci{I}(x_1)}{|I|}\bigg)^{p/2}\mathd x_1.
\end{align*}
Thus, by Fubini--Tonelli we have
\begin{align*}
&\Vert[\wt{S}M]\ci{\bfD,W}f\Vert\ci{L^{p}}^{p}\\
&\lesssim_{p,d}[W]\ci{A_p,\bfD}\int_{\R}\bigg(\int_{\R}\bigg(\sum_{I\in\cD^1}(\sup_{J\in\cD^2}|W_{x_2}(x_1)^{1/p}\La f\ci{I}^1\Ra\ci{J}|\1\ci{J}(x_2))^2\frac{\1\ci{I}(x_1)}{|I|}\bigg)^{p/2}\mathd x_2\bigg)\mathd x_1\\
&=[W]\ci{A_p,\bfD}\int_{\R}\bigg(\int_{\R}\bigg(\sum_{I\in\cD^1}(\sup_{J\in\cD^2}|W_{x_1}(x_2)^{1/p}\La f\ci{I}^1\Ra\ci{J}|\1\ci{J}(x_2))^2\frac{\1\ci{I}(x_1)}{|I|}\bigg)^{p/2}\mathd x_2\bigg)\mathd x_1\\
&=[W]\ci{A_p,\bfD}\int_{\R}\bigg(\int_{\R}\bigg(\sum_{I\in\cD^1}(M\ci{\cD^2,W(x_1,\fdot)}(Q^{1}\ci{I}f)(x_2))^2\bigg)^{p/2}\mathd x_2\bigg)\mathd x_1.
\end{align*}
Clearly, one would follow a similar strategy to bound $[M\wt{S}]\ci{\bfD,W}$.
\end{proof}

\begin{rem}
\label{r: incomplete sm bound}
To extend part (1) of Lemma \ref{l: SM bound} to the whole range $1<p<\infty$, we would need to answer Question \ref{quest: vector valued maximal function} to the positive. Moreover, to deduce actual matrix-weighted bounds from part (2) of Lemma \ref{l: SM bound}, we would need to answer Question \ref{quest: vector valued maximal function-not modified} to the positive.
\end{rem}

\subsubsection{Mixed shifted square function-maximal functions operators}

Let $i$ be any nonnegative integer. For a.e.~$x=(x_1,x_2)\in\R\times\R$, we define
\begin{equation*}
[\wt{S}^{i}M]\ci{\bfD,W}f(x):=\bigg(\sum_{R_1\in\cD^1}\bigg(\sum_{P_1\in\ch_{i}(R_1)}(\sup_{R_2\in\cD^2}|\cW\ci{x_2,R_1}\La f^{1}\ci{P_1}\Ra\ci{R_2}|\1\ci{R_2}(x_2))\bigg)^2\frac{\1\ci{R_1}(x_1)}{|R_1|}\bigg)^{1/2},
\end{equation*}
\begin{equation*}
[M\wt{S}^{i}]\ci{\bfD,W}f(x):=\bigg(\sum_{R_2\in\cD^2}\bigg(\sum_{P_2\in\ch_{i}(R_2)}(\sup_{R_1\in\cD^1}|\cW\ci{x_1,R_2}\La f^{2}\ci{P_2}\Ra\ci{R_1}|\1\ci{R_1}(x_1))\bigg)^2\frac{\1\ci{R_2}(x_2)}{|R_2|}\bigg)^{1/2}.
\end{equation*}
Intuitively, $[\wt{S}^{i}M]\ci{\bfD,W}$ acts like a one-parameter matrix-weighted shifted square function of complexity $i$ (with the weight introduced in terms of its reducing operators) on the first variable, and as a one-parameter matrix-weighted maximal function (with the weight introduced in a pointwise fashion) on the second variable. A similar symmetric interpretation holds for $[M\wt{S}^{i}]\ci{\bfD,W}$.

Estimating these operators for $p=2$ is straightforward.

\begin{lm}
\label{l: shifted square-maximal bound p = 2}
Assume $p=2$. Then, there holds
\begin{equation*}
\Vert [\wt{S}^{i}M]\ci{\bfD,W}\Vert\ci{L^2(W)\rightarrow L^2}\lesssim_{d}2^{i/2}[W]\ci{A_2,\bfD}^{5/2}.
\end{equation*}
The same bound is true for the symmetric counterpart $[M\wt{S}^{i}]\ci{\bfD,W}$.
\end{lm}

\begin{proof}
We will apply the same trick as in the proof of Lemma \ref{l: tl shifted-square-function-p=2}. We have
\begin{align*}
&\Vert [\wt{S}^{i}M]\ci{\bfD,W}f\Vert\ci{L^2}^2=
\int_{\R}\sum_{R_1\in\cD^1}\bigg(\sum_{P_1\in\ch_{i}(R_1)}(\sup_{R_2\in\cD^2}|\cW\ci{x_2,R_1}\La f^{1}\ci{P_1}\Ra\ci{R_2}|\1\ci{R_2}(x_2))\bigg)^2\mathd x_2\\
&\leq\int_{\R}\sum_{R_1\in\cD^1}\bigg(\sum_{P_1\in\ch_{i}(R_1)}(\sup_{R_2\in\cD^2}(|\cW\ci{x_2,R_1}\cW\ci{x_2,P_1}^{-1}|\cdot|\cW\ci{x_2,P_1}\La f^{1}\ci{P_1}\Ra\ci{R_2}|\1\ci{R_2}(x_2)))\bigg)^2\mathd x_2\\
&\leq\int_{\R}\sum_{R_1\in\cD^1}\bigg(\sum_{P_1\in\ch_{i}(R_1)}|\cW\ci{x_2,R_1}\cW\ci{x_2,P_1}^{-1}|^2\bigg)\bigg(\sum_{P_1\in\ch_{i}(R_1)}(\sup_{R_2\in\cD^2}|\cW\ci{x_2,P_1}\La f^{1}\ci{P_1}\Ra\ci{R_2}|\1\ci{R_2}(x_2))^2\bigg)\mathd x_2
\end{align*}
where we applied the Cauchy--Schwarz inequality in the last step. As in the proof of Lemma \ref{l: tl shifted-square-function-p=2} we have
\begin{equation*}
\sum_{P_1\in\ch_{i}(R_1)}|\cW\ci{x_2,R_1}\cW\ci{x_2,P_1}^{-1}|^2\lesssim_{d}2^{i}[W_{x_2}]\ci{A_2,\cD^1}\lesssim_{d}2^{i}[W]\ci{A_2,\bfD},
\end{equation*}
where we used Lemma \ref{l: two-weight-biparameter A_p implies uniform A_p in each coordinate} in the last $\lesssim_{d}$. Recall that $W\ci{P_1}(x_2)=\cW\ci{x_2,P_1}^{2}$, for a.e.~$x_2\in\R$, and that by \eqref{uniform domination of characteristics of averages} we have $[W\ci{P_1}]\ci{A_2,\cD^2}\lesssim_{d}[W]\ci{A_2,\bfD}$. Thus
\begin{align*}
\Vert [\wt{S}^{i}M]f\ci{\bfD,W}\Vert\ci{L^2}^2&\lesssim_{d}2^{i}[W]\ci{A_2,\bfD}\int_{\R}\sum_{R_1\in\cD^1}\sum_{P_1\in\ch_{i}(R_1)}(\sup_{R_2\in\cD^2}|\cW\ci{x_2,P_1}\La f^{1}\ci{P_1}\Ra\ci{R_2}|\1\ci{R_2}(x_2))^2\mathd x_2\\
&=2^{i}[W]\ci{A_2,\bfD}\sum_{P_1\in\cD^1}\Vert M\ci{\cD^2,W\ci{P_1}}(f\ci{P_1}^1)\Vert\ci{L^2(\R)}^2\\
&\lesssim_{d}2^{i}[W]\ci{A_2,\bfD}\sum_{P_1\in\cD^1}[W\ci{P_1}]\ci{A_2,\cD^2}^2\Vert f\ci{P_1}^1\Vert\ci{L^2(W\ci{P_1})}^2\\
&\lesssim_{d}2^{i}[W]\ci{A_2,\bfD}^{3}\sum_{P_1\in\cD^1}\int_{\R}|\cW_{x_2,P_1}f\ci{P_1}^1(x_2)|^2\mathd x_2\\
&=2^{i}[W]\ci{A_2,\bfD}^{3}\int_{\R}\bigg(\int_{\R}\sum_{P_1\in\cD^1}|\cW_{x_2,P_1}f\ci{P_1}^1(x_2)|^2\frac{\1\ci{P_1}(x_1)}{|P_1|}\mathd x_1\bigg)\mathd x_2\\
&=2^{i}[W]\ci{A_2,\bfD}^{3}\int_{\R}\Vert \wt{S}\ci{\cD^1,W_{x_2}}(f(\fdot,x_2))\Vert\ci{L^2(\R)}^2\mathd x_2\\
&\lesssim_{d}2^{i}[W]\ci{A_2,\bfD}^{5}\int_{\R}\Vert f(\fdot,x_2)\Vert\ci{L^2(W(\fdot,x_2))}^2\mathd x_2=[W]\ci{A_2,\bfD}^{5}\Vert f\Vert\ci{L^2(W)}^2.
\end{align*}
Thus
\begin{equation*}
\Vert [\wt{S}^{i}M]\ci{\bfD,W}\Vert\ci{L^2(W)\rightarrow L^2}\lesssim_{d}2^{i/2}[W]\ci{A_2,\bfD}^{5/2}.
\end{equation*}
The proof for $[M\wt{S}^{i}]\ci{\bfD,W}$ is symmetric.
\end{proof}

Bounding these operators for general $1<p<\infty$ is much harder.

\begin{lm}
\label{l: shifted square-maximal bound}
There holds
\begin{align*}
&\Vert [\wt{S}^{i}M]\ci{\bfD,W}f\Vert\ci{L^{p}}^{p}\\
&\lesssim_{p,d} i^{p}2^{pi/2}[W]\ci{A_p,\bfD}^{1+p\gamma(p)+\frac{p\gamma(p')}{p-1}}\int_{\R}\bigg(\int_{\R}\bigg(\sum_{P_1\in\cD^1}(M\ci{\cD^2,W(x_1,\fdot)}(Q^{1}\ci{P_1}(f))(x_2))^{2}\bigg)^{p/2}\mathd x_2\bigg)\mathd x_1.
\end{align*}
A symmetric estimate holds for the symmetric counterpart $[M\wt{S}^{i}]\ci{\bfD,W}$.
\end{lm}

\begin{proof}
By the Monotone Convergence Theorem, it suffices for all nonempty finite subsets $\cF_1,\cF_2$ of $\cD^1,\cD^2$ respectively to obtain bounds for the map
\begin{equation*}
[\wt{S}^{i}M]\ci{\cF_1,\cF_2,W}f(x):=\bigg(\sum_{R_1\in\cF_1}\bigg(\sum_{P_1\in\ch_{i}(R_1)}(\max_{R_2\in\cF_2}|\cW\ci{x_2,R_1}\La f^{1}\ci{P_1}\Ra\ci{R_2}|\1\ci{R_2}(x_2))\bigg)^2\frac{\1\ci{R_1}(x_1)}{|R_1|}\bigg)^{1/2}.
\end{equation*}
that are independent of $\cF_1,\cF_2$.

For every $P_1\in\cF_1$, it is easy to see that there exists an a.e.~defined map $J\ci{P_1}:\R\rightarrow\cF_2$, such that:
\begin{itemize}

\item the set $\lbrace x_2\in\R:~J\ci{P_1}(x_2)=R_2\rbrace$ is measurable, for all $R_2\in\cF_2$

\item there holds
\begin{equation*}
|\cW\ci{x_2,P_1^{(i)}}\La f^{1}\ci{P_1}\Ra\ci{J\ci{P_1}(x_2)}|=\max_{R_2\in\cF_2}|\cW\ci{x_2,P_1^{(i)}}\La f^{1}\ci{P_1}\Ra\ci{R_2}|,
\end{equation*}
for a.e.~$x_2\in\R$, where $P_1^{(i)}$ is the unique interval in $\cD^1$ containing $P_1$ such that $P_1\in\ch_{i}(P_1^{(i)})$.
\end{itemize}
For the sake of completeness, we give the details in the appendix. Thus
\begin{align*}
[\wt{S}^{i}M]\ci{\cF_1,\cF_2,W}f(x)&\lesssim\bigg(\sum_{R_1\in\cF_1}\bigg(\sum_{P_1\in\ch_{i}(R_1)}|\cW\ci{x_2,P_1^{(i)}}\La f^{1}\ci{P_1}\Ra\ci{J\ci{P_1}(x_2)}|\bigg)^2\frac{\1\ci{R_1}(x_1)}{|R_1|}\bigg)^{1/2}\\
&=\wt{S}^{i}\ci{\cD^1,W_{x_2}}(F_{x_2})(x_1),
\end{align*}
where
\begin{equation*}
F_{x_2}(y_1):=\sum_{R_1\in\cF_1}\sum_{P_1\in\ch_{i}(R_1)}\La f^{1}\ci{P_1}\Ra\ci{J\ci{P_1}(x_2)}h\ci{P_1}(y_1).
\end{equation*}
Thus, for a.e.~$x_2\in\R$, we have
\begin{align*}
\int_{\R}|[\wt{S}^{i}M]\ci{\cF_1,\cF_2,W}f(x_1,x_2)|^{p}\mathd x_1&\lesssim_{p,d}\Vert \wt{S}^{i}\ci{\cD^1,W_{x_2}}(F_{x_2})\Vert\ci{L^{p}(\R)}^{p}\\
&\lesssim_{p,d}i^{p}2^{ip/2}[W_{x_2}]\ci{A_p,\cD^1}^{1+p\gamma(p)}\Vert F_{x_2}\Vert\ci{L^{p}(W_{x_2})}^{p}\\
&\lesssim_{p,d}i^{p}2^{ip/2}[W]\ci{A_p,\bfD}^{1+p\gamma(p)}\Vert F_{x_2}\Vert\ci{L^{p}(W_{x_2})}^{p}.
\end{align*}
where in the second $\lesssim_{p,d}$ we used Lemma \ref{l: upper bound TL shifted square function one parameter}. It follows that
\begin{align*}
&\Vert [\wt{S}^{i}M]\ci{\cF_1,\cF_2,W}f\Vert\ci{L^{p}}^{p}\lesssim_{p,d}i^{p}2^{ip/2}[W]\ci{A_p,\bfD}^{1+p\gamma(p)}\int_{\R}\Vert F\ci{x_2}\Vert\ci{L^{p}(W_{x_2})}^{p}\mathd x_2\\
&\lesssim_{p,d}i^{p}2^{pi/2}[W]\ci{A_p,\bfD}^{1+p\gamma(p)+\frac{p\gamma(p')}{p-1}}\int_{\R}\Vert S\ci{\cD^1,W_{x_2}}(F\ci{x_2})\Vert\ci{L^{p}(\R)}^{p}\mathd x_2\\
&=i^{p}2^{pi/2}[W]\ci{A_p,\bfD}^{1+p\gamma(p)+\frac{p\gamma(p')}{p-1}}\int_{\R}\bigg(\int_{\R}\bigg(\sum_{P_1\in\cD^1}|W_{x_2}(x_1)^{1/p}(F\ci{x_2})\ci{P_1}|^{2}\frac{\1\ci{P_1}(x_1)}{|P_1|}\bigg)^{p/2}\mathd x_1\bigg)\mathd x_2\\
&=i^{p}2^{pi/2}[W]\ci{A_p,\bfD}^{1+p\gamma(p)+\frac{p\gamma(p')}{p-1}}\int_{\R}\bigg(\int_{\R}\bigg(\sum_{P_1\in\cD^1}|W(x_1,x_2)^{1/p}\La f^{1}\ci{P_1}\Ra\ci{J\ci{P_1(x_2)}}|^{2}\frac{\1\ci{P_1}(x_1)}{|P_1|}\bigg)^{p/2}\mathd x_1\bigg)\mathd x_2\\
&\leq i^{p}2^{pi/2}[W]\ci{A_p,\bfD}^{1+p\gamma(p)+\frac{p\gamma(p')}{p-1}}\int_{\R}\bigg(\int_{\R}\bigg(\sum_{P_1\in\cD^1}(M\ci{\cD^2,W(x_1,\fdot)}(Q^{1}\ci{P_1}(f))(x_2))^{2}\bigg)^{p/2}\mathd x_2\bigg)\mathd x_1.
\end{align*}
\end{proof}

\begin{rem}
\label{r: incomplete shifted square-maximal bounds}
To deduce actual weighted bounds from Lemma \ref{l: shifted square-maximal bound}, we would need to answer Question \ref{quest: vector valued maximal function-not modified} to the positive.
\end{rem}

\subsubsection{Mixed shifted square function-square function operators}

Let $i$ be any nonnegative integer. For a.e.~$x=(x_1,x_2)\in\R\times\R$, define
\begin{equation*}
[\wt{S}^{i}S]\ci{\bfD,W}f(x):=\bigg(\sum_{R_1\in\cD^1}\bigg(\sum_{P_1\in\ch_{i}(R_1)}\bigg(\sum_{R_2\in\cD^2}|\cW\ci{x_2,R_1}f\ci{P_1\times R_2}|^2\frac{\1\ci{R_2}(x_2)}{|R_2|}\bigg)^{1/2}\bigg)^2\frac{\1\ci{R_1}(x_1)}{|R_1|}\bigg)^{1/2},
\end{equation*}
\begin{equation*}
[S\wt{S}^{i}]\ci{\bfD,W}f(x):=\bigg(\sum_{R_2\in\cD^2}\bigg(\sum_{P_2\in\ch_{i}(R_2)}\bigg(\sum_{R_1\in\cD^1}|\cW\ci{x_1,R_2}f\ci{R_1\times P_2}|^2\frac{\1\ci{R_1}(x_1)}{|R_1|}\bigg)^{1/2}\bigg)^2\frac{\1\ci{R_2}(x_2)}{|R_2|}\bigg)^{1/2}.
\end{equation*}
Intuitively, $[\wt{S}^{i}S]\ci{\bfD,W}$ acts like a one-parameter matrix-weighted shifted square function of complexity $i$ (with the weight introduced in terms of its reducing operators) on the first variable, and as a one-parameter matrix-weighted square function (with the weight introduced in a pointwise fashion) on the second variable. A similar symmetric interpretation holds for $[S\wt{S}^{i}]\ci{\bfD,W}$.

Estimating these operators for $p=2$ is straightforward.

\begin{lm}
\label{l: shifted square-square bound p = 2}
Assume $p=2$. Then, there holds
\begin{equation*}
\Vert [\wt{S}^{i}S]\ci{\bfD,W}\Vert\ci{L^2(W)\rightarrow L^2}\lesssim_{d}2^{i/2}[W]\ci{A_2,\bfD}^{5/2}.
\end{equation*}
The same bound is true for the symmetric counterpart $[S\wt{S}^{i}]\ci{\bfD,W}$.
\end{lm}

The proof of Lemma \ref{l: shifted square-square bound p = 2} is almost identical to that of Lemma \ref{l: shifted square-maximal bound p = 2} (the only difference being that one obtains $S\ci{\cD^2,W\ci{P_1}}$ instead of $M\ci{\cD^2,W\ci{P_1}}$ in the course of the proof), so we omit it.

Bounding these operators for general $1<p<\infty$ is slightly harder.

\begin{lm}
\label{l: shifted square-square bound}
There holds
\begin{equation*}
\Vert [\wt{S}^{i}S]\ci{\bfD,W}\Vert\ci{L^{p}(W)\rightarrow L^{p}}\lesssim_{p,d}i2^{i/2}[W]\ci{A_p,\bfD}^{\frac{1}{p}+3\gamma(p)+\frac{\gamma(p')}{p-1}}.
\end{equation*}
The same bound is true for the symmetric counterpart $[S\wt{S}^{i}]\ci{\bfD,W}$.
\end{lm}

\begin{proof}
By the Monotone Convergence Theorem, it suffices for all nonempty finite subsets $\cF_1,\cF_2$ of $\cD^1,\cD^2$ respectively to obtain bounds for the map
\begin{equation*}
[\wt{S}^{i}S]\ci{\cF_1,\cF_2,W}f(x):=\bigg(\sum_{R_1\in\cF_1}\bigg(\sum_{P_1\in\ch_{i}(R_1)}\bigg(\sum_{R_2\in\cF_2}|\cW\ci{x_2,R_1}f\ci{P_1\times R_2}|^2\frac{\1\ci{R_2}(x_2)}{|R_2|}\bigg)^{1/2}\bigg)^2\frac{\1\ci{R_1}(x_1)}{|R_1|}\bigg)^{1/2}
\end{equation*}
that are independent of $\cF_1,\cF_2$.

For every $P_1\in\{P\in\ch_i(R_1)\colon R_1\in\cF_1\}$, by Khintchine's inequalities we have
\begin{equation*}
\bigg(\sum_{R_2\in\cF_2}|\cW\ci{x_2,P_1^{(i)}}f\ci{P_1\times R_2}|^2\frac{\1\ci{R_2}(x_2)}{|R_2|}\bigg)^{1/2}\sim\int_{\Omega}\bigg|\sum_{R_2\in\cF_2}\s\ci{R_2}(\omega)\cW\ci{x_2,P_1^{(i)}}f\ci{P_1\times R_2}h\ci{R_2}(x_2)\bigg|\mathd\bP(\omega),
\end{equation*}
where $P_1^{(i)}$ is the unique interval in $\cD^1$ such that $P_1\in\ch_{i}(P_1^{(i)})$. For every $P_1\in\cF_1$, it is easy to see that there exists an a.e.~defined measurable function $\f\ci{P_1}:\R\times\Omega\rightarrow\lbrace-1,1\rbrace$, such that
\begin{align*}
&\int_{\Omega}\bigg|\sum_{R_2\in\cF_2}\s\ci{R_2}(\omega)\cW\ci{x_2,P_1^{(i)}}f\ci{P_1\times R_2}h\ci{R_2}(x_2)\bigg|\mathd\bP(\omega)\\
&\sim_{d}\bigg|\cW\ci{x_2,P_1^{(i)}}\int_{\Omega}\f\ci{P_1}(x_2,\omega)\sum_{R_2\in\cF_2}\s\ci{R_2}(\omega)f\ci{P_1\times R_2}h\ci{R_2}(x_2)\mathd\bP(\omega)\bigg|.
\end{align*}
For the sake of completeness, we supply the details in the appendix. Therefore, setting
\begin{equation*}
F_{x_2}(y_1):=\sum_{R_1\in\cF_1}\sum_{P_1\in\ch_i(R_1)}\bigg(\int_{\Omega}\f\ci{P_1}(x_2,\omega)\sum_{R_2\in\cF_2}\s\ci{R_2}(\omega)f\ci{P_1\times R_2}h\ci{R_2}(x_2)\mathd\bP(\omega)\bigg) h\ci{P_1}(y_1),
\end{equation*}
we have
\begin{equation*}
[\wt{S}^{i}S]\ci{\bfD,W}f(x)\lesssim_{p,d}\wt{S}^{i}\ci{\cD^1,W_{x_2}}(F_{x_2})(x_1).
\end{equation*}
It follows that for a.e. $x_2\in\R$, we have
\begin{align*}
\int_{\R}|[\wt{S}^{i}S]\ci{\cF_1,\cF_2,W}f(x_1,x_2)|^{p}\mathd x_1&\lesssim_{p,d}\Vert \wt{S}^{i}\ci{\cD^1,W_{x_2}}(F_{x_2})\Vert\ci{L^{p}(\R)}^{p}\\
&\lesssim_{p,d}i^{p}2^{ip/2}[W_{x_2}]\ci{A_p,\cD^1}^{1+p\gamma(p)}\Vert F_{x_2}\Vert\ci{L^{p}(W_{x_2})}^{p}\\
&\lesssim_{p,d}i^{p}2^{ip/2}[W]\ci{A_p,\bfD}^{1+p\gamma(p)}\Vert F_{x_2}\Vert\ci{L^{p}(W_{x_2})}^{p},
\end{align*}
where in the second $\lesssim_{p,d}$ we used Lemma \ref{l: upper bound TL shifted square function one parameter}. Therefore
\begin{align*}
&\Vert [S^{i}\wt{S}]\ci{\cF_1,\cF_2,W}f\Vert\ci{L^{p}}^{p}
\lesssim_{p,d}[W]\ci{A_p,\bfD}^{1+p\gamma(p)}
i^{p}2^{pi/2}\int_{\R}\Vert F\ci{x_2}\Vert\ci{L^{p}(W_{x_2})}^{p}\mathd x_2\\
&\lesssim_{p,d}i^{p}2^{pi/2}[W]\ci{A_p,\bfD}^{1+p\gamma(p)+\frac{p\gamma(p')}{p-1}}\int_{\R}\Vert S\ci{\cD^1,W_{x_2}}(F\ci{x_2})(x_1)\Vert\ci{L^{p}(x_1)}^{p}\mathd x_2\\
&=i^{p}2^{pi/2}[W]\ci{A_p,\bfD}^{1+p\gamma(p)+\frac{p\gamma(p')}{p-1}}\int_{\R}\bigg(\int_{\R}\bigg(\sum_{P_1\in\cD^1}|W_{x_2}(x_1)^{1/p}(F\ci{x_2})\ci{P_1}|^{2}\frac{\1\ci{P_1}(x_1)}{|P_1|}\bigg)^{p/2}\mathd x_1\bigg)\mathd x_2\\
&\leq i^{p}2^{pi/2}[W]\ci{A_p,\bfD}^{1+p\gamma(p)+\frac{p\gamma(p')}{p-1}}\\
&\int_{\R}\bigg(\int_{\R}\bigg(\sum_{P_1\in\cD^1}\sum_{R_2\in\cD^2}|W_{x_2}(x_1)^{1/p}f\ci{P_1\times R_2}|^{2}\frac{\1\ci{P_1\times R_2}(x_1,x_2)}{|P_1|\cdot|R_2|}\bigg)^{p/2}\mathd x_1\bigg)\mathd x_2\\
&=i^{p}2^{i/2}[W]\ci{A_p,\bfD}^{1+p\gamma(p)+\frac{p\gamma(p')}{p-1}}\Vert S\ci{\bfD,W}f\Vert\ci{L^{p}(\R^2)}^{p}\lesssim_{p,d}i^{p}2^{pi/2}[W]\ci{A_p,\bfD}^{1+3p\gamma(p)+\frac{p\gamma(p')}{p-1}},
\end{align*}
concluding the proof.
\end{proof}

\subsection{Estimating non-cancellative terms}

Here we attempt to estimate the various non-cancellative terms in Martikainen's representation \eqref{martikainen representation}.
Note that all of these terms are of paraproduct form.
Also, similar two-weighted bounds for one-parameter matrix weights appear in \cite{isralowitz-commutators}.
We split them into different classes, following \cite[Subsection 7.4]{holmes-petermichl-wick}.

\subsubsection{Full standard paraproducts}
\label{s: pure biparameter paraproduct}

Let $\bfD=\cD^1\times\cD^2$ be a product dyadic grid in $\R\times\R$.
First of all, observe that all operators in this section (and in the following ones) are linear
with respect to their symbol $a.$
In particular, all estimates that we will show are homogeneous with respect to $\Vert a\Vert\ci{\text{BMO}\ci{\text{prod},\bfD}},$
and so we can consider without loss of generality that $\Vert a\Vert\ci{\text{BMO}\ci{\text{prod},\bfD}}\leq 1$ (see Subsection \ref{s:ProductSpaces} for the definition of the space $\text{BMO}\ci{\text{prod},\bfD}$). Consider the paraproduct
\begin{equation*}
\Pi_{a}^{(11)}f:=\sum_{R\in\bfD}a\ci{R}\La f\Ra\ci{R}h\ci{R}.
\end{equation*}

\begin{lm}
\label{l: bound full paraproduct}
Let $1<p<\infty$, and let $W,U$ be $d\times d$-matrix valued $\bfD$-dyadic biparameter $A_p$ weights with $[W,U']\ci{A_p,\bfD}<\infty$. Then, we have the bounds
\begin{equation*}
\Vert \Pi_{a}^{(11)}\Vert\ci{L^{p}(W)\rightarrow L^{p}(W)}\lesssim_{p,d}[W]\ci{A_p,\bfD}^{\frac{p+1}{p(p-1)}+\frac{1}{p}+\frac{2\gamma(p')}{p-1}},
\end{equation*}
as well as the two-weight bound
\begin{equation*}
\Vert \Pi_{a}^{(11)}\Vert\ci{L^{p}(U)\rightarrow L^{p}(W)}\lesssim_{p,d}[W,U']\ci{A_p,\bfD}^{1/p}[U]\ci{A_p,\bfD}^{\frac{p+1}{p(p-1)}+\frac{1}{p}}[W]\ci{A_p,\bfD}^{\frac{1}{p}+\frac{2\gamma(p')}{p-1}}.
\end{equation*}
Moreover, for $p=2$ we have the better bounds
\begin{equation*}
\Vert \Pi_{a}^{(11)}\Vert\ci{L^{2}(W)\rightarrow L^{2}(W)}\lesssim_{d}[W]\ci{A_2,\bfD}^{7/2},
\end{equation*}
and
\begin{equation*}
\Vert \Pi_{a}^{(11)}\Vert\ci{L^{2}(U)\rightarrow L^{2}(W)}\lesssim_{d}[W,U']\ci{A_2,\bfD}^{1/2}[U]\ci{A_2,\bfD}^{3/2}[W]\ci{A_2,\bfD}^{2}.
\end{equation*}
\end{lm}

\begin{proof}
We have
\begin{equation*}
(\Pi^{(11)}_af,g)=\sum_{R\in\bfD}a\ci{R}\La \La f\Ra\ci{R},g\ci{R}\Ra
\end{equation*}
Notice that
\begin{align*}
&|(\cU\ci{R})^{-1}(\cW'\ci{R})^{-1}|\lesssim_{p,d}|\cU_{R}'\cW\ci{R}|\lesssim_{p,d}[W,U']\ci{A_p,\bfD}^{1/p},
\end{align*}
where in the first $\lesssim_{p,d}$ we applied Lemma \ref{l: replace inverse by prime}. Then, by the well-known (scalar) $H^1\ci{\bfD}$-$\text{BMO}\ci{\text{prod},\bfD}$ duality (see Subsection \ref{s:ProductSpaces}) we have
\begin{align*}
&|(\Pi^{(11)}_af,g)|\lesssim\int_{\R^2}\bigg(\sum_{R\in\bfD}|\La \La f\Ra\ci{R},g\ci{R}\Ra|^2\frac{\1\ci{R}(x)}{|R|}\bigg)^{1/2}\mathd x\\
&\lesssim_{p,d} [W,U']\ci{A_p,\bfD}^{1/p}\int_{\R^2}\bigg(\sum_{R\in\bfD}(|\cU\ci{R}\La f\Ra\ci{R}|\cdot|\cW'\ci{R}g\ci{R}|)^2\frac{\1\ci{R}(x)}{|R|}\bigg)^{1/2}\mathd x\\
&\leq[W,U']\ci{A_p,\bfD}^{1/p}\int_{\R^2}\wt{M}\ci{\bfD,U}f(x)\bigg(\sum_{R\in\bfD}(|\cW'\ci{R}g\ci{R}|)^2\frac{\1\ci{R}(x)}{|R|}\bigg)^{1/2}\mathd x\\
&=[W,U']\ci{A_p,\bfD}^{1/p}\int_{\R^2}\wt{M}\ci{\bfD,U}f(x)\wt{S}\ci{\bfD,W'}g(x)\mathd x\leq[W,U']\ci{A_p,\bfD}^{1/p}\Vert \wt{M}\ci{\bfD,U}f\Vert\ci{L^{p}}\Vert \wt{S}\ci{\bfD,W'}g\Vert\ci{L^{p'}}.
\end{align*}
Similarly of course we obtain
\begin{align*}
|(\Pi^{(11)}_af,g)|
&\lesssim_{p,d}\Vert \wt{M}\ci{\bfD,W}f\Vert\ci{L^{p}}\Vert \wt{S}\ci{\bfD,W'}g\Vert\ci{L^{p'}}.
\end{align*}
It suffices now to use the bounds for $\wt{M}\ci{\bfD,U},\wt{M}\ci{\bfD,W}$ and $\wt{S}\ci{\bfD,W'}$ obtained in the previous sections.
\end{proof}

The weighted estimates for $\Pi_{a}^{(11)}$ imply by duality analogous weighted estimates for the paraproduct $\Pi_{a}^{(00)}$ given by
\begin{equation*}
\Pi_{a}^{(00)}f:=\sum_{R\in\bfD}a\ci{R}f\ci{R}\frac{\1\ci{R}}{|R|},
\end{equation*}
since $(\Pi_{a}^{(00)})^{\ast}=\Pi_{\bar{a}}^{(11)}$ in the (unweighted) $L^2(\R^2;\C^d)$ sense, where $\bar{a}$ denotes the complex conjugate of $a$. In particular, for $p=2$ we have
\begin{align*}
&\Vert \Pi_{a}^{(00)}\Vert\ci{L^2(U)\rightarrow L^2(W)}=\Vert \Pi_{\bar{a}}^{(11)}\Vert\ci{L^2(U')\rightarrow L^2(W')}\\
&\lesssim_{d}[U',(W')']\ci{A_2,\bfD}^{1/2}[W']\ci{A_2,\bfD}^{3/2}[U']\ci{A_2,\bfD}^{2}\sim_{d}[W,U']\ci{A_2,\bfD}^{1/2}[U]\ci{A_2,\bfD}^{2}[W]\ci{A_2,\bfD}^{3/2},
\end{align*}
and similarly
\begin{equation*}
\Vert \Pi_{a}^{(00)}\Vert\ci{L^2(U)\rightarrow L^2(W)}\lesssim_{d}[W]\ci{A_2,\bfD}^{7/2}.
\end{equation*}

\subsubsection{Full mixed paraproducts}

Let $\bfD=\cD^1\times\cD^2$ be a product dyadic grid in $\R\times\R$. Let $a\in L^1\ti{loc}(\R^2)$ such that $\Vert a\Vert\ci{\text{BMO}\ci{\text{prod},\bfD}}\leq 1$. Consider the paraproduct
\begin{equation*}
\Pi^{(01)}_{a}f:=\sum_{R\in\bfD}a\ci{R}\La f^{1}\ci{R_1}\Ra\ci{R_2}\frac{\1\ci{R_1}}{|R_1|}\otimes h\ci{R_2}.
\end{equation*}

\begin{lm}
\label{l: full mixed paraproduct}
Let $1<p<\infty$, and let $W,U$ be $\bfD$-dyadic biparameter $d\times d$-matrix valued $A_p$ weights on $\R^2$ with $[W,U']\ci{A_p,\bfD}<\infty$. Then, we have the decompositions
\begin{align*}
&|(\Pi_{a}^{(01)}f,g)|\lesssim_{p,d}[W,U']\ci{A_p,\bfD}^{1/p}\Vert [\wt{S}M]\ci{\bfD,U}f\Vert\ci{L^p}\Vert[\wt{M}S]\ci{\bfD,W'}g\Vert\ci{L^{p'}},
\end{align*}
\begin{equation*}
|(\Pi_{a}^{(01)}f,g)|\lesssim_{p,d}\Vert[\wt{S}M]\ci{\bfD,W}f\Vert\ci{L^{p}}\Vert[\wt{M}S]\ci{\bfD,W'}g\Vert\ci{L^{p'}},
\end{equation*}
\begin{align*}
&|(\Pi_{a}^{(01)}f,g)|\lesssim_{p,d} [W,U']\ci{A_p,\bfD}^{1/p}\Vert [S\wt{M}]\ci{\bfD,U}f\Vert\ci{L^p}\Vert[M\wt{S}]\ci{\bfD,W'}g\Vert\ci{L^{p'}},
\end{align*}
and
\begin{equation*}
|(\Pi_{a}^{(01)}f,g)|\lesssim_{p,d}\Vert[S\wt{M}]\ci{\bfD,W}f\Vert\ci{L^{p}}\Vert[M\wt{S}]\ci{\bfD,W'}g\Vert\ci{L^{p'}}.
\end{equation*}
In particular, for $p=2$ we deduce the one-weight bound
\begin{equation*}
\Vert \Pi_{a}^{(01)}\Vert\ci{L^{2}(W)\rightarrow L^{2}(W)}\lesssim_{d}[W]\ci{A_2,\bfD}^{4},
\end{equation*}
as well as the two-weight bound
\begin{equation*}
\Vert \Pi_{a}^{(01)}\Vert\ci{L^{2}(U)\rightarrow L^{2}(W)}\lesssim_{d}[W,U']\ci{A_2,\bfD}^{1/2}[U]\ci{A_2,\bfD}^{2}[W]\ci{A_2,\bfD}^{2}.
\end{equation*}
\end{lm}

\begin{proof}
We have
\begin{equation*}
(\Pi^{(01)}_af,g)=\sum_{R\in\bfD}a\ci{R}\La \La f^1\ci{R_1}\Ra\ci{R_2},\La g^2\ci{R_2}\Ra\ci{R_1}\Ra.
\end{equation*}
Thus, by the well-known (scalar) $H^1\ci{\bfD}$-$\text{BMO}\ci{\text{prod},\bfD}$ duality (see Subsection \ref{s:ProductSpaces}) we have
\begin{align*}
&|(\Pi_{a}^{(01)}f,g)|\lesssim\int_{\R^2}\bigg(\sum_{R\in\bfD}|\La \La f^1\ci{R_1}\Ra\ci{R_2},\La g^2\ci{R_2}\Ra\ci{R_1}\Ra|^2\frac{\1\ci{R}(x)}{|R|}\bigg)^{1/2}\mathd x.
\end{align*}
For a.e.~$x_2\in\R$, define the reducing operators $\cW'\ci{x_2,I},\cU\ci{x_2,I},\cU'\ci{x_2,I}$ corresponding to the weights $W',U,U'$ respectively in the same way as $\cW\ci{x_2,I}$. Notice that for a.e.~$x_2\in\R$, we have
\begin{align*}
&|(\cU\ci{x_2,R_1})^{-1}(\cW'\ci{x_2,R_1})^{-1}|\lesssim_{p,d}|\cU_{x_2,R_1}'\cW\ci{x_2,R_1}|\lesssim_{p,d}[W_{x_2},U'_{x_2}]\ci{A_p,\bfD}^{1/p}\lesssim_{p,d}[W,U']\ci{A_p,\bfD}^{1/p},
\end{align*}
where in the first $\lesssim_{p,d}$ we applied Lemma \ref{l: replace inverse by prime}, and in the third $\lesssim_{p,d}$ we applied Lemma \ref{l: two-weight-biparameter A_p implies uniform A_p in each coordinate}. It follows that
\begin{align*}
&|(\Pi_{a}^{(01)}f,g)|\lesssim_{p,d}[W,U']\ci{A_p,\bfD}^{1/p}\int_{\R^2}\bigg(\sum_{R\in\bfD}(|\cU\ci{x_2, R_1}\La f^1\ci{R_1}\Ra\ci{R_2}|\cdot|\cW'\ci{x_2,R_1}\La g^2\ci{R_2}\Ra\ci{R_1}|)^2\frac{\1\ci{R}(x)}{|R|}\bigg)^{1/2}\mathd x\\
&\leq[W,U']\ci{A_p,\bfD}^{1/p}\int_{\R^2}\bigg(\sum_{R\in\bfD}(F\ci{R_1}(x_2) G\ci{x_2,R_2}(x_1))^2\frac{\1\ci{R}(x)}{|R|}\bigg)^{1/2}\mathd x\\
&=[W,U']\ci{A_p,\bfD}^{1/p}\bigg(\int_{\R^2}[\wt{S}M]\ci{\bfD,U}f(x)[\wt{M}S]\ci{\bfD,W'}g(x)\mathd x\bigg)^{1/2}\\
&\leq [W,U']\ci{A_p,\bfD}^{1/p}\Vert [\wt{S}M]\ci{\bfD,U}f\Vert\ci{L^p}\Vert[\wt{M}S]\ci{\bfD,W'}g\Vert\ci{L^{p'}},
\end{align*}
where
\begin{equation*}
F\ci{R_1}(x_2):=\sup_{R_2\in\cD^2}|\cU\ci{x_2,R_1}\La f\ci{R_1}^1\Ra\ci{R_2}|\1\ci{R_2}(x_2),\qquad x_2\in\R,~R_1\in\cD^1,
\end{equation*}
\begin{equation*}
G\ci{x_2,R_2}(x_1):=\sup_{R_1\in\cD^2}|\cW'\ci{x_2,R_1}\La g\ci{R_2}^2\Ra\ci{R_1}|\1\ci{R_1}(x_1),\qquad x_1\in\R,~R_2\in\cD^2.
\end{equation*}
The other decompositions are proved similarly, and the bounds for the case $p=2$ follow from Lemma \ref{l: SM bound p = 2}.
\end{proof}

\begin{rem}
\label{r: incomplete mixed paraproducts}
Note that for $p'<2$, that is $p>2$, one can use the set of decompositions
\begin{align*}
|(\Pi_{a}^{(01)}f,g)|\lesssim_{p,d}[W,U']\ci{A_p,\bfD}^{1/p}\Vert [\wt{S}M]\ci{\bfD,U}f\Vert\ci{L^p}\Vert[\wt{M}S]\ci{\bfD,W'}g\Vert\ci{L^{p'}},
\end{align*}
\begin{equation*}
|(\Pi_{a}^{(01)}f,g)|\lesssim_{p,d}\Vert[\wt{S}M]\ci{\bfD,W}f\Vert\ci{L^{p}}\Vert[\wt{M}S]\ci{\bfD,W'}g\Vert\ci{L^{p'}},
\end{equation*}
while for $p<2$ one can use the set of decompositions
\begin{align*}
&|(\Pi_{a}^{(01)}f,g)|\lesssim_{p,d} [W,U']\ci{A_p,\bfD}^{1/p}\Vert [S\wt{M}]\ci{\bfD,U}f\Vert\ci{L^p}\Vert[M\wt{S}]\ci{\bfD,W'}g\Vert\ci{L^{p'}},
\end{align*}
\begin{equation*}
|(\Pi_{a}^{(01)}f,g)|\lesssim_{p,d}\Vert[S\wt{M}]\ci{\bfD,W}f\Vert\ci{L^{p}}\Vert[M\wt{S}]\ci{\bfD,W'}g\Vert\ci{L^{p'}}.
\end{equation*}
Therefore, in view of Lemma \ref{l: SM bound} and Remark \ref{r: incomplete sm bound}, in order to estimate $\Pi_a^{(01)}$ we only need to answer Question \ref{quest: vector valued maximal function-not modified} to the positive in the range $2<p<\infty$.
\end{rem}

Observe that weighted estimates for $\Pi_{a}^{(01)}$ imply weighted estimates for the paraproduct $\Pi_{a}^{(10)}$ given by
\begin{equation*}
\Pi_{a}^{(10)}f:=\sum_{R\in\bfD}a\ci{R}\La f^2\ci{R_2}\Ra\ci{R_1}h\ci{R_1}\otimes\frac{\1\ci{R_2}}{|R_2|},
\end{equation*}
since $(\Pi_{a}^{(10)})^{\ast}=\Pi_{\bar{a}}^{(01)}$ in the (unweighted) $L^2(\R^2;\C^d)$ sense, where $\bar{a}$ denotes again the complex conjugate of $a$.

\subsubsection{Partial paraproducts}

Let $\bfD=\cD^1\times\cD^2$ be any product dyadic grid in $\R^2$. Let $i,j$ be nonnegative integers. Consider the shifted paraproduct
\begin{equation*}
\shi\ci{\bfD}^{i,j}f=\sum_{R\in\bfD}\sum_{\substack{P_1\in\ch_{i}(R_1)\\ Q_1\in\ch_{j}(R_1)}}a^{P_1Q_1R_1}\ci{R_2}f\ci{P_1\times R_2}h\ci{Q_1}\otimes\frac{\1\ci{R_2}}{|R_2|},
\end{equation*}
where for every $P_1,Q_1,R_1$, $a^{P_1Q_1R_1}$ is a function in $L^1\ti{loc}(\R)$ with Haar coefficients $a^{P_1Q_1R_1}\ci{R_2},~R_2\in\cD^2$ such that
\begin{equation*}
\Vert a^{P_1Q_1R_1}\Vert\ci{\text{BMO}\ci{\cD^2}}\leq 2^{-(i+j)/2}.
\end{equation*}

\begin{lm}
\label{l: partial paraproduct}
Let $1<p<\infty$, and let $W,U$ be $d\times d$-matrix valued $\bfD$-dyadic biparameter $A_p$ weights on $\R^2$ with $[W,U']\ci{A_p,\bfD}<\infty$. Then, we have the decompositions
\begin{equation*}
|(\shi\ci{\bfD}^{i,j}f,g)|\lesssim_{p,d}[W,U']\ci{A_p,\bfD}^{1/p}2^{-(i+j)/2}\Vert [\wt{S}^{i}S]\ci{\bfD,U}f\Vert\ci{L^{p}}\Vert[\wt{S}^{j}M]\ci{\bfD,W'}g\Vert\ci{L^{p'}},
\end{equation*}
and
\begin{equation*}
|(\shi\ci{\bfD}^{i,j}f,g)|\lesssim_{p,d}2^{-(i+j)/2}\Vert [\wt{S}^{i}S]\ci{\bfD,W}f\Vert\ci{L^{p}}\Vert[\wt{S}^{j}M]\ci{\bfD,W'}g\Vert\ci{L^{p'}}.
\end{equation*}
In particular, for $p=2$ we deduce the bounds
\begin{equation*}
\Vert \shi^{i,j}\ci{\bfD}\Vert\ci{L^{2}(U)\rightarrow L^{2}(W)}\lesssim_{d}[W,U']\ci{A_2,\bfD}^{1/2}[W]\ci{A_2,\bfD}^{5/2}[U]\ci{A_2,\bfD}^{5/2}
\end{equation*}
and
\begin{equation*}
\Vert \shi^{i,j}\ci{\bfD}\Vert\ci{L^{2}(W)\rightarrow L^{2}(W)}\lesssim_{d}[W]\ci{A_2,\bfD}^{5}.
\end{equation*}
\end{lm}

\begin{proof}
By the classical $H^1\ci{\cD^2}$-$\text{BMO}\ci{\cD^2}$ duality we have
\begin{align*}
|(\shi^{i,j}\ci{\bfD}f,g)|&\leq\sum_{R\in\bfD}\sum_{\substack{P_1\in\ch_{i}(R_1)\\ Q_1\in\ch_{j}(R_1)}}|a^{P_1Q_1R_1}\ci{R_2}|\cdot|\La f\ci{P_1\times R_2},\La g^{1}\ci{Q_1}\Ra\ci{R_2}\Ra|\\
&\lesssim \sum_{R_1\in\cD^1}\sum_{\substack{P_1\in\ch_{i}(R_1)\\ Q_1\in\ch_{j}(R_1)}}\Vert a^{P_1Q_1R_1}\Vert\ci{\text{BMO}\ci{\cD^2}}\int_{\R}\bigg(\sum_{R_2\in\cD^2}|\La f\ci{P_1\times R_2},\La g^{1}\ci{Q_1}\Ra\ci{R_2}\Ra|^2\frac{\1\ci{R_2}(x_2)}{|R_2|}\bigg)^{1/2}\mathd x_2\\
&\leq 2^{-(i+j)/2} \sum_{R_1\in\cD^1}\sum_{\substack{P_1\in\ch_{i}(R_1)\\ Q_1\in\ch_{j}(R_1)}}\int_{\R}\bigg(\sum_{R_2\in\cD^2}|\La f\ci{P_1\times R_2},\La g^{1}\ci{Q_1}\Ra\ci{R_2}\Ra|^2\frac{\1\ci{R_2}(x_2)}{|R_2|}\bigg)^{1/2}\mathd x_2\\
&\lesssim_{p,d}2^{-(i+j)/2}[W,U']\ci{A_p,\bfD}^{1/p}\\
&\sum_{R_1\in\cD^1}\sum_{\substack{P_1\in\ch_{i}(R_1)\\ Q_1\in\ch_{j}(R_1)}}\int_{\R}\bigg(\sum_{R_2\in\cD^2}|\cU\ci{x_2,R_1}f\ci{P_1\times R_2}|^2\cdot|\cW'\ci{x_2,R_1}\La g^{1}\ci{Q_1}\Ra\ci{R_2}|^2\frac{\1\ci{R_2}(x_2)}{|R_2|}\bigg)^{1/2}\mathd x_2.
\end{align*}
We have
\begin{align*}
&\sum_{R_1\in\cD^1}\sum_{\substack{P_1\in\ch_{i}(R_1)\\ Q_1\in\ch_{j}(R_1)}}\int_{\R}\bigg(\sum_{R_2\in\cD^2}|\cU\ci{x_2,R_1}f\ci{P_1\times R_2}|^2\cdot|\cW'\ci{x_2,R_1}\La g^{1}\ci{Q_1}\Ra\ci{R_2}|^2\frac{\1\ci{R_2}(x_2)}{|R_2|}\bigg)^{1/2}\mathd x_2\\
&\leq\int_{\R}\sum_{R_1\in\cD^1}\sum_{\substack{P_1\in\ch_{i}(R_1)\\ Q_1\in\ch_{j}(R_1)}}(\sup_{R_2\in\cD^2}|\cW'\ci{x_2,R_1}\La g^{1}\ci{Q_1}\Ra\ci{R_2}|\1\ci{R_2}(x_2))\\
&\bigg(\sum_{R_2\in\cD^2}|\cU\ci{x_2,R_1}f\ci{P_1\times R_2}|^2\frac{\1\ci{R_2}(x_2)}{|R_2|}\bigg)^{1/2}\mathd x_2\\
&=\int_{\R^2}\sum_{R_1\in\cD^1}\bigg(\sum_{\substack{Q_1\in\ch_{j}(R_1)}}(\sup_{R_2\in\cD^2}|\cW'\ci{x_2,R_1}\La g^{1}\ci{Q_1}\Ra\ci{R_2}|\1\ci{R_2}(x_2))\bigg)\\
&\bigg(\sum_{\substack{P_1\in\ch_{i}(R_1)}}\bigg(\sum_{R_2\in\cD^2}|\cU\ci{x_2,R_1}f\ci{P_1\times R_2}|^2\frac{\1\ci{R_2}(x_2)}{|R_2|}\bigg)^{1/2}\bigg)\frac{\1\ci{R_1}(x_1)}{|R_1|}\mathd x\\
&\leq\int_{\R^2}[\wt{S}^{i,0}S]\ci{\bfD,U}f(x)[\wt{S}^{j}M]\ci{\bfD,W'}g(x)\mathd x\leq\Vert [\wt{S}^{i}S]\ci{\bfD,U}f\Vert\ci{L^{p}}\Vert[\wt{S}^{j}M]\ci{\bfD,W'}g\Vert\ci{L^{p'}}.
\end{align*}
The second decomposition is proved similarly, and the bounds for the case $p=2$ follow immediately by applying Lemmas \ref{l: shifted square-maximal bound p = 2} and \ref{l: shifted square-square bound p = 2}. 
\end{proof}

\begin{rem}
\label{r: incomplete partial paraproduct}
In view of Remark \ref{r: incomplete shifted square-maximal bounds}, we see that in order to deduce actual weighed bounds from Lemma \ref{l: partial paraproduct}, we need to answer Question \ref{quest: vector valued maximal function-not modified} to the positive.
\end{rem}

Working symmetrically, we obtain analogous decompositions for shifted paraproducts $\shi^{i,j,\ast}\ci{\bfD}$ of the form
\begin{equation*}
\shi\ci{\bfD}^{i,j,\ast}f=\sum_{R\in\bfD}\sum_{\substack{P_2\in\ch_{i}(R_2)\\ Q_2\in\ch_{j}(R_2)}}a^{P_2Q_2R_2}\ci{R_1}f\ci{R_1\times P_2}h\ci{Q_2}\otimes\frac{\1\ci{R_1}}{|R_1|},
\end{equation*}
where $i,j$ are nonnegative integers, and for every $P_2,Q_2,R_2$, $a^{P_2Q_2R_2}$ is a function in $L^1\ti{loc}(\R)$ with Haar coefficients $a^{P_2Q_2R_2}\ci{R_1},~R_1\in\cD^1$ such that
\begin{equation*}
\Vert a^{P_2Q_2R_2}\Vert\ci{\text{BMO}\ci{\cD^1}}\leq 2^{-(i+j)/2}.
\end{equation*}

\subsection{\texorpdfstring{$L^2$}{L2} matrix-weighted bounds for general Journ\'e operators}

Notice that for $p=2$ we have estimated every piece in Martikainen's representation \eqref{martikainen representation}. It follows that if $T$ is any (not necessarily paraproduct-free) Journ\'e operator on $\R\times\R$, then
\begin{equation*}
\Vert T\Vert\ci{L^2(U)\rightarrow L^2(W)}\lesssim_{d,T}[W,U']\ci{A_2(\R\times\R)}^{1/2}[W]\ci{A_2(\R\times\R)}^{5/2}[U]\ci{A_2(\R\times\R)}^{5/2}
\end{equation*}
and
\begin{equation*}
\Vert T\Vert\ci{L^2(W)\rightarrow L^2(W)}\lesssim_{d,T}[W]\ci{A_2(\R\times\R)}^{5}.
\end{equation*}

\begin{rem}
\label{r: incomplete general Journe}
In order to obtain $L^p$ matrix-weighted bounds for general Journ\'e operators for any $1<p<\infty$ using our methods, we see in view of Remarks \ref{r: incomplete mixed paraproducts} and \ref{r: incomplete partial paraproduct} that we only need to answer Question \ref{quest: vector valued maximal function-not modified} to the positive.
\end{rem}

\section{Appendix}

\subsection{Reverse H\ddoto lder inequality for biparameter dyadic \texorpdfstring{$A_p$}{Ap} weights}

For the reader's convenience, we give here the statement and proof of the reverse H\ddoto lder inequality for scalar biparameter dyadic $A_p$ weights due to Holmes--Petermichl--Wick \cite[Proposition 2.2]{holmes-petermichl-wick}.

First of all, if $\cD$ is any dyadic grid in $\R^n$ and $w$ is any weight on $\R^n$, we define the \emph{dyadic $A_{\infty}$ characteristic}
\begin{equation*}
[w]\ci{A_{\infty},\cD}:=\sup_{Q\in\cD}\frac{\La M\ci{\cD,Q}w\Ra\ci{Q}}{\La w\Ra\ci{Q}},
\end{equation*}
where $M\ci{\cD,Q}$ denotes the dyadic Hardy--Littlewood maximal function adapted to $Q$,
\begin{equation*}
M\ci{\cD,Q}f:=\sup_{\substack{P\in\cD\\P\subseteq Q}}\La f\Ra\ci{P}\1\ci{P}.
\end{equation*}
A tiny modification of \cite[Lemma 4.1]{convex body} shows that
\begin{equation*}
[w]\ci{A_{\infty},\cD}\leq 4[w]\ci{A_p,\cD},\qquad 1<p<\infty.
\end{equation*}
We now give the statement of the sharp reverse H\ddoto lder inequality for one-parameter dyadic $A_p$ weights from \cite{convex body} (attributed therein to \cite{hytonen-perez-rela}, \cite{vasyunin}).

\begin{lm}
Let $\cD$ be any dyadic grid in $\R^n$, and let $w$ be a weight on $\R^{n}$ with $[w]\ci{A_{\infty},\cD}<\infty$. Then, for all 
\begin{equation*}
\delta\in\bigg(0,\frac{1}{2^{n+1}[w]\ci{A_{\infty},\cD}}\bigg),
\end{equation*}
there holds
\begin{equation*}
\La w^{1+\delta}\Ra\ci{Q}\leq 2\La w\Ra\ci{Q}^{1+\delta},\qquad\forall Q\in\cD.
\end{equation*}
\end{lm}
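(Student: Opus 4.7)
The plan is to follow a Calderón--Zygmund stopping-time argument for Fujii--Wilson $A_\infty$ weights, as developed (with different levels of refinement) in \cite{convex body}, \cite{hytonen-perez-rela}, \cite{vasyunin}. I would first fix $Q \in \cD$ and normalize by homogeneity so that $(w)_Q = 1$; the claim then reduces to $\int_Q w^{1+\delta}\,\mathd x \leq 2|Q|$. Setting the threshold $\mu := 2^{n+1}[w]\ci{A_{\infty},\cD}$, I would define the iterated principal cube families
\begin{equation*}
\cS_k := \lbrace P \in \cD: P \subseteq Q, P \text{ is maximal with } (w)_P > \mu^k\rbrace, \qquad E_k := \bigcup_{P \in \cS_k} P,
\end{equation*}
so that $E_k = \{x \in Q: M\ci{\cD,Q}w(x) > \mu^k\}$.

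The first key step is the geometric decay $|E_{k+1}| \leq \tfrac{1}{2}|E_k|$, which is where the Fujii--Wilson hypothesis enters. For each $P \in \cS_k$, the maximality of $P$ in $\cS_k$ forces $M\ci{\cD,Q}w \leq \mu^k$ on the ``outside'' side of $P$, so that $E_{k+1}\cap P = \lbrace x \in P : M\ci{\cD,P}w(x) > \mu^{k+1}\rbrace$; combining Markov's inequality with the $A_\infty$ condition on $P$ and the dyadic parent bound $(w)_P \leq 2^n \mu^k$ then gives
\begin{equation*}
|E_{k+1}\cap P| \leq \frac{1}{\mu^{k+1}}\int_P M\ci{\cD,P} w\,\mathd x \leq \frac{[w]\ci{A_{\infty},\cD}\,(w)_P}{\mu^{k+1}}|P| \leq \frac{2^n[w]\ci{A_{\infty},\cD}}{\mu}|P| = \frac{1}{2}|P|.
\end{equation*}
Summing over $P \in \cS_k$ yields the claim, and by iteration $|E_k| \leq 2^{-k}|Q|$. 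This is the only place where the hypothesis is used; the rest is combinatorial.

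Having the decay of $|E_k|$, I would expand $\int_Q w^{1+\delta}$ via the layer cake formula, split the $t$-integration into $[0,1]$ (whose contribution is at most $|Q|$) and $[1,\infty)$, and on each dyadic band $(\mu^k,\mu^{k+1}]$ combine the global measure bound $|\{w>t\}\cap Q| \leq |E_k|$ with the pointwise Markov estimate $|\{w>t\}\cap P| \leq (w)_P|P|/t \leq 2^n\mu^k|P|/t$ inside each $P \in \cS_k$. The smallness constraint $\delta < 1/(2^{n+1}[w]\ci{A_{\infty},\cD}) = 1/\mu$ is then used to keep the resulting geometric sum strictly below $1$, so that together with the $[0,1]$-contribution the total is at most $2|Q|$, as desired.

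The main obstacle is precisely the quantitative bookkeeping of this last step. The natural stopping at rate $\mu$ produces only polynomial tail decay of $w$ of exponent $\log_\mu 2 < 1$, which on its own is insufficient for $\int_Q w^{1+\delta} < \infty$ for any $\delta > 0$; using only the measure bound $|E_k|\leq 2^{-k}|Q|$ leads to a geometric series with ratio $\mu^{1+\delta}/2 \gg 1$. The Markov improvement inside each principal cube, where $(w)_P$ is much smaller than $t$ in the upper portion of each band, is what allows one to interpolate the two bounds and to land on the stated range of $\delta$ with the explicit constant $2$. Since this is a known sharp result, the cleanest presentation is to carry out in detail only the geometric-decay step above and to refer to the direct argument of \cite{hytonen-perez-rela} or the Bellman-function approach of \cite{vasyunin} for the delicate numerology of the final series estimate.
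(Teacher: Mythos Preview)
The paper does not prove this lemma at all: it is simply stated in the appendix as the sharp one-parameter reverse H\"older inequality ``from \cite{convex body} (attributed therein to \cite{hytonen-perez-rela}, \cite{vasyunin})'', and is then used as a black box in the proof of the biparameter reverse H\"older inequality that follows. Your proposal outlines exactly the stopping-time argument of \cite{hytonen-perez-rela} to which the paper refers, and your geometric-decay step is correct; since you too defer the final series estimate to the same references, your write-up and the paper's treatment are effectively equivalent.
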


We now state and prove the reverse H\ddoto lder inequality for scalar biparameter dyadic $A_p$ weights due to Holmes--Petermichl--Wick \cite[Proposition 2.2]{holmes-petermichl-wick}, with emphasis on explicitly tracking constants.

\begin{lm}[\cite{holmes-petermichl-wick}]
Let $\bfD:=\cD^1\times\cD^2$ be any product dyadic grid in $\R^n\times\R^m$. Let $1<p<\infty$, and let $w$ be a weight on $\R^{n+m}$ with $[w]\ci{A_p,\bfD}<\infty$. Then, for all
\begin{equation*}
\delta\in\bigg(0,\frac{1}{2^{\max(m,n)+3}[w]\ci{A_p,\bfD}}\bigg),
\end{equation*}
there holds
\begin{equation*}
\La w^{1+\delta}\Ra\ci{R}\leq 4\La w\Ra\ci{R}^{1+\delta},\qquad\forall R\in\bfD.
\end{equation*}
\end{lm}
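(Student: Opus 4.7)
The plan is to reduce to the one-parameter reverse H\"{o}lder inequality by iterating in each variable, using the fact that both a slice of $w$ and an average of $w$ over one variable remain one-parameter $A_p$ weights with the biparameter characteristic controlling the one-parameter one. Fix $R = Q_1 \times Q_2 \in \bfD$, where $Q_1 \in \cD^1$ and $Q_2 \in \cD^2$.

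First, for a.e.~$x_2 \in Q_2$, the scalar analogue of Lemma \ref{l: two-weight-biparameter A_p implies uniform A_p in each coordinate} (applied with $V = w^{-1/(p-1)}$, where absolutely no matrix obstruction arises since $d=1$) gives $[w(\fdot,x_2)]\ci{A_p,\cD^1} \leq [w]\ci{A_p,\bfD}$, whence $[w(\fdot,x_2)]\ci{A_{\infty},\cD^1} \leq 4 [w]\ci{A_p,\bfD}$. I would then apply the sharp one-parameter reverse H\"{o}lder inequality (stated immediately above) on $Q_1$, yielding
\begin{equation*}
\strokedint_{Q_1} w(x_1,x_2)^{1+\delta} \mathd x_1 \leq 2 \left(\strokedint_{Q_1} w(x_1,x_2)\mathd x_1\right)^{1+\delta}
\end{equation*}
for every $\delta \in \big(0, 2^{-(n+3)}[w]\ci{A_p,\bfD}^{-1}\big)$ and a.e.~$x_2 \in Q_2$.

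Next, define the one-variable function $W_1(x_2) := \strokedint_{Q_1} w(\fdot,x_2)\mathd x_1$ on $\R^m$. The key observation is that $W_1$ is itself a one-parameter $A_p$ weight on $\R^m$ with $[W_1]\ci{A_p,\cD^2} \leq [w]\ci{A_p,\bfD}$. Indeed, for any $Q_2' \in \cD^2$, Fubini gives $(W_1)\ci{Q_2'} = (w)\ci{Q_1 \times Q_2'}$, while Jensen's inequality applied to the convex function $t \mapsto t^{-1/(p-1)}$ on $(0,\infty)$ yields
\begin{equation*}
W_1(x_2)^{-1/(p-1)} \leq \strokedint_{Q_1} w(x_1,x_2)^{-1/(p-1)} \mathd x_1,
\end{equation*}
so averaging over $Q_2'$ gives $(W_1^{-1/(p-1)})\ci{Q_2'} \leq (w^{-1/(p-1)})\ci{Q_1 \times Q_2'}$. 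Multiplying the two and raising to appropriate powers produces the stated bound on $[W_1]\ci{A_p,\cD^2}$. Hence $[W_1]\ci{A_{\infty},\cD^2} \leq 4 [w]\ci{A_p,\bfD}$, and a second application of the one-parameter reverse H\"{o}lder inequality on $Q_2$ gives
\begin{equation*}
\strokedint_{Q_2} W_1(x_2)^{1+\delta}\mathd x_2 \leq 2 \left(\strokedint_{Q_2} W_1(x_2)\mathd x_2\right)^{1+\delta} = 2 (w)\ci{R}^{1+\delta},
\end{equation*}
provided $\delta < 2^{-(m+3)}[w]\ci{A_p,\bfD}^{-1}$.

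Finally, I would combine these two steps via Fubini: integrating the first estimate over $x_2 \in Q_2$ and then applying the second gives
\begin{equation*}
(w^{1+\delta})\ci{R} = \strokedint_{Q_2}\strokedint_{Q_1} w(x_1,x_2)^{1+\delta}\mathd x_1 \mathd x_2 \leq 2 \strokedint_{Q_2} W_1(x_2)^{1+\delta}\mathd x_2 \leq 4 (w)\ci{R}^{1+\delta},
\end{equation*}
valid for $\delta \in \big(0, 2^{-(\max(m,n)+3)}[w]\ci{A_p,\bfD}^{-1}\big)$, which is exactly the stated range. The only genuinely nontrivial point is verifying that the partially averaged weight $W_1$ still lives in $A_p$ with the correct characteristic; everything else is bookkeeping of the constants from the one-parameter result.
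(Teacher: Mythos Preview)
Your proof is correct and follows essentially the same approach as the paper: iterate the one-parameter reverse H\"older inequality, first on slices and then on the partially averaged weight, using Jensen's inequality to show that the averaged weight inherits the $A_p$ characteristic. The only difference is that you slice in $x_2$ and average over $Q_1$ first, while the paper slices in $x_1$ and averages over $R_2$ first; the arguments are otherwise identical.
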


\begin{proof}
We follow the proof of \cite[Proposition 2.2]{holmes-petermichl-wick}. For a.e. $x_1\in\R^n$, define
\begin{equation*}
w_{x_1}(x_2):=w(x_1,x_2),\qquad x_2\in\R^m.
\end{equation*}
For a.e. $x_1\in\R^n$, we have that $w_{x_1}$ is a weight on $\R^{m}$, and since $w$ is scalar valued it is trivial by classical dyadic Lebesgue differentiation to see that $[w_{x_1}]\ci{A_p,\cD^2}\leq[w]\ci{A_p,\bfD}$. Therefore, for a.e. $x_1\in\R^{1}$ we have
\begin{equation*}
\delta<\frac{1}{2^{m+3}[w_{x_1}]\ci{A_p,\cD^2}}\leq\frac{1}{2^{m+1}[w_{x_1}]\ci{A_\infty,\cD^2}}
\end{equation*}
and therefore
\begin{align*}
\La w^{1+\delta}\Ra\ci{R}=\strokedint_{R_1}\La w_{x_1}^{1+\delta}\Ra\ci{R_2}\mathd x_1
\leq 2\strokedint_{R_1}\La w_{x_1}\Ra^{1+\delta}\ci{R_2}\mathd x_1=2\La v\ci{R_2}^{1+\delta}\Ra\ci{R_1},
\end{align*}
where $v\ci{R_2}(x_1):=\La w(x_1,\fdot)\Ra\ci{R_2}$, for a.e. $x_1\in\R^n$. We notice that for all $P\in\cD^1$, we have $\La v\ci{R_2}\Ra\ci{P}=\La w\Ra\ci{P\times R_2}$, and also by Jensen's inequality (since $-1/(p-1)<0$)
\begin{equation*}
v\ci{R_2}(x_1)^{-1/(p-1)}=\La w(x_1,\fdot)\Ra\ci{R_2}^{-1/(p-1)}\leq\La w(x_1,\fdot)^{-1/(p-1)}\Ra\ci{R_2},
\end{equation*}
therefore $\La v\ci{R_2}\Ra\ci{P}\leq\La w^{-1/(p-1)}\Ra\ci{P\times R_2}$. We deduce $[v]\ci{A_p,\cD^1}\leq[w]\ci{A_p,\bfD}$. Therefore
\begin{equation*}
\delta<\frac{1}{2^{n+3}[v\ci{R_2}]\ci{A_p,\cD^2}}\leq\frac{1}{2^{n+1}[v\ci{R_2}]\ci{A_\infty,\cD^2}}
\end{equation*}
and therefore
\begin{align*}
\La v\ci{R_2}^{1+\delta}\Ra\ci{R_1}\leq 2\La v\ci{R_2}\Ra\ci{R_1}^{1+\delta}=2\La w\Ra^{1+\delta}\ci{R},
\end{align*}
concluding the proof.
\end{proof}

\subsection{``Slicing" a finite supremum of measurable functions}

Let $X$ be a measurable space. Let $N$ be any positive integer. Let $f_{n}:X\rightarrow[0,\infty)$, $n=1,\ldots,N$ be a sequence of measurable functions on $X$. Set $f:=\sup_{n=1,\ldots,N}f_n$. Then, there exists a measurable function $J:X\rightarrow\lbrace1,\ldots,N\rbrace$ (where $\lbrace1,\ldots,N\rbrace$ is equipped with the power-set $\sigma$-algebra), such that
\begin{equation*}
f\ci{J(x)}(x)=f(x),\qquad\forall x\in X.
\end{equation*}
Indeed, set
\begin{equation*}
J(x):=\min\lbrace n\in\lbrace1,\ldots,N\rbrace:~f(x)=f_{n}(x)\rbrace,\qquad x\in X.
\end{equation*}
Then, $J$ is measurable and $f\ci{J(x)}=f(x)$, for all $x\in X$.

\subsection{``Passing" the Euclidean norm outside the integral}

Let $X$ be a measurable space, and let $(Y,\mu)$ be a measure space. Let $f:X\times Y\rightarrow\C^d$ be a measurable function, such that $f(x,\fdot)\in L^1(Y,\mu)$, for all $x\in X$. Then, there exists a measurable function $\f:X\times Y\rightarrow\lbrace-1,1\rbrace$, such that
\begin{equation*}
\int_{Y}|f(x,y)|\mathd\mu(y)\lesssim_{d}\bigg|\int_{Y}\phi(x,y)f(x,y)\mathd\mu(x)\bigg|,\qquad\forall x\in X.
\end{equation*}
Indeed, let $f_1,\ldots,f_n$ be the coordinate projections of $f$. Set $g_i^{j}:=\text{Re}(f_i)$ and $g_i^{j}=\text{Im}(f_i)$, for all $i=1,\ldots,d$ and $j=1,2$. Then, we have
\begin{equation*}
\int_{Y}|f(x,y)|\mathd\mu(y)\leq C(d)\sum_{i=1}^{d}\sum_{j=1}^{2}\int_{Y}|g^{j}_i(x,y)|\mathd\mu(y),\qquad\forall x\in X.
\end{equation*}
For all $x\in X$, set
\begin{align*}
I(x):=\min\bigg\lbrace i\in\lbrace1,\ldots,d\rbrace:  ~&\max\bigg(\int_{Y}|g^{1}_i(x,y)|\mathd\mu(y),\int_{Y}|g^{2}_i(x,y)|\mathd\mu(y)\bigg)\\
&\geq\frac{1}{2dC(d)}\int_{Y}|f(x,y)|\mathd\mu(y)\bigg\rbrace
\end{align*}
and
\begin{align*}
J(x):=\min\bigg\lbrace j\in\lbrace1,2\rbrace:  ~\int_{Y}|g^{j}\ci{I(x)}(x,y)|\mathd\mu(y)
\geq\frac{1}{2dC(d)}\int_{Y}|f(x,y)|\mathd\mu(y)\bigg\rbrace.
\end{align*}
Clearly, the functions $I:X\rightarrow\lbrace1,\ldots,d\rbrace,J:X\rightarrow\lbrace1,2\rbrace$ are well-defined measurable. Set then
\begin{equation*}
\f(x,y):=\text{sgn}(g^{J(x)}\ci{I(x)}(x,y)),\qquad(x,y)\in X\times Y,
\end{equation*}
where $\text{sgn}(t):=t/|t|$ for $t\in\R\setminus\lbrace0\rbrace$, and $\text{sgn}(0):=1$. Then, the function $\f:X\times Y\rightarrow\lbrace-1,1\rbrace$ is well-defined measurable and
\begin{equation*}
\int_{Y}|f(x,y)|\mathd\mu(y)\lesssim_{d}\int_{Y}|g^{J(x)}\ci{I(x)}(x,y)|\mathd\mu(y)\lesssim_{d}\bigg|\int_{Y}\f(x,y)f(x,y)\mathd\mu(y)\bigg|,
\end{equation*}
for all $x\in X$, concluding the proof.

\subsection{Measurable choice of reducing matrix}
\label{s:MeasurableReducingOp}

Here we show that if a family of norms is measurably parametrized, then one can choose a reducing matrix for each of them in a way that is measurable with respect to the parameter.

We fix a positive integer $d$ throughout this subsection. All metric spaces will be assumed to be equipped with the Borel $\sigma$-algebra with respect to the topology induced by their metric, unless explicitly stated otherwise.

\subsubsection{Hausdorff distance}

Let $\F=\R$ of $\F=\C$. Recall for any $x\in\F^d$ and for any nonempty $K\subseteq\F^d$ the notation
\begin{equation*}
\text{dist}(x,K):=\inf_{y\in K}|x-y|.
\end{equation*}
Let $\cX(\F^d)$ be the set of all nonempty compact subsets of $\F^d$. For all $K_1,K_2\in\cX(\F^d)$, we define the Hausdorff distance of $K_1,K_2$ by
\begin{equation*}
\delta(K_1,K_2):=\max(\sup_{y\in K_1}\text{dist}(y,K_2),~\sup_{z\in K_2}\text{dist}(z,K_1)).
\end{equation*}
Then, it is very well-known that $(\cX(\F^d),\delta)$ is a metric space. Moreover, it is easy to see that $(\cX(\F^d),\delta)$ is separable; for example, the family of all finite subsets of $(\Q^{d}+i\Q^d)\cap\F^d$ is dense in $(\cX(\F^d),\delta)$. See \cite{aamari} for a more general version of that, as well as for additional properties of the Hausdorff distance.

\subsubsection{Ellipsoids}

Let $\F=\R$ or $\F=\C$. Let $\B_{d}(\F):=\lbrace v\in\F^d:~|v|\leq1\rbrace$ be the unit ball in $\F^d$. We identify linear maps $A:\F^d\rightarrow\F^d$ with $d\times d$-matrices with entries in $\F$ in the natural way.

A subset $E$ of $\F^d$ is said to be a centrally symmetric ellipsoid in $\F^d$ if there exists a $\F$-linear map $A:\F^d\rightarrow\F^d$ such that
\begin{equation*}
E=A\B_{d}(\F):=\lbrace Av:~v\in\B_{d}(\F)\rbrace.
\end{equation*}
Simple facts of linear algebra, see for instance \cite[page 304]{goffin-hoffman}, imply that if $A,B\in M_{d}(\F)$ are such that $E=A\B_{d}(\F)=B\B_{d}(\F)$, then $AA^{\ast}=BB^{\ast}$ and $E=(AA^{\ast})^{1/2}\B_{d}(\F)$. In particular, if $E$ is any centrally symmetric ellipsoid in $\F^d$, then there exists a unique positive semidefinite matrix $A\in M_{d}(\F)$ such that $E=A\B_{d}(\F)$. Denote by $\text{PS}_{d}(\F)$ the set of all positive semidefinite matrices in $M_{d}(\F)$, and by $\cE_{d}(\F)$ the set of all centrally symmetric ellipsoids in $\F^d$. Then, the map $\cM\ci{\F,d}:\cE_{d}(\F)\rightarrow\text{PS}_{d}(\F)$ given by $\cM\ci{\F,d}(A\B_{d}(\F)):=A$, for all $A\in\text{PS}_{d}(\F)$, is well-defined bijective.

A centrally symmetric ellipsoid $E$ in $\F^d$ is said to be nondegenerate if $E=A\B_{d}(\F)$ for some invertible linear map $A:\F^{d}\rightarrow\F^{d}$. Let $\cE^{\ast}_{d}(\F)$ be the set of all nondegenerate centrally symmetric ellipsoids in $\F^d$. Notice that any $E\in\cE_{d}(\F)$ is nondegenerate if and only if $\cM\ci{\F,d}(E)$ is positive definite. We denote by $\text{P}_{d}(\F)$ the set of all positive definite matrices in $M_{d}(\F)$.

\subsubsection{John ellipsoids}

Let $K$ be any convex compact subset of $\R^d$ such that $0\in\text{Int}(K)$ and
\begin{equation*}
-K:=\lbrace -v:~v\in K\rbrace\subseteq K.
\end{equation*}
Then, it is well-known that there exists a unique $E\in\cE^{\ast}_{d}(\R)$, such that $E\subseteq K$ and
\begin{equation*}
|E|=\max\lbrace|F|:~F\in\cE^{\ast}_{d}(\R),~F\subseteq K\rbrace.
\end{equation*}
The ellipsoid $E$ is usually called the John ellipsoid of $K$. Then, it is well-known, see \cite{goldberg}, \cite{convex body}, that
\begin{equation}
\label{John ellipsoid containment}
E\subseteq K\subseteq\sqrt{d}E.
\end{equation}

Let $\cC_{d}(\R)$ be the set of all convex compact subsets of $\R^d$ such that $0\in\text{Int}(K)$ and $-K\subseteq K$. Then, it is well-known, see \cite{mordhorst}, that the map $J_{d}:\cC_{d}(\R)\rightarrow\cE^{\ast}_{d}(\R)$ sending each $K\in\cC_{d}(\R)$ to its John ellipsoid is continuous with respect to the Hausdorff distance.

\subsubsection{John ellipsoids for unit balls of norms on \texorpdfstring{$\R^d$}{Rd}}

Let $r$ be any norm on $\R^d$. Let $K:=\lbrace v\in\R^{d}:~r(v)\leq 1\rbrace$ be the unit ball of $r$. Clearly, $K\in\cC_{d}(\R)$. Let $E$ be the John ellipsoid of $K$. Set $A:=\cM_{\R,d}(E)$. Then, \eqref{John ellipsoid containment} implies that for all $e\in\R^d$, we have
\begin{equation*}
|e|\leq1\Rightarrow r(Ae)\leq1
\end{equation*}
and
\begin{equation*}
r(e)\leq1\Rightarrow|A^{-1}e|\leq\sqrt{d}.
\end{equation*}
It follows that
\begin{equation*}
r(e)\leq|A^{-1}e|\leq\sqrt{d}r(e),\qquad\forall e\in\R^d.
\end{equation*}

\subsubsection{Measurable choice of reducing matrix for measurably parametrized norms on \texorpdfstring{$\R^d$}{Rd}}

Let $(X,\cF)$ be any measurable space. Let $r:X\times\R^d\rightarrow[0,\infty)$ be a function such that $r(x,\fdot)$ is a norm on $\R^d$, for all $x\in X$, and $r(\fdot,y)$ is a measurable function on $X$, for all $y\in\R^d$. For all $x\in X$, we let $K_{x}$ be the unit ball of $r(x,\fdot)$, $E_{x}$ be the John ellipsoid of $K_{x}$, and we set $A_{x}:=\cM\ci{\R,d}(E_{x})$.

\begin{prop}
The map $\cR:X\rightarrow\emph{P}_{d}(\R)$ given by $\cR(x):=A_{x}$, for all $x\in X$, is measurable.
\end{prop}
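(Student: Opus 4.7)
The plan is to factor $\cR$ as a composition
\begin{equation*}
X \xrightarrow{K} \cC_d(\R) \xrightarrow{J_d} \cE^{*}_d(\R) \xrightarrow{\cM\ci{\R,d}} \textup{P}_d(\R),
\end{equation*}
where $K(x):=K_x$ and the target spaces of the first two arrows are equipped with the Hausdorff distance $\delta$. The middle arrow is continuous by the result quoted just before the proposition. The last arrow, restricted to $\cE^{*}_d(\R)$, is also continuous: a short compactness argument shows that if $A_n\B_d(\R)\to A\B_d(\R)$ in $\delta$ with $A_n,A\in\textup{P}_d(\R)$, then the sequence $(A_n)$ is bounded in operator norm (the $E_n$ are eventually contained in a common compact set), and any subsequential limit $B\in\textup{PS}_d(\R)$ satisfies $B\B_d(\R)=A\B_d(\R)$, forcing $B=A$ by uniqueness of $\cM\ci{\R,d}$. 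Since $(\cC_d(\R),\delta)$ is separable, the proof reduces to showing that $K$ is Borel measurable.

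First I will check that $r:X\times\R^d\to[0,\infty)$ is jointly Borel measurable: $r(\fdot,y)$ is measurable by hypothesis and $r(x,\fdot)$ is continuous (as a norm), so $r$ is a Carath\'{e}odory function and hence jointly measurable (approximate $y$ by $\cB(\R^d)$-simple functions with rational values and pass to the limit using continuity). Next, I pass to support functions. Using $1$-homogeneity of $r(x,\fdot)$ one computes
\begin{equation*}
h\ci{K_x}(y):=\sup_{v\in K_x}\langle v,y\rangle=\sup_{u\in\R^d\setminus\{0\}}\frac{\langle u,y\rangle_{+}}{r(x,u)}=\sup_{u\in\Q^d\setminus\{0\}}\frac{\langle u,y\rangle_{+}}{r(x,u)},
\end{equation*}
where the last equality uses continuity of $r(x,\fdot)$ and of $\langle\fdot,y\rangle$. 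For each fixed $y\in\R^d$, this exhibits $x\mapsto h\ci{K_x}(y)$ as a countable supremum of measurable functions, hence measurable. Invoking the classical identity $\delta(A,B)=\sup_{|y|=1}|h_A(y)-h_B(y)|$ for compact convex bodies, together with $1$-homogeneity and continuity of support functions, we get
\begin{equation*}
\delta(K_x,L)=\sup_{y\in\Q^d\setminus\{0\}}\frac{|h\ci{K_x}(y)-h_L(y)|}{|y|}
\end{equation*}
for every $L\in\cC_d(\R)$. The right-hand side is measurable in $x$ as a countable supremum of measurable functions, and separability of $(\cC_d(\R),\delta)$ then implies that $K:X\to\cC_d(\R)$ is Borel measurable.

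The main obstacle is the passage from the scalar measurability of $r$ to the ``set-valued'' measurability of $x\mapsto K_x$, since $K_x$ is defined by an uncountable family of inequalities. Dualizing via support functions is the key device: it re-encodes the geometric object $K_x$ as a scalar function $h\ci{K_x}$ that inherits the measurability of $r$, so everything reduces to countable operations. Once $K$ is shown Borel measurable, composing with the two continuous maps $J_d$ and $\cM\ci{\R,d}|_{\cE^{*}_d(\R)}$ yields the desired measurability of $\cR:X\to\textup{P}_d(\R)$.
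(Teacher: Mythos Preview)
Your proof is correct and follows the same overall structure as the paper: both factor $\cR = \cM\ci{\R,d} \circ J_d \circ \cK$, note that the last two arrows are continuous (the paper cites Goffin--Hoffman for $\cM\ci{\R,d}$ where you give a direct compactness argument), and reduce to showing that $x\mapsto\delta(K_x,L)$ is measurable for each fixed $L\in\cC_d(\R)$. The difference is in this last step. The paper works directly from the definition of Hausdorff distance: it first shows that $K_x\cap\Q^d$ is dense in $K_x$ (using that $K_x$ is convex with nonempty interior), and then expresses $\{x:\delta(K_x,L)\leq\e\}$ as a countable combination of sets of the form $\{x:r(x,z)\leq 1\}$ and $\{x:r(x,z)>1\}$ for $z\in\Q^d$. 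Your route via support functions is more conceptual: the classical identity $\delta(A,B)=\sup_{|y|=1}|h_A(y)-h_B(y)|$ together with the explicit formula $h\ci{K_x}(y)=\sup_{u\in\Q^d\setminus\{0\}}\langle u,y\rangle_+/r(x,u)$ gives the measurability immediately as a countable supremum. Your approach avoids the density lemma and packages the argument more cleanly through convex duality; the paper's approach is slightly more elementary in that it only manipulates the defining inequality $r(x,z)\leq 1$ directly. Either way, the joint measurability of $r$ that you establish is not strictly needed---pointwise measurability of $r(\fdot,u)$ for rational $u$ already suffices for your support-function formula.
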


The proof will be accomplished in several steps. We first consider the map $\cK:X\rightarrow\cC_{d}(\R)$ given by $\cK(x):=K_{x}$, for all $x\in X$. Then, it is clear that
\begin{equation*}
\cR=\cM\ci{\R,d}\circ J_{d}\circ\cK.
\end{equation*}
Recall that $J_{d}:\cC_{d}(\R)\rightarrow\cE_{d}^{\ast}(\R)$ is continuous, thus measurable.

\begin{lm}[\cite{goffin-hoffman}]
The map $\cM\ci{\R,d}:\cE_{d}(\R)\rightarrow\emph{PS}_{d}(\R)$ is continuous. In fact, there holds
\begin{equation*}
\delta(E,F)\leq|A-B|\lesssim_{d}\delta(E,F),
\end{equation*}
where $E=A\B_{d}(\R)$ and $F=B\B_{d}(\R)$, for all $A,B\in\emph{PS}_{d}(\R)$, so $\cM\ci{\R,d}:\cE_{d}(\R)\rightarrow\emph{PS}_{d}(\R)$ is a topological homeomorphism.
\end{lm}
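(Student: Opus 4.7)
The lower bound $\delta(E,F)\leq |A-B|$ is immediate from the definitions: for each $v\in\B_d(\R)$ the point $Av\in E$ lies within distance $|Av-Bv|\leq |A-B|$ of the point $Bv\in F$, so $\sup_{y\in E}\text{dist}(y,F)\leq |A-B|$; the symmetric estimate is identical.

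For the reverse direction I would pass to support functions. Recall that for any centrally symmetric convex body $K\subseteq\R^d$ one has
\begin{equation*}
h_K(u)=\sup_{x\in K}\langle u,x\rangle,\qquad \delta(K,L)=\sup_{|u|=1}|h_K(u)-h_L(u)|.
\end{equation*}
Since $A$ is positive semidefinite (hence self-adjoint),
\begin{equation*}
h_{A\B_d(\R)}(u)=\sup_{|v|\leq 1}\langle u,Av\rangle=\sup_{|v|\leq 1}\langle Au,v\rangle=|Au|,
\end{equation*}
and analogously $h_{B\B_d(\R)}(u)=|Bu|$. Thus $\delta(E,F)=\sup_{|u|=1}\bigl||Au|-|Bu|\bigr|$. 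Polarization then gives
\begin{equation*}
|\langle (A^2-B^2)u,u\rangle|=\bigl||Au|^2-|Bu|^2\bigr|\leq(|Au|+|Bu|)\bigl||Au|-|Bu|\bigr|\leq 2\delta(E,F)\max(|A|,|B|),
\end{equation*}
so $|A^2-B^2|\leq 2\delta(E,F)\max(|A|,|B|)$ by self-adjointness of $A^2-B^2$.

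To upgrade this to the linear bound $|A-B|\lesssim_d\delta(E,F)$ I would invoke a compactness-and-homogeneity argument. Both sides of the target inequality are $1$-homogeneous in $(A,B)$, so one normalizes $\max(|A|,|B|)=1$. On the resulting compact subset of $\mathrm{PS}_d(\R)\times \mathrm{PS}_d(\R)$, the functional $\delta(A\B_d(\R),B\B_d(\R))$ is continuous and vanishes precisely on the diagonal $A=B$, by the bijectivity of $A\mapsto A\B_d(\R)$ recorded just before the lemma. Away from the diagonal, the ratio $|A-B|/\delta(E,F)$ is continuous; near the diagonal, a first-order perturbation analysis---writing $B=A+tH$ with $H$ self-adjoint and using $|(A+tH)u|^2=|Au|^2+2t\langle Au,Hu\rangle+t^2|Hu|^2$, and separately handling the directions $u$ with $|Au|$ small via $|(A+tH)u|\approx t|Hu|$ along $\ker A$---shows that the ratio remains bounded. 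A standard contradiction-and-compactness argument then yields a constant $c_d<\infty$ depending only on $d$ with $|A-B|\leq c_d\,\delta(E,F)$.

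\textbf{Main obstacle.} The hard part is extracting a \emph{linear} bound. Polarization provides only quadratic information on $|A^2-B^2|$, and the sharp square-root inequality $|X^{1/2}-Y^{1/2}|\lesssim |X-Y|^{1/2}$ for positive semidefinite matrices would degrade this to a H\"older-$1/2$ estimate $|A-B|\lesssim_d \delta(E,F)^{1/2}$. Linearity must therefore be recovered by exploiting that the ellipsoid simultaneously encodes $A$ and $A^2$, together with scale invariance; the most delicate ingredient is the behaviour near nearly singular $A$ or $B$, where the first-order perturbation analysis must be supplemented by the contribution along the (near-)kernel of $A$ in order to guarantee that the ratio $|A-B|/\delta(E,F)$ stays bounded uniformly in direction.
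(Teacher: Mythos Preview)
The paper does not prove this lemma; it simply quotes it from Goffin--Hoffman \cite{goffin-hoffman}. So there is no in-paper argument to compare against, and your proposal must be judged on its own merits.

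Your proof of $\delta(E,F)\le|A-B|$ and the support-function identity $\delta(E,F)=\sup_{|u|=1}\bigl||Au|-|Bu|\bigr|$ are correct.

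For the reverse inequality, the compactness-and-perturbation scheme you outline has a genuine gap at exactly the point you flag as the main obstacle. After normalising $\max(|A_n|,|B_n|)=1$ and extracting a subsequence with $A_n\to A$, $B_n=A_n+t_nH_n$, $t_n\to0$, $H_n\to H$ ($|H|=1$), you must exhibit unit vectors $u_n$ with $\bigl||B_nu_n|-|A_nu_n|\bigr|\gtrsim t_n$. Your two suggested choices---an eigenvector of $H$ with eigenvalue of modulus $1$, or a vector in $\ker A$---can fail simultaneously. Take $A=\mathrm{diag}(1,0)$, $H=\mathrm{diag}(0,1)$, and approximate by
\[
A_n=\begin{pmatrix}1&\alpha_n\\ \alpha_n&\epsilon_n\end{pmatrix},\qquad \alpha_n=n^{-3/2},\ \epsilon_n=n^{-2},\ t_n=n^{-4}.
\]
The only unit eigenvector of $H$ with nonzero eigenvalue is $e_2$, which also spans $\ker A$; yet $A_ne_2\neq0$, and a direct computation gives $\bigl||B_ne_2|-|A_ne_2|\bigr|/t_n\sim n^{-1/2}\to0$, so your approximation $|(A+tH)u|\approx t|Hu|$ along $\ker A$ breaks down for the perturbed $A_n$. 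In this example a test vector that \emph{does} work is the small-eigenvalue eigenvector of $A_n$, which depends on $n$. Your sketch never produces such an $n$-dependent choice, and without it the contradiction argument does not close. This is precisely the substance of the cited Goffin--Hoffman result; you should either consult that reference or supply a perturbation argument that selects $u_n$ adapted to $A_n$ rather than to the limit $A$.
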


Therefore, $\cM\ci{\R,d}|\ci{\cE_{d}^{\ast}(\R)}:\cE_{d}^{\ast}(\R)\rightarrow\text{P}_{d}(\R)$ is continuous, thus measurable. Thus, it suffices to prove that the map $\cK:X\rightarrow\cC_{d}(\R)$ is measurable.

\begin{lm}
Let $K$ be any compact convex subset of $\R^{d}$ with $\emph{Int}(K)\neq\emptyset$. Then, $K\cap\Q^d$ is dense in $K$. In particular
\begin{equation*}
\emph{dist}(x,K)=\inf_{y\in K\cap\Q^d}|y-x|,\qquad\forall x\in\R^d.
\end{equation*}
\end{lm}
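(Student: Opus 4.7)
The plan is to prove density of $K\cap\Q^d$ in $K$ directly, and then read off the distance identity as an immediate corollary.

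For the density, the key tool is the standard convex-analysis fact that if $z\in\mathrm{Int}(K)$ and $x\in K$, then the half-open segment $\lbrace(1-t)x+tz:~t\in(0,1]\rbrace$ lies entirely in $\mathrm{Int}(K)$. Since by hypothesis $\mathrm{Int}(K)\neq\emptyset$, I fix once and for all some $z\in\mathrm{Int}(K)$ together with a radius $r>0$ such that the closed ball $\overline{B}(z,r)\subseteq K$. Given an arbitrary $x\in K$ and $\varepsilon>0$, I would choose $t\in(0,1]$ so small that
\begin{equation*}
t\,|z-x|<\varepsilon/2,
\end{equation*}
and set $x_t:=(1-t)x+tz\in K$. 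The explicit convex-combination argument (writing the ball $B(x_t,tr)$ as $(1-t)x + t\,\overline{B}(z,r)$ and using convexity) shows that $B(x_t,tr)\subseteq K$, so in particular $x_t\in\mathrm{Int}(K)$.

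Once $x_t$ is an interior point of $K$, density of $\Q^d$ in $\R^d$ supplies a rational vector $y\in\Q^d$ with $y\in B(x_t,\min(tr,\varepsilon/2))$; this point automatically satisfies $y\in K$ and
\begin{equation*}
|y-x|\leq|y-x_t|+|x_t-x|<\varepsilon/2+\varepsilon/2=\varepsilon,
\end{equation*}
which establishes density of $K\cap\Q^d$ in $K$.

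For the ``in particular'' assertion, one inclusion $\mathrm{dist}(x,K)\leq\inf_{y\in K\cap\Q^d}|y-x|$ is immediate from $K\cap\Q^d\subseteq K$. The reverse inequality follows by approximating any $y'\in K$ by rational points $y\in K\cap\Q^d$ with $|y-y'|$ arbitrarily small: the triangle inequality $|y-x|\leq|y-y'|+|y'-x|$ together with taking the infimum over $y'\in K$ yields $\inf_{y\in K\cap\Q^d}|y-x|\leq\mathrm{dist}(x,K)$. There is no genuine obstacle here; the only point requiring care is the ``segment into the interior'' lemma, which however is routine once the interior ball around $z$ is used explicitly.
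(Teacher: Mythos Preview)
Your proof is correct and follows essentially the same approach as the paper. The only difference is one of packaging: the paper quotes the convex-geometry fact $K=\mathrm{Cl}(\mathrm{Int}(K))$ from a reference and then observes that $\mathrm{Int}(K)\cap\Q^d$ is dense in the open set $\mathrm{Int}(K)$, whereas you prove that closure identity inline via the segment-into-the-interior argument, making your write-up more self-contained.
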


\begin{proof}
It is a well-known fact of convex geometry, see e.g. \cite[Proposition 3.1]{gruber}, that $K=\text{Cl}(\text{Int}(K))$. Since $H:=\text{Int}(K)$ is an open subset of $\R^d$, we have that $H\cap\Q^{d}$ is dense in $H$. The desired result follows.
\end{proof}

Since the function $\text{dist}(\fdot,K)$ on $\R^d$ is continuous, for any nonempty $K\subseteq\R^d$, the previous lemma implies immediately the following corollary.

\begin{cor}
Let $K_1,K_2$ be any two convex compact subsets of $\R^d$ with $\emph{Int}(K_i)\neq\emptyset$, $i=1,2$. Then
\begin{equation*}
\delta(K_1,K_2)=\max(\sup_{y\in K_1\cap\Q^{d}}\emph{dist}(y,K_2),~\sup_{z\in K_2\cap\Q^{d}}\emph{dist}(z,K_1)).
\end{equation*}
\end{cor}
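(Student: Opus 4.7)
The corollary is a direct consequence of the preceding lemma once one notices the continuity of the distance function. My plan is to reduce the statement to the one-sided claim
\[
\sup_{y \in K_1} \operatorname{dist}(y, K_2) = \sup_{y \in K_1 \cap \Q^d} \operatorname{dist}(y, K_2),
\]
and then invoke symmetry in $K_1, K_2$ together with the definition of $\delta$. The analogous identity with the roles of $K_1, K_2$ swapped will then yield the formula for $\delta(K_1, K_2)$.

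To prove the displayed one-sided identity, I would first record that $\operatorname{dist}(\cdot, K_2) \colon \R^d \to [0,\infty)$ is $1$-Lipschitz (an easy consequence of the triangle inequality applied inside the infimum defining $\operatorname{dist}$), hence continuous on $\R^d$. Since $K_1$ is compact and the function is continuous, the supremum is actually attained: there exists $y_0 \in K_1$ with $\operatorname{dist}(y_0, K_2) = \sup_{y \in K_1} \operatorname{dist}(y, K_2)$. Then I would apply the previous lemma (which uses $\operatorname{Int}(K_1) \neq \emptyset$) to conclude that $K_1 \cap \Q^d$ is dense in $K_1$, so we may pick a sequence $(y_n)_n$ in $K_1 \cap \Q^d$ with $y_n \to y_0$. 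Continuity gives $\operatorname{dist}(y_n, K_2) \to \operatorname{dist}(y_0, K_2)$, which forces
\[
\sup_{y \in K_1 \cap \Q^d} \operatorname{dist}(y, K_2) \geq \operatorname{dist}(y_0, K_2) = \sup_{y \in K_1} \operatorname{dist}(y, K_2).
\]
The reverse inequality is trivial from $K_1 \cap \Q^d \subseteq K_1$, so the two suprema coincide.

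Repeating the argument with $K_1$ and $K_2$ interchanged (both have nonempty interior by hypothesis, so the lemma applies symmetrically) gives the analogous identity $\sup_{z \in K_2} \operatorname{dist}(z, K_1) = \sup_{z \in K_2 \cap \Q^d} \operatorname{dist}(z, K_1)$. Taking the maximum of these two quantities and using the definition
\[
\delta(K_1, K_2) = \max\Bigl(\sup_{y \in K_1} \operatorname{dist}(y, K_2),\ \sup_{z \in K_2} \operatorname{dist}(z, K_1)\Bigr)
\]
from the introductory paragraph on the Hausdorff distance yields the asserted formula. There is really no obstacle here, which is why the author writes that the corollary follows \emph{immediately}; the only point that needs a small verification is that $\operatorname{dist}(\cdot, K_2)$ is continuous, and that the supremum of a continuous function on a compact set is attained, so that density of $K_i \cap \Q^d$ in $K_i$ can be converted into equality of the two suprema.
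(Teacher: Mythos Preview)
Your proof is correct and follows the same approach as the paper: use continuity of $\operatorname{dist}(\cdot,K)$ together with the density of $K_i\cap\Q^d$ in $K_i$ from the preceding lemma to replace each supremum over $K_i$ by the supremum over $K_i\cap\Q^d$. Your use of compactness to attain the supremum is a slight elaboration but not a genuine departure.
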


We can now prove that the map $\cK$ is measurable.

\begin{lm}
The map $\cK:X\rightarrow\cC_{d}(\R)$ given by $\cK(x):=K_{x}$, for all $x\in X$, is measurable.
\end{lm}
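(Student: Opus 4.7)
The plan is to verify measurability of $\cK$ by exploiting the separability of the codomain $(\cC_{d}(\R),\delta)$ together with the countable characterization of the Hausdorff distance given by the preceding corollary. Since $(\cX(\R^d),\delta)$ is separable (as noted earlier in the subsection), the subset $\cC_{d}(\R)$ is also separable in the induced metric. Consequently, to show that $\cK$ is Borel measurable it suffices to check that for every fixed $K_0\in\cC_{d}(\R)$, the real-valued function $x\mapsto \delta(K_x,K_0)$ is measurable on $X$; indeed, each open set in $\cC_{d}(\R)$ is a countable union of open balls, and the preimage of an open ball centered at $K_0$ is exactly a sublevel set of this function.

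To establish this measurability, I would apply the preceding corollary. Both $K_x$ (as the unit ball of a norm) and $K_0\in\cC_{d}(\R)$ are convex compact with non-empty interior, so
\begin{equation*}
\delta(K_x,K_0)=\max\!\left(\sup_{y\in K_x\cap\Q^d}\mathrm{dist}(y,K_0),\;\sup_{z\in K_0\cap\Q^d}\mathrm{dist}(z,K_x)\right).
\end{equation*}
For the first supremum, I would observe that $y\in K_x$ iff $r(x,y)\leq 1$, so it can be rewritten as the countable supremum
\begin{equation*}
\sup_{y\in\Q^d}\mathrm{dist}(y,K_0)\cdot \mathbf{1}_{\{r(\fdot,y)\leq 1\}}(x),
\end{equation*}
where for each fixed $y\in\Q^d$ the function $x\mapsto \mathbf{1}_{\{r(\fdot,y)\leq 1\}}(x)$ is measurable by the assumed measurability of $r(\fdot,y)$, while $\mathrm{dist}(y,K_0)$ is a constant. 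A countable supremum of measurable functions is measurable.

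For the second supremum, the key step is to show that for each fixed $z\in\R^d$ the function $x\mapsto \mathrm{dist}(z,K_x)$ is measurable. Here I would use density of $K_x\cap\Q^d$ in $K_x$ (which holds because $r(x,\fdot)$ is a norm, so $K_x$ has non-empty interior) to obtain the identity
\begin{equation*}
\{x\in X:\mathrm{dist}(z,K_x)<t\}=\bigcup_{\substack{y\in\Q^d\\|y-z|<t}}\{x\in X:r(x,y)\leq 1\},\qquad t>0,
\end{equation*}
which is a countable union of measurable sets. Taking then the countable supremum over $z\in K_0\cap\Q^d$ of these measurable functions, and combining it with the first supremum via the maximum, yields measurability of $x\mapsto\delta(K_x,K_0)$. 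The main (minor) obstacle is purely book-keeping: one must resist the temptation to differentiate $K_x$ directly, and instead use the corollary to systematically reduce every supremum and infimum appearing in the definition of the Hausdorff distance to a countable operation over $\Q^d$, which is precisely what lets the hypothesis ``$r(\fdot,y)$ is measurable for each fixed $y$'' propagate to measurability of $\cK$.
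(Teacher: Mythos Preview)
Your proof is correct and follows essentially the same approach as the paper: both use separability of $(\cC_d(\R),\delta)$ to reduce to balls, invoke the preceding corollary to replace the suprema in the Hausdorff distance by countable suprema over $\Q^d$, and then unwind everything via the equivalence $y\in K_x\iff r(x,y)\le 1$. The paper works at the set level (showing $\{x:\delta(K_x,K)\le\e\}\in\cF$ by writing it as a countable Boolean combination of sets $\{x:r(x,z)\le 1\}$), whereas you work at the function level (showing $x\mapsto\delta(K_x,K_0)$ is measurable as a countable supremum of measurable functions); these are minor packaging differences of the same argument.
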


\begin{proof}
Since $(\cX(\R^d),\delta)$ is a separable metric space, it follows that $(\cC_{d}(\R),\delta)$ is also a separable metric space. Therefore, it suffices to prove that for all $K\in\cC_{d}(\R)$ and for all $\e>0$, the inverse image along $\cK$ of the closed $\delta$-ball of radius $\e$ and center $K$,
\begin{equation*}
\lbrace x\in X:~\delta(K_x,K)\leq\e\rbrace,
\end{equation*}
belongs to $\cF$. We have
\begin{align*}
&\lbrace x\in X:~\delta(K_x,K)\leq\e\rbrace\\
&=\lbrace x\in X:~\sup_{y\in K\cap\Q^{d}}\text{dist}(y,K_x)\leq\e\rbrace\cap\lbrace x\in X:~\sup_{z\in K_x\cap\Q^{d}}\text{dist}(z,K)\leq\e\rbrace.
\end{align*}
Notice that
\begin{align*}
\lbrace x\in X:~\sup_{y\in K\cap\Q^{d}}\text{dist}(y,K_x)\leq\e\rbrace=\bigcap_{y\in K\cap\Q^{d}}\bigcap_{n=1}^{\infty}\bigg\lbrace x\in X:~\text{dist}(y,K_x)<\e+\frac{1}{n}\bigg\rbrace,
\end{align*}
and
\begin{align*}
&\bigg\lbrace x\in X:~\text{dist}(y,K_x)<\e+\frac{1}{n}\bigg\rbrace=
\bigg\lbrace x\in X:~\inf_{z\in K_{x}\cap\Q^{d}}|y-z|<\e+\frac{1}{n}\bigg\rbrace\\
&=\bigcup_{\substack{z\in\Q^{d}\\|y-z|<\e+\frac{1}{n}}}\lbrace x\in X:~z\in K_{x}\rbrace
=\bigcup_{\substack{z\in\Q^{d}\\|y-z|<\e+\frac{1}{n}}}\lbrace x\in X:~r(x,z)\leq 1\rbrace,
\end{align*}
for all $y\in\R^d$ and for all $n=1,2,\ldots$. Thus $\lbrace x\in X:~\sup_{y\in K\cap\Q^{d}}\text{dist}(y,K_x)\leq\e\rbrace\in\cF$. Moreover, we have
\begin{align*}
\lbrace x\in X:~\sup_{z\in K_x\cap\Q^{d}}\text{dist}(z,K)\leq\e\rbrace&=\bigcap_{\substack{z\in\Q^{d}\\\text{dist}(z,K)>\e}}\lbrace x\in X:~z\notin K_{x}\rbrace\\
&=\bigcap_{\substack{z\in\Q^{d}\\\text{dist}(z,K)>\e}}\lbrace x\in X:~r(x,z)>1\rbrace.
\end{align*}
It follows that $\lbrace x\in X:~\sup_{z\in K_x\cap\Q^{d}}\text{dist}(z,K)\leq\e\rbrace\in\cF$ as well, concluding the proof.
\end{proof}

\subsubsection{Identifying \texorpdfstring{$\C^d$}{Cd} with \texorpdfstring{$\R^{2d}$}{R2d}}

Recall that we identify complex $d\times d$-matrices with $\C$-linear maps on $\C^d$ in the natural way, and that we identify $(2d)\times(2d)$-matrices in $\R^{2d}$ with linear maps on $\R^{2d}$ in the natural way.

Let $\cB=\lbrace e_1,e_2,\ldots,e_{2d-1},e_{2d}\rbrace$ be the standard $\R$-basis of $\C^d$, and let $\cB'=\lbrace e_1,\ldots,e_{2d}\rbrace$ be the standard basis of $\R^{2d}$. For all $A\in M_{d}(\C)$, it is easy to see by direct computation that $[A]\ci{\cB}^{\cB}=H(A)$, where $H:M_{d}(\C)\rightarrow M_{2d}(\R)$ is the natural ring monomorphism induced by the ring monomorphism $H:\C\rightarrow M_{2}(\R)$ given by
\begin{equation*}
H(z):=
\begin{bmatrix}
\rp(z)&-\ip(z)\\
\ip(z)&\rp(z)
\end{bmatrix}
,\qquad\forall z\in\C.
\end{equation*}

Consider the map $R:\C^{d}\rightarrow\R^{2d}$ given by
\begin{equation*}
R(z_1,\ldots,z_d):=(\rp(z_1),\ip(z_1),\ldots,\rp(z_d),\ip(z_d)),\qquad\forall z\in\C^d.
\end{equation*}
Clearly, $R$ is an $\R$-linear isometric isomorphism, and it is also measure-preserving. In particular, $R(\B_{d}(\C))=\B_{2d}(\R)$. Note that $R$ induces the map $\cX(R):\cX(\C^d)\rightarrow\cX(\R^{2d})$ given by $\cX(R)(K):=R(K)$, for all $K\in\cX(\C^d)$, which is an isometric isomorphism with respect to the Hausdorff distance. 

Notice that for all $\C$-linear maps $A:\C^d\rightarrow\C^d$, the map $RAR^{-1}:\R^{2d}\rightarrow\R^{2d}$ is linear, and
\begin{equation*}
[RAR^{-1}]\ci{\cB'}^{\cB'}=[R]\ci{\cB}^{\cB'}[A]\ci{\cB}^{\cB}[R^{-1}]\ci{\cB'}^{\cB}=I_{2d}\cdot H(A)\cdot I_{2d}^{-1}=H(A).
\end{equation*}

Notice also that for all $A\in M_{d}(\C)$, we have $H(A)^{T}=H(\bar{A}^{T})=H(A^{\ast})$, where $\bar{A}$ denotes the complex conjugate of $A$, $A^{T}$ denotes the transpose of $A$, and $A^{\ast}$ denotes the adjoint of $A$. Therefore, if $A\in M_{d}(\C)$ is Hermitian, then $H(A)$ is symmetric. 

Let now $A\in\text{PS}_{d}(\C)$. Then, we can write $A=UDU^{\ast}$, where $U$ is a complex $d\times d$-unitary matrix and $D$ is a $d\times d$-diagonal matrix with real non-negative entries. Then, $H(D)$ is a $(2d)\times(2d)$-diagonal matrix with real non-negative entries. Moreover, we have
\begin{equation*}
H(U)^{T}H(U)=H(U^{\ast}U)=H(I_{d})=I_{2d},
\end{equation*}
so $H(U)$ is a $(2d)\times(2d)$-orthogonal matrix. Thus
\begin{equation*}
H(A)=H(U)DH(U)^{T}\in\text{PS}_{2d}(\R).
\end{equation*}

\subsubsection{Complex John ellipsoids}

Let $\cC_{d}(\C)$ be the set of all compact convex subsets $K$ of $\C^d$ such that $0\in\text{Int}(K)$ and
\begin{equation*}
zK:=\lbrace zv:~v\in K\rbrace\subseteq K,
\end{equation*}
for all $z\in\T:=\lbrace w\in\C:~|w|=1\rbrace$. Clearly, $\wt{K}:=R(K)\in\cC_{2d}(\R)$. Let $\wt{E}$ be the John ellipsoid of $\wt{K}$ in $\R^{2d}$. Set $E:=R^{-1}(\wt{E})$. We claim that $E$ is an ellipsoid in $\C^d$.

Set $\wt{A}:=\cM\ci{\R,d}(\wt{E})$ and $A:=R^{-1}\wt{A}R$. It is obvious that $A:\C^d\rightarrow\C^d$ is an $\R$-linear map. Moreover, we have
\begin{equation*}
E=R^{-1}(\wt{E})=R^{-1}\wt{A}\B_{2d}(\R)=R^{-1}\wt{A}R\B_{d}(\C)=A\B_{d}(\C).
\end{equation*}
Therefore, it suffices to prove that $A:\C^d\rightarrow\C^d$ is $\C$-linear.

For every $z\in\T$, consider the $\C$-linear map $L_{z}:\C^d\rightarrow\C^d$ given by $L_{z}(v):=zv$, for all $v\in\C^d$. Then it is easy to see that $RL_{z}=\wt{L}_{z}R$, where the linear map $\wt{L}_{z}:\R^{2d}\rightarrow\R^{2d}$ is given by
\begin{equation*}
\wt{L}_{z}:=H(L_{z})=
\begin{bmatrix}
\rp(z)&-\ip(z)&&&&&&&&0\\
\ip(z)&\rp(z)&&&&&&&\\
&&&&\ddots&&&&&\\
&&&&&&&&\rp(z)&-\ip(z)\\
0&&&&&&&&\ip(z)&\rp(z)
\end{bmatrix}
,
\end{equation*}
for all $z\in\T$. Clearly, for all $z\in\T$, $\wt{L}_{z}:\R^{2d}\rightarrow\R^{2d}$ is an orthogonal map, because $|z|=1$. It follows that for all $z\in\T$, $\wt{L}_{z}(\wt{E})$ is the John ellipsoid in $\R^{2d}$ of $\wt{L}_{z}(\wt{K})$. Notice that by assumption
\begin{equation*}
\wt{L}_{z}(\wt{K})=\wt{L}_{z}R(K)=RL_{z}(K)=R(K)=\wt{K},\qquad\forall z\in\T.
\end{equation*}
Therefore, by uniqueness of the John ellipsoid we deduce $\wt{L}_{z}(\wt{E})=\wt{E}$, for all $z\in\T$. Notice that $\wt{L}_{z}^{T}=\wt{L}_{\bar{z}}=\wt{L}_{z}^{-1}$, for all $z\in\T$. Then, for all $z\in\T$, $\wt{L}_{z}\wt{A}(\wt{L}_{z})^{T}\in\text{PS}_{2d}(\R)$ (because $\wt{A}\in \text{PS}_{2d}(\R)$) and
\begin{equation*}
\wt{L}_{z}\wt{A}(\wt{L}_{z})^{T}\B_{2d}(\R)=\wt{L}_{z}\wt{A}\wt{L}_{\bar{z}}\B_{2d}(\R)=\wt{L}_{z}\wt{A}\B_{2d}(\R)=\wt{L}_{z}\wt{E}=\wt{E}.
\end{equation*}
By uniqueness of $\wt{A}$ we deduce $\wt{L}_{z}\wt{A}\wt{L}_{\bar{z}}=\wt{A}$, therefore deduce $\wt{L}_{z}\wt{A}=\wt{A}\wt{L}_{z}$, for all $z\in\T$. Thus
\begin{equation*}
AL_{z}=R^{-1}\wt{A}RL_{z}=R^{-1}\wt{A}\wt{L}_{z}R=R^{-1}\wt{L}_{z}\wt{A}R=L_{z}R^{-1}\wt{A}R=L_{z}A,\qquad\forall z\in\T,
\end{equation*}
proving that $A:\C^d\rightarrow\C^d$ is $\C$-linear (since $A:\C^d\rightarrow\C^d$ is already known to be $\R$-linear). In the sequel, we will say that $E$ is the \emph{complex} John ellipsoid of $K$.

Let $J\ci{\C,d}:\cC_{d}(\C)\rightarrow\cE^{\ast}_{d}(\C)$ be the map sending each $K\in\cC_{d}(C)$ to its complex John ellipsoid. Since $R$ is an isometry, we have
\begin{equation*}
\delta(E,F)=\delta(R(E),R(F)),
\end{equation*}
for every nonempty compact subsets $E,F$ of $\C^d$. Therefore, since $J_{2d}:\cC_{2d}(\R)\rightarrow\cE^{\ast}_{2d}(\R)$ is continuous with respect to the Hausdorff distance, it follows that $J\ci{\C,d}:\cC_{d}(\C)\rightarrow\cE_{d}^{\ast}(\C)$ is also continuous with respect to the Hausdorff distance.

\subsubsection{From ellipsoids in \texorpdfstring{$\C^d$}{Cd} to ellipsoids in \texorpdfstring{$\R^{2d}$}{R2d}}

Let $E\in\cE_{d}(\C)$, and let $A:=\cM\ci{\C,d}(E)$, so $E=A\B_{d}(\C)$. Set $\wt{E}:=R(E)$. Then
\begin{equation*}
\wt{E}=RAR^{-1}\B_{2d}(\R),
\end{equation*}
so since $R:\C^{d}\rightarrow\R^{2d}$ is an $\R$-linear isometric isomorphism, it follows that $\wt{E}$ is an ellipsoid in $\R^{2d}$. Since $A\in\text{PS}_{d}(\C)$, by the above we have $[RAR^{-1}]\ci{\cB'}^{\cB'}=H(A)\in\text{PS}_{2d}(\R)$. Thus
\begin{equation*}
\cM\ci{\R,2d}(\wt{E})=H(A)=H(\cM\ci{\C,d}(E)).
\end{equation*}
This implies that $\cM\ci{\C,d}=H^{-1}\circ\cM\ci{\R,2d}\circ \cX(R)|\ci{\cE_{d}(\C)}$. Since $H:M_{d}(\C)\rightarrow M_{2d}(\R)$ is a topological homeomorphism onto its image, $\cX(R):\cX(\C^{d})\rightarrow\cX(\R^{2d})$ is an isometric isomorphism with respect to the Hausdorff distance, and $\cM\ci{\R,2d}:\cE_{2d}(\R)\rightarrow\text{PS}_{2d}(\R)$ is continuous, it follows that $\cM\ci{\C,d}:\cE_{d}(\C)\rightarrow\text{PS}_{d}(\C)$ is continuous. In fact, since $\cM\ci{\R,2d}:\cE_{2d}(\R)\rightarrow\text{PS}_{2d}(\R)$ is a topological homeomorphism, it follows that $\cM\ci{\C,d}:\cE_{d}(\C)\rightarrow\text{PS}_{d}(\C)$ is also a topological homeomorphism.

\subsubsection{John ellipsoids for unit balls of norms on \texorpdfstring{$\C^d$}{Cd}}

Let $r$ be any norm on $\C^d$. Let $K:=\lbrace v\in\C^{d}:~r(v)\leq 1\rbrace$ be the unit ball of $r$. Clearly, $K\in\cC\ci{d}(\C)$. Let $E$ be the complex John ellipsoid of $K$. Set $A:=\cM\ci{\C,d}(E)$. Then, \eqref{John ellipsoid containment} implies that
\begin{equation*}
R(E)\subseteq R(K)\subseteq \sqrt{2d}R(E),
\end{equation*}
therefore
\begin{equation*}
E\subseteq K\subseteq \sqrt{2d}E.
\end{equation*}
In fact, it follows from the above that $E$ is a maximum volume centrally symmetric nondegenerate complex ellipsoid contained in $K$, therefore by the proof of \cite[Proposition 1.2]{goldberg} we have
\begin{equation*}
E\subseteq K\subseteq \sqrt{d}E.
\end{equation*}
Thus, for all $e\in\C^d$, we have
\begin{equation*}
|e|\leq1\Rightarrow r(Ae)\leq1
\end{equation*}
and
\begin{equation*}
r(e)\leq1\Rightarrow|A^{-1}e|\leq\sqrt{d}.
\end{equation*}
It follows that $r(e)\leq |A^{-1}e|\leq \sqrt{d}r(e)$, for all $e\in\C^d$.

\subsubsection{Measurable choice of reducing matrix for measurably parametrized norms on \texorpdfstring{$\C^d$}{Cd}}

Let $(X,\cF)$ be any measurable space. Let $r:X\times\C^d\rightarrow[0,\infty)$ be a function such that $r(x,\fdot)$ is a norm on $\C^d$, for all $x\in X$, and $r(\fdot,y)$ is a measurable function on $X$, for all $y\in\C^d$. For all $x\in X$, we let $K_{x}$ be the unit ball of $r(x,\fdot)$, $E_{x}$ be the complex John ellipsoid of $K_{x}$, and we set $A_{x}:=\cM\ci{\C,d}(E_{x})$.

\begin{prop}
The map $\cR:X\rightarrow \emph{P}_{d}(\C)$ given by $\cR(x):=A_{x}$, for all $x\in X$, is measurable.
\end{prop}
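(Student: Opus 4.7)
The plan is to reduce this complex-valued statement to the real-valued version of the proposition just established, using the $\R$-linear isometric isomorphism $R:\C^d\to\R^{2d}$ and the compatibility identities between complex and real John ellipsoids worked out in the preceding subsections. First I would introduce the pulled-back norm $\widetilde r:X\times\R^{2d}\to[0,\infty)$ defined by $\widetilde r(x,v):=r(x,R^{-1}(v))$. Since $R^{-1}$ is an $\R$-linear bijection, $\widetilde r(x,\fdot)$ is a norm on $\R^{2d}$ for each $x\in X$; since $r(\fdot,y)$ is measurable for every $y\in\C^d$ and $R^{-1}(v)$ is a fixed vector once $v\in\R^{2d}$ is fixed, $\widetilde r(\fdot,v)$ is measurable for every $v\in\R^{2d}$. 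Thus the real-valued version of the proposition applies to $\widetilde r$.

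Invoking that result yields the measurability of the map $\widetilde\cR:X\to\text{P}_{2d}(\R)$ given by $\widetilde\cR(x):=\cM\ci{\R,2d}(\widetilde E_x)$, where $\widetilde E_x$ denotes the (real) John ellipsoid of the unit ball $\widetilde K_x:=\lbrace v\in\R^{2d}:~\widetilde r(x,v)\leq 1\rbrace$. By the definition of $\widetilde r$ we have $\widetilde K_x=R(K_x)$, and by the construction in the subsection on complex John ellipsoids this forces $\widetilde E_x=R(E_x)$. Then, using the identification $\cM\ci{\R,2d}(R(E))=H(\cM\ci{\C,d}(E))$ for every $E\in\cE_{d}(\C)$ established earlier, one obtains $\widetilde\cR(x)=H(A_x)=H(\cR(x))$ for every $x\in X$. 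Hence $\cR=H^{-1}\circ\widetilde\cR$ on the image of $H$.

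It then remains only to argue that $H^{-1}$, considered as a map from the image $H(M_d(\C))\subseteq M_{2d}(\R)$ back to $M_d(\C)$, is Borel measurable, so that its composition with the measurable map $\widetilde\cR$ is again measurable. But $H:M_d(\C)\to M_{2d}(\R)$ is a continuous $\R$-linear injection between finite-dimensional normed spaces, hence a topological embedding onto its (closed) image, and its inverse on the image is automatically continuous. I do not foresee any serious obstacle in this reduction: all of the geometric content (existence and uniqueness of the complex John ellipsoid, continuity of $J\ci{\C,d}$ with respect to Hausdorff distance, continuity of $\cM\ci{\C,d}$, and the compatibility $\cM\ci{\C,d}=H^{-1}\circ\cM\ci{\R,2d}\circ\cX(R)|\ci{\cE_d(\C)}$) has already been verified, and the only new input needed is the trivial measurability of the pulled-back norm $\widetilde r$, so the proof essentially amounts to assembling these ingredients.
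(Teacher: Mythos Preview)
Your proposal is correct. Both your argument and the paper's ultimately rest on the same ingredients (the real-case proposition, the identification $\cM\ci{\R,2d}(R(E))=H(\cM\ci{\C,d}(E))$, and continuity of $H^{-1}$), but you package them differently: the paper factors $\cR=\cM\ci{\C,d}\circ J\ci{\C,d}\circ\cK$ and re-verifies measurability of $\cK:X\to\cC_d(\C)$ ``similarly to the real case,'' whereas you pull the norm back through $R$, apply the real proposition as a black box to obtain measurability of $\widetilde\cR$, and then read off $\cR=H^{-1}\circ\widetilde\cR$. Your route is slightly more economical in that it avoids repeating the $\cK$-measurability argument in the complex setting; the paper's route keeps the complex and real cases structurally parallel. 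Either way, the substantive work has already been done in the preceding subsections, and your assembly of it is sound.
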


Consider the map $\cK:X\rightarrow\cC_{d}(\C)$ given by $\cK(x):=K_{x}$, for all $x\in X$. Then, it is clear that
\begin{equation*}
\cR=\cM\ci{\C,d}\circ J\ci{\C,d}\circ\cK.
\end{equation*}
We already saw above that $\cM_{\C,d}|\ci{\cE_d^{\ast}(\C)}:\cE_{d}^{\ast}(\C)\rightarrow\text{P}_{d}(\C)$ and $J\ci{\C,d}:\cC_{d}(\C)\rightarrow\cE^{\ast}_{d}(\C)$ are continuous, therefore measurable. Moreover, similarly to the real case we have that $\cK:X\rightarrow\cC_{d}(\C)$ is measurable, yielding the desired result.

\Addresses

\end{document}